\DeclareMathOperator*{\res}{Res}
\numberwithin{equation}{section}
\newtheorem{thm}{Theorem}[section]
\newtheorem{lem}[thm]{Lemma}
\newtheorem{rem}[thm]{Remark}
\newtheorem{prop}[thm]{Proposition}
\newtheorem{assum}[thm]{Assumption}
\newtheorem{RHP}[thm]{RH Problem}
\newtheorem{DRHP}[thm]{$\overline{\partial}$-Problem}
\newtheorem{cor}[thm]{Corollary}
\providecommand{\D}[1]{\mathbb{#1}}
\numberwithin{equation}{section}
\numberwithin{thm}{section}
\renewcommand{\Im}{\operatorname{Im}}
\renewcommand{\Re}{\operatorname{Re}}
\renewcommand{\res}{\operatorname{Res}}
\DeclareMathOperator{\di}{diag}
\subjclass[2000]{35Q55, 35Q15, 35C20}
\keywords{The modified Camassa-Holm equation,  long-time asymptotics,  soliton resolution, Riemann-Hilbert problem,  $\overline{\partial}$-steepest descent method.}
\begin{document}

\title[On the long-time asymptotic of the mCH equation]{On the long-time asymptotic of the modified Camassa-Holm equation with nonzero boundary conditions in space-time solitonic regions}


\author[Yang]{Jin-Jie Yang}

\author[Tian]{Shou-Fu Tian$^{*}$}
\address{Jin-Jie Yang, Shou-Fu Tian (Corresponding author) and Zhi-Qiang Li \newline
School of Mathematics, China University of Mining and Technology, Xuzhou 221116, People's Republic of China}
\thanks{$^{*}$Corresponding author(sftian@cumt.edu.cn, shoufu2006@126.com).
\email{sftian@cumt.edu.cn, shoufu2006@126.com (S.F. Tian)}}

\author[Li]{Zhi-Qiang Li}
%
%
%
%
%

\begin{abstract}
{We investigate the long-time asymptotic behavior for the Cauchy problem of the modified Camassa-Holm (mCH) equation with nonzero boundary conditions in different regions
\begin{align*}
&m_t+\left((u^2-u_x^2)m\right)_x=0,\ m=u-u_{xx}, \ (x,t)\in\mathbb{R}\times\mathbb{R}^+,\\
&u(x,0)=u_0(x),\ \ \lim_{x\to\pm\infty} u_0(x)=1,~~u_0(x)-1\in H^{4,1}(\D{R}),
\end{align*}
where  $m(x,t=0):=m_{0}(x)$ and $m_0(x)-1\in H^{2,1}(\D{R})$.
 Through spectral analysis, the initial value problem of the mCH equation is transformed into a matrix RH problem on a new plane $(y,t)$, and then using the $\overline{\partial}$ nonlinear steepest descent method, we analyze the different asymptotic behaviors of the four regions divided by the interval of $\xi=y/t$ on plane $\{(y,t)|y\in(-\infty,+\infty), t>0\}$.  There is no steady-state phase point corresponding to the regions $\xi\in(-\infty,-1/4)\cup(2,\infty)$. We prove that the solution of mCH equation is characterized by $N$-soliton solution and error on these two regions. In $\xi\in(-1/4,0)$ and $\xi\in(0,2)$, the phase function $\theta(z)$ has eight and four steady-state phase points, respectively. We prove that the soliton resolution conjecture holds, that is, the solution of the mCH equation can be expressed as the soliton solution on the discrete spectrum, the leading term on the continuous spectrum, and the residual error. Our results also show that soliton solutions of the mCH equation with nonzero condition boundary are asymptotically stable.}
\end{abstract}

\maketitle
\tableofcontents

\section{Introduction}\quad
The Camassa-Holm (CH) equation is an integrable nonlinear dispersion equation in the sense of having Lax representations \cite{CH-PRL,CH-AAM}
\begin{align}\label{CH-1}
m_t+(um)_x+u_xm=0,\ \ \ \ m:=u-u_{xx},
\end{align}
which can be used to describe the unidirectional propagation of shallow water waves on a flat bottom and has a rich mathematical structure, which has attracted widespread attention in \cite{CL-2009,Jo-2002}.  In particular,  Boutet and Shepelsky in \cite{BD-2008} developed the inverse scattering theory to study the long-time asymptotic behavior of the solution of the CH equation \eqref{CH-1} based on the nonlinear steepest descent method \cite{DZ-AM1993} by constructing the matrix Riemann-Hilbert (RH) problem \cite{BC-1984}, including Cauchy problem and initial-boundary value problem in \cite{BD-SIAM-2010,BD-SIAM-2009,BD-AMSc2008,BD-AIF}. Also the method is widely applied to other integrable systems to study soliton solutions and asymptotic solutions of initial value problems and initial-boundary value problems \cite{RHP-1,RHP-2,Tian-PAMS,DS-JDE,XJ-2015,Geng-CMP}.

On the other hand, the CH equation \eqref{CH-1} has a remarkable feature with fast decay initial value, which has weak solution and orbital stability \cite{Liu-AM-2014,Qu-CMP}. The CH equation has smooth soliton solution with non-zero dispersion term $\kappa u_x$ under the condition of fast decay initial value
\begin{align}\label{dis-CH}
m_t+\left((u^2-u_x^2)m\right)_x +\kappa u_x=0, \ m:=u-u_{xx},
\end{align}
where $\kappa>0$ represents the effect of the linear dispersion, which is also called mCH equation \cite{Chen-AM,Matsuno-JPA}. Under the initial value of weighted Sobolev space supporting solitons, Fan's team \cite{YF-AM} proved the soliton resolution conjecture of the mCH equation with fast decaying initial value, and gave the asymptotic stability of the soliton solution. It is worth noting that for another version of the mCH equation without nonlinear dispersion term  $\kappa u_x$, which is derived from  some literatures, e.g., \cite{YQZ-DCDS} and references therein.

Specifically, in 2009,  Novikov classified integrable equations by utilizing perturbation symmetry methods \cite{Novikov-JPA}
\begin{align}
\left(1-\partial_x^2\right)u_t=F(u,u_x,u_{xx},\ldots),\ u=u(x,t),
\end{align}
where $F$ is a homogeneous differential polynomial over $\D{C}$ (also \cite{MN-JPA}). Among a series of equations presented by Novikov, equation (32) in \cite{Novikov-JPA} is the second cubic nonlinear equation with the form (also called the modified Camassa-Holm (mCH) equation)
\begin{equation}\label{mCH-1}
m_t+\left((u^2-u_x^2)m\right)_x = 0, \quad m:=u-u_{xx},
\end{equation}
which has equivalent form given by Fokas in \cite{Fokas-1995}
(also see \cite{FGLQ-2013,OR-1996}). Shiff regarded equation \eqref{mCH-1} as the duality of the modified  Korteweg-de Vries (mKdV) equation, and gave the Lax pair of \eqref{mCH-1} based on the Lax pair of mKdV equation in \cite{Schiff-1996}. An equivalent Lax pair for \eqref{mCH-1} was proposed by Qiao in \cite{Qiao-JMP}, thus the mCH
equation is also called the Fokas-Olver-Rosenau-Qiao (FORQ) equation in \cite{HFQ-2017}. The mCH equation  \eqref{mCH-1} has an interesting feature, that is, it has peakon solution \cite{GLOQ-2013}. The dynamic stability and the stability of  peakons were studied in \cite{Qu-CMP,Liu-2014}. Chang and Szmigielski studied multipeakon solutions by using inverse spectrum method \cite{Chang-JNMP}. Besides, there are many results about mCH equation, including local well posedness \cite{Chen-JFA,Chen-AM,FGLQ-2013,LL-AA}, wave breaking mechanism \cite{GLOQ-2013}, algebraic geometric quasi-periodic solutions \cite{HFQ-2017},  global weak solutions \cite{Gao-DCDS}, Hamiltonian structure and Liouville integrability of peak systems \cite{Anco-2018,CH-AAM,GLOQ-2013,OR-1996}. In terms of asymptotic solution analysis, the asymptotic analysis of the dispersionless CH equation \eqref{CH-1} in the zero background (where the spectrum is completely discrete) requires different tools \cite{Eckhardt-I,Eckhardt-II,Eckhardt-III}, because the inverse scattering method requires the spatial equation of the Lax pair related to the CH equation to have a continuous spectrum. For the similar situation of the mCH equation \eqref{mCH-1}, Boutet, Karpenko, and Shepelsky developed the RH method to study the Cauchy problem of the mCH equation under the non-zero background \cite{mCH-JMP}, and studied the long-time asymptotic behavior of the solution in different regions \cite{mCH-NZBCs} by using the classical nonlinear steepest descent method.

Compared with equations \eqref{dis-CH} and \eqref{mCH-1}, the significantly different feature is that our work considers the version of the mCH equation  \eqref{mCH-1} without dispersion term $\kappa u_x$, which leads to the fact that the mCH equation  \eqref{mCH-1} has only peakon solutions and no smooth soliton solution when it has an attenuation initial value. Therefore, in order to study whether the mCH equation \eqref{mCH-1} has similar characteristics, soliton resolution conjecture, to the mCH equation \eqref{dis-CH} with  dispersion term $\kappa u_x$ and fast decaying initial value, we consider the initial value of finite density, which is also one of the main motivations of this work, that is, to study the long-time asymptotic behavior of the solution of the mCH equation  \eqref{mCH-1} in different soliton regions.

Recently, an asymptotic analysis method, the so-called $\overline{\partial}$-steepest descent   method, was proposed by McLaughlin and Miller in \cite{McLaughlin-1,McLaughlin-2} to investigate the oscillation RH problem.  Compared with the classical nonlinear steepest descent method, the advantage lies in improving the error accuracy on the one hand, and avoiding the complex norm   estimation on the other hand. This method has been used to solve the asymptotic solution        analysis of the initial value problem of nonlinear Schr\"{o}dinger (NLS) equation without       soliton since it was proposed in \cite{DM-2008}. Subsequently, the soliton resolution conjecture of focusing NLS equation fast decay initial value was solved by $\overline{\partial}$-steepest  descent method from the perspective of integrable system \cite{AIHP}. The defocusing NLS        equation with finite density initial value also had similar results and \cite{Cu-CMP} gave the  asymptotic stability of the $N$-soliton solution. In addition, some progress has been made in   proving the conjecture of soliton resolution of other nonlinear integrable models, including    modified CH equation \cite{YF-AM}, short-pulse equation \cite{Fan-1}, Fokas-Lenells equation    \cite{CF-JDE},  Wadati-Konno-Ichikawa equation \cite{WKI-2021} and other equations.

A recent work by Boutet de monvel, Karpenko and Shepelsky in  \cite{mCH-NZBCs} investigated the
long-time asymptotic solutions of Cauchy problem for the mCH equation with nonzero zero boundary conditions
\begin{subequations}\label{mCH1-ic}
\begin{alignat}{4}
&m_t+\left((u^2-u_x^2)m\right)_x=0,&\quad&m:=u-u_{xx},&\quad&t>0,&\;&-\infty<x<+\infty,\\
&u(x,0)=u_0(x), && u_0(x)\to 1, &&&&x\to\pm\infty,\label{IC}
\end{alignat}
\end{subequations}
in the absence of the discrete spectrum based on the classical nonlinear steepest descent method, where $u_0(x)-1\in H^{4,1}(\D{R})$. Subsequently, Yang, Fan and Liu \cite{YFL-2022} studied the existence of the global solution of the mCH equation \eqref{mCH1-ic} based on the inverse scattering theory. The purpose of our work
 is to study
the long-time asymptotic behavior of the solution of the mCH equation \eqref{mCH1-ic} under the condition of finite density initial value $ m_0(x)-1\in H^{2,1}(\D{R})$ and the presence of discrete spectrum, based on the work \cite{mCH-NZBCs} and in combination with the
$\overline{\partial}$-steepest  descent method. Furthermore, we will prove that
the soliton resolution conjecture of the mCH equation is also valid under the initial value
condition of finite density.

As in \cite{mCH-JMP}, the mCH equation \eqref{mCH1-ic} with nonzero zero boundary conditions can be transformed into zero background under the new function $q:=q(x,t)$ by
\begin{align}\label{q-new}
u(x,t)=q(x-t,t)+1.
\end{align}
Then the mCH equation is
\begin{subequations}\label{mCH-2}
\begin{align}
&\breve{m}_t+(\breve{\omega}\breve{m})_x=0,\\
&\breve{m}:=q-q_{xx}+1,\\
&\breve{\omega}:=q^2-q_x^2+2q.
\end{align}
\end{subequations}
In what follows, we will study long-time asymptotic behavior of mCH equation \eqref{mCH-2} with zero background $q(x,0):=q_0(x)\to 0$ as $x\to\pm\infty$ and the initial data $\breve{m}_0-1\in H^{2,1}(\D{R})$.

The soliton resolution conjecture can generally be expressed as that the evolution of the general initial data of globally well posed nonlinear dispersion equations will be decomposed into a finite soliton plus a dispersive radiation component. For most nonlinear dispersion equations, it is an open and very active research field \cite{Cuccagna-2014,Martel-SIAM,Martel-MA}. The interesting point is that soliton resolution is better understood in integrable systems, and the solution provided by RH problem is more accurate than that obtained by pure analytic techniques \cite{Cuccagna-AA,Deift-1996,DP-2011}.
Although \cite{mCH-NZBCs} gives the asymptotic behavior of the solution of the mCH equation under non-zero boundary conditions from the classical nonlinear steepest descent method, we will further consider the asymptotic behavior of the solution of the mCH equation under non-zero boundary conditions in different regions from the perspective of $\overline{\partial}$-nonlinear steepest descent method, and verify the validity of the soliton resolution conjecture. \\


The basic framework of this work: In Section \ref{sec2}, we do spectral analysis of the mCH equation under the initial value condition of finite density, and establish the analytical, asymptotic and symmetric properties of the eigenfunctions and scattering coefficients. In addition,   the RH problem corresponding to the mCH are established with finite density initial value \eqref{IC}.  Section \ref{sec3} mainly considers the properties of the phase function $\theta(z)$, including the distribution of steady-state phase points and the symbol table that affects the attenuation of the oscillation terms $e^{\pm2it\theta(z)}$.

In Section \ref{sec4}, the decomposition of RH problem on regions $\xi\in(-\infty,-1/4)$ and $\xi\in(2,\infty)$ depends on the change of the symbol table in Fig.\ref{Fig1} of the phase function discussed in Section \ref{sec3}, corresponding to the absence of steady-state phase points.  Section \ref{subsec4.1} mainly removes the jump of $M^{(1)}(z)$ on the real axis $\mathbb{R}$, and analyzes the attenuation region of the imaginary part of the phase function to prepare for continuous extension. In Section \ref{subsec4.2},
the second transformation obtains a new matrix-valued function $M^{(2)}(z)$ corresponding to the mixed RH problem \ref{RHP-M2}, which includes soliton component and pure $\overline{\partial}$-problem.  Pure soliton component $M^{(rhp)}(z)$ satisfied RH problem \ref{RHP-Mrhp} is considered, corresponding to RH problem  \ref{RHP-M2} with $\overline{\partial}R^{(2)}(z)\equiv0$ discussed in Section \ref{subsec4.3}. The error function $M^{(err)}(z)$ satisfies RH problem \ref{RHP-Merr}  solved by small-norm RH problem presented in Section \ref{subsec4.4}. In the last Subsection \ref{subsec4.5}, we discuss the contribution of pure RH problem to the solution of mCH equation.

In Section \ref{sec5}, the decomposition of RH problem on regions $\xi\in(-1/4,0)$ and $\xi\in(0,2)$ depends on the change of the symbol table in Fig.\ref{Fig1} of the phase function discussed in Section \ref{sec3}, corresponding to eight steady-state phase points and four steady-state phase points, respectively. Similar to Section \ref{sec4}, we make a series of deformations to the original RH problem, and finally turn it into a solvable RH problem. The difference from Section \ref{sec4} is that the steady-state phase points appear, and the contribution of the steady-state phase points to the solution of the mCH equation needs to be handled carefully, which is reflected in Appendix \ref{AppA}. In Section \ref{sec6}, we summarize the results corresponding to the four cases, and give the different properties of the solution of the mCH equation under the initial value condition of finite density.

\section{Spectral analysis and eigenfunctions} \label{sec2}\quad
This section will briefly review the spectral analysis in \cite{mCH-JMP} reported by Boutet
 de Monvel, Karpenko and Shepelsky and make appropriate adjustments to facilitate the subsequent long-time asymptotic analysis.
\subsection{Lax pairs}
The Lax representation of the mCH equation \eqref{mCH-1} has the following form in \cite{Qiao-JMP}
\begin{align}\label{Lax-O}
\Phi_x=X_0\Phi,\qquad\qquad\Phi_x=T_0\Phi,
\end{align}
where $\Phi:=\Phi(x,t,\lambda)$, $X_0:=X_0(x,t,\lambda)$, and $T_0:=T_0(x,t,\lambda)$ with
\begin{align*}
X_0&=\frac{1}{2}\begin{pmatrix}
-1 & \lambda  m \\
-\lambda m & 1
\end{pmatrix},\qquad\quad  m:=u-u_{xx},\\
T_0&=\begin{pmatrix}\lambda^{-2}+\frac{u^2-u_x^2}{2} &
-\lambda^{-1}(u-u_x)-\frac{\lambda(u^2-u_x^2)m}{2}\\
\lambda^{-1}(u+u_x)+\frac{\lambda(u^2- u_x^2)m}{2} &
		 -\lambda^{-2}-\frac{u^2-u_x^2}{2}\end{pmatrix},
\end{align*}
from which one obtains a new Lax representation under the transformation \eqref{q-new}
\begin{subequations}\label{Lax2}
\begin{align}
\Phi_x&=X_1\Phi,\label{Lax-x}\\
\Phi_t&=T_1\Phi,\label{Lax-t}
\end{align}
\end{subequations}
where the coefficients $X_1:=X_1(x,t,\lambda)$ and $T_1:=T_1(x,t,\lambda)$ are defined by
\begin{subequations}
\begin{align}
X_1&=\frac{1}{2}\begin{pmatrix} -1 & \lambda \breve{m} \\
-\lambda \breve{m} & 1 \end{pmatrix}, \quad \breve{m}:=q-q_{xx}+1, \quad \breve{\omega}:=q^2-q_x^2+2q, \label{Lax-X1}\\
T_1&=\begin{pmatrix}\lambda^{-2}+\frac{\breve{\omega}}{2} &
-\lambda^{-1}(q-q_x+1)-\frac{\lambda\breve{\omega}\breve{m}}{2}\\
\lambda^{-1}(q+q_x+1)+\frac{\lambda\breve{\omega}\breve{m}}{2} & -\lambda^{-2}-\frac{\breve{\omega}}{2}\end{pmatrix}.\label{Lax-T1}
\end{align}
\end{subequations}
Then the mCH equation \eqref{mCH-2} can be directly calculated from the compatibility condition $T_{1,t}-X_{1,x}+[T_1,X_1]=0$, where the operator $[A,B]=AB-BA$. By observing the spectral problem \eqref{Lax-X1} of mCH equation, it is found that there are two spectral singularities, $\lambda=0$ and $\lambda=\infty$, respectively. In order to control the asymptotic behavior, we need to discuss them respectively.

As $\lambda\to\infty$, we introduce the transformation
\begin{align}\label{Lax-psi}
\Psi(x,t,\lambda)=D(\lambda)\Phi(x,t,\lambda),
\end{align}
where  $D(\lambda)$ is the eigenvector matrix with
\begin{align*}
D(\lambda)=\left(
             \begin{array}{cc}
               1 & -\frac{1}{1+\sqrt{1-\lambda^2}} \\
               -\frac{1}{1+\sqrt{1-\lambda^2}} & 1 \\
             \end{array}
           \right),
\end{align*}
to transform the Lax pairs \eqref{Lax2} into
\begin{subequations}\label{Lax3}
\begin{align}
&\Psi_x(x,t,\lambda)+P_x\Psi_x(x,t,\lambda)=X_2\Psi_x(x,t,\lambda),\\
&\Psi_t(x,t,\lambda)+P_t\Psi_x(x,t,\lambda)=T_2\Psi_x(x,t,\lambda),
\end{align}
\end{subequations}
where $X_2=X_2(x,t,\lambda)$ and $T_2=T_2(x,t,\lambda)$ are given by
\begin{align*}
&X_2=\frac{\lambda(\breve{m} -1)}{2\sqrt{1-\lambda^2}}
\begin{pmatrix}
0 & 1 \\
-1 & 0 \\
\end{pmatrix}
+\frac{\breve{m} -1}{2\sqrt{1-\lambda^2}}\sigma_3,\\
&T_2=
\frac{i}{2 \sqrt{1-\lambda^2}}\left(\lambda\breve{\omega}(\breve{m} - 1)
+\frac{2 q}{\lambda}\right)
\sigma_2
+\frac{ q_x}{\lambda}  \sigma_1
 -\frac{1}{\sqrt{1-\lambda^2}}\left(q +\frac{1}{2}(\breve{m}-1)\breve{\omega}\right) \sigma_3,\\
&P:=P(x,t,\lambda)=p(x,t,\lambda)\sigma_3, \text {with}\\
&p(x,t,\lambda)=-\frac{1}{2}\sqrt{1-\lambda^2}\left(\int_x^{+\infty} (\breve{ m}(\xi,t)-1)\,d\xi-\frac{1}{2}x+\frac{1}{\lambda^2}t\right).
\end{align*}
\subsection{The eigenfunctions}
The Lax pairs in form \eqref{Lax3} allows us to determine the special solution with good control behavior through the relevant integral equation. Indeed, introducing the transformation $\widetilde{\Psi}(z)=\widetilde{\Psi}(x,t,z)$
\begin{align}\label{psi-tilde}
\widetilde{\Psi}(z)=\Psi(z)e^{P},
\end{align}
yields that
\begin{subequations}\label{Lax4}
\begin{align}
&\widetilde{\Psi}_x(z)+[P_x,\widetilde{\Psi}(z)]=X_2\widetilde{\Psi}(z),\label{Lax4a}\\
&\widetilde{\Psi}_t(z)+[P_t,\widetilde{\Psi}(z)]=T_2\widetilde{\Psi}(z).
\end{align}
\end{subequations}
As $x\to\infty$, the systems \eqref{Lax4} are determined by the Volterra integral equations
\begin{equation}\label{E-T-3}
\begin{split}
\widetilde\Psi_+(x,t,\lambda)&=I-\int_x^{+\infty}
e^{\frac{\sqrt{1-\lambda^2}}{2}
\int_x^\xi \breve{m}(\eta,t)\,d\eta\,\sigma_3} X_2(\xi,t,\lambda)\widetilde\Psi_+(\xi,t,\lambda)e^{-\frac{\sqrt{1-\lambda^2}}{2}\int_x^\xi\breve{m}
(\eta,t)\,d\eta\,\sigma_3}\,d\xi,\\
\widetilde\Psi_{-}(x,t,\lambda)&=I+\int_{-\infty}^xe^{\frac{\sqrt{1-\lambda^2}}{2}\int_x^\xi\breve{m}
(\eta,t)\,d\eta\,\sigma_3} X_2(\xi,t,\lambda)\widetilde\Psi_{-}(\xi,t,\lambda)e^{-\frac{\sqrt{1-\lambda^2}}{2}\int_x^\xi\breve{m}
(\eta,t)\,d\eta\,\sigma_3}\,d\xi,
\end{split}
\end{equation}
where $\widetilde\Psi_\pm\equiv\widetilde\Psi_\pm(x,t,\lambda)$ are also called Jost functions. In order to eliminate the multivaluence of functions in exponential functions in \eqref{E-T-3}, the following transformation is further introduced
\begin{align*}
  \lambda^2=4k^2+1,
\end{align*}
then the exponential functions in \eqref{E-T-3} become  $e^{\pm ik\int_x^\xi\breve{m}
(\eta,t)\,d\eta\,\sigma_3}$. Also the coefficient matrices $X_2$ and $T_2$ have  the following new forms
\begin{align*}
X_2(x,t,k) &= \frac{\breve{m}-1}{2}\left(\frac{1}{2ik}
            \begin{pmatrix}
                1 & 0 \\ 0 & -1
            \end{pmatrix}
            +\frac{\lambda(k)}{2ik}
            \begin{pmatrix}
                0 & 1 \\ -1 & 0
            \end{pmatrix}
            \right),\\
T_2(x,t,k)&=
\frac{1}{4k}\left(\lambda(k)\breve{\omega}(\breve{m}-1)
+\frac{2 q}{\lambda(k)}\right)
\sigma_2
+\frac{ q_x}{\lambda(k)}\sigma_1
-\frac{1}{2ik}\left(q +\frac{1}{2}(\breve{m}-1)\breve{\omega}\right) \sigma_3.
\end{align*}

It is observed that the new spectral parameter $k$ on which the coefficient matrices $X_2$ and $T_2$ depend  are not rational because of the existence of $\lambda(k)$, which is also a great difference between the mCH equation and CH equation. In order to avoid discussing on the Riemann surface $\lambda^2=4k^2+1$, we introduce a new spectral parameter $z$, then both $\lambda$ and $k$ are single valued functions of $z$ with
\begin{equation}\label{la-k-z}
\lambda=-\frac{1}{2}\left(z+\frac{1}{z}\right), \qquad
k = \frac{1}{4}\left(z-\frac{1}{z}\right).
\end{equation}

Now combining the transformation \eqref{la-k-z} with the expressions of $p(x,t,\lambda)$ and $X_{2}$ obtains
\begin{align}
p(x,t,z)&=-\frac{ i(z^2-1)}{4z}\left(\int_x^{+\infty} (\breve{m}(\xi,t)-1)\,d\xi-x+\frac{8z^2}{(z^2+1)^2}t\right),\label{e1}\\
X_2(x,t,z)&=
\frac{ i(z^2+1)(\breve{m}-1)}{2(z^2-1)}
\begin{pmatrix}
0 & 1 \\
-1 & 0 \\
\end{pmatrix}
-\frac{ iz(\breve{m} - 1)}{z^2-1}
\begin{pmatrix}
1 & 0 \\
0 & -1 \\
\end{pmatrix},\label{e2}
\end{align}
from which we obtain the new form of Volterra integral equations
\begin{equation}\label{e3}
\widetilde\Psi_{\pm}(x,t,z)=I+\int_{\pm\infty}^xe^{\frac{ i(z^2-1)}{4z}\int_x^\xi\breve{m}(\eta,t)\,d\eta\,\sigma_3} X_2(\xi,t,z)\widetilde\Psi_{\pm}(\xi,t,z)e^{-\frac{ i(z^2-1)}{4z}\int_x^\xi\breve{m}(\eta,t)\,d\eta\,\sigma_3}\,d\xi.
\end{equation}

Applying the Neumann series expansions for the solutions of $\widetilde\Psi_{\pm}(x,t,z)$ in \eqref{e3}, we can investigate the analytic and asymptotic properties of eigenfunctions $\widetilde\Psi_{\pm}(x,t,z)$  by analogy with the case of the CH equation \cite{BD-2006,BD-2008}. As the results reported in \cite{mCH-JMP}, we obtain the following proposition.
\begin{prop}\label{pp-ana}
The eigenfunctions $\widetilde\Psi_{\pm}(x,t,z)$ have the following properties:
\begin{enumerate}[$\blacktriangleright$]
\item The eigenfunctions $\widetilde\Psi_{-,1}$ and $\widetilde\Psi_{+,2}$ are analytic in ${\mathbb{C}}^+=\{z\in\mathbb{C}\mid \operatorname{Im} z>0\}$.
\item The eigenfunctions $\widetilde\Psi_{+,1}$ and $\widetilde\Psi_{-,2}$ are analytic in ${\mathbb{C}}^-=\{z\in\mathbb{C}\mid \operatorname{Im} z <0\}$.
\item The eigenfunctions $\widetilde\Psi_{-,1}$, $\widetilde\Psi_{+,2}$, $\widetilde\Psi_{+,1}$, and $\widetilde\Psi_{-,2}$ are continuous up to the real line except at $z=\pm 1$, where
 $\widetilde\Psi_{\pm,j} (j=1,2)$ denote the $j$-th column of $\widetilde\Psi_{\pm}$.
\end{enumerate}
\end{prop}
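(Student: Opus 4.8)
The plan is to construct $\widetilde\Psi_\pm$ as the unique solutions of the Volterra equations \eqref{e3} by successive approximation and to read off the column-wise analyticity from the sign of the real part of the phase carried by the conjugating exponentials. Setting $\widetilde\Psi_\pm^{(0)}=I$ and
\begin{equation*}
\widetilde\Psi_\pm^{(n+1)}(x,t,z)=\int_{\pm\infty}^{x}e^{\Theta\sigma_3}\,X_2(\xi,t,z)\,\widetilde\Psi_\pm^{(n)}(\xi,t,z)\,e^{-\Theta\sigma_3}\,d\xi,\qquad \Theta:=\frac{i(z^2-1)}{4z}\int_x^{\xi}\breve{m}(\eta,t)\,d\eta,
\end{equation*}
the diagonal part of $X_2$ commutes with $e^{\Theta\sigma_3}$ while its off-diagonal part acquires the factors $e^{\pm2\Theta}$, so the whole scheme is governed by $\nu(z):=\tfrac{i(z^2-1)}{4z}$ through the single exponential $e^{2\Theta}$.

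A direct computation with $z=a+ib$ gives $\operatorname{Re}\nu(z)=-\frac{b(|z|^2+1)}{4|z|^2}$, hence $\operatorname{Re}\nu<0$ on $\mathbb{C}^+$, $\operatorname{Re}\nu>0$ on $\mathbb{C}^-$, and $\nu$ is purely imaginary for real $z\neq0$. Writing $\int_x^\xi\breve{m}=(\xi-x)+\int_x^\xi(\breve{m}-1)$ and using $\breve{m}-1\in H^{2,1}(\mathbb{R})\hookrightarrow L^1(\mathbb{R})$, the contribution of $\breve{m}-1$ to $e^{\pm2\Theta}$ is bounded by $e^{2|\operatorname{Re}\nu|\,\|\breve{m}-1\|_{L^1}}$, so the boundedness of $e^{\pm2\Theta}$ is dictated by the linear term, i.e. by the sign of $(\xi-x)\operatorname{Re}\nu$. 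For $\widetilde\Psi_+$ one integrates over $\xi>x$ and for $\widetilde\Psi_-$ over $\xi<x$; attaching to each matrix entry the exponential it carries and selecting the half-plane in which that exponential is bounded produces exactly the four assignments of the statement: the columns $\widetilde\Psi_{-,1},\widetilde\Psi_{+,2}$ are controlled in $\mathbb{C}^+$, and $\widetilde\Psi_{+,1},\widetilde\Psi_{-,2}$ in $\mathbb{C}^-$.

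With the relevant exponential of modulus at most one on the corresponding half-plane, and with $X_2=(\breve{m}-1)R(z)$ for a rational matrix $R(z)=O\big(|z^2-1|^{-1}\big)$ read off from \eqref{e2}, the Volterra structure yields the standard majorization $\|\widetilde\Psi_\pm^{(n)}(x,t,z)\|\le \tfrac{1}{n!}\big(c(z)\,\|\breve{m}-1\|_{L^1}\big)^n$ with $c(z)$ locally bounded away from $z=\pm1$. Hence the Neumann series converges absolutely and uniformly on compact subsets of the open half-plane; since each iterate is holomorphic in $z$ there (the integrand is holomorphic and the convergence uniform), the sum is holomorphic, which establishes the first two bullets.

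For continuity up to the real axis, note that on $\mathbb{R}\setminus\{0,\pm1\}$ the phase $\nu$ is purely imaginary, so $e^{\pm2\Theta}$ has modulus one and the same $L^1$ majorant applies, giving uniform convergence and hence continuity of the boundary values on compact subsets of $\mathbb{R}\setminus\{\pm1\}$. At $z=0$ the coefficient $X_2$ is regular (its diagonal part vanishes and its off-diagonal part stays finite), and although $\nu$ has a pole there the resulting rapid oscillation combined with $\breve{m}-1\in L^1$ forces the correction integrals to vanish by Riemann--Lebesgue, matching the limit obtained from within the half-plane; thus no singularity occurs at the origin. The only genuine singularities are $z=\pm1$, where the factor $(z^2-1)^{-1}$ in $X_2$ blows up and $c(z)$ degenerates, which is precisely the excluded set. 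I expect the main obstacle to be this last step: producing a single $z$-dependent Volterra estimate that is simultaneously summable and locally uniform up to the boundary away from $\pm1$, which is where the weighted decay encoded in $\breve{m}_0-1\in H^{2,1}$ must be exploited to tame the iterated integrals near the real line and near the branch points.
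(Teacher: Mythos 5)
Your proposal is correct and follows essentially the same route the paper takes: the paper establishes this proposition by Neumann series expansion of the Volterra equations \eqref{e3} (deferring details to the CH-equation literature and \cite{mCH-JMP}), and your sign computation $\operatorname{Re}\bigl(\tfrac{i(z^2-1)}{4z}\bigr)=-\tfrac{\operatorname{Im}z\,(|z|^2+1)}{4|z|^2}$ together with the $\tfrac{1}{n!}$ Volterra majorization is exactly the standard argument being invoked, and it yields the stated column-wise assignments. The only cosmetic imprecisions are that the rational factor in $X_2$ is $O(1)$ rather than $O(|z^2-1|^{-1})$ away from $z=\pm1$ (harmless, since only local boundedness off $\pm1$ is needed) and that plain $L^1$ decay of $\breve{m}-1$, not the full weight in $H^{2,1}$, already suffices for this proposition.
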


Additionally, for $z\neq\pm1$ and $z\neq0$, the symmetries of the coefficient matrices $X_2(z)$ and $T_2(z)$
\begin{align*}
X_2(z^{*})&=\sigma_1 X_2^{*}(z) \sigma_1,&~~&X_2(-z)=\sigma_2X_2(z) \sigma_2,&~~&X_2(z^{-1})=\sigma_1X_2(z)\sigma_1,\\
T_2(z^{*})&=\sigma_1 T_2^{*}(z) \sigma_1,&&T_2(-z)=\sigma_2T_2(z)\sigma_2, &&T_2(z^{-1})=\sigma_1T_2(z)\sigma_1,
\end{align*}
allow us to establish the symmetries of the eigenfunctions $\widetilde\Psi_{\pm}(z)$
\begin{equation}\label{sym-Psi}
\widetilde\Psi_\pm(z^{*})=\sigma_1 \widetilde\Psi_{\pm}^{*}(z) \sigma_1, \quad \widetilde\Psi_\pm(-z)=\sigma_2\widetilde\Psi_\pm(z)\sigma_2, \quad \widetilde\Psi_\pm(z^{-1})=\sigma_1\widetilde\Psi_\pm(z)\sigma_1,
\end{equation}
for $\pm \operatorname{Im} z\leq 0$ and  $z\neq\pm1$, or equivalently,
$$\widetilde\Psi_{\pm,1}(z)=\sigma_1\widetilde\Psi^{*}_{\pm,2}(z)=\sigma_3 \sigma_1\widetilde\Psi_{\pm,2}(-z)=\sigma_1\widetilde\Psi_{\pm,2}(z^{-1}).$$

Furthermore, the combination coefficient matrix $U_2(z)$ is traceless matrix and the relation of equation \eqref{e3} produces the following relation.
\begin{prop}\label{pp-asy}
Asymptotic properties and asymptotic behavior  of eigenfunctions $\widetilde\Psi_{\pm}$ at singular points $z=\pm1$:
\begin{enumerate}[$\blacktriangleright$]
\item
As $z\to\infty$ $($is equivalent to $z\to0$ by the symmetry \eqref{sym-Psi}$)$, the eigenfunctions $\left(\begin{smallmatrix}
\widetilde\Psi_-^{(1)} &
\widetilde\Psi_+^{(2)}\end{smallmatrix}\right)\to I$ for $\Im z\geq 0$.
\item
As $z\to\infty$ $($is equivalent to $z\to0$ by the symmetry \eqref{sym-Psi}$)$, the eigenfunctions $\left(\begin{smallmatrix}
\widetilde\Psi_+^{(1)} &
\widetilde\Psi_-^{(2)}\end{smallmatrix}\right)\to I$ for  $\Im z\leq 0$.
\item
As $z\to 1$, $\widetilde\Psi_\pm(x,t,z)=\frac{i}{2(z-1)}\alpha_\pm(x,t)\left(\begin{smallmatrix}-1&1\\ -1&1\end{smallmatrix}\right)+O(1)$ with $\alpha_\pm(x,t)\in \mathbb{R} $ $($understood column-wise, in the corresponding half-planes$)$.
\item
As $z\to -1$, $\widetilde\Psi_\pm(x,t,z)=-\frac{i}{2(z+1)}\alpha_\pm(x,t)\left(\begin{smallmatrix}1 & 1 \\ -1 & -1\end{smallmatrix}\right)+O(1)$ with the same $\alpha_\pm(x,t)$ as the previous ones $($by symmetry \eqref{sym-Psi}$)$.
\end{enumerate}
\end{prop}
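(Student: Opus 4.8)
The plan is to extract all four items from the single Volterra representation \eqref{e3}, treating the regular points $z=\infty,0$ and the singular points $z=\pm1$ separately, and then letting the symmetries \eqref{sym-Psi} carry out the remaining bookkeeping.

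For items (1)--(2) I would analyze \eqref{e3} as $z\to\infty$. Writing the conjugating factor as $e^{\phi(\xi)\sigma_3}$ with $\phi(\xi)=\frac{i(z^2-1)}{4z}\int_x^\xi\breve{m}\,d\eta$, the matrix \eqref{e2} splits into an off-diagonal part with coefficient $\frac{i(z^2+1)(\breve{m}-1)}{2(z^2-1)}\to\frac{i}{2}(\breve{m}-1)$ and a diagonal part with coefficient $-\frac{iz(\breve{m}-1)}{z^2-1}=O(z^{-1})$. Under the $\sigma_3$-conjugation the off-diagonal entries acquire the oscillatory factors $e^{\pm2\phi(\xi)}$, whose phase $\sim\frac{iz}{2}\int_x^\xi\breve{m}$ grows linearly in $z$; since $\breve{m}-1\in H^{2,1}(\R)\subset L^1(\R)$, the Riemann--Lebesgue lemma kills these integrals, while the diagonal contribution vanishes through its $O(z^{-1})$ prefactor. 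A standard Neumann-series estimate then yields $\widetilde\Psi_\pm\to I$, and restricting to the columns analytic in each half-plane (Proposition \ref{pp-ana}) gives (1)--(2). The stated equivalence with $z\to0$ is immediate from $\widetilde\Psi_\pm(z^{-1})=\sigma_1\widetilde\Psi_\pm(z)\sigma_1$: this relation pairs, e.g., $\widetilde\Psi_{-,1}(z^{-1})$ with $\widetilde\Psi_{-,2}(z)$ and fixes $I$ under $\sigma_1(\cdot)\sigma_1$.

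For the behavior at $z=1$ the decisive structural fact is that \eqref{e2} has there a simple pole with \emph{nilpotent} residue. Expanding $\frac{1}{z^2-1}=\frac{1}{(z-1)(z+1)}$ gives $X_2(\xi,z)=\frac{1}{z-1}R(\xi)+O(1)$ with $R(\xi)=\frac{i(\breve{m}(\xi)-1)}{2}N$, where $N:=\left(\begin{smallmatrix}-1&1\\-1&1\end{smallmatrix}\right)$ satisfies $N^2=0$. Inserting this into the Neumann series for \eqref{e3}, the crucial elementary identity $N e^{d\sigma_3}N=-2\sinh(d)\,N$ shows that every product of two residues is again proportional to $N$; moreover the phase differences $d=\phi(\xi_{j+1})-\phi(\xi_j)=O(z-1)$ occurring between consecutive factors contribute an extra power of $(z-1)$. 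Hence the naive $(z-1)^{-n}$ singularity of the $n$-th term collapses to a simple pole, each surviving residue is a scalar multiple of $N$ (the conjugating exponentials tend to $I$ as $z\to1$), and resumming produces $\widetilde\Psi_\pm(x,t,z)=\frac{i\alpha_\pm(x,t)}{2(z-1)}N+O(1)$, which is item (3). Turning this truncation heuristic into a rigorous statement---controlling the full Neumann series in a weighted norm, proving the residues sum to a finite scalar, and ruling out any $(z-1)^{-2}$ contribution---is the step I expect to be the main obstacle, and it is precisely where the $H^{2,1}(\R)$ regularity of $\breve{m}-1$ enters.

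The remaining assertions follow by symmetry. Reality of $\alpha_\pm$: at the real point $z=1$ the relation $\widetilde\Psi_\pm(z^{*})=\sigma_1\widetilde\Psi_\pm^{*}(z)\sigma_1$, together with $N^{*}=N$ and $\sigma_1 N\sigma_1=-N$, forces $\alpha_\pm=\alpha_\pm^{*}$. That a single scalar controls both columns follows from $\widetilde\Psi_{\pm,1}(z)=\sigma_1\widetilde\Psi_{\pm,2}(z^{-1})$: near $z=1$ one has $z^{-1}-1\sim-(z-1)$, so the two column residues become negatives of one another, exactly as encoded by the two columns of $N$. Finally, item (4) comes from $\widetilde\Psi_\pm(-z)=\sigma_2\widetilde\Psi_\pm(z)\sigma_2$: setting $z=-w$ and using $\sigma_2 N\sigma_2=\left(\begin{smallmatrix}1&1\\-1&-1\end{smallmatrix}\right)$ transfers the pole from $w=1$ to $z=-1$, while $w-1=-(z+1)$ supplies the leading minus sign, so that the \emph{same} $\alpha_\pm$ reappears.
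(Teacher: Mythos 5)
The paper does not actually prove Proposition \ref{pp-asy}: it invokes the Neumann series for \eqref{e3} ``by analogy with the case of the CH equation'' and cites \cite{mCH-JMP} for the result, so your proposal supplies strictly more detail than the text. Your route is the standard one used in those references, and the algebraic skeleton is sound: the residue of $X_2$ at $z=1$ is indeed $\tfrac{i}{2}(\breve m-1)N$ with $N=\left(\begin{smallmatrix}-1&1\\-1&1\end{smallmatrix}\right)$, $N^2=0$, $Ne^{d\sigma_3}N=-2\sinh(d)\,N$, $\sigma_1N\sigma_1=-N$ and $\sigma_2N\sigma_2=\left(\begin{smallmatrix}1&1\\-1&-1\end{smallmatrix}\right)$ all check, and the symmetry bookkeeping for the reality of $\alpha_\pm$, the single scalar for both columns, and the transfer to $z=-1$ is exactly how these statements are obtained in \cite{mCH-JMP}. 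Items (1)--(2) via Riemann--Lebesgue on $\mathbb{R}$ (plus exponential decay of the relevant triangular entries off the axis) and the $z\mapsto z^{-1}$ symmetry are likewise fine.

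The one place where your argument is not yet a proof is the step you yourself flag, and the specific heuristic you propose there is too optimistic as stated. The phase increments are $d_j=\tfrac{i(z^2-1)}{4z}\int_{\xi_j}^{\xi_{j+1}}\breve m\,d\eta$, and since $\breve m\to1$ the integral grows like $\xi_{j+1}-\xi_j$; hence $d_j=O\bigl(|z-1|\,|\xi_{j+1}-\xi_j|\bigr)$, not uniformly $O(z-1)$ over the unbounded integration simplex, and $|\sinh d_j|$ is merely bounded (not small) when the nodes are far apart. So the claimed gain of one power of $(z-1)$ per factor does not come for free; the collapse of the $n$-th Neumann term from $(z-1)^{-n}$ to $(z-1)^{-1}$ has to be extracted by pairing $\min(1,|d_j|)$ against the decay of $\breve m-1$ at the nodes, which is exactly the weighted estimate carried out in \cite{mCH-JMP} (and for CH in \cite{BD-2006,BD-2008}). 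Until that estimate is done, the existence of the limit $\alpha_\pm$ and the absence of a worse singularity are not established; everything else in your write-up is correct modulo this point.
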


\subsection{Scattering matrix}
As usual, there is a matrix $S(z)=\{(s_{ij})\}_{i,j=1}^2$ that only depends on the spectral parameter $z$ and associates the eigenfunctions $\widetilde{\Psi}_+$ and $\widetilde{\Psi}_-$ on the real line $\D{R}$
\begin{align}\label{scattering}
\widetilde{\Psi}_+(x,t,z)=\widetilde{\Psi}_-(x,t,z)S(z), ~~~~z\in\D{R},~z\neq\pm1,
\end{align}
which is also called scattering matrix. Simple calculations show that
\begin{align}\label{ele-sca}
s_{22}(z)=\det[\widetilde{\Psi}_{-,1},\widetilde{\Psi}_{+,2}],\qquad
s_{12}(z)=e^{2p}\det[\widetilde{\Psi}_{+,2},\widetilde{\Psi}_{-,2}].
\end{align}
It follows from the symmetries of eigenfunctions \eqref{sym-Psi} that
\begin{align}\label{S}
S(z)=\left(
       \begin{array}{cc}
         s_{11}(z) & s_{12}(z) \\
         s_{21}(z) & s_{22}(z) \\
       \end{array}
     \right)
\triangleq  \left(
        \begin{array}{cc}
          \overline{a(z)} & b(z) \\
          \overline{b(z)} & a(z) \\
        \end{array}
      \right).
\end{align}
The properties of entires of the scattering matrix $S(z)$ in \eqref{S} are derived by Propositions \ref{pp-ana}, \ref{pp-asy} and the expressions \eqref{ele-sca} as follow:
\begin{prop}\label{pp-S}
The scattering matrix $S(z)$ is of the following properties:
\begin{enumerate}[$\blacktriangleright$]
\item $a(z)$ is analytic in $\mathbb{C}^+$, $a(0)=1$ and $a(z)\to1$ as $z\to\infty$.
\item $a(z)=\overline{a(-\overline{z})}=a(-z^{-1})$ for $\Im z>0$.
\item $b(z)$ is continuous for $z\in\D{R}\setminus\{\pm1\}$, $b(0)=0$, and $b(z)\to0$ as $z\to\infty$.
\item As $z\to1$, there exists a constant $\gamma\in\D{R}$ such that
\begin{align}
a(z)=\frac{i\gamma}{2(z-1)}+O(1),~~b(z)=\frac{i\gamma}{2(z-1)}+O(1).
\end{align}
\item As $z\to-1$, there exists a constant $\gamma\in\D{R}$ with the same as previous one such that
\begin{align}
a(z)=\frac{i\gamma}{2(z+1)}+O(1),~~b(z)=-\frac{i\gamma}{2(z+1)}+O(1).
\end{align}
\item For $z\in\D{R}\setminus\{\pm1\}$, $|a(z)|^2-|b(z)|^2=1.$
\item The reflection coefficient $r(z)=\frac{b(z)}{\overline{a(z)}}$ such that
\begin{align}
r(z)=-\overline{r(-\overline{z})}(\text{or}\ -\overline{r(-z)})=r(-z^{-1}),\ \ z\in\D{R}.
\end{align}
\end{enumerate}
\end{prop}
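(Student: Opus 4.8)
The plan is to work entirely from the determinant representations \eqref{ele-sca}, which under the parametrization \eqref{S} read $a(z)=s_{22}(z)=\det[\widetilde\Psi_{-,1},\widetilde\Psi_{+,2}]$ and $b(z)=s_{12}(z)=e^{2p}\det[\widetilde\Psi_{+,2},\widetilde\Psi_{-,2}]$, and to transfer to $a$ and $b$ the analytic, asymptotic and symmetry information already established for the Jost functions in Propositions \ref{pp-ana} and \ref{pp-asy}. Since $\widetilde\Psi_{-,1}$ and $\widetilde\Psi_{+,2}$ are both analytic in $\mathbb{C}^+$, their Wronskian $a(z)$ is analytic there, and the continuity of $b(z)$ on $\R\setminus\{\pm1\}$ follows from the boundary continuity of $\widetilde\Psi_{+,2},\widetilde\Psi_{-,2}$. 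For the normalizations I would use the limit $\left(\begin{smallmatrix}\widetilde\Psi_-^{(1)}&\widetilde\Psi_+^{(2)}\end{smallmatrix}\right)\to I$ of Proposition \ref{pp-asy}: taking determinants gives $a\to1$ as $z\to\infty$, and by the $z\mapsto -z^{-1}$ symmetry (equivalently $z\to0$) also $a(0)=1$. For $b$ I would note that $z^2-1$ is real on $\R$, so $p$ is purely imaginary and $|e^{2p}|=1$ there; since both $\widetilde\Psi_{+,2}$ and $\widetilde\Psi_{-,2}$ tend to $(0,1)^{T}$ as $z\to\infty$ (and as $z\to0$), the two columns coalesce and the determinant — hence $b$ — vanishes, giving $b(0)=0$ and $b(z)\to0$.

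The symmetries are the cleanest part. Applying the scattering relation \eqref{scattering} together with the three involutions in \eqref{sym-Psi}, I would derive the induced symmetries of the scattering matrix. Composing the first two relations of \eqref{sym-Psi} gives $\widetilde\Psi_\pm(-\overline z)=\sigma_3\widetilde\Psi_\pm^{*}(z)\sigma_3$ and $\widetilde\Psi_\pm(-z^{-1})=\sigma_3\widetilde\Psi_\pm(z)\sigma_3$, and inserting these (and $\widetilde\Psi_\pm(-z)=\sigma_2\widetilde\Psi_\pm(z)\sigma_2$) into $\widetilde\Psi_+=\widetilde\Psi_-S$ and cancelling $\widetilde\Psi_-$ on the left yields
\begin{align*}
S(-\overline z)=\sigma_3 S^{*}(z)\sigma_3,\qquad S(-z^{-1})=\sigma_3 S(z)\sigma_3,\qquad S(-z)=\sigma_2 S(z)\sigma_2.
\end{align*}
Reading off the $(2,2)$ entry of the first two gives $a(z)=\overline{a(-\overline z)}=a(-z^{-1})$, which is exactly item (2), while the $(1,2)$ entries produce the companion relations for $b$; substituting the $a$- and $b$-symmetries into $r=b/\overline a$ then delivers the reflection-coefficient symmetries of item (7), with the signs tracked through the $\sigma_j$-conjugations.

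The unitarity relation $|a(z)|^2-|b(z)|^2=1$ of item (6) I would get from $\det S(z)=1$: because the coefficient matrix governing $\widetilde\Psi_\pm$ is traceless, Abel's identity makes $\det\widetilde\Psi_\pm$ constant in $x$, equal to its value $1$ at the appropriate spatial infinity, so $\det S=\det\widetilde\Psi_+/\det\widetilde\Psi_-=1$; expanding $\det S=s_{11}s_{22}-s_{12}s_{21}=|a|^2-|b|^2$ via \eqref{S} finishes it. The same fact $\det\widetilde\Psi_-=1$ is what reduces the Wronskians in \eqref{ele-sca} to the single entries $s_{22},s_{12}$ in the first place.

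The delicate step, and the one I expect to be the main obstacle, is the behaviour at the singular points $z=\pm1$ in items (4) and (5). Here I would feed the local expansions of Proposition \ref{pp-asy} into the column identity $\widetilde\Psi_{+,2}=b\,\widetilde\Psi_{-,1}+a\,\widetilde\Psi_{-,2}$. Near $z=1$ the two columns of $\widetilde\Psi_-$ have opposite leading singular parts $\mp\tfrac{i\alpha_-}{2(z-1)}(1,1)^{T}$ (first and second column), so if $a,b$ had simple poles with residues $A,B$ the combination would carry a $(z-1)^{-2}$ term proportional to $(A-B)(1,1)^{T}$; since $\widetilde\Psi_{+,2}$ has only a simple pole, this forces $A=B$, i.e.\ equal residues $\tfrac{i\gamma}{2}$ for $a$ and $b$. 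At $z=-1$ the two singular parts coincide (both $\propto(1,-1)^{T}$), and the same no-double-pole argument forces $A'+B'=0$, producing the opposite residues $\pm\tfrac{i\gamma}{2(z+1)}$ of item (5). That the common constant $\gamma$ is real would follow from the conjugation symmetry $a(z)=\overline{a(-\overline z)}$, and that the \emph{same} $\gamma$ appears at both points from the $z\mapsto-z^{-1}$ symmetry, which carries a neighbourhood of $z=1$ to one of $z=-1$ and matches the two residues. A small but necessary preliminary here is to check via \eqref{e1} that $p\to0$ as $z\to\pm1$, so that the $e^{2p}$ factor in $b$ tends to $1$ and does not disturb the residue bookkeeping.
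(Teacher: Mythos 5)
Your proposal is correct and follows the same route the paper (implicitly) takes: the paper offers no written proof of this proposition, asserting only that it follows from Propositions \ref{pp-ana}, \ref{pp-asy} and the Wronskian formulas \eqref{ele-sca}, and your argument supplies exactly those details --- analyticity and normalization of $a,b$ from the Jost-function properties, $\det S=1$ via tracelessness and Abel's identity for item (6), the induced symmetries $S(-\bar z)=\sigma_3S^{*}(z)\sigma_3$, $S(-z^{-1})=\sigma_3S(z)\sigma_3$ for items (2) and (7), and the no-double-pole argument in the column identity $\widetilde\Psi_{+,2}=b\,\widetilde\Psi_{-,1}+a\,\widetilde\Psi_{-,2}$ for items (4)--(5), which is the standard and intended mechanism.

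One caution on the single step you defer ("signs tracked through the $\sigma_j$-conjugations"): if you actually carry it out, $S(-z^{-1})=\sigma_3S(z)\sigma_3$ gives $a(-z^{-1})=a(z)$ but $b(-z^{-1})=-b(z)$, hence $r(-z^{-1})=-r(z)$ rather than the stated $r(-z^{-1})=r(z)$. Note that $b(-z^{-1})=-b(z)$ is the relation consistent with the opposite residue signs of $b$ at $z=1$ and $z=-1$ in items (4)--(5), so the discrepancy sits in item (7) itself; you should either locate a differing sign convention or record this as an apparent typo rather than leaving the sign-tracking implicit. Two smaller points of rigor: (i) before invoking "if $a,b$ had simple poles with residues $A,B$" you should observe that the Wronskian representations force at most simple poles at $z=\pm1$ (the singular parts of the two columns entering each determinant are parallel vectors, so the would-be double pole cancels); and (ii) realness of $\gamma$ and equality of the residues at $\pm1$ each require combining \emph{both} symmetries of item (2), not one apiece as your phrasing suggests --- the conjugation symmetry gives $\gamma=\overline{\gamma'}$ and the inversion symmetry gives $\gamma=\gamma'$, and only together do they yield $\gamma=\overline{\gamma}\in\D{R}$. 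None of these affects the overall correctness of the approach.
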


We cannot rule out that the scattering coefficient $s_{22}(z)$ has zeros on the real axis $\D{R}$, which are called spectral singularities. In this case, it will bring great difficulties to our work. Therefore, in order to avoid this situation, we make the following assumptions.
\begin{assum}
For the scattering coefficient $s_{22}(z)$ generated by the initial data $q_0(x)\in H^{2,1}(\D{R})$ such that
\begin{enumerate}[(1)]
\item The scattering coefficient $s_{22}(z)$ has no zeros on the real axis $\D{R}$.
\item The scattering coefficient $s_{22}(z)$ has finite simple zeros.
\end{enumerate}
\end{assum}

\subsection{Construction of Riemann-Hilbert problem}
Applying the analytical properties of the eigenfunctions $\widetilde{\Psi}_\pm$ and the scattering matrix $S(z)$, a piecewise analytical function $\widetilde{M}(z):=\widetilde{M}(x,t,z)$ can be introduced as usual (it depends on time and space at the same time. In fact, it should be consistent with classical inverse scattering transformation. A piecewise analytical function only related to space is introduced, and the time part is processed through time evolution. The two processing methods are essentially equivalent here).
\begin{align}\label{M-tilde}
\widetilde{M}(z):=\widetilde{M}(x,t,z)=\left\{\begin{aligned}
\left(\frac{\widetilde{\Psi}_{-,1}(x,t,z)}{a(z)}~~\widetilde{\Psi}_{+,2}(x,t,z)\right),&&z\in\D{C}^+,\\
\left(\widetilde{\Psi}_{+,1}(x,t,z)~~\frac{\widetilde{\Psi}_{-,2}(x,t,z)}{\overline{a(z)}}\right),&&z\in\D{C}^-.
\end{aligned}\right.
\end{align}
Then the piecewise analytical function $\widetilde{M}(z)$ solves the following RH problem.
\begin{RHP}
Find a $2\times2$ matrix-valued function
$\widetilde{M}(z)$ satisfying
\begin{enumerate}[(I)]
\item Analytic condition: $\widetilde{M}(z)$ is meromorphic in $\D{C}\setminus\D{R}$.
\item Symmetry condition: $\widetilde{M}(z)=\overline{\widetilde{M}(\bar{z}^{-1})}=
    \sigma_1\overline{\widetilde{M}(\overline{z})}\sigma_1=
\sigma_3\overline{\widetilde{M}(-\bar{z})}\sigma_3$.
\item Jump condition:
\begin{align}\label{Jp-tilde}
\widetilde{M}_+(z)=\widetilde{M}_-(z)\widetilde{V}(z),~~~~z\in\D{R}\setminus\{\pm1\},
\end{align}
where  the matrix $\widetilde{V}(z)$ is defined by
\begin{align}
\widetilde{V}(z)=\left(
                   \begin{array}{cc}
                     1-r(z)\overline{r}(z) & r(z)e^{-2p(z)} \\
                     -\overline{r}(z)e^{2p(z)} & 1 \\
                   \end{array}
                 \right),
\end{align}
with $p(z):=p(x,t,z)$ in \eqref{e1}.
\item Asymptotic conditions: $\widetilde{M}(z)\to I$ as $z\to\infty$ $(z\to0)$.
\item Singularity conditions:
\begin{align}
\widetilde{M}(z)=\left\{\begin{aligned}
&\frac{i\alpha_+}{2(z-1)}\left(
                          \begin{array}{cc}
                            -c & 1 \\
                            -c & 1 \\
                          \end{array}
                        \right)+O(1),&&z\to1, \Im z>0,\\
&-\frac{i\alpha_+}{2(z+1)}\left(
                          \begin{array}{cc}
                            c & 1 \\
                            -c &-1 \\
                          \end{array}
                        \right)+O(1),&&z\to-1, \Im z>0,
\end{aligned}\right.
\end{align}
where $\alpha_+\in\D{R}$ and
\begin{align*}
c=\left\{\begin{aligned}
&0,&&\gamma\neq0,\\
&\frac{a(1)+b(1)}{a(1)},&&\gamma=0.
\end{aligned}\right.
\end{align*}
\item Residue conditions: $\widetilde{M}(z)$ has simple poles in $\mathcal {Z}=\{\eta_n,\overline{\eta}_n\}_{n=1}^{8N_1+4N_2}$ with
\begin{align}\label{res-tilde}
&\mathop{\res}\limits_{\eta_n} \widetilde{M}(z)=\lim_{z\to\eta_n}\widetilde{M}(z)\left(
                                                  \begin{array}{cc}
                                                    0 & 0 \\
                                                    c_ne^{2p(\eta_n)} & 0 \\
                                                  \end{array}
                                                \right),\\
&\mathop{\res}\limits_{\overline{\eta}_n} \widetilde{M}(z)=\lim_{z\to\overline{\eta}_n}\widetilde{M}(z)\left(
                                                  \begin{array}{cc}
                                                    0 & \overline{c}_ne^{-2p(\overline{\eta}_n)} \\
                                                    0 & 0 \\
                                                  \end{array}\right),
\end{align}
where the discrete spectrums $\eta_n=z_n$, $\eta_{n+N_1}=-\overline{z}_{n}$, $\eta_{n+2N_1}=-z_n^{-1}$, and $\eta_{n+3N_1}=\bar{z}_n^{-1}$ for $n=1,2,\cdots,N_1$, the discrete spectrums on the unit circle are expressed as $\eta_{m+4N_1}=\kappa_m$ and $\eta_{m+4N_1+N_2}=-\overline{\kappa}_m$ for $m=1,2,\cdots,N_2$.
\end{enumerate}
\end{RHP}
\begin{proof}
See \cite{mCH-JMP}.
\end{proof}

In order to recover the potential function, we need to further consider the behavior at singular point $z=i$ corresponding to $\lambda=0$, which has been reported in \cite{mCH-JMP}. Here we will briefly review it. The Lax pair \eqref{Lax3} under the new spectrum parameter $z$ becomes
\begin{subequations}\label{Lax-z}
\begin{align}
  &\Psi_x+\frac{ i(z^2-1)}{4z}\sigma_3 \Psi = X_3 \Psi,\label{R-3}\\
  &\Psi_t-\frac{2 i(z^2-1)z}{(z^2+1)^2}\sigma_3 \Psi=T_3 \Psi,\label{R-4}
\end{align}
\end{subequations}
where
\begin{align*}
X_3&= \frac{(z^2+1)(\breve{m} - 1)}{2 (z^2-1)}\sigma_2
-\left(\frac{ iz(\breve{m}-1)}{z^2-1}+\frac{ i(z^2-1)\breve{m}}{4z}-\frac{ i(z^2-1)}{4z}\right)\sigma_3,\\
T_3&=\frac{ i(z^2-1)}{4z}(q^2-q_x^2+2q)\breve{m}\sigma_3 + T_2(x,t,z).
\end{align*}

Introducing the variable
\begin{align}
p_0(x,t,z)=\frac{i(z^2-1)}{4z}x-\frac{2i(z^2-1)}{(z^2+1)^2}t,
\end{align}
and $P_0=p_0\sigma_3$, $\widetilde{\Psi}_0=\Psi e^{P_0}$ transforms the Lax pair \eqref{Lax-z}  into
\begin{subequations}\label{Lax-P}
\begin{align}
&\widetilde{\Psi}_{0,x}+[P_{0,x},\widetilde{\Psi}_0]=X_3\widetilde{\Psi}_0,\\
&\widetilde{\Psi}_{0,t}+[P_{0,t},\widetilde{\Psi}_0]=T_3\widetilde{\Psi}_0,
\end{align}
\end{subequations}
from which one obtains the solution to Lax pair \eqref{Lax-P}
\begin{align}
\widetilde\Psi_{0,\pm}(x,t,z)=I+\int_{\pm\infty}^xe^{-\frac{ i(z^2-1)}{4z}(x-\xi)\sigma_3}  X_3(\xi,t,z)\widetilde\Psi_{0,\pm}(\xi,t,z)e^{\frac{ i(z^2-1)}{4z}(x-\xi)\sigma_3}\,d\xi,
\end{align}
which is also called Jost functions.
Similar to CH and SP equations, the Jost functions $\widetilde\Psi_{0,\pm}$ and $\widetilde\Psi$ are related by matrices $C_{\pm}(z)$  independent of $x$ and $t$:
\begin{equation}\label{Related}
\widetilde\Psi_{\pm}(x,t,z)=\widetilde\Psi_{0\pm}(x,t,z)e^{-p_0(x,t,z) \sigma_3}C_\pm(z)e^{p(x,t,z)\sigma_3},
\end{equation}
a calculation reveals that $C_\pm(z)=e^{(p_0(\pm\infty,t,z)-p(\pm\infty,t,z))\sigma_3}$ and $p(x,t,z)-p_0(x,t,z)=-\frac{i(z^2-1)}{4z}\int_{x}^{-\infty}(\breve{m}(\xi,t)-1)\,d\xi$, thus
\begin{align*}
C_+(z)\equiv I,~~~~C_-(z)=e^{\frac{i(z^2-1)}{4z}\int_{-\infty}^{\infty}(\breve{m}(\xi,t)-1)\,d\xi\sigma_3}.
\end{align*}

Additionally, observing that $\widetilde\Psi_{0,\pm}(x,t,i)\equiv I$ from the fact $X_3(x,t,i)\equiv0$, then we have
\begin{align}
\widetilde\Psi_{+}(x,t,i)=e^{\frac{1}{2}\int_{x}^{+\infty}(\breve{m}(\xi,t)-1)\,d\xi\,\sigma_3}, \quad \widetilde\Psi_{-}(x,t,i)=e^{-\frac{1}{2}\int^{x}_{-\infty}(\breve{m}(\xi,t)-1)\,d\xi\,\sigma_3},
\end{align}
which leads to
\begin{align}
a(i)=e^{-\frac{1}{2}\int_{-\infty}^{+\infty}(\breve{m}(\xi,t)-1)\,d\xi}
\end{align}
and
\begin{align}
 M(x,t,i)=\left(
             \begin{array}{cc}
               e^{\frac{1}{2}\int_x^{+\infty}(\tilde m(\xi,t)-1)\,d\xi} & 0 \\
               0 & e^{-\frac{1}{2}\int_x^{+\infty}(\tilde m(\xi,t)-1)\,d\xi} \\
             \end{array}
           \right),
\end{align}
from expressions \eqref{ele-sca} and \eqref{M-tilde}.

By analogy with CH equation \cite{BD-2006,BD-2008}, we transform the ($x,t$)-plane into ($y,t$)-plane by
\begin{align}
y(x,t)=x-\int_{x}^{-\infty}(\breve{m}(\xi,t)-1)d\xi,
\end{align}
and introduce a new matrix-valued function $M(z):=M(y(x,t),t,z)$ satisfying
\begin{align}\label{M-O}
\widetilde{M}(x,t,z)=M(y(x,t),t,z)
\end{align}
which solves the following RH problem.
\begin{RHP}\label{RHP-M}
Find a matrix $M(y,t,z)$ such that the following conditions:
\begin{enumerate}[$(a)$]
\item $M(z)$ is analytic in $\D{C}\setminus\{\D{R}\cup\mathcal {Z}\}$.
\item $M(z)=\overline{M(\bar{z}^{-1})}=
    \sigma_1\overline{M(\overline{z})}\sigma_1=
\sigma_3\overline{M(-\bar{z})}\sigma_3$.
\item Jump condition: The non-tangential limits $M_{\pm}(z)=\lim\limits_{\varepsilon\to0^+}M(z+\pm i\varepsilon)$ for $z\in\D{R}\setminus\{\pm1\}$
\begin{align}\label{JM}
M_{+}(z)=M_{-}(z)V(z),~~~~~z\in\D{R}\setminus\{\pm1\},
\end{align}
where the jump matrix $V(z)$ is defined by
\begin{align}\label{Jump-M}
V(z)=\left(
       \begin{array}{cc}
         1-|r(z)|^2 & r(z)e^{-2it\theta(z)} \\
         -\overline{r}(z)e^{2it\theta(z)} & 1 \\
       \end{array}
     \right)
\end{align}
with the phase function $\theta(z)=-\frac{i(z^2-1)}{4z}\left(-y+\frac{8t}{(z^2+1)^2}\right)$.
\item Asymptotic behavior:
\begin{align}\label{Asy-M}
&M(z)=I+O(z^{-1}), z\to\infty~ (\text{and}~ z\to0 ~\text{by symmetry}),\\
&M(z)=\left(
        \begin{array}{cc}
          a_1(y,t) & 0 \\
          0 & a_1^{-1}(y,t) \\
        \end{array}
      \right)+\left(
        \begin{array}{cc}
          0 & a_2(y,t) \\
          a_3(y,t) & 0 \\
        \end{array}
      \right)(z-i)+O((z-i)^2),
\end{align}
where $a_j(y,t)$, $j=1,2,3$ are real-valued functions from the expansion of $M(z)$ at $z=i$.
\item The singularity:
\begin{align}
&M(z)=\frac{i\widetilde{\alpha}_+}{2(z-1)}\left(
                                           \begin{array}{cc}
                                             -c & 1 \\
                                             -c & 1 \\
                                           \end{array}
                                         \right)+O(1),&&z\to1,~ \Im z>0,\\
&M(z)=-\frac{i\widetilde{\alpha}_+}{2(z+1)}\left(
                                           \begin{array}{cc}
                                             -c & 1 \\
                                             c & -1 \\
                                           \end{array}
                                         \right)+O(1),&&z\to-1, ~\Im z<0.
\end{align}
\item Residue conditions: The matrix $M(z)$ has simple poles at $\eta_n$ and $\overline{\eta}_n$ with
\begin{subequations}\label{res-M}
\begin{align}
&\mathop{\res}\limits_{\eta_n} M(z)=\lim_{z\to\eta_n} M (z)\left(
                                                  \begin{array}{cc}
                                                    0 & 0 \\
                                                    c_ne^{2it\theta(\eta_n)} & 0 \\
                                                  \end{array}
                                                \right), \label{res1} \\
&\mathop{\res}\limits_{\overline{\eta}_n} M (z)=\lim_{z\to\overline{\eta}_n} M (z)\left(
                                                  \begin{array}{cc}
                                                    0 & \overline{c}_ne^{-2it\overline(\bar{\eta}_n)} \\
                                                    0 & 0 \\
                                                  \end{array}\right).
\end{align}
\end{subequations}
\item The solution to the mCH equation \eqref{mCH-1} is expressed by
\begin{align}\label{q-sol}
\begin{split}
q(x,t)=q(y(x,t),t)=&-\partial_zM_{12}(z)|_{z=i}M_{11}(i)-
\partial_zM_{21}(z)|_{z=i}M_{11}(i)^{-1},\\
&x(y,t)=y+\ln M_{11}(i).
\end{split}
\end{align}
\end{enumerate}
\end{RHP}

It follows from the results \cite{bij-1998,YF-AM,YFL-2022} that the following map is Lipschitz:
\begin{align}
\mathcal {D}: H^{2,1}(\D{R})\ni q_0(x)\mapsto r(z)\in H^{1,1}(\D{R}),
\end{align}
where some spaces are defined as follows:
\begin{enumerate}[(I)]
\item The Sobolev space: \begin{align}\label{S1}
W^{j,k}(\mathbb{R})=\left\{f(x)\in L^k(\mathbb{R}):\partial_x^mf(x)\in L^k(\mathbb{R}),
~m=1,\ldots,j~\right\}.
\end{align}
\item The weighted Sobolev space
\begin{align}\label{WS}
H^{j,k}(\mathbb{R})=\left\{f(x)\in L^2(\mathbb{R}):\partial_x^jf(x)\in L^{2}(\mathbb{R}),
 |x|^kf(x)\in L^{2}(\mathbb{R})\right\}.
\end{align}
\item The weighted $L^{p}(\mathbb{R})$ space
\begin{align}\label{WL}
L^{p,s}(\mathbb{R})=\left\{f(x)\in L^p(\mathbb{R}):|x|^sf(x)\in L^p(\mathbb{R})\right\},
\end{align}
\item  Define $A\lesssim B$ for any quantities $A$ and $B$, if there exists a constant $c>0$
 such that $|A|\leq c|B|$.
Additionally, the norm of $f(x)\in L^{p}(\mathbb{R})$ $(L^{p,s}(\mathbb{R}))$
are abbreviated to $||f||_p$ $(||f||_{p,s})$, respectively.
\end{enumerate}

\section{Distribution of phase points and symbol table}\label{sec3}
The jump matrix $V(z)$ in \eqref{Jump-M} is of the triangular decomposition
\begin{align}\label{Jump-de}
V(z)=\left\{\begin{aligned}
&\left(
  \begin{array}{cc}
    1 & r(z)e^{-2it\theta(z)} \\
    0 & 1 \\
  \end{array}
\right)\left(
         \begin{array}{cc}
           1 & 0 \\
           -\overline{r}(z)e^{-2it\theta(z)} & 1 \\
         \end{array}
       \right), && z\in\Sigma_a(\xi),   \\
&\left(
   \begin{array}{cc}
     1 & 0 \\
     -\frac{\overline{r}(z)}{1-|r(z)|^2}e^{2it\theta(z)} & 1 \\
   \end{array}
 \right)(1-|r(z)|^2)^{\sigma_3}\left(
                 \begin{array}{cc}
                   1 & \frac{r(z)}{1-|r(z)|^2}e^{-2it\theta(z)} \\
                   0 & 1 \\
                 \end{array}
               \right),&& z\in\Sigma_b(\xi).
               \end{aligned}\right.
\end{align}

Following the basic idea of the nonlinear steepest descent method \cite{DZ-AM1993,Deift-1994}, the triangular decomposition \eqref{Jump-de} of the jump matrix is used so that the (oscillation) jump matrix of the modified RH problem on steepest descent line can be exponentially attenuated to the identity matrix in appropriate regions as $t\to+\infty$. Whereas,
the analysis of the oscillation RH problem is mainly based on the growth and decay properties of the exponential functions $e^{\pm2it\theta(z)}$ appearing in the jump matrix $V(z)$ \eqref{Jump-M} and residue conditions \eqref{res-M}, and whether there is a phase point also affects the leading term of the solution. As  indicated in \cite{mCH-NZBCs}, the structure of $\Sigma_a(\xi)$ and $\Sigma_b(\xi)$ is affected by the ranges of values of $\xi$. That is, the four value ranges of $\xi$ can be distinguished, $\Sigma_a(\xi)$ and $\Sigma_b(\xi)$ have different structures, which means four different types of large time asymptotics: ($i$) $\xi>2$,
($ii$) $0<\xi<2$,
($iii$) $-\frac{1}{4}<\xi<0$,
($iv$) $\xi<-\frac{1}{4}$.

Thus, we now analyze the exponential function in the jump matrix. Observing that the phase function $\theta(z)$ defined by \eqref{Jump-M}
\begin{align}\label{phe}
\theta(z)=-\frac{i(z^2-1)}{4z}\left(-y+\frac{8t}{(z^2+1)^2}\right).
\end{align}
Let $\xi=\frac{y}{t}$, then the imaginary part of the phase function can be shown in Fig.\ref{Fig1}.
The stationary points of phase function $\theta(z)$ are determined by $\frac{d\theta(z)}{dz}=0$, where
\begin{align}\label{theta-dao}
\frac{d\theta(z)}{dz}=\frac{\xi z^8+4\xi z^6+8z^6+6\xi z^4-48z^4+4\xi z^2+8z^2+\xi}{z^2(z^2+1)^3}.
\end{align}
\begin{prop}\label{pp-point}
The  distribution of stationary points for the phase function $\theta$ is based on the value range of $\xi=\frac{y}{t}$:
\begin{enumerate}[$($i$)$]
\item As $\xi>2$ and $\xi<-1/4$, there is no stationary points on the real axis $\D{R}$.
\item As $0<\xi<2$, there are four stationary points on the real axis  $\D{R}$, which are respectively recorded as $\xi_1$, $\xi_2$, $\xi_3$, and $\xi_4$, and satisfy $\xi_4<\xi_3<\xi_2<\xi_1$ as well as $\xi_1>1$, $\xi_4\xi_3=1$,
$\xi_2\xi_1=1$ and $\xi_1\xi_3=-1$ shown in Fig. \ref{Fig1}.
\item As $-1/4<\xi<0$, there are eight stationary points on the real axis $\D{R}$, which are respectively recorded as $\xi_1$, $\xi_2$, $\xi_3$, $\ldots$, $\xi_8$ satisfying
$$\xi_8<\ldots<\xi_2<\xi_1(>1), \quad \xi_1\xi_4=\xi_5\xi_8=-\xi_2\xi_6=-\xi_3\xi_7=1,\quad \xi_j=-\xi_{9-j},~j=1,2,3,4.$$
\end{enumerate}
This results lead to the structure of $\Sigma_b(\xi)$ expressed as
\begin{align}
\Sigma_b(\xi)=\left\{\begin{aligned}
&\emptyset, &&\xi>2,\\
&(\xi_4,\xi_3)\cup(\xi_2,\xi_1), &&0<\xi<2,\\
&(-\infty,\xi_8)\cup(\xi_7,\xi_6)\cup(\xi_5,\xi_4)\cup(\xi_3,\xi_2)\cup(\xi_1,+\infty),&&-\frac{1}{4}<\xi<0,\\
&(-\infty,+\infty),&&\xi<-\frac{1}{4},
\end{aligned}\right.
\end{align}
and $\Sigma_a(\xi)=\D{R}\setminus\Sigma_b(\xi).$
\end{prop}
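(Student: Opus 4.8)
The plan is to collapse the degree-eight stationary condition in \eqref{theta-dao} to a single quadratic by exploiting the manifest symmetries of its numerator. First I would note that the numerator $P(z)=\xi z^8+(4\xi+8)z^6+(6\xi-48)z^4+(4\xi+8)z^2+\xi$ is even in $z$, so the substitution $w=z^2$ produces the quartic $\xi w^4+(4\xi+8)w^3+(6\xi-48)w^2+(4\xi+8)w+\xi$, whose coefficient string $(\xi,4\xi+8,6\xi-48,4\xi+8,\xi)$ is palindromic. Dividing by $w^2$ and setting $v=w+w^{-1}=z^2+z^{-2}$, and using $w^2+w^{-2}=v^2-2$, reduces everything to
\begin{equation*}
g(v):=\xi v^2+(4\xi+8)v+(4\xi-48)=0 .
\end{equation*}
Its discriminant is $\Delta=64(4\xi+1)$, which already isolates the threshold $\xi=-1/4$: for $\xi<-1/4$ there is no real $v$, hence no real stationary point, proving half of (i).

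The key reduction for the remaining ranges is the elementary fact that a real stationary point $z\in\R$ forces $w=z^2>0$, so $v=w+w^{-1}\ge 2$. Counting real stationary points therefore amounts to counting roots of $g$ in $[2,\infty)$, and for each such root solving $w^2-vw+1=0$ (two reciprocal positive roots $w$) followed by $z=\pm\sqrt{w}$ contributes exactly four real stationary points per admissible root. To place the roots of $g$ relative to $v=2$ I would use $g(2)=16(\xi-2)$ together with the vertex abscissa $v^\ast=-2-4/\xi$. For $\xi>2$ the parabola opens upward with $g(2)>0$ and $v^\ast<2$, forcing both roots below $2$ and giving no real stationary points, which completes (i). For $0<\xi<2$ one has $g(2)<0$, so exactly one root $v_+>2$ survives, yielding the four points of (ii). For $-1/4<\xi<0$ the parabola opens downward with $g(2)<0$ and $v^\ast=-2+4/|\xi|>2$, which pins both roots above $2$ and produces the eight points of (iii).

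The listed algebraic relations among the $\xi_j$ then follow directly from this structure rather than from any further computation. Each admissible $v$ supplies a reciprocal pair $w_iw_j=1$, so the associated positive square roots multiply to $1$, while $z\mapsto -z$ contributes the sign-reversed partners; reading off the orderings $\xi_4<\cdots<\xi_1$ and $\xi_8<\cdots<\xi_1$ gives $\xi_2\xi_1=\xi_4\xi_3=1$, $\xi_1\xi_3=-1$ in case (ii) and $\xi_1\xi_4=\xi_5\xi_8=-\xi_2\xi_6=-\xi_3\xi_7=1$, $\xi_j=-\xi_{9-j}$ in case (iii). The stated form of $\Sigma_b(\xi)$ is obtained last, by reading the sign of $\Im\theta$ across the partition of $\R$ induced by these points, as recorded in Fig.~\ref{Fig1}.

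I expect the only delicate part to be the root-location analysis against $v=2$: in particular, verifying simultaneously that $\Delta>0$, $g(2)<0$, and $v^\ast>2$ force \emph{both} roots above $2$ on $(-1/4,0)$, where the downward opening of the parabola makes the sign bookkeeping easy to get backwards. The boundary values $\xi=0$ (where $g$ degenerates to a linear equation with the escaping roots carrying two point-pairs off to $0$ and $\infty$) and $\xi=2$ (where a root of $g$ meets $v=2$, i.e.\ $z=\pm1$, an excluded singular point) must also be checked to sit correctly outside the open intervals of the statement.
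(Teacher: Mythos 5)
Your proposal is correct, and it reaches the conclusion by a genuinely more self-contained route than the paper. The paper's proof simply solves $\tfrac{d\theta}{dz}=0$ for $\xi$ as an explicit rational function of the stationary point (written in the auxiliary variable $k=\tfrac14(z-z^{-1})$, which is why its displayed formula $\xi=\tfrac{2-8z^2}{(1+4z^2)^2}$ looks lower-degree than the octic numerator in \eqref{theta-dao}), and then defers the entire case analysis to \cite{BD-2008,BD-AMSc2008,mCH-NZBCs}. You instead exploit the evenness and palindromy of the numerator to reduce to the quadratic $g(v)=\xi v^2+(4\xi+8)v+(4\xi-48)$ in $v=z^2+z^{-2}$, and then count roots of $g$ in $(2,\infty)$; I have checked that $g$, its discriminant $64(4\xi+1)$, the value $g(2)=16(\xi-2)$, and the vertex $v^\ast=-2-4/\xi$ are all as you state, and that your three root-location arguments (both roots below $2$ for $\xi>2$; exactly one above $2$ for $0<\xi<2$; both above $2$ for $-1/4<\xi<0$) are sound, including the slightly delicate downward-parabola case. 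The reciprocal-pair structure $w_1w_2=1$ of $w^2-vw+1=0$ together with the $z\mapsto-z$ symmetry does yield all the stated products $\xi_i\xi_j=\pm1$ and the antisymmetry $\xi_j=-\xi_{9-j}$ once the points are ordered, and your boundary checks at $\xi=0,2,-1/4$ and at $z=\pm1$ are exactly the right things to verify. What your approach buys is a complete, elementary proof in which the two thresholds $\xi=-1/4$ and $\xi=2$ emerge transparently from the discriminant and from $g(2)$; what the paper's formulation buys is the explicit inverse map $z\mapsto\xi$ whose monotonicity analysis the cited references use to obtain the same counts. The one part you (like the paper) leave to Fig.~\ref{Fig1} is the identification of $\Sigma_b(\xi)$ as the set where $\Im\theta$ forces the middle (diagonal) factorization; that is consistent with the level of detail of the paper's own proof, but strictly speaking it is a separate sign analysis of $\Im\theta$ near $\mathbb{R}$ rather than a corollary of the root count.
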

\begin{proof}
It follows from $\frac{d\theta(z)}{dz}=0$ \eqref{theta-dao} that
\begin{align}
\xi z^8+4\xi z^6+8z^6+6\xi z^4-48z^4+4\xi z^2+8z^2+\xi=0,
\end{align}
from which one obtains
\begin{align}
\xi=\frac{2-8z^2}{(1+4z^2)^2}.
\end{align}
As in \cite{BD-2008,BD-AMSc2008,mCH-NZBCs}, the distribution of corresponding steady-state phase points in four cases can be calculated, respectively.
\end{proof}
\begin{rem}
As $\xi\in(-\infty,-1/4)\cup(2,+\infty)$, there is no steady-state phase point. Whereas, as $\xi\in(-1/4,0)\cup(0,2)$, there exists eight and four phase points, respectively. Each phase points have different properties. Specifically,  as $\xi\in(0,2)$, $\theta''(\xi_j)>0$ for $j=1,3$ and $\theta''(\xi_j)<0$ for $j=2,4$. As $\xi\in(-1/4,0)$, $\theta''(\xi_j)>0$ for $j=2,4,6,8$ and $\theta''(\xi_j)<0$ for $j=1,3,5,7$, which shown in Fig.\ref{fa}.
For convenience, we introduce the following notation
\begin{align}\label{notation-eta}
\eta_j=\text{sgn}(\theta''(\xi_j))=\left\{\begin{aligned}
&1, & j=1,3\ \text{as}\ \xi\in(0,2)\ \text{and}\ j=2,4,6,8\ \text{as}\ \xi\in(-1/4,0),\\
&-1,& j=2,4\ \text{as}\ \xi\in(0,2)\ \text{and}\ j=1,3,5,7\ \text{as}\ \xi\in(-1/4,0),
\end{aligned}\right.
\end{align}

\begin{figure}[H]
	\centering
\subfigure[]{
\includegraphics[width=0.38\linewidth]{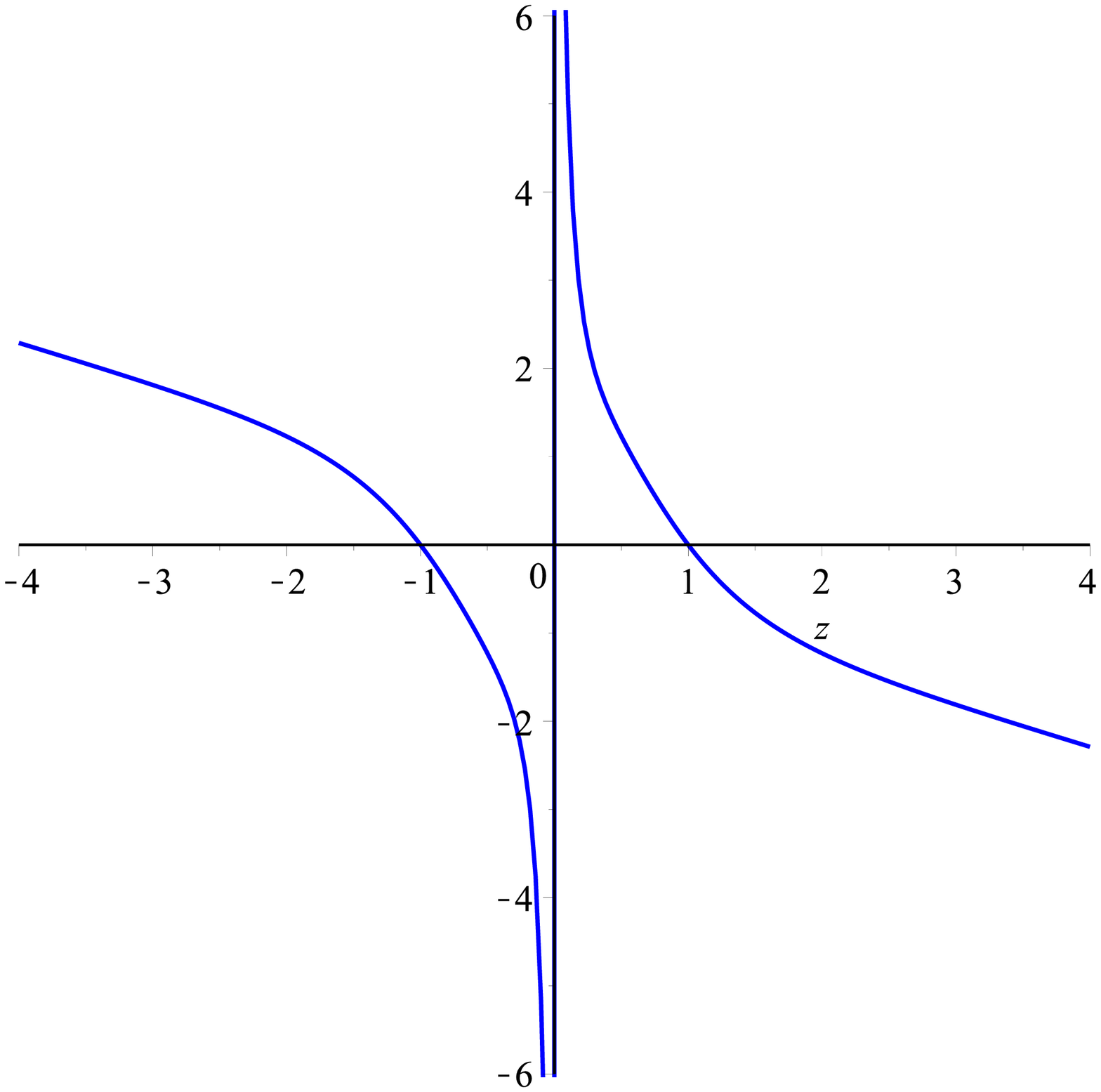}\hspace{0.6cm}\label{F1}}
\subfigure[]{
\includegraphics[width=0.38\linewidth]{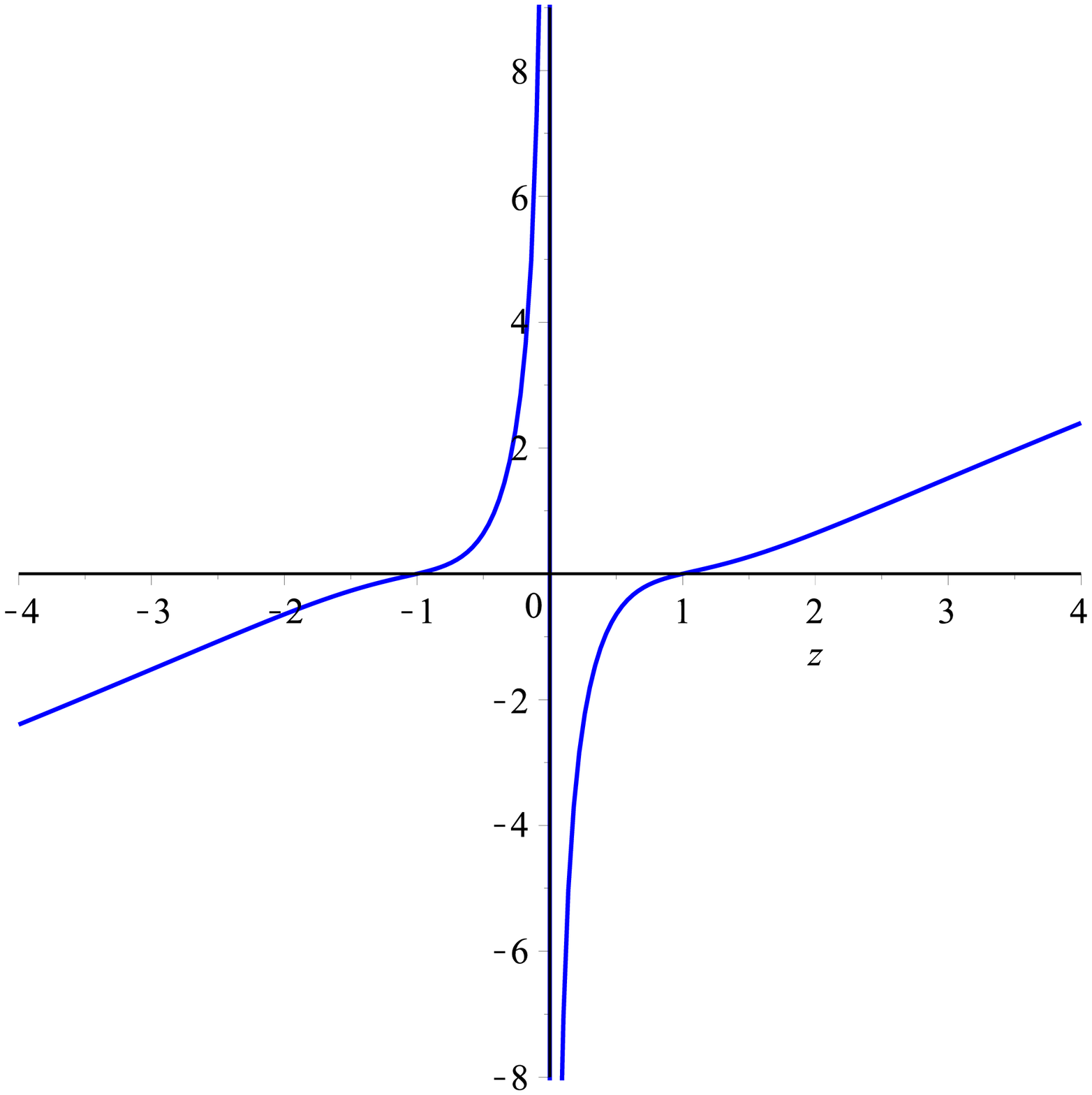}\label{F2}}
\end{figure}

\begin{figure}[H]
	\centering
\subfigure[]{
\includegraphics[width=0.38\linewidth]{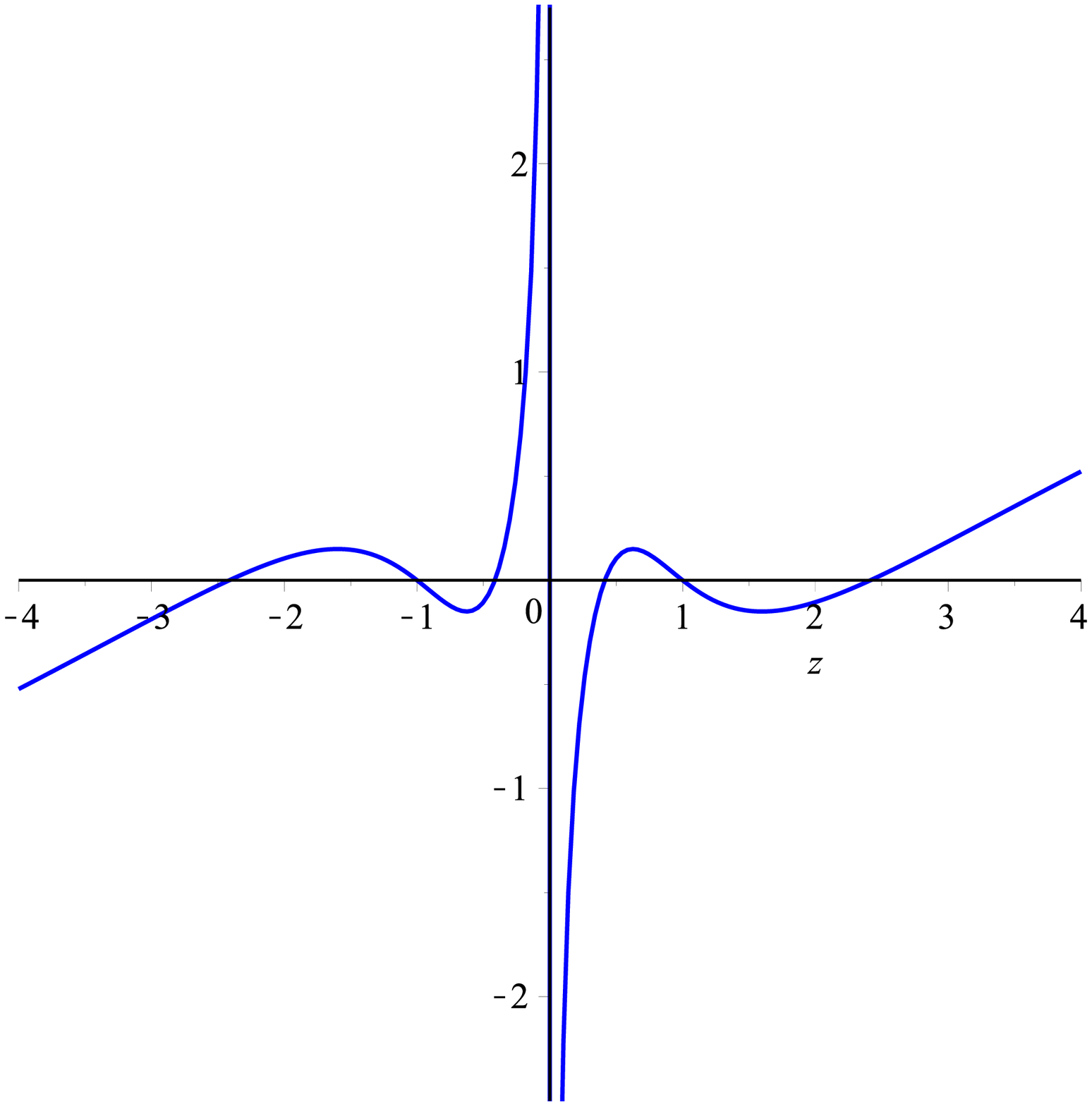}\hspace{0.6cm}\label{F3}}
\subfigure[]{
\includegraphics[width=0.38\linewidth]{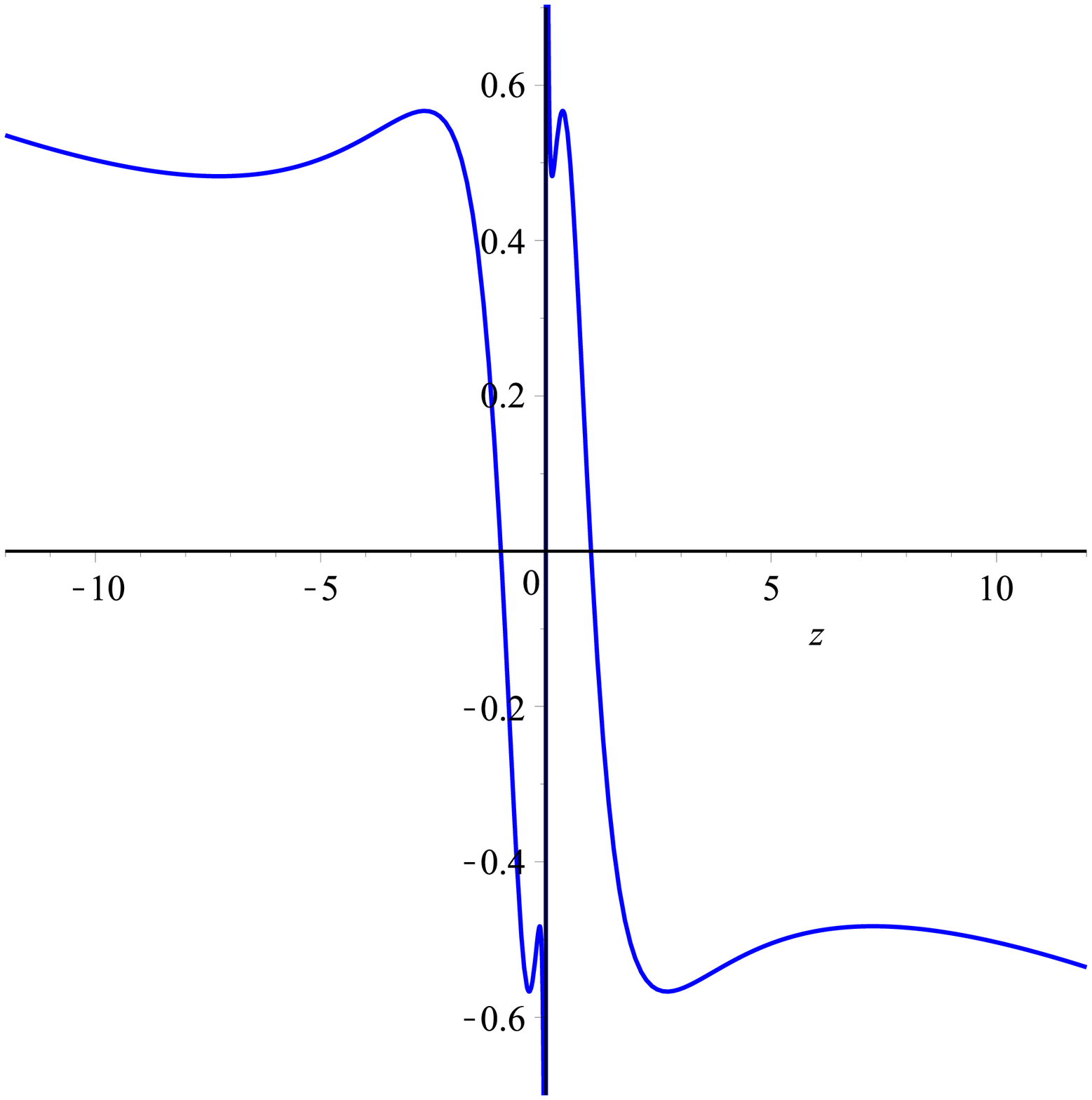}\label{F4}}
	\caption{Variation diagram of phase function $\theta$ with $\xi$ value range. ($a$) and ($b$) correspond to $\xi\in(-\infty,-1/4)\cup(2,+\infty)$. ($c$) corresponds to $\xi=1\in(0,2)$. ($d$) corresponds to $\xi=-1/8\in(-1/4,0)$.}
	\label{fa}
\end{figure}

\end{rem}

\begin{figure}[H] 
\centering  
\subfigbottomskip=12pt 
\subfigcapskip=2pt 
\subfigure[$\xi>2$]{
\includegraphics[width=0.38\linewidth]{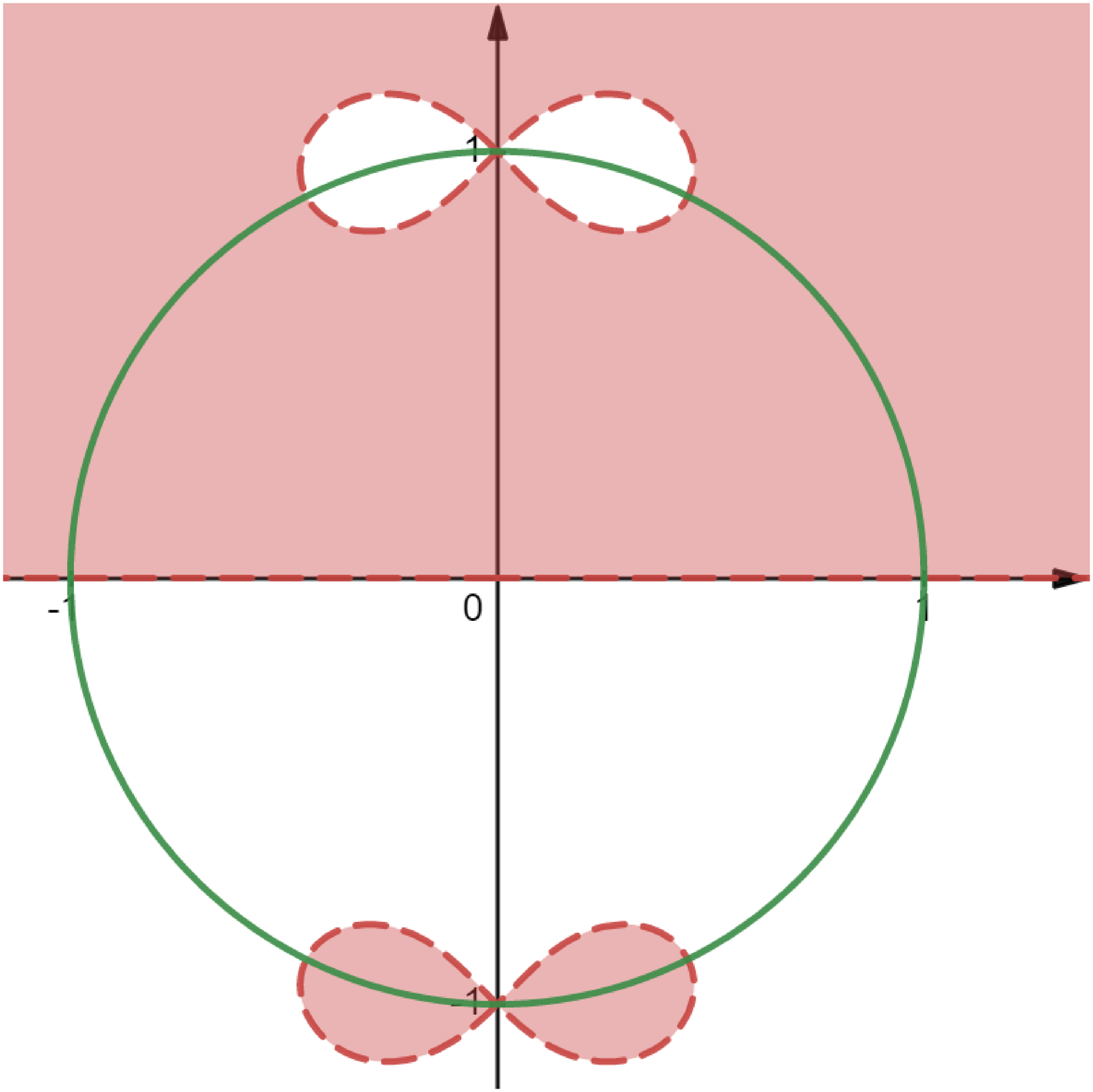}\hspace{0.6cm}\label{Fig-2a}}
\subfigure[$\xi<-1/4$]{
\includegraphics[width=0.38\linewidth]{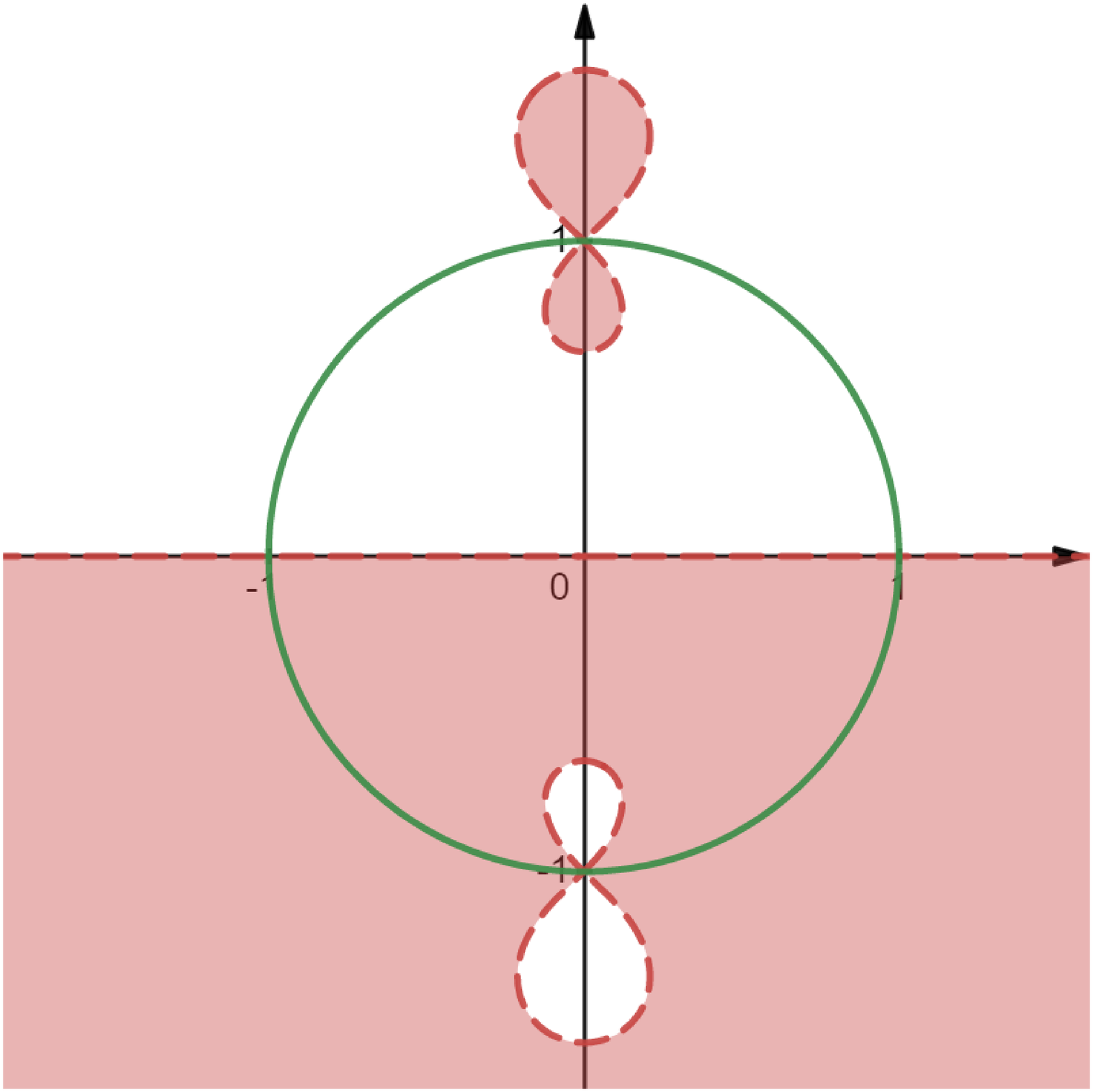}\label{Fig-2b}}\\
\subfigure[$0<\xi<2$]{
\includegraphics[width=0.38\linewidth]{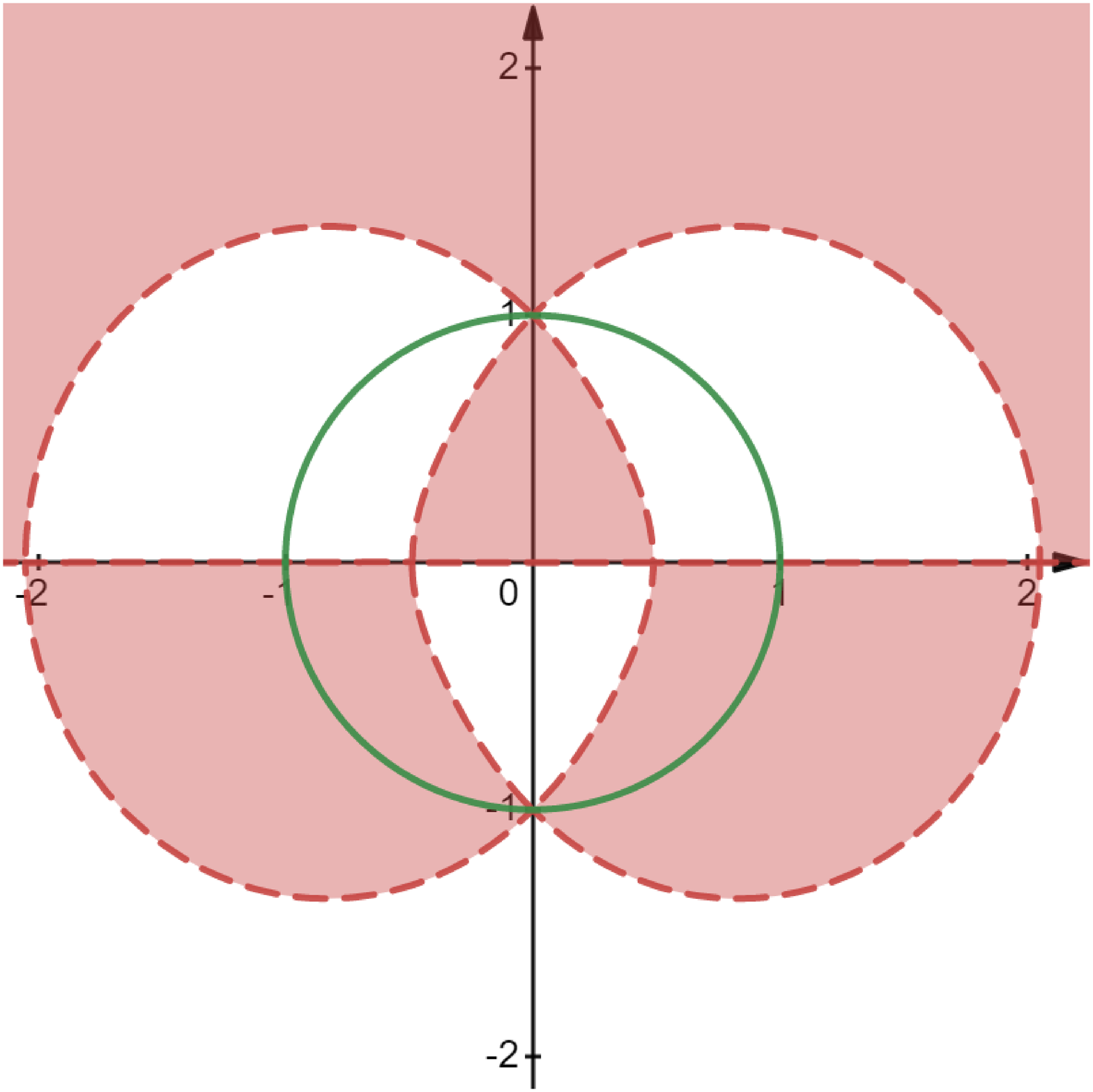}\hspace{0.6cm}\label{Fig-2c}}
\subfigure[$-1/4<\xi<0$]{
\includegraphics[width=0.38\linewidth]{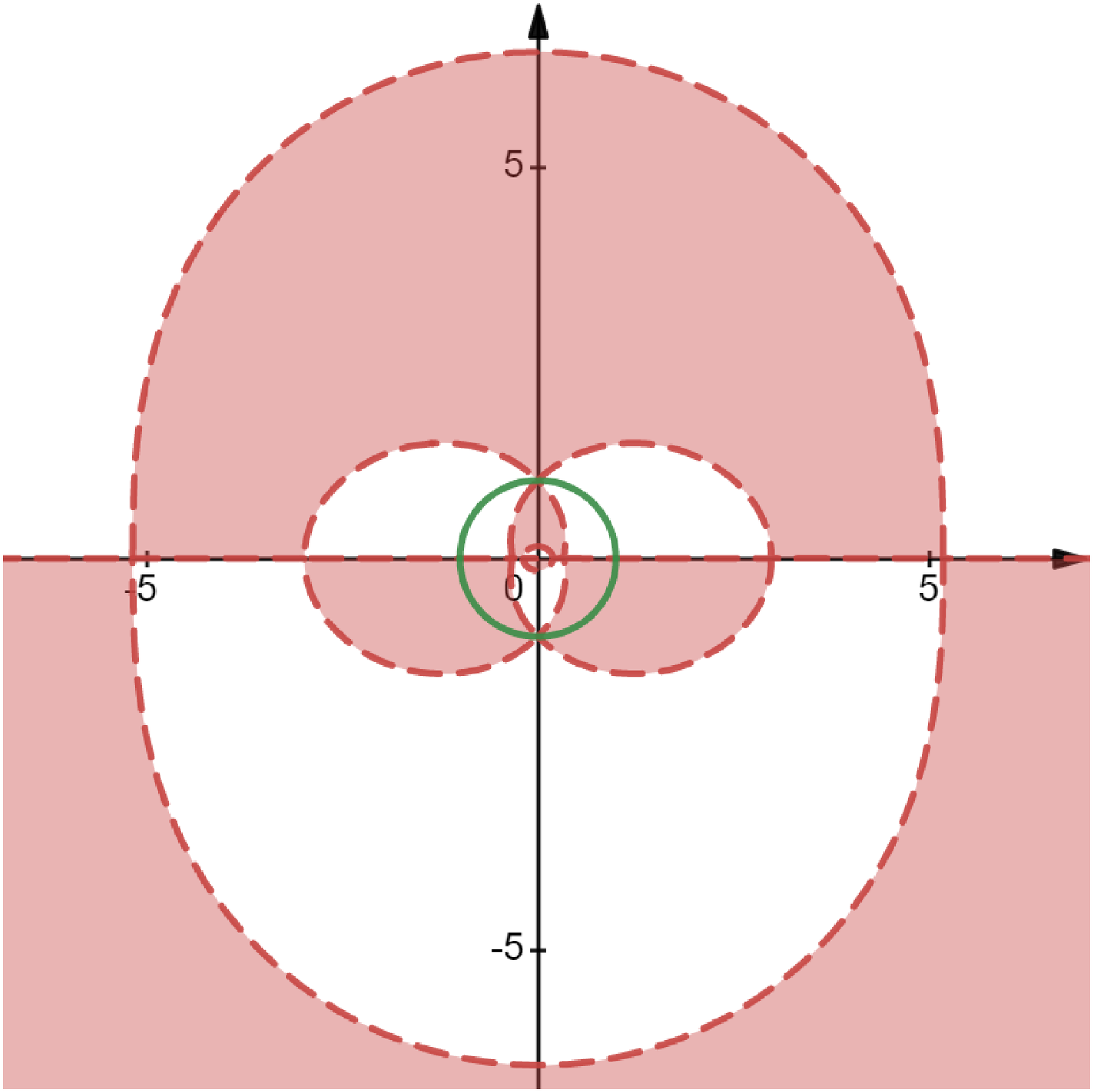}\label{Fig-2d}}
\caption{Plots of the imaginary part $\Im\theta$ of phase function $\theta$: (a) $\xi>2$. (b) $\xi<-1/4$.
(c) $0<\xi<2$. (d) $-1/4<\xi<0$, where $\Im\theta>0$ in red region and $\Im\theta<0$ in white region. }\label{Fig1}
\end{figure}

$\blacktriangleright$ For $\xi>2$ and $\xi<-\frac{1}{4}$, we will study the long-time asymptotic behavior of mCH equation \eqref{mCH-2}, and prove that its solution can be characterized by $N$-soliton solution and error term shown in Section \ref{sec4}.

$\blacktriangleright$ For $0<\xi<2$ and $-\frac{1}{4}<\xi<0$, which correspond to four and eight phase points respectively, we prove that the soliton resolution conjecture is valid for mCH equation  \eqref{mCH1-ic} with non-zero boundary, that is, when the time tends to infinity, the solution can be characterized by the soliton solution corresponding to the discrete spectrum, the leading term corresponding to the continuous spectrum and error term  corresponding to pure $\overline{\partial}$-problem.

In order to facilitate the later analysis, the sign of the number of steady-state phase points is introduced by
\begin{align}\label{number}
n(\xi)=\left\{\begin{aligned}
&0,&&\xi>2,\xi<-\frac{1}{4},\\
&4,&&0<\xi<2,\\
&8,&&-\frac{1}{4}<\xi<0,
\end{aligned}\right.
\end{align}
and define $N=\{1,2,\ldots,4N_1+2N_2\}$, and fix a sufficiently small positive number $\varepsilon_0$ to satisfy
\begin{align*}
\Delta=\{n\in N|\Im\theta<0\},~~\nabla=\{n\in N|\Im\theta>0\},\
\Lambda=\{n\in N||\Im\theta|\leq\varepsilon_0\}.
\end{align*}
Observing that the oscillatory term $e^{2it\theta}$ appearing in residue condition \eqref{res1} grows as $t\to+\infty$ for $\eta_n$ with $n\in\Delta$, also for $\eta_n$ with $n\in\nabla$ the residue condition \eqref{res1} decays as $t\to+\infty$.

\section{Deformation of RH problem for $\xi>2$ and $\xi<-1/4$}\label{sec4}
In this section, we turn our attention to $\xi>2$ and $\xi<-1/4$ corresponding to Figures \ref{Fig-2a} and \ref{Fig-2b}, which corresponds to the case where there is no steady-state phase point. In order to perform a long-time asymptotic analysis through the nonlinear steepest descent method, we need to deal with the original RH problem with the following conditions:
\begin{enumerate}[(i)]
\item Interpolating poles are transformed to jump conditions on each pole circle by trading their residues for $\eta_n$ with $n\in\Lambda$.
\item The jump of the original RH problem \ref{RHP-M} on the real axis $\D{R}$ is traded to the exponentially decaying contour.
\end{enumerate}

In order to eliminate the diagonal matrix in \eqref{Jump-de} in the triangular decomposition on interval $\Sigma_b(\xi)$, as \cite{mCH-NZBCs}, the scalar RH problem satisfied by $\delta(z):=\delta(z,\xi)$, which is analytic in $\D{C}\setminus\overline{\Sigma_b(\xi)}$ needs to be introduced
\begin{subequations}\label{del}
\begin{align}
&\delta_+(z)=\delta_-(z)(1-|r(z)|^2),&&z\in\Sigma_b(\xi),\\
&\delta(z)\to I,&&z\to\infty.
\end{align}
\end{subequations}
The solution to \eqref{del} can be derived by Pelemlj formula
\begin{align}\label{del-ex}
\delta(z)=\exp\left\{\frac{1}{2\pi i}\int_{\Sigma_b(\xi)}\frac{\ln(1-|r(s)|^2)}{s-z}\ ds\right\}.
\end{align}

Define some functions as
\begin{align}
\nu(z)&=-\frac{1}{2\pi}\ln(1-|r(z)|^2), ~~\delta(z)=\exp\left\{-i\int_{\Sigma_b(\xi)}
\frac{\nu(s)}{s-z}\ ds\right\},\\
T(z)&:=T(z,\xi)=\prod_{n\in\Delta}\frac{z-\eta_n}{\overline{\eta}_n^{-1}z-1}\delta(z)\nonumber\\
&=\prod_{j\in \Delta_1}\dfrac{z-z_j}{\bar{z}_j^{-1}z-1}\dfrac{z+\bar{z}_j}{z_j^{-1}z+1}\dfrac{z-\bar{z}_j^{-1}}{z_jz-1}\dfrac{z+z_j^{-1}}{\bar{z}_jz+1}\prod_{i\in \Delta_2}\dfrac{z-w_i}{w_iz-1}\dfrac{z+\overline{w}_i}{\overline{w}_iz+1}\delta (z,\xi) \label{T},
\end{align}
where the intervals $\Delta_j$, $\nabla_j$ and $\Lambda_j$ for $j=1,2$ are defined as
\begin{subequations}
\begin{align}
	&\nabla_1=\left\lbrace j \in \left\lbrace 1,\ldots,N_1\right\rbrace  \left|\Im\theta(z_j)> 0\right.\right\rbrace,
	&&\Delta_1=\left\lbrace j \in \left\lbrace 1,\ldots,N_1\right\rbrace  \left|\Im\theta(z_j)< 0\right.\right\rbrace,\\
	&\nabla_2=\left\lbrace i \in \left\lbrace 1,\ldots,N_2\right\rbrace  \left|\Im\theta(w_i)> 0\right.\right\rbrace,
	&&\Delta_2=\left\lbrace i \in \left\lbrace 1,\ldots,N_2\right\rbrace  \left|\Im\theta(w_i)< 0\right.\right\rbrace,\\
	&\Lambda_1=\left\lbrace j_0 \in \left\lbrace 1,\ldots,N_1\right\rbrace  \left||\Im\theta(z_{j_0})|\leq\delta_0\right.\right\rbrace,&&\Lambda_2=\left\lbrace i_0 \in \left\lbrace 1,\ldots,N_2\right\rbrace  \left||\Im\theta(w_{i_0})|\leq\delta_0\right.\right\rbrace.
\end{align}
\end{subequations}
\begin{prop}\label{PP-T}
The function $T(z)$ defined by \eqref{del} such that the following conditions
\begin{enumerate}[(i)]
\item The function $T(z)$ is meromorphic in $\mathbb{C}\setminus \mathbb{R}$, and for $n\in\Delta$, $T(z)$ has a simple zeros at $\eta_n$ and simple poles at $\overline{\eta}_n$.
\item $T(z)=\overline{T(-\bar{z})}=T(-z^{-1})=\overline{T^{-1}(\bar{z})}$.
\item For $z\in \Sigma_b(\xi)$, the non-tangential boundary values $T_\pm(z):=T_{\pm}(z,\xi)$, which satisfy:
\begin{equation}
T_+(z)=(1-|r(z)|^2)T_-(z),\hspace{0.5cm}z\in \Sigma_b(\xi).
\end{equation}
\item $\lim\limits_{z\to \infty}T(z)\triangleq T(\infty)$ with $T(\infty)=1$.
\item As $z=i$,
\begin{equation}
T(i)=\prod_{j\in \Delta_1}\left( \dfrac{i-z_j}{i-\overline{z}_j}\dfrac{i+\overline{z}_j}{i+z_j}\right) \prod_{h\in \Delta_2}\dfrac{i-w_h}{i+w_h}\dfrac{i+\overline{w}_h}{i-\overline{w}_h}\delta (i,\xi)\label{Ti},
\end{equation}
and as $z\to i$, $T(z)$ has asymptotic expansion  as
\begin{equation}
T(z)=T(i)\left( 1-T_0(\xi)(z-i)\right) + \mathcal{O}((z-i)^2) \label{expT0},
\end{equation}
with
\begin{equation}
T_0(\xi)=\frac{1}{2\pi i}\int _{\Sigma_b(\xi)}\dfrac{ \ln (1-|r(s)|^2)}{(s-i)^2}ds;
\end{equation}
\item $T(z)$ is continuous at $z=0$, and
\begin{equation}
T(0)=T(\infty)=1 \label{T0}.
\end{equation}
\item As $z\to \xi_j$ along any ray $\xi_j+e^{i\phi}\mathbb{R}^+$ with $|\phi|<\pi$,
\begin{align}
|T(z,\xi)-T_j(\xi)(z-\xi_j)^{i\nu(\xi_j)}|\lesssim \parallel r\parallel_{H^{1}(\mathbb{R})}|z-\xi_j|^{1/2},\label{T-TJ}
\end{align}
where $T_j(\xi) $ are determined by
\begin{align}
T_j(\xi)=\prod_{n\in \Delta}\dfrac{z-\eta_n}{\overline{\eta}_n^{-1}z-1}e^{i\beta(\xi_j,\xi)},
\end{align}
for  $j=1,\ldots,n(\xi)$ and
\begin{align}
\beta_j(z,\xi)=\int_{\Sigma_b(\xi)}\frac{\nu(s)}{s-z}ds-\ln(z-\xi_j)\nu(\xi_j).
\end{align}
\end{enumerate}
\end{prop}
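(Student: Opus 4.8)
The plan is to factor $T(z)=B(z)\delta(z)$, where $B(z)=\prod_{n\in\Delta}\frac{z-\eta_n}{\overline{\eta}_n^{-1}z-1}$ is the finite Blaschke-type product and $\delta(z)$ is the scalar function of \eqref{del-ex}, and to treat the two factors separately. Since the discrete spectrum $\{\eta_n\}$ lies off the real axis, $B(z)$ is a rational function with simple zeros exactly at the $\eta_n$ (numerators) and simple poles exactly at the $\overline{\eta}_n$ (where $\overline{\eta}_n^{-1}z-1$ vanishes), while $\delta(z)$ is analytic in $\C\setminus\overline{\Sigma_b(\xi)}$ by the Plemelj formula; this yields (i) at once. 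For the jump relation (iii), $B(z)$ is analytic across $\Sigma_b(\xi)\subset\R$, so only $\delta$ contributes, and Plemelj applied to \eqref{del} gives $\delta_+=(1-|r|^2)\delta_-$, hence $T_+=(1-|r|^2)T_-$.

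The symmetries (ii) I would verify factor by factor: the orbit structure of the discrete spectrum under $z\mapsto-\overline{z}$, $z\mapsto-z^{-1}$, $z\mapsto\overline{z}^{-1}$ (encoded by $\eta_{n+N_1}=-\overline{z}_n$, $\eta_{n+2N_1}=-z_n^{-1}$, etc.) makes $B(z)$ closed under these maps, while for $\delta$ I would change variables in \eqref{del-ex} and invoke the reflection-coefficient symmetries $r(z)=-\overline{r(-\overline{z})}=r(-z^{-1})$ of Proposition \ref{pp-S}, together with the invariance of $\Sigma_b(\xi)$ under $z\mapsto-z$ and $z\mapsto-z^{-1}$ read off from Proposition \ref{pp-point}. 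The normalization (iv) follows because each symmetric quadruple $\frac{z-z_j}{\overline{z}_j^{-1}z-1}\frac{z+\overline{z}_j}{z_j^{-1}z+1}\frac{z-\overline{z}_j^{-1}}{z_jz-1}\frac{z+z_j^{-1}}{\overline{z}_jz+1}\to\overline{z}_j\,z_j\,z_j^{-1}\,\overline{z}_j^{-1}=1$ and each unit-circle pair tends to $|w_i|^{-2}=1$ as $z\to\infty$, while $\delta(\infty)=1$. For (vi) the value is cleanest via the symmetry $T(z)=T(-z^{-1})$ from (ii): as $z\to0$ one has $-z^{-1}\to\infty$, so $T(-z^{-1})\to T(\infty)=1$; continuity at $z=0$ (an interior point of $\Sigma_b(\xi)$ when $\xi<-1/4$) holds because $r(0)=0$ forces $\nu(0)=0$, killing the jump there. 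Property (v) is obtained by evaluating $T(i)=B(i)\delta(i)$ to get \eqref{Ti} and Taylor-expanding $\log\delta(z)=\tfrac{1}{2\pi i}\int_{\Sigma_b(\xi)}\tfrac{\ln(1-|r|^2)}{s-z}\,ds$ about $z=i$, whose linear coefficient produces \eqref{expT0}.

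The genuinely delicate part is (vii), the stationary-point behavior, since each $\xi_j$ is an endpoint of a component of $\Sigma_b(\xi)$ by Proposition \ref{pp-point}. Writing $\log\delta(z)=-i\int_{\Sigma_b(\xi)}\frac{\nu(s)}{s-z}\,ds$, I would split
\[
-i\int_{\Sigma_b(\xi)}\frac{\nu(s)}{s-z}\,ds=-i\nu(\xi_j)\int_{\Sigma_b(\xi)}\frac{ds}{s-z}-i\int_{\Sigma_b(\xi)}\frac{\nu(s)-\nu(\xi_j)}{s-z}\,ds,
\]
where the first (elementary) integral produces the singular factor $(z-\xi_j)^{i\nu(\xi_j)}$ up to an analytic piece absorbed into the constant $e^{i\beta_j(z,\xi)}$, and the second is the remainder. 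The key analytic input is the embedding $H^1(\R)\hookrightarrow C^{0,1/2}(\R)$, so $\nu$ is H\"older-$1/2$ and $\nu(s)-\nu(\xi_j)=O(|s-\xi_j|^{1/2})$; this shows the remainder integral is H\"older-$1/2$ at $\xi_j$ with modulus controlled by $\|r\|_{H^1(\R)}$. Factoring $T(z)=T_j(\xi)(z-\xi_j)^{i\nu(\xi_j)}\exp(\text{remainder})$, expanding the exponential to first order, and bounding the remainder gives the stated estimate. The main obstacle is precisely this uniform H\"older bound near the $\xi_j$, which must be carried out identically at each point using the sign of the adjacent component of $\Sigma_b(\xi)$ to fix the branch of $(z-\xi_j)^{i\nu(\xi_j)}$; the rational factor $B(z)$ is smooth and nonvanishing at $\xi_j$ and only rescales the leading constant $T_j(\xi)$.
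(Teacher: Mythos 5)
Your overall architecture matches the paper's: the factorization $T(z)=B(z)\delta(z)$ with $B$ the Blaschke-type product, the direct verification of (i)--(vi) by symmetry of the discrete spectrum, the limit of each symmetric quadruple, and the Taylor expansion of $\log\delta$ at $z=i$ are exactly what the paper dismisses as ``simple calculations,'' and your treatment of (vii) likewise isolates the singular factor $(z-\xi_j)^{i\nu(\xi_j)}$ and reduces the claim to a H\"older-$1/2$ bound on a remainder with constant controlled by $\|r\|_{H^1(\mathbb{R})}$. The one place where you genuinely diverge is the mechanism for that bound. The paper keeps the full density $\nu$ in the Cauchy integral, writes the remainder as $\beta_j(z,\xi)-\beta_j(z,\xi_j)$, and estimates
\[
\Bigl|\int_{\Sigma_b(\xi)}\tfrac{\nu(s)}{s-z}\,ds-\int_{\Sigma_b(\xi)}\tfrac{\nu(s)}{s-\xi_j}\,ds\Bigr|
=2\pi\Bigl|\int_{\xi_j}^{z}C\nu'(s)\,ds\Bigr|\le \|C\nu'\|_{L^{2}}\,|z-\xi_j|^{1/2},
\]
using only Cauchy--Schwarz and the $L^{2}$-boundedness of the Cauchy projection; this is self-contained and delivers the $\|r\|_{H^{1}}$ constant in one line. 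You instead subtract $\nu(\xi_j)$ inside the integral and invoke $H^{1}(\mathbb{R})\hookrightarrow C^{0,1/2}(\mathbb{R})$ to make the density vanish to order $1/2$ at the endpoint; to conclude that the resulting Cauchy integral is itself $O(|z-\xi_j|^{1/2})$ uniformly along rays $|\phi|<\pi$ you still need the classical Plemelj--Privalov/Muskhelishvili lemma on the endpoint behavior of Cauchy integrals of H\"older densities vanishing at the end of the contour. That lemma is true and standard, so your route is correct, but it is an extra imported ingredient that the paper's $L^{2}$ argument avoids; conversely, your route makes the pointwise H\"older regularity of $\nu$ explicit, which is conceptually cleaner. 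Everything else (the branch choice fixed by the adjacent component of $\Sigma_b(\xi)$, the smoothness and nonvanishing of $B$ at $\xi_j$, the evaluation $T(0)=T(\infty)=1$ via $T(z)=T(-z^{-1})$) agrees with the paper.
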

\begin{proof}
Simple calculations show that the claims ($i$)-($vi$) are hold. Next, we will prove the claim ($vii$). Observing that the solution to \eqref{del} can be written as
\begin{align}\label{del-gx}
\delta(z)=\exp\left(i\beta_j(z,\xi)+\nu(\xi_j)\ln(z-\xi_j)\right).
\end{align}
It also follows from \cite{AIHP} that
$$\left|(z-\xi_j)^{i\nu(\xi_j)}\right|\leq e^{-\pi\nu(\xi_j)}.$$
Combining the function $T(z)$ with \eqref{del-gx} yields that
\begin{align}\label{T-gx}
T(z)=\prod_{n\in\Delta}\frac{z-\eta_n}{\overline{\eta}_n^{-1}z-1}(z-\xi_j)^{i\nu(\xi_j)}\exp(i\beta_j(z,\xi_j)).
\end{align}
Then one has
\begin{align}\label{beta-est}
\left|\beta_j(z,\xi)-\beta_j(z,\xi_j)\right|\leq\left|\nu(\xi_j)\ln(z-\xi_j)\right|+\left|
\int_{\Sigma_b(\xi)}\frac{\nu(s)}{s-z}\ ds-\int_{\Sigma_b(\xi)}\frac{\nu(s)}{s-\xi_j}\ ds\right|.
\end{align}
For the first term of \eqref{beta-est},
\begin{align}
\nu(\xi_j)\ln(z-\xi_j)=\nu(\xi_j)(z-\xi_j)+O((z-\xi_j)^2).
\end{align}
For the second term of \eqref{beta-est}, applying H\"{o}lder inequality obtains
\begin{align}
&\left|
\int_{\Sigma_b(\xi)}\frac{\nu(s)}{s-z}\ ds-\int_{\Sigma_b(\xi)}\frac{\nu(s)}{s-\xi_j}\ ds\right|\leq
2\pi\left|C\nu(z)-C\nu(\xi_j)\right|=2\pi\left|\int_{\xi_j}^{z}C\nu'(s)\ ds\right|\nonumber\\
&\leq||C\nu'||_{L^{2}}|z-\xi_j|^{1/2}\lesssim ||r(z)||_{H^{1}(\D{R})}|z-\xi_j|^{1/2},
\end{align}
where $C$ denotes the Cauchy operator, which implies that the proof of the proposition is completed.
\end{proof}

In order to transform the residue conditions at the poles  into jump conditions on sufficient small disks, a positive constant $\epsilon$ is introduced
\begin{equation}
	\varrho=\frac{1}{2}\min\left\lbrace \min_{j\in N}\left\lbrace |\Im\eta_j|, \right\rbrace ,\min_{j\in N\setminus\Lambda,\Im\theta(z)=0}|\eta_j-z|, \min_{i\neq j\in N}|\eta_i-\eta_j|, \min_{  j\in N}|\eta_j-i|\right\rbrace ,
\end{equation}
from which we define the disks $\D{P}_n=\D{P}(\eta_n,\epsilon)$ and $\overline{\D{P}}_n=\D{P}(\overline{\eta}_n,\epsilon)$.
Observe that for any $n\in N$, the disks $\D{P}_n$ are pairwise disjoint, and also disjoint with $\{z\in\D{C}|\Im\theta(z)=0\}$ and the real axis $\D{R}$. The interpolation matrix $F(z)$ is further introduced as follows
\begin{equation}\label{inte-F}
F(z)=\left\{ \begin{array}{ll}
		\left(\begin{array}{cc}
			1 & 0\\
			-C_n(z-\eta_n)^{-1}e^{2it\theta_n} & 1
		\end{array}\right),   &\text{as } z\in\mathbb{P}_n,n\in\nabla\setminus\Lambda,\\[12pt]
		\left(\begin{array}{cc}
			1 & -C_n^{-1}(z-\eta_n)e^{-2it\theta_n}\\
			0 & 1
		\end{array}\right),   &\text{as } z\in\mathbb{P}_n,n\in\Delta\setminus\Lambda,\\
		\left(\begin{array}{cc}
		1 & \overline{C}_n(z-\overline{\eta}_n)^{-1}e^{-2it\overline{\theta}_n}\\
		0 & 1
		\end{array}\right),   &\text{as } 	z\in\overline{\mathbb{P}}_n,n\in\nabla\setminus\Lambda,\\
		\left(\begin{array}{cc}
		1 & 0	\\
		\overline{C}_n^{-1}(z-\overline{\eta}_n)e^{2it\bar{\theta}_n} & 1
		\end{array}\right),   &\text{as } 	z\in\overline{\mathbb{P}}_n,n\in\Delta\setminus\Lambda.\\
	I &\text{as } 	z \text{ in elsewhere};
	\end{array}\right.
\end{equation}

Applying interpolation matrix $F(z)$ \eqref{inte-F} to define a new matrix $M^{(1)}(z):=M^{(1)}(y,t,z)$ such that
\begin{align}\label{M1}
M^{(1)}(z)=M(z)F(z)T(z)^{\sigma_3},
\end{align}
which solves the following problem.
\begin{RHP}\label{RHP-M1}
Find a matrix-valued function $M^{(1)}(z)$ satisfy the following conditions:
\begin{enumerate}[(I)]
\item The function $M^{(1)}(z)$ is meromorphic in $\D{C}\setminus\Sigma^{(1)}$, where the contour $$\Sigma^{(1)}
=\D{R}\cup\left(\cup_{n\in N\setminus\Lambda}(\D{P}_{n}\cup\overline{\D{P}}_{n})\right).$$
\item The symmetries: $$M^{(1)}(z)=\overline{M^{(1)}(\bar{z}^{-1})}=
    \sigma_1\overline{M^{(1)}(\overline{z})}\sigma_1=
\sigma_3\overline{M^{(1)}(-\bar{z})}\sigma_3.$$
\item Jump condition: The non-tangential limits $M^{(1)}_{\pm}(z)=\lim\limits_{\varepsilon\to0^+}M^{(1)}(z+\pm i\varepsilon)$ for $z\in\D{R}\setminus\{\pm1\}$
\begin{align}\label{JM1}
M^{(1)}_{+}(z)=M^{(1)}_{-}(z)V^{(1)}(z),~~~~~z\in\D{R}\setminus\Sigma^{(1)},
\end{align}
where the jump matrix $V^{(1)}(z)$ is defined by
\begin{align}\label{Jump-M1}
V^{(1)}(z)=\left\{\begin{aligned}
&\left(
  \begin{array}{cc}
    1 & r(z)T^2(z)e^{-2it\theta(z)} \\
    0 & 1 \\
  \end{array}
\right)\left(
         \begin{array}{cc}
           1 & 0 \\
           -\overline{r}(z)T^{-2}(z)e^{-2it\theta(z)} & 1 \\
         \end{array}
       \right), && z\in\Sigma_a(\xi),   \\
&\left(
   \begin{array}{cc}
     1 & 0 \\
     -\frac{\overline{r}(z)}{1-|r(z)|^2}T_{-}^{-2}(z)e^{2it\theta(z)} & 1 \\
   \end{array}
 \right)\left(
                 \begin{array}{cc}
                   1 & \frac{r(z)}{1-|r(z)|^2}T_+^2(z)e^{-2it\theta(z)} \\
                   0 & 1 \\
                 \end{array}
               \right),&& z\in\Sigma_b(\xi),\\
&\left(\begin{array}{cc}
			1 & 0\\
			-C_n(z-\eta_n)^{-1}T^{-2}(z)e^{2it\theta_n} & 1
		\end{array}\right),   \qquad z\in\mathbb{P}_n,n\in\nabla\setminus\Lambda,\\[12pt]
&\left(\begin{array}{cc}
			1 & -C_n^{-1}(z-\eta_n)T^{2}(z)e^{-2it\theta_n}\\
			0 & 1
		\end{array}\right),   ~~\qquad z\in\mathbb{P}_n,n\in\Delta\setminus\Lambda,\\
&\left(\begin{array}{cc}
		1 & \overline{C}_n(z-\overline{\eta}_n)^{-1}T^{2}(z)e^{-2it\overline{\theta}_n}\\
		0 & 1
		\end{array}\right), ~~~~\qquad 	z\in\overline{\mathbb{P}}_n,n\in\nabla\setminus\Lambda,\\
&\left(\begin{array}{cc}
		1 & 0	\\
		\overline{C}_n^{-1}(z-\overline{\eta}_n)T^{-2}(z)e^{2it\bar{\theta}_n} & 1
		\end{array}\right),   \qquad\quad	z\in\overline{\mathbb{P}}_n,n\in\Delta\setminus\Lambda.
\end{aligned}\right.
\end{align}
\item Asymptotic behavior:
\begin{align}\label{Asy-M1}
&M^{(1)}(z)=I+O(z^{-1}), z\to\infty~ (\text{and}~ z\to0 ~\text{by symmetry}),\\
&M^{(1)}(z)=\left(\di(a_1(y,t),a_1^{-1}(y,t))+\left(
        \begin{array}{cc}
          0 & a_2(y,t) \\
          a_3(y,t) & 0 \\
        \end{array}
      \right)(z-i)\right)
T(i)^{\sigma_3}\left(I-T_0(\xi)(z-i)\right)
+O((z-i)^2),
\end{align}
where $a_j(y,t)$, $j=1,2,3$ are real-valued functions from the expansion of $M^{(1)}(z)$ at $z=i$.
\item Residue conditions: The matrix $M^{(1)}(z)$ has simple poles at $\eta_n$ and $\overline{\eta}_n$ with
\begin{subequations}\label{res-M1}
\begin{align}
&\mathop{\res}\limits_{\eta_n} M^{(1)}(z)=\lim_{z\to\eta_n} M^{(1)} (z)\left(
                                                  \begin{array}{cc}
                                                    0 & 0 \\
                                                    c_nT^2(z)e^{2it\theta(\eta_n)} & 0 \\
                                                  \end{array}
                                                \right), \label{resM1} \\
&\mathop{\res}\limits_{\overline{\eta}_n} M^{(1)}(z)=\lim_{z\to\overline{\eta}_n} M^{(1)} (z)\left(
                                                  \begin{array}{cc}
                                                    0 & \overline{c}_nT^{-2}(z)e^{-2it\overline{\theta}(\eta_n)} \\
                                                    0 & 0 \\
                                                  \end{array}\right).
\end{align}
\end{subequations}
\end{enumerate}
\end{RHP}

\subsection{Contour deformation}\label{subsec4.1}
In this section, our attention will turn to removing jump conditions of $M^{(1)}(z)$ on the real axis $\D{R}$
 taking  into account the fact that the growth and decay properties of  the exponential functions $e^{\pm2it\theta}$.

To that end, A fixed angle $\psi$ is sufficiently small to satisfy the following conditions
\begin{enumerate}[(i)]
\item For $\xi>2$, then $\cos2\psi>\frac{4}{\xi}-1$.
\item For $\xi<-\frac{1}{4}$, then $\cos2\psi>-\frac{1}{\xi}-1$.
\end{enumerate}
For the regions $\xi<-\frac{1}{4}$ and $\xi>2$, we define the $\Omega=\cup_{1}^4\Omega_k$, where
\begin{align*}
	&\Omega_{2n+1}=\left\lbrace z\in\mathbb{C}|n\pi \leq\arg z \leq n\pi+\psi \right\rbrace ,\\
	&\Omega_{2n+2}=\left\lbrace z\in\mathbb{C}|(n+1)\pi -\psi\leq\arg z \leq (n+1)\pi \right\rbrace,
\end{align*}
with $n=0,1$, which such that the boundary values
\begin{align}
&\Sigma_k=e^{(k-1)i\pi/2+i\psi}\D{R}^+,\hspace{0.5cm}k=1,3,\\
&\Sigma_k=e^{ki\pi/2-i\psi}\D{R}^+,\hspace{1cm}k=2,4,
\end{align}
shown in Fig. \ref{Fig2}. Let
\begin{align}
\Sigma^{(2)}(\xi)=\underset{n\in N\setminus\Lambda}{\cup}\left( \partial\overline{\mathbb{P}}_n\cup\partial\mathbb{P}_n\right).
\end{align}

\begin{figure}[H]
\centering
\begin{tikzpicture}[node distance=2cm]
\draw[fill=pink] (0,0)--(3,0)--(3,0.5)--(0,0)--(-3,0)--(-3,0.5)--(0,0);
\draw[fill=yellow](0,0)--(-3,0)--(-3,-0.5)--(0,0)--(3,0)--(3,-0.5)--(0,0);
\draw(0,0)--(3,0.5)node[above right]{$\Sigma_1$};
\draw(0,0)--(-3,0.5)node[above left]{$\Sigma_2$};
\draw(0,0)--(-3,-0.5)node[left]{$\Sigma_3$};
\draw(0,0)--(3,-0.5)node[right]{$\Sigma_4$};
\draw[line width=0.8](-4,0)--(4,0)node[right]{ $\Re z$};
\draw[line width=0.8](0,-3)--(0,3)node[above]{ $\Im z$};
\draw[-latex](4,0)--(4.1,0);
\draw[-latex](0,3)--(0,3.1);
\draw[-latex](0,0)--(-1.5,-0.25);
\draw[-latex](0,0)--(-1.5,0.25);
\draw[-latex](0,0)--(1.5,0.25);
\draw[-latex](0,0)--(1.5,-0.25);
\coordinate (C) at (-0.2,2.2);
\coordinate (D) at (2.2,0.2);
\fill (D) circle (0pt) node[right] {\footnotesize $\Omega_1$};
\coordinate (J) at (-2.2,-0.2);
\fill (J) circle (0pt) node[left] {\footnotesize $\Omega_3$};
\coordinate (k) at (-2.2,0.2);
\fill (k) circle (0pt) node[left] {\footnotesize $\Omega_2$};
\coordinate (k) at (2.2,-0.2);
\fill (k) circle (0pt) node[right] {\footnotesize $\Omega_4$};
\coordinate (I) at (0.2,0);
\fill (I) circle (0pt) node[below] {$0$};
\draw[red] (2,0) arc (0:360:2);
\draw[blue] (2,3) circle (0.12);
\coordinate (A) at (2,3);
\coordinate (B) at (2,-3);
\coordinate (C) at (-0.5546996232,0.8320505887);
\coordinate (D) at (-0.5546996232,-0.8320505887);
\coordinate (E) at (0.5546996232,0.8320505887);
\coordinate (F) at (0.5546996232,-0.8320505887);
\coordinate (G) at (-2,3);
\coordinate (H) at (-2,-3);
\coordinate (I) at (2,0);
\draw[blue] (2,-3) circle (0.12);
\draw[blue] (-0.55469962326,0.8320505887) circle (0.12);
\draw[blue] (0.5546996232,0.8320505887) circle (0.12);
\draw[blue] (-0.5546996232,-0.8320505887) circle (0.12);
\draw[blue] (0.5546996232,-0.8320505887) circle (0.12);
\draw[blue] (-2,3) circle (0.12);
\draw[blue] (-2,-3) circle (0.12);
\coordinate (J) at (1.7320508075688774,1);
\coordinate (K) at (1.7320508075688774,-1);
\coordinate (L) at (-1.7320508075688774,1);
\coordinate (M) at (-1.7320508075688774,-1);
\fill (A) circle (1pt) node[right] {$z_n$};
\fill (B) circle (1pt) node[right] {$\overline{z}_n$};
\fill (C) circle (1pt) node[left] {$-\frac{1}{z_n}$};
\fill (D) circle (1pt) node[left] {$-\frac{1}{\overline{z}_n}$};
\fill (E) circle (1pt) node[right] {$\frac{1}{\overline{z}_n}$};
\fill (F) circle (1pt) node[right] {$\frac{1}{z_n}$};
\fill (G) circle (1pt) node[left] {$-\overline{z}_n$};
\fill (H) circle (1pt) node[left] {$-z_n$};
\fill (I) circle (1pt) node[above] {$1$};
\fill (J) circle (1pt) node[right] {$w_m$};
\fill (K) circle (1pt) node[right] {$\overline{w}_m$};
\fill (L) circle (1pt) node[left] {$-\overline{w}_m$};
\fill (M) circle (1pt) node[left] {$-w_m$};
\end{tikzpicture}
\caption{The unknown matrix-valued function $M^{(2)}(z)$ has nonzero $\overline{\partial}$-derivatives in $\Omega_k$, and has jump conditions on $\D{P}_n\cup\overline{\D{P}}_n$. $M^{(2)}(z)$ is continuous on  the contour $\Sigma_k$, where  $\Sigma_k$ denote the boundary values of $\Omega_k$. }\label{Fig2}
\end{figure}
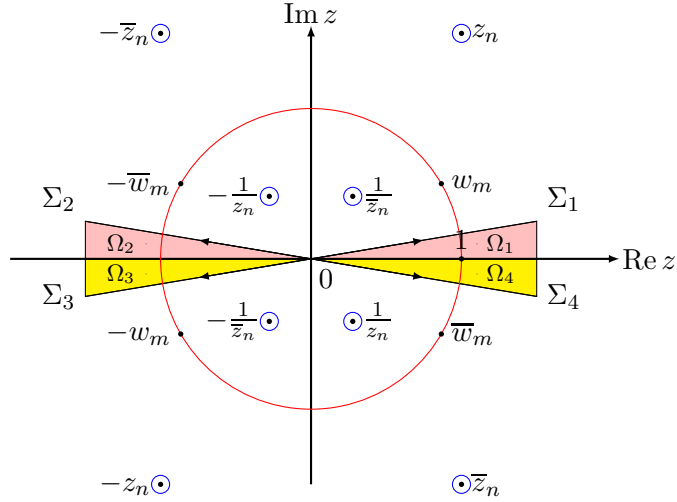
\begin{prop}\label{PP-theta}
Let $F(s)=s+\frac{1}{s}$ be real function for $s\in\D{R}$, then the imaginary part of phase function $\theta(z)$ satisfies the following inequalities:

For the case $\xi\in(2,\infty)$,
\begin{subequations}
\begin{align}
&\Im\theta(z)\geq|\sin\psi|F(|z|)\left(\frac{\xi}{4}-\frac{1}{\cos2\psi+1}\right), \quad\qquad z\in\Omega_1\cup\Omega_2, \label{theta-est1}\\
&\Im\theta(z)\leq-|\sin\psi|F(|z|)\left(\frac{\xi}{4}-\frac{1}{\cos2\psi+1}\right), \qquad z\in\Omega_3\cup\Omega_4,\label{theta-est2}
\end{align}
\end{subequations}
For the case $\xi\in(-\infty,-1/4)$,
\begin{subequations}
\begin{align}
&\Im\theta(z)\leq-|\sin\psi|F(|z|)\left(-\frac{\xi}{4}-\frac{1}{4(\cos2\psi+1)}\right), \qquad z\in\Omega_1\cup\Omega_2, \label{theta-est3}\\
&\Im\theta(z)\geq|\sin\psi|F(|z|)\left(-\frac{\xi}{4}-\frac{1}{4(\cos2\psi+1)}\right), \quad\qquad z\in\Omega_3\cup\Omega_4. \label{theta-est4}
\end{align}
\end{subequations}
\end{prop}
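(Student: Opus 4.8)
The plan is to reduce all four inequalities to a single explicit computation of $\Im\theta$ in one sector and then transport the result by symmetry. I would first pass to polar coordinates $z=\rho e^{i\phi}$ and observe that, because
\[
z-\frac1z=\Big(\rho-\frac1\rho\Big)\cos\phi+i\Big(\rho+\frac1\rho\Big)\sin\phi,
\]
the combination $\rho+\rho^{-1}=F(|z|)$ is generated by the prefactor $\frac{z^2-1}{4z}$ of $\theta$ in \eqref{phe}; this is the algebraic origin of the factor $F(|z|)$ on the right-hand sides of \eqref{theta-est1}--\eqref{theta-est4}, and it is compatible with the invariance $F(\rho)=F(1/\rho)$ inherited from the symmetry $z\mapsto-z^{-1}$ in \eqref{sym-Psi}. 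Those same symmetries map $\Omega_1$ onto $\Omega_2$ and onto $\Omega_3$ while at most reversing the sign of $\Im\theta$, so it is enough to prove \eqref{theta-est1} on $\Omega_1$ and then read off \eqref{theta-est2}--\eqref{theta-est4}.

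Next I would split the phase as $\theta=\theta_1+\theta_2$, where $\theta_1$ carries the $-y=-\xi t$ contribution and $\theta_2$ carries the dispersive term $\frac{8t}{(z^2+1)^2}$ in \eqref{phe}. The term $\theta_1$ produces the leading quantity proportional to $\frac{\xi}{4}F(|z|)\sin\phi$, which already has the sign required by \eqref{theta-est1} once $\xi>2$. The term $\theta_2$ is then treated as a perturbation: writing $|z^2+1|^2=\rho^4+2\rho^2\cos2\phi+1$ and using $\cos2\phi\ge\cos2\psi$ for $0\le\phi\le\psi$ (with $2\psi<\pi$), I would bound $|\Im\theta_2|$ from above by a constant multiple of $F(|z|)|\sin\phi|/(\cos2\psi+1)$. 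Adding the two contributions yields a bound of the shape
\[
\Im\theta(z)\ge|\sin\phi|\,F(|z|)\left(\frac{\xi}{4}-\frac{1}{\cos2\psi+1}\right),\qquad z\in\Omega_1,
\]
and the condition (i) imposed on $\psi$ is precisely the requirement that the coefficient $\frac{\xi}{4}-\frac{1}{\cos2\psi+1}$ be nonnegative, so the bound is meaningful. The region $\xi<-1/4$ is handled the same way: the sign of the $\theta_1$-term reverses, producing the opposite inequalities \eqref{theta-est3}--\eqref{theta-est4}, and the balance of the two terms now throws off the constant $\frac{1}{4(\cos2\psi+1)}$, whose positivity against $-\frac{\xi}{4}$ is exactly condition (ii); indeed at the threshold $\cos2\psi+1=-1/\xi$ the coefficient vanishes, which pins down the role of condition (ii).

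The main obstacle is the uniform control of the dispersive contribution $\Im\theta_2$ throughout the sector, and the passage from the pointwise factor $|\sin\phi|$ to the fixed constant $|\sin\psi|$ in the stated bound. Unlike $\Im\theta_1$, the term $\Im\theta_2$ does not carry a fixed sign a priori, so one must show that its magnitude is dominated by the $-\xi$ term at \emph{every} point of $\Omega_1$, in particular as $\rho\to1$, where $F(\rho)$ attains its minimum and the points approach the singularities $z=\pm1$; it is here that the sharp constants $\frac{1}{\cos2\psi+1}$ and $\frac{1}{4(\cos2\psi+1)}$, and the thresholds $\xi=2$ and $\xi=-1/4$, are forced. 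The remaining work --- keeping track of the signs of $\sin\phi$ and $\cos2\phi$ in each of the four sectors, and checking the reflected inequalities on $\Omega_3\cup\Omega_4$ --- is routine once the estimate on $\Omega_1$ is in hand, and is most efficiently organized through the symmetries of $\theta$ rather than repeated from scratch.
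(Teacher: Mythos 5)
Your proposal follows essentially the same route as the paper's proof: pass to polar coordinates $z=|z|e^{i\phi}$, write $\Im\theta(z)=-F(|z|)\sin\phi\bigl(-\tfrac{\xi}{4}+H(|z|,\phi)\bigr)$ with $H(|z|,\phi)=\frac{4\cos2\phi-2F(|z|)^2+12}{(F(|z|)^2+2\cos2\phi-2)^2}$ coming from the dispersive term, bound $H$ between $-\frac{1}{4(\cos2\phi+1)}$ and $\frac{1}{\cos2\phi+1}$, and then use $\cos2\phi\ge\cos2\psi$ together with the smallness conditions on $\psi$ (your symmetry reduction to $\Omega_1$ is just a cleaner organization of the paper's ``other regions are similar''). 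The one point you flag as an obstacle --- upgrading the pointwise factor $|\sin\phi|$ to the constant $|\sin\psi|$ --- is not addressed in the paper's proof either (its displayed estimate likewise carries $\sin\phi$, and the $|\sin\psi|$ version as literally stated fails on the real boundary of $\Omega_1$ where $\Im\theta=0$ while the right-hand side is positive); since Corollary \ref{cor-im} only needs $F(|z|)\,|\sin\phi|\ge|\Im z|$, the $|\sin\phi|$ version you actually derive is the one that matters downstream.
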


\begin{proof}
Taking the region $\Omega_1$ as an example, it can be proved that other regions are similar with the same method.
Let $z=|z|e^{i\phi}$ and $F(|z|)=|z|+\frac{1}{|z|}$,  then the phase function $\theta(z)$ \eqref{phe} can be written as
\begin{align}
\Im\theta(z)=-F(|z|)\sin\phi\left(-\frac{\xi}{4}+\frac{4\cos2\phi-2F(|z|)^2+12}{(F(|z|)^2+2\cos2\phi-2)^2}\right).
\end{align}
As in \cite{YF-AM}, considering the maximum and minimum values of $$H(|z|,\phi):=\frac{4\cos2\phi-2F(|z|)^2+12}{(F(|z|)^2+2\cos2\phi-2)^2}$$
yields $H(|z|,\phi)\in(-\frac{1}{4(\cos2\phi+1)},\frac{1}{\cos2\phi+1})$, which completes the proof of Proposition \ref{PP-theta}.
\end{proof}
\begin{cor}\label{cor-im}
Let $\xi\in(-\infty,-1/4)\cup(2,+\infty)$. For $z=|z|e^{i\phi}=u+iv$, there exists a constant $\tau(\xi)$ such that:

For the case $\xi\in(2,\infty)$,
\begin{subequations}
\begin{align}
&\Im\theta(z)\geq\tau(\xi)v,\qquad\quad z\in\Omega_1\cup\Omega_2,\\
&\Im\theta(z)\leq-\tau(\xi)v,\qquad z\in\Omega_3\cup\Omega_4.
\end{align}
\end{subequations}
For the case $\xi\in(-\infty,-1/4)$,
\begin{subequations}
\begin{align}
&\Im\theta(z)\leq-\tau(\xi)v,\qquad z\in\Omega_1\cup\Omega_2,\\
&\Im\theta(z)\geq\tau(\xi)v,\qquad\quad z\in\Omega_3\cup\Omega_4.
\end{align}
\end{subequations}
\end{cor}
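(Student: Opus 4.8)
The plan is to deduce Corollary \ref{cor-im} directly from Proposition \ref{PP-theta} by converting the bounds expressed through $F(|z|)=|z|+|z|^{-1}$ into bounds that are linear in $v=\Im z$. First I would fix the constant: for $\xi\in(2,\infty)$ set $\tau(\xi)=\frac{\xi}{4}-\frac{1}{\cos2\psi+1}$, and for $\xi\in(-\infty,-1/4)$ set $\tau(\xi)=-\frac{\xi}{4}-\frac{1}{4(\cos2\psi+1)}$. These are precisely the coefficients occurring in the estimates \eqref{theta-est1}--\eqref{theta-est4}, and the defining conditions $\cos2\psi>\frac{4}{\xi}-1$ and $\cos2\psi>-\frac{1}{\xi}-1$ on the fixed angle $\psi$ guarantee $\tau(\xi)>0$ in each respective case.

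The key elementary step is to compare $|v|$ with $|\sin\psi|\,F(|z|)$ uniformly over $\Omega=\cup_{k=1}^{4}\Omega_k$. Writing $z=|z|e^{i\phi}$, in each sector $\Omega_k$ the argument $\phi$ lies within distance $\psi$ of $0$ or of $\pi$, so that $|\sin\phi|\le\sin\psi$. Combining this with the trivial lower bound $F(|z|)=|z|+|z|^{-1}\ge|z|$ gives
\begin{equation*}
|v|=|z|\,|\sin\phi|\le|z|\sin\psi\le F(|z|)\,|\sin\psi|,
\end{equation*}
which holds throughout $\Omega$.

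It then remains to feed this into Proposition \ref{PP-theta}, distinguishing the sign of $v$. On the upper sectors $\Omega_1\cup\Omega_2$ one has $v\ge0$, hence $|v|=v$; for $\xi>2$ the lower estimate \eqref{theta-est1} yields $\Im\theta(z)\ge\tau(\xi)\,|\sin\psi|F(|z|)\ge\tau(\xi)|v|=\tau(\xi)v$, and for $\xi<-1/4$ the estimate \eqref{theta-est3} gives the reversed inequality $\Im\theta(z)\le-\tau(\xi)v$. On the lower sectors $\Omega_3\cup\Omega_4$ one has $v\le0$, hence $|v|=-v$; for $\xi>2$ the estimate \eqref{theta-est2} gives the sharp bound $\Im\theta(z)\le-\tau(\xi)|\sin\psi|F(|z|)\le-\tau(\xi)|v|=\tau(\xi)v$, and since $v\le0$ forces $\tau(\xi)v\le-\tau(\xi)v$ this in particular produces the stated inequality $\Im\theta(z)\le-\tau(\xi)v$; the case $\xi<-1/4$ is entirely symmetric, using \eqref{theta-est4}.

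The argument is essentially bookkeeping: the only point requiring care is tracking the sign of $v$ across the four sectors and pairing it correctly with the sign of $\Im\theta$, together with verifying $\tau(\xi)>0$ from the angle conditions on $\psi$. I do not expect a genuine analytic obstacle here, since the oscillatory decay has already been quantified in Proposition \ref{PP-theta}; the corollary merely repackages that estimate in the form linear in $v$ that is convenient for the subsequent $\overline{\partial}$ and small-norm analysis, where integration is carried out in the transverse variable.
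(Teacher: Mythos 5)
Your deduction is, in outline, exactly what the paper intends: Corollary \ref{cor-im} is stated without a separate proof as an immediate repackaging of Proposition \ref{PP-theta}, your choice of $\tau(\xi)$ as the bracketed coefficient is the right one, the positivity of $\tau(\xi)$ from the angle conditions on $\psi$ is correctly observed, and your sign bookkeeping in the four sectors is careful (you even notice that on $\Omega_3\cup\Omega_4$ you obtain the sharper bound $\Im\theta\le\tau(\xi)v$, of which the printed $\Im\theta\le-\tau(\xi)v$ is a weak consequence).

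One point deserves attention. Your bridge inequality $|v|=|z|\,|\sin\phi|\le|\sin\psi|\,F(|z|)$ is true, and combined with the \emph{literal} statement of Proposition \ref{PP-theta} it closes the argument. But the literal statement, with the constant factor $|\sin\psi|$, cannot hold uniformly on the closed sectors: on the real axis (which belongs to $\overline{\Omega}_1$) one has $\Im\theta=0$ while $|\sin\psi|F(|z|)\bigl(\tfrac{\xi}{4}-\tfrac{1}{\cos2\psi+1}\bigr)$ stays bounded away from zero. What the proof of Proposition \ref{PP-theta} actually delivers is the pointwise estimate
\begin{equation*}
\Im\theta(z)\;\ge\;\sin\phi\,F(|z|)\left(\frac{\xi}{4}-\frac{1}{\cos2\psi+1}\right),
\qquad z\in\Omega_1\cup\Omega_2,\ \xi>2,
\end{equation*}
with $\sin\phi$ in place of $|\sin\psi|$. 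With that version your comparison must go the other way: one uses $F(|z|)=|z|+|z|^{-1}\ge|z|$ and $v=|z|\sin\phi\ge0$ to get $\sin\phi\,F(|z|)\ge\sin\phi\,|z|=v$, whence $\Im\theta\ge\tau(\xi)v$; the other three cases are symmetric. So the conclusion and the constants are right, but the one-line chain should be routed through $F(|z|)\ge|z|$ rather than through the upper bound $|\sin\phi|\le\sin\psi$, since the latter pairs with a form of the proposition that is only valid on the boundary rays $\Sigma_k$ and not in the interior of the sectors.
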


In what follows, we will turn our attention to the continuous continuation of the jump matrix $V^{(1)}(z)$. For this purpose, we first define
\begin{align}
&r_{1}(z,\xi)=r_{2}(z,\xi)=\left\{\begin{aligned}
&\overline{r}(z),&&\xi>2,\\
&-\frac{r(z)}{1-|r(z)|^2},&&\xi<-1/4.
\end{aligned}\right.\\
&r_{3}(z,\xi)=r_{4}(z,\xi)=\left\{\begin{aligned}
&r(z),&&\xi>2,\\
&-\frac{\overline{r}(z)}{1-|r(z)|^2},&&\xi<-1/4.
\end{aligned}\right.
\end{align}
In order to eliminate the jump on the real axis $\D{R}$ and generate two steepest descent lines, it is necessary to introduce the matrix $R^{(2)}(z)$ with good $\overline{\partial}$-derivatives.\\
For $\xi\in(2,+\infty)$:
\begin{equation}
	R^{(2)}(z,\xi)=\left\{\begin{array}{lll}
		\left(\begin{array}{cc}
			1 & 0\\
			R_j(z,\xi)e^{2it\theta} & 1
		\end{array}\right), & z\in \Omega_j,j=1,2,\\
		\\
		\left(\begin{array}{cc}
			1 & R_j(z,\xi)e^{-2it\theta}\\
			0 & 1
		\end{array}\right),  &z\in \Omega_j,j=3,4,\\
		\\
		I,  &elsewhere;\\
	\end{array}\right.\label{R(2)+}
\end{equation}

For $\xi\in(-\infty,-1/4)$:
\begin{equation}
R^{(2)}(z,\xi)=\left\{\begin{array}{lll}
\left(\begin{array}{cc}
1 & R_j(z,\xi)e^{-2it\theta}\\
0 & 1
\end{array}\right), & z\in \Omega_j,j=1,2,\\
\\
\left(\begin{array}{cc}
1 & 0\\
R_j(z,\xi)e^{2it\theta} & 1
\end{array}\right),  &z\in \Omega_j,j=3,4,\\
\\
I,  &elsewhere,\\
\end{array}\right.\label{R(2)-}
\end{equation}
where the functions $R_{j}(z,\xi)$, $j=1,2,3,4$ are defined in the following proposition.
\begin{prop}\label{PP-R2}
The functions $R_j(z,\xi):\overline{\Omega}\rightarrow\D{C}$ have the boundary values:\\

For $\xi\in(2,+\infty)$:
\begin{subequations}
\begin{align}
&R_1(z,\xi)=\left\{\begin{aligned}
&r_1(z,\xi)T(z)^{-2}, & z\in \mathbb{R}^+,\\
&0,  &z\in \Sigma_1,\\
\end{aligned}\right.  \\
&R_2(z,\xi)=\left\{\begin{aligned}
&r_2(z,\xi)T(z)^{-2}, &z\in  \mathbb{R}^-,\\
			&0,  &z\in \Sigma_2,\\
\end{aligned}\right. \\
&R_3(z,\xi)=\left\{\begin{aligned}
			&r_3(z,\xi)T(z)^{2}, &z\in \mathbb{R}^-, \\
			&0, &z\in \Sigma_3,\\
\end{aligned}\right.\\
&R_4(z,\xi)=\left\{\begin{aligned}
&r_4(z,\xi)T(z)^{2}, &z\in  \mathbb{R}^+,\\
			&0,  &z\in \Sigma_4,\\		
\end{aligned}\right.
\end{align}	
\end{subequations}

for the case $\xi\in(-\infty,-1/4)$,
\begin{subequations}
\begin{align}
&R_1(z,\xi)=\left\{\begin{aligned}
&r_1(z,\xi)T_+(z)^{2}, & z\in \mathbb{R}^+,\\
&0,  &z\in \Sigma_1,\\
\end{aligned}\right. \\
&R_2(z,\xi)=\left\{\begin{aligned}
&r_2(z,\xi)T_+(z)^{2}, &z\in  \mathbb{R}^-,\\
			&0,  &z\in \Sigma_2,\\
\end{aligned}\right. \\
&R_3(z,\xi)=\left\{\begin{aligned}
			&r_3(z,\xi)T_-(z)^{-2}, &z\in \mathbb{R}^-, \\
			&0, &z\in \Sigma_3,\\
\end{aligned}\right.\\
&R_4(z,\xi)=\left\{\begin{aligned}
&r_4(z,\xi)T_-(z)^{-2}, &z\in  \mathbb{R}^+,\\
			&0,  &z\in \Sigma_4,\\		
\end{aligned}\right.
\end{align}	
\end{subequations}
where the matrix-valued functions $R_{k}(z,\xi)$ such that for $k=1,2,3,4$
\begin{align}\label{R2-D}
\left|\overline{\partial}R_{k}(z,\xi)\right|\lesssim \left|r'_k(|z|)\right|+|z|^{-1/2}, \qquad  z\in\Omega_k,
\end{align}
furthermore
\begin{align}
&\left|\overline{\partial}R_{k}(z,\xi)\right|\lesssim \left|r'_k(|z|)\right|+|z|^{-1}, &  z\in\Omega_k,\label{R2-D1}\\
&\left|\overline{\partial}R_{k}(z,\xi)\right|\equiv0,&z\in\text{elsewhere}.
\end{align}
\end{prop}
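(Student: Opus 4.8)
The plan is to construct $R_k(z,\xi)$ explicitly on each sector $\Omega_k$ by interpolating between the prescribed boundary data: the function $r_k(z,\xi)$ (multiplied by the appropriate power of $T$) on the portion of $\R$ bounding $\Omega_k$, and $0$ on the ray $\Sigma_k$. A standard device (as in \cite{DM-2008,AIHP,YF-AM}) is to write, for $z=u+iv\in\Omega_1$ say,
\begin{align*}
R_1(z,\xi)=\bigl(r_1(|z|,\xi)T(z)^{-2}\bigr)\cos\!\Bigl(\tfrac{\pi}{2}\cdot\tfrac{\arg z}{\psi}\Bigr)
+\bigl(1-\cos(\cdots)\bigr)\mathcal{R}(z),
\end{align*}
or more simply a product of a smooth cutoff $\chi(\arg z/\psi)$ equal to $1$ near $\R$ and $0$ near $\Sigma_k$ times a suitable extension of $r_k$ off the real line; one then patches in the rational prefactors $T^{\pm2}$, which are smooth and nonvanishing on $\overline{\Omega}_k$ away from the stationary points (there are none here) and from $z=\pm1$ (which lie on $\R$, but the behaviour there is governed by Proposition~\ref{pp-S}(iv)--(v) and Proposition~\ref{PP-T}, so $r_kT^{\pm2}$ stays bounded). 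Because $r\in H^{1,1}(\R)$ by the mapping $\mathcal D$, $r$ is $\tfrac12$-H\"older and we may use its derivative in an $L^2$ sense, which is exactly what feeds the $|r_k'(|z|)|$ term on the right-hand side of \eqref{R2-D}.

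The key steps, in order: (1) fix the smooth angular cutoff and write down the explicit formula for $R_k$ on each $\Omega_k$, checking it matches $r_k(z,\xi)T(z)^{\mp2}$ (resp. $T_\pm(z)^{\pm2}$ in the $\xi<-1/4$ case) on the bounding segment of $\R$ and vanishes identically on $\Sigma_k$ and outside $\Omega$; (2) compute $\overline\partial R_k$. Writing $z=|z|e^{i\phi}$, one has $\overline\partial=\tfrac12 e^{i\phi}\bigl(\partial_{|z|}+\tfrac{i}{|z|}\partial_\phi\bigr)$, so $\overline\partial R_k$ splits into a term hitting the radial dependence of $r_k(|z|,\xi)$ — producing $r_k'(|z|)$ — and a term hitting the angular cutoff, producing a factor $\tfrac{1}{|z|}$ times a bounded function supported in $\Omega_k$. (3) Bound the cross terms: the only subtlety is the replacement of $|z|^{-1}$ by $|z|^{-1/2}$ in the weaker estimate \eqref{R2-D}. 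This comes from writing $r_k(|z|,\xi)-r_k(\mathrm{Re}\,z,\xi)$ (the "error" introduced by evaluating $r$ at $|z|$ rather than on $\R$) and estimating it by $\|r_k'\|_{L^2}|z-\mathrm{Re}\,z|^{1/2}\lesssim \|r\|_{H^1}|v|^{1/2}$, then combining with the $|z|^{-1}$ from the cutoff derivative; on $\Omega_k$, $|v|^{1/2}|z|^{-1}\lesssim |z|^{-1/2}$. For the sharper bound \eqref{R2-D1} one instead assumes more regularity/decay and keeps the $|z|^{-1}$ directly. (4) Record that $\overline\partial R_k\equiv 0$ off $\Omega$ since $R^{(2)}\equiv I$ there, and that the $T^{\pm2}$ factors contribute nothing to $\overline\partial$ since $T$ is holomorphic on $\C\setminus\R$ (its jump sits on $\Sigma_b(\xi)\subset\R$, which is disjoint from the open sectors).

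The main obstacle is controlling the prefactors $T(z)^{\pm2}$ and $T_\pm(z)^{\pm 2}$ near the real axis, and more precisely ensuring the estimates hold uniformly as $z$ approaches $\R$ and as $|z|\to\infty$ and $|z|\to0$. Near $z=\pm1$ one must use the precise singular structure of $r$ and $T$ (Propositions~\ref{pp-S} and \ref{PP-T}) to see that $r_k T^{\pm2}$ remains bounded with controlled derivative; near $z=0$ and $z=\infty$ one uses $r(0)=0$, $r(z)\to0$, $T(0)=T(\infty)=1$, together with the factor $F(|z|)=|z|+|z|^{-1}$ appearing in Corollary~\ref{cor-im} so that the exponentials $e^{\pm2it\theta}$ still decay. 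In the $\xi<-1/4$ case the non-tangential boundary values $T_\pm$ differ across $\Sigma_b(\xi)=\R$, so one must be careful to extend from above with $T_+$ in $\Omega_1,\Omega_2$ and from below with $T_-$ in $\Omega_3,\Omega_4$, matching the one-sided limits built into the definition of $R_j$ in Proposition~\ref{PP-R2}; since each $R_j$ is extended into a single half-plane this is consistent, and $\overline\partial R_j$ is still well-defined and supported in the open sector. Once these uniform bounds are in place, the two displayed inequalities \eqref{R2-D}--\eqref{R2-D1} follow by the chain-rule computation of step (3).
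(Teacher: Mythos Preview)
Your overall strategy matches the paper's exactly: define $R_k$ on $\Omega_k$ by the explicit formula
\[
R_1(z)=r_1(|z|)\,T(z)^{-2}\cos\!\Bigl(\tfrac{\pi}{2\psi}\arg z\Bigr),
\]
check the boundary values, and compute $\overline\partial$ in polar coordinates via $\overline\partial=\tfrac{e^{i\phi}}{2}(\partial_\rho+i\rho^{-1}\partial_\phi)$ to obtain a radial term $\sim r_1'(|z|)$ and an angular term $\sim r_1(|z|)/|z|$, with $T^{\pm2}$ bounded and holomorphic on the open sectors.

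Where your write-up diverges from the paper is step (3), and there it is overcomplicated in a way that obscures the actual mechanism. You propose comparing $r_k(|z|)$ with $r_k(\Re z)$ and invoking $|v|^{1/2}|z|^{-1}\lesssim|z|^{-1/2}$; but no such comparison is needed, because the angular derivative already produces $r_1(|z|)/|z|$ directly. The paper's route is shorter: since $r(0)=0$ (Proposition~\ref{pp-S}),
\[
|r_1(|z|)|=|r_1(|z|)-r_1(0)|=\Bigl|\int_0^{|z|}r_1'(s)\,ds\Bigr|\le \|r_1'\|_{L^2}\,|z|^{1/2},
\]
so $|r_1(|z|)|/|z|\lesssim|z|^{-1/2}$, which is \eqref{R2-D}. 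The sharper bound \eqref{R2-D1} then follows immediately from $r_1\in L^\infty$, giving $|r_1(|z|)|/|z|\lesssim|z|^{-1}$. Your extra discussion of the behaviour near $z=\pm1$ and of $T_\pm$ in the $\xi<-1/4$ case is correct and useful context, but the paper absorbs all of it into the single assertion that $T$ and $T^{-1}$ are bounded on $\overline{\Omega}$.
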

\begin{proof}
Taking $R_{1}(z,\xi)$ in the region $\xi\in(2,+\infty)$  as example, it can be proved that other situations are similar. We now construct a new function
\begin{align}\label{con-f}
f_1(z)=r_1(|z|)T^{-2}(z)\cos(z_0\arg z), \qquad z_0=\frac{\pi}{z\phi}.
\end{align}
Let $z=\rho e^{i\phi}$. Observing that the function \eqref{con-f} has the same boundary values on the contour $\Sigma_1$ and $\D{R}$, which means that the function we construct is appropriate. Applying the fact that $\overline{\partial}=\frac{e^{i\phi}}{2}\left(\partial_\rho +i\rho \partial_\phi\right)$ to the function $f_1(z)$ yields that
\begin{align}
\overline{\partial}f_1(z)=\frac{e^{i\phi}}{2}T^{-2}(z)\left(r'_1(\rho)\cos(z_0\phi)-\frac{iz_0}{\rho}r_1(\rho)
\sin(z_0\phi)\right).
\end{align}
It follows from the boundedness of functions $T(z)$ and $T(z)^{-1}$ that
\begin{align}\label{f-ds}
\left|\overline{\partial}f_1(z)\right|\leq|r'_1(|z|)|+\left|\frac{r_1(|z|)}{|z|}\right|.
\end{align}
On the other hand,
\begin{align}\label{ine}
|r_1(|z|)|&=|r_1(|z|)-r_1(0)|=\left|\int_{0}^{|z|}r'_1(s)\ ds\right|\nonumber\\
&\leq||r'_1(s)||_{L^{2}(\D{R})}|z|^{1/2}\lesssim |z|^{1/2}.
\end{align}
Combining the inequality \eqref{ine} with \eqref{f-ds} yields \eqref{R2-D}. Additionally, Observing the fact $r_1(|z|)\in L^{\infty}$, one has  \eqref{R2-D1}, which completes the proof of the proposition.
\end{proof}

Based on the above analysis, a new unknown function $M^{(2)}(z):=M^{(2)}(y,t,z)$ can be further defined by $R^{(2)}(z)$ as follows
\begin{align}\label{M2}
M^{(2)}(z)=M^{(1)}(z)R^{(2)}(z),
\end{align}
which solves a mixed RH problem.
\begin{RHP}\label{RHP-M2}
Find a matrix-valued function $M^{(2)}(z)$ satisfy the following conditions:
\begin{enumerate}[(I)]
\item The function $M^{(2)}(z)$ is continuous in $\D{C}\setminus\Sigma^{(2)}$ and takes continuous boundary values $M^{(2)}_{\pm}(z)$ on $\Sigma^{(2)}$ from the left (respectively right).
\item $M^{(2)}(z)=\sigma_1\overline{M^{(2)}(\overline{z})}\sigma_1=
    \sigma_2M^{(2)}(-z)\sigma_2=\sigma_1M^{(2)}(z^{-1})\sigma_1$.
\item Jump condition: The non-tangential limits $M^{(2)}_{\pm}(z)$ for $z\in\D{R}\setminus\{\pm1\}$
\begin{align}\label{JM2}
M^{(2)}_{+}(z)=M^{(2)}_{-}(z)V^{(2)}(z),~~~~~z\in\Sigma^{(2)},
\end{align}
where the jump matrix $V^{(2)}(z)$ is defined by
\begin{align}\label{Jump-M2}
V^{(2)}(z)=\left\{\begin{aligned}
&\left(\begin{array}{cc}
			1 & 0\\
			-C_n(z-\eta_n)^{-1}T^{-2}(z)e^{2it\theta_n} & 1
		\end{array}\right),   \qquad z\in\mathbb{P}_n,n\in\nabla\setminus\Lambda,\\[12pt]
&\left(\begin{array}{cc}
			1 & -C_n^{-1}(z-\eta_n)T^{2}(z)e^{-2it\theta_n}\\
			0 & 1
		\end{array}\right),   ~~\qquad z\in\mathbb{P}_n,n\in\Delta\setminus\Lambda,\\
&\left(\begin{array}{cc}
		1 & \overline{C}_n(z-\overline{\eta}_n)^{-1}T^{2}(z)e^{-2it\overline{\theta}_n}\\
		0 & 1
		\end{array}\right), ~~~~\qquad 	z\in\overline{\mathbb{P}}_n,n\in\nabla\setminus\Lambda,\\
&\left(\begin{array}{cc}
		1 & 0	\\
		\overline{C}_n^{-1}(z-\overline{\eta}_n)T^{-2}(z)e^{2it\bar{\theta}_n} & 1
		\end{array}\right),   \qquad\quad	z\in\overline{\mathbb{P}}_n,n\in\Delta\setminus\Lambda.
\end{aligned}\right.
\end{align}
\item Asymptotic behavior:
\begin{align}\label{Asy-M2}
&M^{(2)}(z)=I+O(z^{-1}), z\to\infty~ (\text{and}~ z\to0 ~\text{by symmetry}),\\
&M^{(2)}(z)=\left(
        \begin{array}{cc}
          a_1(y,t) & 0 \\
          0 & a_1^{-1}(y,t) \\
        \end{array}
      \right)+\left(
        \begin{array}{cc}
          0 & a_2(y,t) \\
          a_3(y,t) & 0 \\
        \end{array}
      \right)(z-i)+O((z-i)^2),
\end{align}
where $a_j(y,t)$, $j=1,2,3$ are real-valued functions from the expansion of $M^{(2)}(z)$ at $z=i$.
\item Residue conditions: The matrix $M^{(2)}(z)$ has simple poles at $\eta_n$ and $\overline{\eta}_n$ with
\begin{subequations}\label{res-M2}
\begin{align}
&\mathop{\res}\limits_{\eta_n} M^{(2)}(z)=\lim_{z\to\eta_n} M^{(2)} (z)\left(
                                                  \begin{array}{cc}
                                                    0 & 0 \\
                                                    c_nT^2(z)e^{2it\theta(\eta_n)} & 0 \\
                                                  \end{array}
                                                \right), \label{resM2} \\
&\mathop{\res}\limits_{\overline{\eta}_n} M^{(2)}(z)=\lim_{z\to\overline{\eta}_n} M^{(2)} (z)\left(
                                                  \begin{array}{cc}
                                                    0 & \overline{c}_nT^{-2}(z)e^{-2it\overline{\theta}(\eta_n)} \\
                                                    0 & 0 \\
                                                  \end{array}\right).
\end{align}
\end{subequations}
\item The $\overline{\partial}$-derivative: For $z\in\D{C}$
\begin{align}\label{R2-Dbar}
\overline{\partial}M^{(2)}(z)=M^{(2)}(z)\overline{\partial}R^{(2)}(z),
\end{align}
where for $\xi\in(2,+\infty)$
\begin{align*}
\overline{\partial}R^{(2)}(z)=\left\{\begin{aligned}
&\left(
  \begin{array}{cc}
    0 & 0 \\
    \overline{\partial}R_j(z,\xi)e^{2it\theta} & 0 \\
  \end{array}
\right), &z\in\Omega_j,~~j=1,2,\\
&\left(
  \begin{array}{cc}
    0 & \overline{\partial}R_j(z,\xi)e^{-2it\theta} \\
    0 & 0 \\
  \end{array}
\right), &z\in\Omega_j,~~j=3,4,\\
&\textbf{0},&z\in\text{elsewhere},
\end{aligned}\right.
\end{align*}
for $\xi\in(-\infty,-1/4)$
\begin{align*}
\overline{\partial}R^{(2)}(z)=\left\{\begin{aligned}
&\left(
  \begin{array}{cc}
    0 & \overline{\partial}R_j(z,\xi)e^{-2it\theta} \\
    0 & 0 \\
  \end{array}
\right), &z\in\Omega_j,~~j=1,2,\\
&\left(
  \begin{array}{cc}
    0 & \overline{\partial}R_j(z,\xi)e^{2it\theta} \\
    0 & 0 \\
  \end{array}
\right), &z\in\Omega_j,~~j=3,4,\\
&\textbf{0},&z\in\text{elsewhere}.
\end{aligned}\right.
\end{align*}
\end{enumerate}
\end{RHP}

\subsection{Decomposition of mixed RH problem}\label{subsec4.2}
In this section, RH problem \ref{RHP-M2} will be solved according to the classification of $\overline{\partial}$-derivatives $\overline{\partial}R^{(2)}(z)$. One is that $\overline{\partial}R^{(2)}(z)\equiv0$ corresponds to the pure soliton solution $M^{(rhp)}(z)$, and the other is that $\overline{\partial}R^{(2)}(z)\neq0$ corresponds to the pure $\overline{\partial}$-problem $M^{(3)}(z)$, which can be solved by double integral. First, consider the pure RH problem, that is, the corresponding $\overline{\partial}$-derivative $\overline{\partial}R^{(2)}(z)$ is constant to zero.
\begin{RHP}\label{RHP-Mrhp}
A matrix-valued function $M^{(rhp)}(z):=M^{(rhp)}(y,t,z)$ with the same symmetry, analytic property and asymptotic property as $M^{(2)}(z)$ satisfies the following conditions
\begin{enumerate}[(1)]
\item Jump condition: The non-tangential limits $M^{(rhp)}_{\pm}(z)$ for $z\in\D{R}\setminus\{\pm1\}$
\begin{align}\label{JMrhp}
M^{(rhp)}_{+}(z)=M^{(rhp)}_{-}(z)V^{(rhp)}(z),~~~~~z\in\Sigma^{(2)},
\end{align}
where the jump matrix $V^{(rhp)}(z)$ is defined by \eqref{Jump-M2}.
\item The $\overline{\partial}$-derivative: For $z\in\D{C}$, $\overline{\partial}R^{(2)}(z)\equiv0$.
\item Residue conditions: The matrix $M^{(rhp)}(z)$ has simple poles at $\eta_n$ and $\overline{\eta}_n$ with
\begin{subequations}\label{res-Mrhp}
\begin{align}
&\mathop{\res}\limits_{\eta_n} M^{(rhp)}(z)=\lim_{z\to\eta_n} M^{(rhp)} (z)\left(
                                                  \begin{array}{cc}
                                                    0 & 0 \\
                                                    c_nT^2(z)e^{2it\theta(\eta_n)} & 0 \\
                                                  \end{array}
                                                \right), \label{resMr} \\
&\mathop{\res}\limits_{\overline{\eta}_n} M^{(rhp)}(z)=\lim_{z\to\overline{\eta}_n} M^{(rhp)} (z)\left(
                                                  \begin{array}{cc}
                                                    0 & \overline{c}_nT^{-2}(z)e^{-2it\overline{\theta}(\eta_n)} \\
                                                    0 & 0 \\
                                                  \end{array}\right).
\end{align}
\end{subequations}
\end{enumerate}
\end{RHP}

The pure $\overline{\partial}$-problem can be obtained by removing the soliton component $M^{(rhp)}(z)$
\begin{align}\label{M3}
M^{(3)}(z)=M^{(2)}(z)M^{(rhp)}(z)^{-1},
\end{align}
which solves the following pure $\overline{\partial}$-problem.
\begin{DRHP}\label{RHP-Dbar}
Find a matrix-valued function $M^{(3)}(z)$ such that
\begin{enumerate}[$($$I$$)$]
\item $M^{(3)}(z)$ is continuous in $\D{C}$, and analytic in $\D{C}\setminus\overline{\Omega}$.
\item $M^{(3)}(z)\to I$ as $z\to \infty$.
\item $\overline{\partial}$-derivative: For $z\in\D{C}$, one has
\begin{align}\label{W3}
\overline{\partial}M^{(3)}(z)=M^{(3)}(z)W^{(3)}(z),
\end{align}
where $W^{(3)}(z)=M^{(rhp)}(z)\overline{\partial}R^{(2)}(z)M^{(rhp)}(z)^{-1}$.
\end{enumerate}
\end{DRHP}
\begin{proof}
The analytic properties of $M^{(3)}(z)$ can be obtained by using the properties of $M^{(2)}(z)$ and $M^{(rhp)}(z)$  in RH problems \ref{RHP-M2} and  \ref{RHP-Mrhp}, respectively. Observing that $M^{(2)}(z)$ and $M^{(rhp)}(z)$ have the same jump conditions $V^{(2)}(z)$ and $V^{(rhp)}(z)$ ($V^{(2)}(z)\equiv V^{(rhp)}(z)$), then
\begin{align*}
M_-^{(3)}(z)^{-1}M_+^{(3)}(z)&=M_-^{(2)}(z)^{-1}M_-^{(rhp)}(z)M_+^{(rhp)}(z)^{-1}M_+^{(2)}(z)\\
&=M_-^{(2)}(z)^{-1}V^{(2)}(z)^{-1}M_+^{(2)}(z)=I,
\end{align*}
which means that $M^{(3)}(z)$ is no jump condition and is continuous. Meanwhile, $M^{(3)}(z)$ has no poles for $\xi\in\{\eta_n,\overline{\eta}_n\} $ with $n\in\Lambda$. Let $A_\xi$ be the nilpotent matrix appearing in the residue conditions of RH problems \ref{RHP-M2} and \ref{RHP-Mrhp}, and then we have a local expansion in the neighborhood of $\xi$
\begin{align}
&M^{(2)}(z)=\alpha(\xi) \left[ \dfrac{A_\xi}{z-\xi}+I\right] +\mathcal{O}(z-\xi),\nonumber\\
&	M^{(rhp)}(z)=\beta(\xi) \left[ \dfrac{A_\xi}{z-\xi}+I\right] +\mathcal{O}(z-\xi),\nonumber
\end{align}
where $\alpha(\xi)$ and $\beta(\xi)$ are constant coefficients of the corresponding expansion respectively.
Taking the product yields that $M^{(3)}(z)=\mathcal{O}(1)$, which indicates that the poles of $M^{(3)}(z)$  are removable and $M^{(3)}(z)$ is boundary locally. The proof of $\overline{\partial}$-derivative for $M^{(3)}(z)$ follows the following process
\begin{align*}
\overline{\partial}M^{(3)}(z)&=\overline{\partial}(M^{(2)}(z)M^{(rhp)}(z)^{-1})
=M^{(2)}(z)\overline{\partial}R^{(2)}(z)M^{(rhp)}(z)^{-1}\\&=M^{(2)}(z)M^{(rhp)}
(z)^{-1}M^{(rhp)}(z)\overline{\partial}R^{(2)}(z)M^{(rhp)}(z)^{-1}\\
&=M^{(3)}(z)M^{(rhp)}(z)\overline{\partial}R^{(2)}(z)M^{(rhp)}(z)^{-1}\triangleq M^{(3)}(z)W^{(3)}(z).
\end{align*}
\end{proof}

\subsection{Asymptotic of $N$-soliton solution}\label{subsec4.3}
In this section, the solution of the original RH problem \ref{RHP-M} without reflection will be established to approximate the solution of RH problem \ref{RHP-Mrhp}. For $\overline{\partial}R^{(2)}(z)=0$, RH problem \ref{RHP-M2} is reduced to RH problem \ref{RHP-Mrhp}. The existence and uniqueness of solutions of RH problem \ref{RHP-Mrhp} can be guaranteed by connecting the solution of RH problem \ref{RHP-Mrhp} with the original RH problem  \ref{RHP-M}.

\begin{prop}\label{pp-Mrhp}
Let $M^{(rhp)}(z)$ be the solution to RH problem \ref{RHP-Mrhp} with $\overline{\partial}R^{(2)}(z)\equiv0$. For the scattering data $\{r(z),(c_n,\eta_n)_{n\in N}\}$ in  RH problem \ref{RHP-Mrhp}, the solution to RH problem  \ref{RHP-Mrhp} exists and is equivalent to the solution to RH problem \ref{RHP-M} with reflection-less scattering data $\{0,(\widetilde{c}_n,\eta_n)_{n\in N}\}$, where the modified connection coefficients $\widetilde{c}_n:=\widetilde{c}_n(x,t)$ are defined by
\begin{align}
\widetilde{c}_n=c_n\exp\left(-\frac{1}{i\pi}\int_{\Sigma_{b}(\xi)}\frac{\ln(1-|r(s)|^2)}{s-z}\ ds\right),
\end{align}
where $r(z)$ is reflection coefficient defined by Proposition \ref{pp-S}.
\end{prop}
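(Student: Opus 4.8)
The plan is to prove existence and uniqueness, and then the asserted equivalence, by \emph{reversing} the two deformations that turned the original problem \ref{RHP-M} into \ref{RHP-Mrhp}: the conjugation by $T(z)^{\sigma_3}$ and the residue-to-jump interpolation $F(z)$ recorded in \eqref{M1} (recall that $R^{(2)}(z)\equiv I$ once $\overline{\partial}R^{(2)}(z)\equiv0$ and, in the target, $r\equiv0$).

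\textbf{Step 1 (existence and uniqueness).} Since $\overline{\partial}R^{(2)}(z)\equiv0$ and there is no jump on $\D{R}$, RH problem \ref{RHP-Mrhp} is a purely meromorphic problem whose only data are the jumps on the finitely many circles $\partial\D{P}_n,\partial\overline{\D{P}}_n$ with $n\in N\setminus\Lambda$ and the residues \eqref{res-Mrhp} at $\{\eta_n,\overline{\eta}_n\}_{n\in\Lambda}$. I would first reduce it to the associated reflectionless instance of problem \ref{RHP-M} with $r\equiv0$, which carries simple poles at all $\eta_n,\overline{\eta}_n$ and the normalization $M\to I$. A reflectionless problem of this type admits an explicit solution by expanding $M$ into a partial-fraction ansatz and imposing \eqref{res-M}; this turns the problem into a finite linear algebraic system. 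Uniqueness follows from $\det M^{(rhp)}(z)\equiv1$ together with Liouville's theorem, and the nonsingularity of the linear system (hence solvability) follows from a vanishing-lemma argument built on the symmetry reduction (b) of RH problem \ref{RHP-M}, which forces the homogeneous problem to have only the trivial solution.

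\textbf{Step 2 (equivalence).} I would then undo the interpolation $F(z)$, converting each circle jump on $\partial\D{P}_n,\partial\overline{\D{P}}_n$ ($n\in N\setminus\Lambda$) back into a residue at $\eta_n,\overline{\eta}_n$; after this step $M^{(rhp)}$ carries residue data at \emph{all} $\eta_n$ with coefficient $c_nT^2(z)e^{2it\theta(\eta_n)}$, as dictated by \eqref{resMr}. Undoing the conjugation by $T(z)^{\sigma_3}$ and using $T(\infty)=1$, $T(0)=1$ and the symmetry of $T$ from Proposition \ref{PP-T} then returns a function solving the reflectionless problem \ref{RHP-M}. Writing $T=\prod_{n\in\Delta}\frac{z-\eta_n}{\overline{\eta}_n^{-1}z-1}\,\delta(z)$ as in \eqref{T}, the scalar factor $\delta(\eta_n)^2=\exp\bigl(-\tfrac{1}{i\pi}\int_{\Sigma_b(\xi)}\tfrac{\ln(1-|r(s)|^2)}{s-\eta_n}\,ds\bigr)$ is precisely the exponential in the definition of $\widetilde{c}_n$, so that the effective norming constant at $\eta_n$ becomes $\widetilde{c}_n$. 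The Blaschke factors of $T$ implement the exchange between poles and zeros at the nodes $n\in\Delta$ (where $e^{2it\theta}$ grows), and are reabsorbed consistently by the symmetry structure and the reflectionless normalization.

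\textbf{Main obstacle.} The principal difficulty is the careful bookkeeping of the factor $T(z)$ through the two reversals carried out simultaneously. One must check that the residue-trading and the conjugation are compatible at each node, that the Blaschke zeros of $T$ at $\eta_n$ for $n\in\Delta$ produce exactly the required pole/zero interchange without creating spurious singularities near the critical set $\Lambda$, and that the net modification of the connection coefficients is precisely $\widetilde{c}_n$. In addition, preserving the local data at $z=i$ and $z=\pm1$, needed later for the reconstruction \eqref{q-sol}, requires the expansion \eqref{expT0} and the value \eqref{Ti}, which must be tracked through the transformation.
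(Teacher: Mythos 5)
Your proposal follows essentially the same route as the paper: invert the interpolation $F(z)$ and the Blaschke-product part of $T(z)^{\sigma_3}$ to map $M^{(rhp)}$ onto the reflectionless instance of RH problem \ref{RHP-M}, observe that the residue coefficients pick up exactly the factor $\delta(\eta_n)^2=\exp\bigl(-\tfrac{1}{i\pi}\int_{\Sigma_b(\xi)}\tfrac{\ln(1-|r(s)|^2)}{s-\eta_n}\,ds\bigr)$, and conclude existence and uniqueness from the solvability of the reflectionless problem together with Liouville's theorem. Your Step 1 (partial-fraction ansatz plus a vanishing-lemma argument for the resulting linear system) actually spells out the existence claim more carefully than the paper does, but it is the same argument in substance.
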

\begin{proof}
With $\overline{\partial}R^{(2)}(z)\equiv0$, then the RH problem \ref{RHP-Mrhp} for $M^{(rhp)}(z)$ degenerates into a piecewise meromorphic function with discontinuous jumps on the contour $\Sigma^{(2)}$. Observing the solutions   $M^{(rhp)}(z)$ and $M(z)$, we need restore the poles at $\eta_n$ and $\overline{\eta}_n$ for $M^{(rhp)}(z)$. Reversing the  triangularity effected by \eqref{T} and \eqref{M1} yields that
\begin{align}\label{MJ}
M^{(\natural)}(z)=M^{(rhp)}(z)G(z)\left(\prod_{n\in\Delta}\frac{z-\eta_n}{
\overline{\eta}_n^{-1}z-1}\right)^{-\sigma_3},
\end{align}
where
\begin{equation*}
G(z)=\left\{ \begin{array}{ll}
\left(\begin{array}{cc}
			1 & 0\\
			C_n(z-\eta_n)^{-1}e^{2it\theta_n} & 1
\end{array}\right),   &z\in\mathbb{P}_n,n\in\nabla\setminus\Lambda,\\[12pt]
\left(\begin{array}{cc}
			1 & C_n^{-1}(z-\eta_n)e^{-2it\theta_n}\\
			0 & 1
\end{array}\right),   &z\in\mathbb{P}_n,n\in\Delta\setminus\Lambda,\\
\left(\begin{array}{cc}
		1 & -\overline{C}_n(z-\overline{\eta}_n)^{-1}e^{-2it\overline{\theta}_n}\\
		0 & 1
\end{array}\right),   &z\in\overline{\mathbb{P}}_n,n\in\nabla\setminus\Lambda,\\
\left(\begin{array}{cc}
		1 & 0	\\
		-\overline{C}_n^{-1}(z-\overline{\eta}_n)e^{2it\bar{\theta}_n} & 1
\end{array}\right),   &z\in\overline{\mathbb{P}}_n,n\in\Delta\setminus\Lambda.
\end{array}\right.
\end{equation*}

It follows from the transformation \eqref{MJ} that for the origin and infinity, $M^{(\natural)}(z)$ preserves the normalization conditions. Comparing with \eqref{M1}, the transformation \eqref{MJ} recovers the jump conditions on the circles $\D{P}_n$ and $\overline{\D{P}}_n$ with $n\notin\Lambda$.  For $n\in\Lambda$, it can be verified that $M^{(\natural)}(z)$ satisfies residue formula \eqref{resMr} with modified connection coefficient. Taking $\eta_n$ as example,
\begin{align}
\mathop{\res}\limits_{\eta_n}M^{(\natural)}(z)&=\mathop{\res}\limits_{\eta_n} M^{(rhp)}(z)G(z)\left(\prod_{n\in\Delta}\frac{z-\eta_n}{
\overline{\eta}_n^{-1}z-1}\right)^{-\sigma_3},\nonumber\\
&=\lim_{z\to\eta_n}M^{(rhp)}(z)\left(
                                                  \begin{array}{cc}
                                                    0 & 0 \\
                                                    c_nT^2(z)e^{2it\theta(\eta_n)} & 0 \\
                                                  \end{array}
                                                \right)G(z)\left(\prod_{n\in\Delta}\frac{z-\eta_n}{
\overline{\eta}_n^{-1}z-1}\right)^{-\sigma_3},\nonumber\\
&=\lim_{z\to\eta_n}M^{(\natural)}(z)\left(
                                      \begin{array}{cc}
                                        0 & 0 \\
                                        c_ne^{2it\theta}T^{2}(z)
                                        \left(\mathop{\prod}\limits_{n\in\Delta}\frac{z-\eta_n}{
\overline{\eta}_n^{-1}z-1}\right)^2 & 0 \\
                                      \end{array}
                                    \right),\nonumber\\
&=\lim_{z\to\eta_n}M^{(\natural)}(z)\left(
                                      \begin{array}{cc}
                                        0 & 0 \\
                                        \widetilde{c}_ne^{2it\theta} & 0 \\
                                      \end{array}
                                    \right).
\end{align}
The analyticity and symmetry of $M^{(\natural)}(z)$ can be derived by the properties of $M^{(rhp)}(z)$, $T(z)$, and $G(z)$. On the other hand, although $M^{(\natural)}(z)$ has no normalization condition at singularity $i$, singularity $i$ is not the pole of $M^{(\natural)}(z)$, which means that the singularity does not affect $M^{(rhp)}(z)$. In conclusion, we know that $M^{(\natural)}(z)$ is the solution of RH problem \ref{RHP-M} under the condition of no reflection. Its uniqueness comes from Liouville's theorem, which leads to the uniqueness and existences of $M^{(\natural)}(z)$  from transformation \eqref{MJ}.
\end{proof}

Proposition \ref{pp-Mrhp} shows the existence and uniqueness of the solution of $M^{(rhp)}(z)$, but since the discrete spectra are distributed in the whole complex plane, the contribution of the discrete spectra to the solution needs to be further considered. As in \cite{Cu-CMP}, the jump condition $V^{(2)}(z)$ such that
\begin{align}\label{est-V2}
\left|\left|V^{(2)}(z)-I\right|\right|_{L^{p}(\Sigma^{(2)})}\lesssim K_pe^{-2\varepsilon_0t}, 1\leq p\leq\infty
\end{align}
where $\varepsilon_0=\min_{n\in N\setminus\Lambda}\{|\Im\theta|\}$ and  $K_p\geq0$ are constants independent of $(y,t)$, which implies that  the jump matrix $V^{(2)}(z)$ on the disks $\D{P}_n$ uniformly converges to the identity matrix, and the contribution  of these discrete spectra to the solution can be ignored. The proof of inequality \eqref{est-V2} can be briefly described as follows. Taking $z\in\D{P}_n$ with $n\in\nabla$ and $p=\infty$ as example, it follows from \eqref{JM2} that
\begin{align}
\left|\left|V^{(2)}(z)-I\right|\right|_{L^{\infty}(\Sigma^{(2)})}&=
\left|c_n(z-\eta_n)^{-1}T(z)^2e^{2it\theta}\right|\nonumber \\
&\lesssim e^{\Re(2it\theta)}=e^{-2t\Im\theta}\lesssim e^{-2\varepsilon_0t}.
\end{align}
In these inequalities, use has been made of the fact that for $n\in\nabla$, then $\Im\theta>0$. At the same time, the estimation of jump matrix $V^{(2)}(z)$ \eqref{est-V2} further encourages us to decompose matrix $M^{(rhp)}(z)$ into the following form
\begin{align}\label{Ez}
M^{(rhp)}(z)=M^{(err)}(z)M^{(sol)}(z),
\end{align}
where $E(z)$ is error function solved by small-norm RH problem as \cite{KMM-2003,Deift-1994,DZ-CPAM} and RH problem \ref{RHP-Mrhp} for $M^{(rhp)}(z)$ reduces to a new RH problem for $M^{(sol)}(z)$ as $V^{(2)}(z)=0$.

\begin{RHP}\label{RHP-Msol}
Find a matrix-valued function $M^{(sol)}(z)$ such that
\begin{enumerate}[(I)]
\item The function $M^{(sol)}(z)$ is analytic in $\D{C}\setminus\{\eta_n,\overline{\eta}_n\}_{n\in\Lambda}$.
\item $M^{(sol)}(z)=I+O(z^{-1})$ as $z\to\infty$.
\item For $n\in\Lambda$, the function $M^{(sol)}(z)$ is of simple poles at $\eta_n$ and $\overline{\eta}_n$ with
\begin{subequations}\label{res-Msol}
\begin{align}
&\mathop{\res}\limits_{\eta_n} M^{(sol)}(z)=\lim_{z\to\eta_n} M^{(sol)} (z)\left(
                                                  \begin{array}{cc}
                                                    0 & 0 \\
                                                    c_nT^2(z)e^{2it\theta(\eta_n)} & 0 \\
                                                  \end{array}
                                                \right), \label{resMs} \\
&\mathop{\res}\limits_{\overline{\eta}_n} M^{(sol)}(z)=\lim_{z\to\overline{\eta}_n} M^{(sol)} (z)\left(
                                                  \begin{array}{cc}
                                                    0 & \overline{c}_nT^{-2}(z)e^{-2it\overline{\theta}(\eta_n)} \\
                                                    0 & 0 \\
                                                  \end{array}\right).
\end{align}
\end{subequations}
\end{enumerate}
\end{RHP}

The unique solution of RH problem \ref{RHP-Msol} is embodied in Proposition \ref{prop-sol-Msol} and as $z\to i$, one has
\begin{align}\label{exp-Msol-i}
M^{(sol)}(z)=M^{(sol)}(i)+M^{(sol)}_1(z-i)+O((z-i)^{2}).
\end{align}

\begin{prop}\label{prop-sol-Msol}
The solution $M^{(sol)}(z)$ to RH problem \ref{RHP-Msol} is  equivalent to the original RH problem \ref{RHP-M} with reflection-less and modified scattering data $\left\{0,\{\eta_n,C_nT^2(\eta_n)\}_{n\in\Lambda}\right\}$.
\begin{enumerate}[(1)]
\item As $\Lambda=\emptyset$, then
\begin{align}
M^{(sol)}(z)=I+\frac{i\widehat{\alpha}_+}{2(z-1)}\left(
                                                   \begin{array}{cc}
                                                     -1 & 1 \\
                                                     -1 & 1 \\
                                                   \end{array}
                                                 \right)-
\frac{i\widehat{\alpha}_+}{2(z+1)}\left(
                                                   \begin{array}{cc}
                                                     1 & 1 \\
                                                     -1 & -1 \\
                                                   \end{array}
                                                 \right).
\end{align}
\item As $\Lambda\neq\emptyset$, and $\Lambda=\{\eta_{j_k}\}_{k=1}^{N}$, then
\begin{align}\label{sol-Msol}
M^{(sol)}(z)=I&+\frac{i\widehat{\alpha}_+}{2(z-1)}\left(
                                                   \begin{array}{cc}
                                                     -1 & 1 \\
                                                     -1 & 1 \\
                                                   \end{array}
                                                 \right)-
\frac{i\widehat{\alpha}_+}{2(z+1)}\left(
                                                   \begin{array}{cc}
                                                     1 & 1 \\
                                                     -1 & -1 \\
                                                   \end{array}
                                                 \right) \nonumber \\
&+\sum_{k=1}^{N}\left(
                  \begin{array}{cc}
                    \frac{\alpha_k}{z-\eta_{j_k}} & \frac{\overline{\beta}_k}{z-\overline{\eta}_{j_k}} \\
                    \frac{\beta_k}{z-\eta_{j_k}} & \frac{\overline{\alpha}_k}{z-\overline{\eta}_{j_k}}  \\
                  \end{array}
                \right),
\end{align}
where the coefficients $\alpha_k$ and $\beta_k$ satisfy that for the discrete spectrum $\{z_n,-\overline{z}_n\}_{n=1}^{N_1}$, $\{z_{n}^{-1},-\overline{z}_{n}^{-1}\}_{n=N_1+1}^{2N_1}$ and  $\{w_n,-\overline{w}_n\}_{n=2N_1+1}^{2N_1+N_2}$, then $\alpha_k=-\overline{\alpha}_{k+n}$ and $\beta_k=\overline{\beta}_{k+n}$, which are determined by the system
\begin{subequations}\label{system}
\begin{align}
&\alpha_kc_{j_k}^{-1}T(\eta_{j_k})^{-2}e^{-2it\theta}=\frac{i\widehat{\alpha}_+}{2(z-1)}+
\frac{i\widehat{\alpha}_+}{2(z-1)}+\sum_{h=1}^{N}\frac{\bar{\eta}_{h}}{\eta_{j_k}-\eta_{j_h}},\\
&\beta_kc_{j_k}^{-1}T(\eta_{j_k})^{-2}e^{-2it\theta}=1+\frac{i\widehat{\alpha}_+}{2(z-1)}-
\frac{i\widehat{\alpha}_+}{2(z-1)}+\sum_{h=1}^{N}\frac{\bar{\alpha}_{h}}{\eta_{j_k}-\eta_{j_h}}.
\end{align}
\end{subequations}
\end{enumerate}
\end{prop}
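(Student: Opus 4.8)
\section*{Proof proposal}

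The plan is to exploit the fact that RH problem \ref{RHP-Msol} carries no jump across $\D{R}$, so that its solution is a rational matrix function of $z$ whose only singularities are the prescribed simple poles at $\{\eta_n,\overline{\eta}_n\}_{n\in\Lambda}$ together with the simple poles at $z=\pm1$ and the symmetries inherited from the original RH problem \ref{RHP-M}, normalized to $I$ at $z=\infty$. First I would record the equivalence asserted in the statement: conjugating back through the factor $T(z)$ introduced in \eqref{T} and setting $r\equiv0$ turns RH problem \ref{RHP-Msol} into the reflection-less instance of RH problem \ref{RHP-M} with connection coefficients rescaled to $C_nT^2(\eta_n)$. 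Uniqueness of both problems (Liouville together with the vanishing lemma discussed below) then identifies the two solutions and, in particular, equips $M^{(sol)}(z)$ with the singularity conditions at $z=\pm1$ and the three symmetries $M^{(sol)}(z)=\overline{M^{(sol)}(\overline{z}^{-1})}=\sigma_1\overline{M^{(sol)}(\overline{z})}\sigma_1=\sigma_3\overline{M^{(sol)}(-\overline{z})}\sigma_3$.

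With the reflection-less structure in hand, $M^{(sol)}(z)-I$ decays at infinity and is a finite sum of simple poles, so it admits a partial-fraction representation. For $\Lambda=\emptyset$ the only poles are at $z=\pm1$; the singularity conditions of RH problem \ref{RHP-M} force rank-one residues of the exact shapes $\left(\begin{smallmatrix}-1&1\\-1&1\end{smallmatrix}\right)$ at $z=1$ and $\left(\begin{smallmatrix}1&1\\-1&-1\end{smallmatrix}\right)$ at $z=-1$, leaving a single scalar amplitude. The symmetries $\sigma_1\overline{M^{(sol)}(\overline{z})}\sigma_1$ and $\sigma_3\overline{M^{(sol)}(-\overline{z})}\sigma_3$ then both link the two residues and constrain that amplitude to the form $\tfrac{i\widehat{\alpha}_+}{2}$ with $\widehat{\alpha}_+\in\D{R}$, producing precisely the stated closed form for Case $(1)$.

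For $\Lambda\neq\emptyset$ I would add the contributions of the soliton poles, writing $M^{(sol)}(z)$ as the Case $(1)$ expression augmented by a first-column term $\tfrac{1}{z-\eta_{j_k}}(\alpha_k,\beta_k)^{T}$ and a second-column term $\tfrac{1}{z-\overline{\eta}_{j_k}}(\overline{\beta}_k,\overline{\alpha}_k)^{T}$, the latter being fixed by the symmetry $\sigma_1\overline{M^{(sol)}(\overline{z})}\sigma_1$. Imposing the residue relations \eqref{res-Msol} at each $\eta_{j_k}$ --- namely that the residue of the first column equals $c_{j_k}T^2(\eta_{j_k})e^{2it\theta(\eta_{j_k})}$ times the regular second column evaluated at $\eta_{j_k}$ --- converts the analytic conditions into the algebraic linear system \eqref{system} for $(\alpha_k,\beta_k)$. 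The residual symmetry $M^{(sol)}(z)=\overline{M^{(sol)}(\overline{z}^{-1})}$, together with the grouping of the spectrum into $\{z_n,-\overline{z}_n\}$, $\{z_n^{-1},-\overline{z}_n^{-1}\}$, $\{w_n,-\overline{w}_n\}$, then yields the reduction relations $\alpha_k=-\overline{\alpha}_{k+n}$ and $\beta_k=\overline{\beta}_{k+n}$, cutting the number of independent unknowns in half.

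The main obstacle is the solvability of the linear system \eqref{system}, equivalently the unique solvability of RH problem \ref{RHP-Msol}. I would secure this through a vanishing lemma: the associated homogeneous problem (the normalization $I$ replaced by $0$) admits only the trivial solution. The argument uses the Schwarz-type symmetry $M^{(sol)}(z)=\sigma_1\overline{M^{(sol)}(\overline{z})}\sigma_1$ to show that an appropriate symmetric product of $M^{(sol)}$ with its conjugate is entire and decays at infinity, hence vanishes identically, which forces the coefficient matrix of \eqref{system} to be nonsingular. Once solvability is established, expanding the resulting $M^{(sol)}(z)$ near $z=i$ yields \eqref{exp-Msol-i}, feeding the reconstruction formula \eqref{q-sol}. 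The most delicate bookkeeping is carrying the three symmetry reductions simultaneously with the two fixed singularities at $z=\pm1$, and verifying that the reduced system is consistent with the stated relations among $\alpha_k$ and $\beta_k$.
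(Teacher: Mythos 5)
Your proposal is correct and follows essentially the same route as the paper: a partial-fraction ansatz for the reflection-less, jump-free problem, reduction of the coefficients via the symmetries $M^{(sol)}(z)=\sigma_1\overline{M^{(sol)}(\overline{z})}\sigma_1=\sigma_3\overline{M^{(sol)}(-\overline{z})}\sigma_3=\overline{M^{(sol)}(\overline{z}^{-1})}$, and substitution into the residue conditions \eqref{res-Msol} to obtain the linear system \eqref{system}. The only substantive addition is your vanishing-lemma argument for solvability of that system, which the paper instead obtains from the equivalence with the original RH problem established in Proposition \ref{pp-Mrhp}; both are standard and adequate.
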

\begin{proof}
For the case $\Lambda=\emptyset$, the conclusion is obvious, so next we turn our attention mainly to proving case $\Lambda\neq\emptyset$. For convenience, only the case of out of circle discrete spectrum is considered, thus $M^{(sol)}(z)$ has the following expansion form
\begin{align}\label{exp-Msol}
M^{(sol)}(z)=I&+\frac{i\widehat{\alpha}_+}{2(z-1)}\left(
                                                   \begin{array}{cc}
                                                     -c & 1 \\
                                                     -c & 1 \\
                                                   \end{array}
                                                 \right)-
\frac{i\widehat{\alpha}_+}{2(z+1)}\left(
                                                   \begin{array}{cc}
                                                     1 & 1 \\
                                                     -c & -c \\
                                                   \end{array}
                                                 \right) \nonumber \\
&+\sum_{k=1}^{N}\left(
                  \begin{array}{cc}
                    \frac{c_k^1}{z-z_{k}}+\frac{c_k^2}{z+\bar{z}_{k}}+
                    \frac{c_k^3}{z+z_k^{-1}}+\frac{c_k^3}{z-\bar{z}_k^{-1}} & \frac{\widetilde{c}_k^1}{z-\bar{z}_{k}}+\frac{\tilde{c}_k^2}{z+z_{k}}+
                    \frac{\tilde{c}_k^3}{z+\bar{z}_k^{-1}}+\frac{\tilde{c}_k^3}{z-z_k^{-1}} \\
                    \frac{c_k^5}{z-z_{k}}+\frac{c_k^6}{z+\bar{z}_{k}}+
                    \frac{c_k^7}{z+z_k^{-1}}+\frac{c_k^8}{z-\bar{z}_k^{-1}} & \frac{\widetilde{c}_k^5}{z-\bar{z}_{k}}+\frac{\tilde{c}_k^6}{z+z_{k}}+
                    \frac{\tilde{c}_k^7}{z+\bar{z}_k^{-1}}+\frac{\tilde{c}_k^8}{z-z_k^{-1}}  \\
                  \end{array}
                \right).
\end{align}
It follows from the symmetry $M^{(sol)}_1(-z)=\sigma_3\sigma_1M^{(sol)}_2(z)$ that $c=1$  and
\begin{align}\label{sym-coe1}
\begin{split}
&\tilde{c}_k^6=-c_k^1,\ \ \tilde{c}_k^5=-c_k^2,\ \
\tilde{c}_k^8=-c_k^3,\ \ \tilde{c}_k^7=-c_k^4,\\
&\tilde{c}_k^1=c_k^6,\ \ \tilde{c}_k^2=c_k^5,\ \
\tilde{c}_k^3=c_k^8,\ \ \tilde{c}_k^4=c_k^7.
\end{split}
\end{align}
On the other hand, applying the symmetry $M^{(sol)}_1(-\bar{z})=\sigma_3M^{(sol)}_1(z)$ obtains
\begin{align}\label{sym-coe2}
c_k^2=-\bar{c}_k^1,\ \ c_k^4=-\bar{c}_k^3,\ \ c_k^6=\bar{c}_k^5,\ \
c_k^8=\bar{c}_k^7.
\end{align}
Thus, combining the relation \eqref{sym-coe1} with  \eqref{sym-coe2} yields \eqref{sol-Msol}. Also the system \eqref{system} is obtained by substituting the expansion \eqref{exp-Msol} into the residue condition \eqref{resMs} at the discrete spectrums $\eta_{j_k}$.
\end{proof}

\begin{cor}\label{cor-sol}
Under the condition of no scattering, that is, $r(z)=0$, the corresponding scattering matrix $S(z)$ is the identity matrix. Let $q_{sol}(x,t,\Lambda)$ denote the $N(\Lambda)$-soliton solutions with the modified scattering data $\sigma_d^{\Lambda}=\left\{0,\{\eta_n,c_nT^{2}(\eta_n)\}_{n\in\Lambda}\right\}$. It follows from the reconstruction formula \eqref{q-sol} for mCH equation that the solution $q_{sol}(x,t,\Lambda)$ is expressed by
\begin{align}
\begin{split}
q_{sol}(x,t,\Lambda)=&-\partial_zM_{12}^{(sol)}(z)|_{z=i}M_{11}^{(sol)}(i)
-\partial_zM_{21}^{(sol)}(z)|_{z=i}M_{11}^{(sol)}(i)^{-1},\\
&x(y,t)=y+\ln M_{11}^{(sol)}(i).
\end{split}
\end{align}
\begin{enumerate}[(I)]
\item For the case $\Lambda=\emptyset$, then
\begin{align}
q_{sol}(x,t,\Lambda)=&-\partial_zM_{12}^{(sol)}(z)|_{z=i}M_{11}^{(sol)}(i)
-\partial_zM_{21}^{(sol)}(z)|_{z=i}M_{11}^{(sol)}(i)^{-1}\nonumber \\
&=i\left(1-\widehat{\alpha}_+/2\right),\\
&x(y,t,\sigma_d^{\Lambda})=y+\ln\left[1-\frac{i\widehat{\alpha}_+}{2(i-1)}
-\frac{i\widehat{\alpha}_+}{2(i+1)}\right]\triangleq y+c_+.
\end{align}
\item For the case $\Lambda=\emptyset$, then
\begin{align}
q_{sol}(x,t,\Lambda)=&-\partial_zM_{12}^{(sol)}(z)|_{z=i}M_{11}^{(sol)}(i)
-\partial_zM_{21}^{(sol)}(z)|_{z=i}M_{11}^{(sol)}(i)^{-1}\nonumber \\
=&-\left(\frac{\widehat{\alpha}_+}{2}
-\sum_{k=1}^{N}\frac{\overline{\beta}_k}{(i-\overline{\eta}_{j_k})^2}\right)
\left(1-\frac{\widehat{\alpha}_+}{4}+\sum_{k=1}^{N}\frac{\alpha_k}
{(i-\eta_{j_k})}\right)\nonumber \\
&+\left(\frac{\widehat{\alpha}_+}{2}
-\sum_{k=1}^{N}\frac{\beta_k}{(i-\eta_{j_k})^2}\right)\left(1-\frac{\widehat{\alpha}_+}{4}
+\sum_{k=1}^{N}\frac{\alpha_k}
{(i-\eta_{j_k})}\right)^{-1}
\end{align}
and
\begin{align}
x(y,t,\sigma_d^{\Lambda})=y+\ln\left[1-\frac{\widehat{\alpha}_+}{4}
+\sum_{k=1}^{N}\frac{\alpha_k}
{(i-\eta_{j_k})}\right]\triangleq y+c_+'.
\end{align}
\end{enumerate}

\end{cor}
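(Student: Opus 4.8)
The plan is to feed the explicit rational form of $M^{(sol)}(z)$ supplied by Proposition \ref{prop-sol-Msol} into the reconstruction formula \eqref{q-sol}. This is legitimate because, by that proposition, $M^{(sol)}(z)$ is exactly the solution of RH problem \ref{RHP-M} attached to the reflection-less modified data $\sigma_d^{\Lambda}=\left\{0,\{\eta_n,c_nT^{2}(\eta_n)\}_{n\in\Lambda}\right\}$; in particular $M^{(sol)}(z)$ carries the same normalization at $z=\infty$ and the same regular behavior at $z=i$ recorded in \eqref{exp-Msol-i}, so the three ingredients $M^{(sol)}_{11}(i)$, $\partial_zM^{(sol)}_{12}(z)|_{z=i}$ and $\partial_zM^{(sol)}_{21}(z)|_{z=i}$ of \eqref{q-sol} are well defined. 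Since $r\equiv0$ here, the scattering matrix is the identity, which is what makes the label ``$N(\Lambda)$-soliton solution'' meaningful.

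For $\Lambda=\emptyset$, I would read the entries of $M^{(sol)}(z)=I+\tfrac{i\widehat{\alpha}_+}{2(z-1)}\left(\begin{smallmatrix}-1&1\\-1&1\end{smallmatrix}\right)-\tfrac{i\widehat{\alpha}_+}{2(z+1)}\left(\begin{smallmatrix}1&1\\-1&-1\end{smallmatrix}\right)$ directly, and evaluate at $z=i$ with the elementary identities
\[
\frac{1}{i-1}+\frac{1}{i+1}=-i,\qquad \frac{1}{i-1}-\frac{1}{i+1}=-1,\qquad (i-1)^2=-2i,\qquad (i+1)^2=2i.
\]
Plugging the resulting values of $M^{(sol)}_{11}(i)$, $\partial_zM^{(sol)}_{12}(i)$ and $\partial_zM^{(sol)}_{21}(i)$ into \eqref{q-sol} produces the closed forms for $q_{sol}(x,t,\Lambda)$ and for $x(y,t,\sigma_d^{\Lambda})$ in part (I), the constant $c_+$ being nothing but $\ln M^{(sol)}_{11}(i)$.

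For $\Lambda\neq\emptyset$, I would start from \eqref{sol-Msol} (equivalently from the ansatz \eqref{exp-Msol} together with the symmetry reductions \eqref{sym-coe1}--\eqref{sym-coe2}, which fixed $c=1$ and the pairings $\alpha_k=-\overline{\alpha}_{k+n}$, $\beta_k=\overline{\beta}_{k+n}$), Taylor-expand each entry to first order in $z-i$, and collect $M^{(sol)}_{11}(i)$, $\partial_zM^{(sol)}_{12}(z)|_{z=i}$ and $\partial_zM^{(sol)}_{21}(z)|_{z=i}$ using the same evaluations of $(z\mp1)^{-1}$ and $(z\mp1)^{-2}$ at $z=i$. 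Substituting these three quantities into \eqref{q-sol} gives the displayed expression for $q_{sol}(x,t,\Lambda)$, while the second line of \eqref{q-sol} gives $x(y,t,\sigma_d^{\Lambda})=y+\ln M^{(sol)}_{11}(i)\triangleq y+c_+'$. The coefficients $\alpha_k,\beta_k$ appearing there are not free parameters: they are the unique solution of the linear algebraic system \eqref{system}, which was obtained by imposing the residue conditions \eqref{res-Msol} on the ansatz \eqref{exp-Msol}.

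The only genuinely delicate point is the bookkeeping of the symmetry orbits. Each off-circle eigenvalue $z_n$ contributes the four poles $\{z_n,-\overline{z}_n,-z_n^{-1},\overline{z}_n^{-1}\}$, whereas each on-circle eigenvalue $w_m$ contributes only $\{w_m,-\overline{w}_m\}$; one must verify that the ansatz \eqref{exp-Msol} is consistent with all three symmetries $M^{(sol)}(z)=\sigma_1\overline{M^{(sol)}(\overline z)}\sigma_1=\sigma_2M^{(sol)}(-z)\sigma_2=\sigma_1M^{(sol)}(z^{-1})\sigma_1$ (which is precisely what \eqref{sym-coe1}--\eqref{sym-coe2} encode), that the system \eqref{system} is compatible with these reductions and with the reality of $\widehat{\alpha}_+$, and that $z=i$ is a regular point so that \eqref{exp-Msol-i} holds and \eqref{q-sol} is applicable. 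Once these structural facts are in hand the corollary follows by direct substitution, with no analytic difficulty beyond the algebra.
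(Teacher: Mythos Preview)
Your proposal is correct and matches the paper's intended derivation: the corollary is stated without a separate proof precisely because it is obtained by substituting the explicit rational form of $M^{(sol)}(z)$ from Proposition~\ref{prop-sol-Msol} into the reconstruction formula~\eqref{q-sol} and evaluating at $z=i$. Your identification of the elementary values of $(i\pm1)^{-1}$ and $(i\pm1)^{-2}$, the role of the symmetry reductions \eqref{sym-coe1}--\eqref{sym-coe2}, and the linear system \eqref{system} for the coefficients $\alpha_k,\beta_k$ is exactly the bookkeeping needed; note also that the second case in the corollary should read $\Lambda\neq\emptyset$, which you have correctly interpreted.
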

\subsection{The small-norm RH problem for error function}\label{subsec4.4}
In \eqref{Ez}, the jump condition $V^{(2)}(z)$ on path $\Sigma^{(2)}$ is ignored, and then error function $M^{(err)}(z)$ is generated. In this section, the attention is mainly focused on the problem of error function by a small-norm RH problem \cite{KMM-2003,Deift-1994,DZ-CPAM}.
\begin{RHP}\label{RHP-Merr}
Find a matrix valued function $M^{(err)}(z)$ satisfies
\begin{enumerate}[(1)]
\item The error function $M^{(err)}(z)$  is analytic in $\mathbb{C}\setminus\Sigma^{(2)}$.
\item Asymptotic behavior:
\begin{align*}
M^{(err)}(z)=I+\mathcal {O}(z^{-1}), ~~~z\rightarrow\infty.
\end{align*}
\item The jump condition:
\begin{align}
M^{(err)}_{+}(z)=M^{(err)}_{-}(z)V^{(err)}(z),
\end{align}
where $V^{(err)}(z)=M^{(sol)}(z)V^{(2)}(z)M^{(sol)}(z)^{-1}$, and for $z\in\Sigma^{(2)}$ the error function $M^{(err)}(z)$ is of continuous boundary values $M^{(err)}_{\pm}(z)$.
\end{enumerate}
\end{RHP}

The error function $M^{(err)}(z)$ has no poles based on the fact that $M^{(rhp)}(z)$ and $M^{(sol)}(z)$ have the same jump conditions and  poles. Furthermore, the jump matrix $V^{(err)}(z)$ follows the same estimate as $V^{(2)}(z)$
 from \eqref{est-V2} for $1\leq p\leq+\infty$
\begin{align}\label{est-Verr}
\left|\left|V^{(err)}(z)-I\right|\right|_{L^{p}(\Sigma^{(2)})}&=
\left|\left|M^{(sol)}(z)V^{(2)}(z)M^{(sol)}(z)^{-1}-I\right|\right|_{L^{p}(\Sigma^{(2)})}\nonumber\\
&\leq\left|\left|V^{(2)}(z)-I\right|\right|_{L^{p}(\Sigma^{(2)})}=
O(e^{-2\varepsilon_0t}),
\end{align}
which indicates that the jump matrix $V^{(err)}(z)$ uniformly converges to the identity matrix.
Then, it follows from the small norm theory of RH problem \cite{KMM-2003,Deift-1994,DZ-CPAM} that the solution of RH problem \ref{RHP-Merr} can be uniquely expressed as
\begin{align}\label{Ez-sol}
M^{(err)}(z)=I+\frac{1}{2\pi i}\int_{\Sigma^{(2)}}\dfrac{\left( I+\rho(s)\right) (V^{(err)}(s)-I)}{s-z}ds,
\end{align}
where $\rho\in L^{2}(\Sigma^{(2)})$ is the unique solution of the operator equation
\begin{align}\label{Ez-oper}
(1-C_{V^{(err)}})\rho=C_{V^{(err)}}I,
\end{align}
where the operator $C_{V^{(err)}}:$ $L^2(\Sigma^{(2)})\to L^2(\Sigma^{(2)})$ denotes the Cauchy projection operator
\begin{align*}
C_{V^{(err)}}[f](z)=C_{-}[f(V^{(err)}-I)]=\lim_{k\rightarrow z}\int_{\Sigma^{(2)}}\frac{f(s)(V^{(err)}(s)-I)}{s-k}ds.
\end{align*}
The definition of the limit $C_-$ here is  that  the limit is obtained from the right side of the oriented contour $\Sigma^{(2)}$ with a non-tangent. The existence and uniqueness of the solution $\rho$ of the equation \eqref{Ez-oper} originates from the boundedness of Cauchy projection operator $C_-$, which immediately leads to
\begin{align}
\left|\left|C_{V^{(err)}}\right|\right|_{L^2(\Sigma^{(2)})\to L^2(\Sigma^{(2)})}=O(e^{-2\varepsilon_0t}).
\end{align}
Furthermore,
\begin{align}\label{est-rho}
||\rho||_{L^2(\Sigma^{(2)})}\lesssim \frac{||C_{V^{(err)}}||}{1-||C_{V^{(err)}}||}\lesssim O(e^{-2\varepsilon_0t}).
\end{align}

In conclusion, the existence, uniqueness and boundedness of the solution to RH problem \ref{RHP-Merr} can be obtained. Further, in order to recover the solution $q(x,t)$ of mCH equation, we need to consider not only the asymptotic behavior at singular point $z=\infty$, but also the asymptotic behavior at singular point $z=i$  shown in Proposition \ref{Merr-i}.
\begin{prop}\label{Merr-i}
As $z\to i$, the solution \eqref{Ez-sol} to RH problem \ref{RHP-Merr} is of
 the following expansion at $z=i$
\begin{align}\label{expansion}
E(z)=E(i)+E_1(z-i)+O((z-i)^2),
\end{align}
where
\begin{align}
&E(i)=I+\frac{1}{2\pi i}\int_{\Sigma^{(2)}}\frac{(I+\rho(s))(V^{(err)}-I)}{s-i}\ ds,\\
&E_1=\frac{1}{2\pi i}\int_{\Sigma^{(2)}}\frac{(I+\rho(s))(V^{(err)}-I)}{(s-i)^2}\ ds.
\end{align}
The coefficients $E(i)$ and $E_1$ admit the following estimates
\begin{align}
|E(i)-I|\lesssim O(e^{-2\varepsilon_0t}),\qquad |E_1|\lesssim O(e^{-2\varepsilon_0t}).
\end{align}
\end{prop}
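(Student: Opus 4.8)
The plan is to obtain the expansion directly from the explicit solution formula \eqref{Ez-sol} for $E(z):=M^{(err)}(z)$, using the crucial geometric fact that the jump contour $\Sigma^{(2)}=\cup_{n\in N\setminus\Lambda}(\partial\mathbb{P}_n\cup\partial\overline{\mathbb{P}}_n)$ stays a fixed positive distance from $z=i$. Indeed, by the very definition of $\varrho$ every disk $\mathbb{P}_n,\overline{\mathbb{P}}_n$ is disjoint from the point $i$, so $|s-i|\geq\varrho>0$ for all $s\in\Sigma^{(2)}$; in particular $E(z)$ is analytic in a full neighbourhood of $z=i$, which guarantees that a Taylor expansion \eqref{expansion} exists there. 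To identify the coefficients I would expand the Cauchy kernel,
\begin{align*}
\frac{1}{s-z}=\frac{1}{s-i}+\frac{z-i}{(s-i)^2}+\frac{(z-i)^2}{(s-i)^2(s-z)},
\end{align*}
valid uniformly for $s\in\Sigma^{(2)}$ and $z$ near $i$ since $|s-i|\geq\varrho$, insert this into \eqref{Ez-sol}, and integrate term by term. The zeroth- and first-order terms give precisely the stated formulas for $E(i)$ and $E_1$, while the last term is bounded by $|z-i|^2\varrho^{-2}(\sup_{s\in\Sigma^{(2)}}|s-z|^{-1})\,\|(I+\rho)(V^{(err)}-I)\|_{L^1(\Sigma^{(2)})}$, which yields the $O((z-i)^2)$ remainder uniformly in $z$ near $i$.

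For the size estimates I would invoke the bounds already established: $\|V^{(err)}-I\|_{L^p(\Sigma^{(2)})}=O(e^{-2\varepsilon_0 t})$ from \eqref{est-Verr} and $\|\rho\|_{L^2(\Sigma^{(2)})}=O(e^{-2\varepsilon_0 t})$ from \eqref{est-rho}, together with the facts that $\Sigma^{(2)}$ is a finite union of circles of fixed radius $\epsilon$, hence of finite total length (so $\|1\|_{L^2(\Sigma^{(2)})}\lesssim 1$), and that $|s-i|^{-1}\leq\varrho^{-1}$, $|s-i|^{-2}\leq\varrho^{-2}$ on $\Sigma^{(2)}$. A Cauchy--Schwarz estimate then gives
\begin{align*}
|E(i)-I|\leq\frac{\varrho^{-1}}{2\pi}\,\|I+\rho\|_{L^2(\Sigma^{(2)})}\,\|V^{(err)}-I\|_{L^2(\Sigma^{(2)})}\lesssim\bigl(1+\|\rho\|_{L^2(\Sigma^{(2)})}\bigr)e^{-2\varepsilon_0 t}\lesssim e^{-2\varepsilon_0 t},
\end{align*}
and the identical computation with $\varrho^{-2}$ in place of $\varrho^{-1}$ (i.e. integrating against $(s-i)^{-2}$) yields $|E_1|\lesssim e^{-2\varepsilon_0 t}$.

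There is no genuine obstacle here; the only point that needs care is the one already flagged, namely that $z=i$ is uniformly separated from $\Sigma^{(2)}$, which is exactly why the term $\min_{j\in N}|\eta_j-i|$ was built into the definition of $\varrho$. Everything else is the standard small-norm perturbation argument. Note finally that for $t$ sufficiently large the bound $|E(i)-I|\lesssim e^{-2\varepsilon_0 t}$ forces $E(i)$ to be invertible, which is what the subsequent reconstruction of $q(x,t)$ from the behaviour of the RH solution at $z=i$ will require.
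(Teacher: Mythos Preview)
Your proposal is correct and follows essentially the same approach as the paper: expand the Cauchy integral \eqref{Ez-sol} at $z=i$, use that $|s-i|^{-1}$ and $|s-i|^{-2}$ are bounded on $\Sigma^{(2)}$ (since $i$ is a fixed distance from the contour), and combine this with the small-norm estimates \eqref{est-Verr} and \eqref{est-rho} via Cauchy--Schwarz. Your write-up is in fact more explicit than the paper's, which simply states that the expansion is obtained ``by directly calculating'' and then records the inequality $|E_1|\leq\|V^{(err)}-I\|_{L^1}+\|\rho\|_{L^2}\|V^{(err)}-I\|_{L^2}$ after noting that $|s-i|^{-2}$ is bounded on $\Sigma^{(2)}$; your version spells out the kernel expansion and the role of $\varrho$ more carefully.
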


\begin{proof}
By directly calculating the expansion of the solution at $z=i$, the equation \eqref{expansion} can be obtained.  We now turn our attention to the estimation of coefficients $E(i)$ and $E_1$. Combining the estimate \eqref{est-Verr} and \eqref{est-rho} yields
\begin{align}
|M^{(err)}-I|\leq|(1-C_{V^{(err)}})(\rho)|+|C_{V^{(err)}}(\rho)|\lesssim O(e^{-2\varepsilon_0t}).
\end{align}
Additionally, observing the fact $|s-i|^{-2}$ is bounded on the contour $\Sigma^{(2)}$, one then has
\begin{align}
|E_1|\leq ||V^{(err)}-I||_{L^1}+||\rho||_{L^2}||V^{(err)}-I||_{L^2}\lesssim O(e^{-2\varepsilon_0t}),
\end{align}
which completes the proof of the proposition.
\end{proof}

\subsection{Analysis on pure $\overline{\partial}$-problem}\label{subsec4.5}
This section will study the pure $\overline{\partial}$-problem \ref{RHP-Dbar} for $M^{(3)}(z)$ defined in \eqref{M3}. As \cite{AIHP,Cu-CMP}, the solution to $\overline{\partial}$-problem \ref{RHP-Dbar} can be expressed by the integral equation
\begin{equation}\label{M3-sol}
M^{(3)}(z)=I+\frac{1}{\pi}\iint_\mathbb{C}\dfrac{M^{(3)}(s)W^{(3)} (s)}{s-z}dA(s),
\end{equation}
where $A(s)$ denotes the Lebesgue measure on the complex plane $\mathbb{C}$. With Cauchy-Green integral operator, the solution reduces to
\begin{align}\label{M3-oper}
M^{(3)}(z)=I\cdot(I-\mathbb{P}_z)^{-1},
\end{align}
where
\begin{align*}
f\mathbb{P}_z(z)=\frac{1}{\pi}\int\int_{\D{C}}\frac{f(s)W^{(3)}(s)}{s-z}\ dA(s).
\end{align*}

The following proposition shows that the operator $\mathbb{P}_z$ is small-norm for a sufficiently large $t$, which means that the operator equation \eqref{M3-oper} is meaningful and $(1-\mathbb{P}_z)^{-1}$ exists.
\begin{prop}\label{pp-oper-P}
As $t\to\infty$, the Cauchy-Green integral operator $\mathbb{P}_z$ satisfies the following inequality
\begin{align}
||\mathbb{P}_z||_{L^{\infty}\to L^{\infty}}\lesssim t^{-1/2}.
\end{align}
\end{prop}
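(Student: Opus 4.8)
The plan is to bound the operator norm by reducing it to a scalar double integral and then exploiting the exponential decay of the oscillatory factor against the two pointwise bounds for $\overline{\partial}R^{(2)}$ established in Proposition \ref{PP-R2}. For $f\in L^\infty$, the definition of $\mathbb{P}_z$ gives
\begin{align*}
\|f\mathbb{P}_z\|_{L^\infty}\leq \frac{1}{\pi}\|f\|_{L^\infty}\sup_{z\in\D{C}}\iint_{\D{C}}\frac{|W^{(3)}(s)|}{|s-z|}\,dA(s),
\end{align*}
so it suffices to bound the supremum of the double integral by $t^{-1/2}$. Since $W^{(3)}=M^{(rhp)}\overline{\partial}R^{(2)}(M^{(rhp)})^{-1}$ and $M^{(rhp)}=M^{(err)}M^{(sol)}$ is uniformly bounded together with its inverse on the support of $\overline{\partial}R^{(2)}$ (the wedges $\Omega=\cup_k\Omega_k$ stay away from the poles $\eta_n$ by the choice of $\varrho$, while $M^{(err)}\to I$ by Proposition \ref{Merr-i}), one has $|W^{(3)}(s)|\lesssim|\overline{\partial}R^{(2)}(s)|$, which is supported in $\Omega$.

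First I would fix one wedge, say $\Omega_1$ for $\xi\in(2,\infty)$, the other three sectors and the region $\xi<-1/4$ being identical after relabeling. Writing $z=\alpha+i\beta$ and $s=u+iv$, the two inputs are the pointwise bound $|\overline{\partial}R_1(s,\xi)|\lesssim |r_1'(|s|)|+|s|^{-1/2}$ from \eqref{R2-D}, and the exponential estimate $|e^{2it\theta}|=e^{-2t\Im\theta}\leq e^{-2t\tau(\xi)v}$ supplied by Corollary \ref{cor-im}. Accordingly I split the double integral into a piece $I_1$ carrying $|r_1'(|s|)|$ and a piece $I_2$ carrying $|s|^{-1/2}$.

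For $I_1$ I would freeze $v$ and apply Cauchy--Schwarz in the $u$-variable, using that the small opening angle $\psi$ makes the Jacobian $\sec(\arg s)\leq\sec\psi$ bounded, so that $\int |r_1'(|s|)|^2\,du\lesssim \|r'\|_{L^2}^2$, together with the elementary kernel bound $\int_{\D{R}}|s-z|^{-2}\,du\lesssim|v-\beta|^{-1}$. This reduces $I_1$ to $\int_0^\infty e^{-2t\tau v}|v-\beta|^{-1/2}\,dv$, which after the rescaling $v=w/(2t\tau)$ equals $(2t\tau)^{-1/2}\int_0^\infty e^{-w}|w-B|^{-1/2}\,dw$ with $B=2t\tau\beta\geq0$; since the last integral is bounded uniformly in $B\geq0$, this yields $I_1\lesssim t^{-1/2}$. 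For $I_2$ I would instead use H\"older in $u$ with a conjugate pair $1/p+1/q=1$, $q>2$, so that both $\int_{\D{R}}|s-z|^{-p}\,du\lesssim|v-\beta|^{1-p}$ and $\int_{\Omega_1}|s|^{-q/2}\,du\lesssim v^{1-q/2}$ converge; the same rescaling in $v$ again produces the universal factor $t^{-1/2}$.

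The main obstacle is the interaction between the local singularity of the Cauchy kernel $(s-z)^{-1}$ at $s=z$ and the degeneration of the phase decay as $v=\Im s\to0$ (the cusp of the wedge on $\D{R}$), which must be controlled uniformly in the evaluation point $z$, and in particular uniformly in $\beta=\Im z$; this is exactly what the scaling identity above resolves, the precise rate $t^{-1/2}$ being the signature of $\int_0^\infty e^{-cv}v^{-1/2}\,dv<\infty$. A secondary technical point is the low-regularity term $|s|^{-1/2}$ near the origin, which is controlled only after the H\"older exponents are chosen so that $|s|^{-1/2}$ is integrable along the $u$-lines inside the wedge; if extra decay is required in the far field, the sharper bound \eqref{R2-D1} with $|s|^{-1}$ can be invoked there.
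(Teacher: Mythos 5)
Your proposal is correct and follows essentially the same route as the paper: reduce to the scalar integral $\iint|W^{(3)}(s)|\,|s-z|^{-1}dA(s)$, use boundedness of $M^{(rhp)}$ on $\overline{\Omega}$, split according to the two terms $|r_1'|+|s|^{-1/2}$ in \eqref{R2-D}, and apply Cauchy--Schwarz/H\"older along horizontal lines before integrating out $v$ against the exponential decay from Corollary \ref{cor-im}. The only (harmless) difference is cosmetic: you extract the $t^{-1/2}$ by a global rescaling $v\mapsto w/(2t\tau)$ with a uniform-in-$B$ bound, whereas the paper splits the $v$-integral at $\beta$ and uses $\varpi=v/\beta$ together with $e^{-x}\lesssim x^{-1/2}$.
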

\begin{proof}
With the definition of Cauchy-Green integral operator $\mathbb{P}_z$
\begin{align}\label{CG-1}
\left|\left|f\mathbb{P}_z\right|\right|_{L^{\infty}}\lesssim ||f||_{L^{\infty}}\int\int_{\D{C}}\frac{|W^{(3)}(s)|}{
|s-z|}\ dA(s),
\end{align}
where $f\in L^{\infty}$ and $W^{(3)}(z)=M^{(rhp)}(z)\overline{\partial}R^{(2)}(z)M^{(rhp)}(z)^{-1}$, which implies that  we only need to estimate the second factor of the above equation. In what follows, we will prove the case $\xi>2$ for $z\in\Omega_1$ in detail, and other areas can be similarly proved. Observing that $W^{(3)}(z)\equiv0$ as $z\notin\overline{\Omega}$ and Proposition \ref{pp-Mrhp} indicates the solution $M^{(rhp)}(z)$ and $M^{(rhp)}(z)^{-1}$ are bounded for $z\in\overline{\Omega}$.  Thus \eqref{CG-1} is equivalent to
\begin{align}\label{CG-2}
\int\int_{\Omega_1}\frac{|W^{(3)}(s)|}{
|s-z|}\ dA(s)\leq \int\int_{\Omega_1}\frac{|\overline{\partial}R_1(s)\cdot e^{2it\theta}|}{
|s-z|}\ dA(s).
\end{align}

It follows from Corollary \ref{cor-im} and the $\overline{\partial}$-derivative \eqref{R2-D} that the inequality \eqref{CG-2}
can be decompose into two  integral equations
\begin{align}
\int\int_{\Omega_1}\frac{|\overline{\partial}R_1(s)| e^{-2t\Im\theta}}{
|s-z|}\ dA(s)\lesssim J_1+J_2,
\end{align}
where
\begin{subequations}
\begin{align}
&J_1=\int\int_{\Omega_1}\frac{|r'_1(s)| e^{-2t\Im\theta}}{
|s-z|}\ dA(s),\label{J1}\\
&J_2=\int\int_{\Omega_1}\frac{|s|^{-1/2} e^{-2t\Im\theta}}{
|s-z|}\ dA(s).\label{J2}
\end{align}
\end{subequations}

Let $z=\alpha+i\beta$ and $s=u+iv$. Observing that for $1\leq q\leq+\infty$ with $\frac{1}{p}+\frac{1}{q}=1$
\begin{align}
\left|\left|\frac{1}{|s-z|}\right|\right|&=\left(\int_{v}^{+\infty}\left(\frac{1}{(u-\alpha)^2+(v-\beta)^2}\right)^{q/2}
\right)^{1/q}\nonumber\\
&=\left(\int_{v}^{+\infty}\left(1+\left(\frac{u-\alpha}{v-\beta}\right)\right)^{-q/2}\ d\left(
\frac{u-\alpha}{v-\beta}\right)\cdot(v-\beta)^{-q+1}\right)^{-1/q}\nonumber\\
&\lesssim |v-\beta|^{1/q-1}.
\end{align}

On the other hand, for $z\in\Omega_1$ in the region $\xi>2$, one has $e^{-2t\Im\theta}\leq e^{-\tau(\xi)v}$. Thus, applying H\"{o}lder inequality yields
\begin{align}
J_1&\leq\int_0^\infty\int_{v}^\infty\frac{|r'_1(s)| e^{-2t\Im\theta}}{
|s-z|}\ dudv\nonumber\\
&\leq\int_0^\infty
\left|\left|\frac{1}{|s-z|}\right|\right|_{L^2(\D{R}^+)}\left|\left|\frac{1}{|r'_1(s)|}\right|\right|_{L^2(\D{R}^+)}
e^{-\tau(\xi)v}\ dudv\nonumber\\
&\leq \int_0^\infty|v-\beta|^{-1/2}e^{-\tau(\xi)v}\ dv\nonumber\\
&=\int_0^\beta|v-\beta|^{-1/2}e^{-\tau(\xi)v}\ dv+\int_\beta^\infty|v-\beta|^{-1/2}e^{-\tau(\xi)v}\ dv\triangleq
J_1^{(1)}+J_1^{(2)}.\label{CG-3}
\end{align}

For $J_1^{(1)}$,
\begin{align*}
J_1^{(1)}&=\int_0^\beta\frac{1}{\sqrt{\beta-v}}e^{-\tau(\xi)v}\ dv=\int_0^1\frac{\sqrt{\beta}e^{-t\tau(\xi)\beta \varpi}}{\sqrt{1-\varpi}}\ d\varpi\nonumber\\
&\leq t^{-1/2}\int_0^1\frac{1}{\sqrt{\varpi(1-\varpi)}}\ d\varpi\lesssim t^{-1/2},
\end{align*}
where the variable $\varpi=v/\beta$. Similarly, we can proof $J_1^{(2)}\lesssim t^{-1/2}$, which completes the proof of \eqref{J1}. Now let's turn our attention to the second integral equation \eqref{J2}. Also, it's easy to check that for $1\leq q\leq+\infty$ with $\frac{1}{p}+\frac{1}{q}=1$
\begin{align*}
\left|\left|\frac{1}{|s|}\right|\right|_{L^{p}(v,+\infty)}
=\left(\int_v^\infty\left(\frac{1}{\sqrt{u^2+v^2}}\right)^{p}\ du\right)^{1/p}\lesssim v^{1/p-1}.
\end{align*}
Then combined with Cauchy-Schwarz inequality, the following results are obtained
\begin{align}
J_2&\leq\int_0^\infty\left|\left|\frac{1}{|s|}\right|\right|_{L^{p}(\D{R}^+)}
\left|\left|\frac{1}{|s-z|}\right|\right|_{L^{q}(\D{R}^+)}e^{-\tau(\xi)v}\ dv\nonumber\\
&\leq\int_0^\infty v^{1/p-1/2}|v-\beta|^{1/q-1}e^{-\tau(\xi)v}\ dv \nonumber\\
&\leq\int_0^\beta v^{1/p-1/2}|v-\beta|^{1/q-1}e^{-\tau(\xi)v}\ dv
+\int_\beta^\infty v^{1/p-1/2}|v-\beta|^{1/q-1}e^{-\tau(\xi)v}\ dv.
\end{align}
Similar to \eqref{CG-3}, one has $J_2\lesssim t^{-1/2}$. To sum up, we have completed the proof of proposition \ref{pp-oper-P}.
\end{proof}

As $z=i$, integral equation \eqref{M3-sol} becomes
\begin{align}\label{M3i}
M^{(3)}(i)=I+\frac{1}{\pi}\int\int_{\D{C}}\frac{M^{(3)}(s)W^{(3)}(s)}{s-i}\ dA(s).
\end{align}
The asymptotic behavior of $M^{(3)}(z)$ at $z=i$ is required when constructing the solution of the mCH equation, which is reflected in the following proposition.
\begin{prop}\label{pp-M3i}
The integral equation \eqref{M3-sol} admits
\begin{align}\label{est-M3i}
\left|\left|M^{(3)}(z)-I\right|\right|=
\left|\left|\frac{1}{\pi}\int\int_{\D{C}}\frac{M^{(3)}(s)W^{(3)}(s)}{s-i}\ dA(s)\right|\right|\lesssim
O(t^{-1/2}).
\end{align}
As $z\to i$, $M^{(3)}(z)$ has the following expansion
\begin{align}
M^{(3)}(z)=M^{(3)}(i)+M^{(3)}_{1}(x,t)(z-i)+O((z-i)^2),
\end{align}
where
\begin{align}
M^{(3)}_{1}(x,t)=
\frac{1}{\pi}\int\int_{\D{C}}\frac{M^{(3)}(s)W^{(3)}(s)}{(s-i)^2}\ dA(s),
\end{align}
which such that as $t\to\infty$
\begin{align}\label{est-M3i1}
\left|M^{(3)}_{1}(x,t)\right|\lesssim O(t^{-1/2}).
\end{align}
\end{prop}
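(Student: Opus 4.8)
The plan is to build the whole proposition on the operator estimate $\|\mathbb{P}_z\|_{L^{\infty}\to L^{\infty}}\lesssim t^{-1/2}$ already furnished by Proposition \ref{pp-oper-P}, together with the elementary geometric fact that the base point $z=i$ lies off the support $\overline{\Omega}$ of $W^{(3)}$ (the sectors $\Omega_k$ are confined to the small-angle cones about $\D{R}$, while $i$ sits on the imaginary axis). First I would record that, for $t$ so large that $\|\mathbb{P}_z\|<1$, the Neumann series for $(I-\mathbb{P}_z)^{-1}$ converges, so \eqref{M3-oper} furnishes a unique solution with $\|M^{(3)}\|_{L^{\infty}}\leq(1-\|\mathbb{P}_z\|)^{-1}\lesssim 1$. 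Feeding this uniform bound into the representation \eqref{M3-sol} gives, for every $z$,
\begin{align*}
\|M^{(3)}(z)-I\|\leq\frac{\|M^{(3)}\|_{L^{\infty}}}{\pi}\int\int_{\D{C}}\frac{|W^{(3)}(s)|}{|s-z|}\,dA(s),
\end{align*}
and the area integral on the right is precisely the quantity that the proof of Proposition \ref{pp-oper-P} controls by $\lesssim t^{-1/2}$ uniformly in $z$ (via the decay $e^{-2t\Im\theta}\lesssim e^{-2t\tau(\xi)v}$ from Corollary \ref{cor-im}, the $\overline{\partial}$-bound \eqref{R2-D} from Proposition \ref{PP-R2}, and the boundedness of $M^{(rhp)},(M^{(rhp)})^{-1}$ from Proposition \ref{pp-Mrhp}). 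Specialising to $z=i$ recovers \eqref{M3i} and establishes \eqref{est-M3i}.

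Next I would justify the local expansion at $z=i$. Because $W^{(3)}$ is supported in $\overline{\Omega}$ and $i\notin\overline{\Omega}$, the kernel $(s-z)^{-1}$ is analytic in $z$ on a full neighbourhood of $i$ disjoint from the region of integration, so I may expand
\begin{align*}
\frac{1}{s-z}=\frac{1}{s-i}+\frac{z-i}{(s-i)^{2}}+O((z-i)^{2})
\end{align*}
uniformly for $s\in\overline{\Omega}$ and substitute it into \eqref{M3-sol}. The zeroth- and first-order coefficients reproduce $M^{(3)}(i)$ together with
\begin{align*}
M^{(3)}_{1}(x,t)=\frac{1}{\pi}\int\int_{\D{C}}\frac{M^{(3)}(s)W^{(3)}(s)}{(s-i)^{2}}\,dA(s),
\end{align*}
as claimed, with the $O((z-i)^{2})$ remainder controlled uniformly since the kernel's higher $z$-derivatives stay bounded on the support of $W^{(3)}$.

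Finally, for \eqref{est-M3i1} I would again exploit $i\notin\overline{\Omega}$: there is a constant $c>0$ with $|s-i|\geq c$ for all $s\in\overline{\Omega}$, so on the support of $W^{(3)}$ one has $|s-i|^{-2}\leq c^{-1}|s-i|^{-1}$, whence
\begin{align*}
|M^{(3)}_{1}(x,t)|\leq\frac{\|M^{(3)}\|_{L^{\infty}}}{\pi}\int\int_{\D{C}}\frac{|W^{(3)}(s)|}{|s-i|^{2}}\,dA(s)\leq\frac{\|M^{(3)}\|_{L^{\infty}}}{\pi c}\int\int_{\D{C}}\frac{|W^{(3)}(s)|}{|s-i|}\,dA(s)\lesssim t^{-1/2},
\end{align*}
the last bound being exactly the one from the first paragraph. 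I expect the genuine analytic content to sit entirely in the area estimate $\int\int_{\D{C}}|W^{(3)}|/|s-i|\,dA\lesssim t^{-1/2}$, that is, in the competition between the exponential factor $e^{-2t\Im\theta}$ and the merely $L^{2}$ control on $r'$ and the weak $|s|^{-1/2}$ growth in \eqref{R2-D}; this is what forces the algebraic rate $t^{-1/2}$ rather than exponential decay, and it is precisely the H\"{o}lder-and-scaling argument already carried out in Proposition \ref{pp-oper-P}. Everything else — invertibility of $I-\mathbb{P}_z$, the expansion at $z=i$, and the passage from the $|s-i|^{-2}$ weight to the $|s-i|^{-1}$ weight — is routine once that estimate is in hand.
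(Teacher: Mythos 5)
Your argument is correct and follows essentially the same route as the paper: both rest on the uniform bound $\|M^{(3)}\|_{L^{\infty}}\lesssim 1$ from the Neumann series for $(I-\mathbb{P}_z)^{-1}$ and then reduce everything to the area estimate $\iint_{\D{C}}|W^{(3)}(s)|\,|s-z|^{-1}dA(s)\lesssim t^{-1/2}$ carried out via H\"older and scaling in Proposition \ref{pp-oper-P} (the paper redoes this with the kernel $|s-i|^{-1}$, splitting into the $|r'|$ and $|s|^{-1}$ pieces, whereas you simply cite the uniform-in-$z$ bound). Your two small efficiencies — justifying the expansion at $z=i$ by Taylor-expanding the Cauchy kernel on $\overline{\Omega}$, which stays a positive distance from $i$, and deducing \eqref{est-M3i1} from \eqref{est-M3i} via $|s-i|\geq c$ on the support of $W^{(3)}$ — are both valid and replace the paper's ``by the same method'' with a cleaner reduction.
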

\begin{proof}
It is similar to the proof process of Proposition \ref{pp-oper-P}, and it follows from \eqref{M3-oper} and  \eqref{pp-oper-P} that $||M^{(3)}||_{L^{\infty}}\lesssim 1$.
Take the region $\xi\in(2,+\infty)$ as an example, the other situations are similar. Let $s=u+iv\in\Omega_{1}$, then
\begin{align*}
M^{(3)}(z)'=\frac{1}{\pi}\int\int_{\D{C}}\frac{M^{(3)}(s)W^{(3)}(s)}{(s-i)^2}\ dA(s)
\end{align*}
and
\begin{align}
\left|\left|M^{(3)}(z)-I\right|\right|&=\frac{1}{\pi}
\left|\left|\frac{1}{\pi}\int\int_{\D{C}}\frac{M^{(3)}(s)W^{(3)}(s)}{s-i}\ dA(s)\right|\right|\nonumber \\
&\lesssim\int\int_{\D{C}}\frac{|\overline{\partial}R_1(s)|e^{-\tau(\xi)tv}}{|s-i|}\ dudv
\triangleq J_3+J_4,
\end{align}
where
\begin{subequations}
\begin{align}
&J_3=\int\int_{\Omega_1}\frac{|r'_1(s)|e^{-\tau(\xi)tv}}{|s-i|}\ dudv,\label{J3} \\
&J_4=\int\int_{\Omega_1}\frac{|s|^{-1}e^{-\tau(\xi)tv}}{|s-i|}\ dudv.\label{J4}
\end{align}
\end{subequations}

Now turn our attention to proving $J_3$. Observing that $r\in H^{1,1}(\D{R})$, then $r'\in L^{1}(\D{R})$. Applying H\"{o}lder inequality yields
\begin{align*}
J_3&\leq\int_{0}^{+\infty}\int_{v}^{+\infty}\frac{|r'_1(s)|e^{-\tau(\xi)tv}}{|s-i|}\ dudv
\nonumber \\
&\leq\int_{0}^{+\infty}\sqrt{2}||r'_1(s)||_{L^1(\D{R})}e^{-\tau(\xi)tv}\ dv\nonumber \\
&\lesssim \int_{0}^{+\infty}e^{-\tau(\xi)tv}\ dv\lesssim O(t^{-1}).
\end{align*}
For $J_4$, it follows from the fact $\left|\left||s|^{-1}\right|\right|_{L^{p}(v,+\infty)}
\lesssim v^{1/p-1}$ for $1\leq q\leq+\infty$ with $\frac{1}{p}+\frac{1}{q}=1$ and $\left|\left||s-i|^{-1}\right|\right|_{L^{q}(v,+\infty)}\lesssim|v-1|^{1/q-1}$. Then
\begin{align*}
J_4&=\int\int_{\Omega_1}\frac{|s|^{-1}e^{-\tau(\xi)tv}}{|s-i|}\ dudv\nonumber \\
&\leq \int_{0}^{+\infty}e^{-\tau(\xi)tv}\left|\left||s-i|^{-1}\right|\right|_{L^{q}}
\left|\left||s|^{-1}\right|\right|_{L^{p}}\ dv\nonumber \\
&\lesssim\int_{0}^{+\infty}e^{-\tau(\xi)tv}|v-1|^{1/q-1}|v|^{1/p-1}\ dv\nonumber \\
&=\int_{0}^{1}e^{-\tau(\xi)tv}(1-v)^{1/q-1}v^{1/p-1}\ dv+
\int_{1}^{+\infty}e^{-\tau(\xi)tv}(v-1)^{1/q-1}v^{1/p-1}\ dv
\nonumber \\
&\triangleq J_4^{(1)}+J_4^{(2)}.
\end{align*}
As in \cite{AIHP}, one can obtain  $J_4\lesssim O(t^{-1/2})$. Similarly, inequality \eqref{est-M3i1} can be proved by the same method.
\end{proof}

\section{Deformation of RH problem for $\xi\in(-1/4,0)$and $\xi\in(0,2)$}\label{sec5}
Starting from this section, we will analyze the long-time asymptotic behavior of solutions in region $\xi\in(-1/4,0)$ and region $\xi\in(0,2)$, corresponding to the existence of eight phase points $\{\xi_i\}_{i=1}^{8}$ and four phase points $\{\xi_i\}_{i=1}^{4}$ respectively, which is significantly different from that in regions $\xi\in(-\infty,-1/4)$ and $\xi\in(2,+\infty)$.  In the analysis process, some matrix-valued functions here are equivalent to the matrix-valued functions in the above two areas, which will not be repeated, including the function $T(z):=T(z,\xi)$ defined in \eqref{T} and the matrix $M^{(1)}(z)$ defined by \eqref{M1} satisfied RH problem \ref{RHP-M1}. On the other hand, due to the existence of steady-state phase points, we need to partition the real axis $\D{R}$. This is because in different intervals, the jump matrix has different forms of triangular decomposition as in Section \ref{sec3}, which will affect the continuous extansion of the next jump matrix. Thus, we define the intervals for $\xi\in(-1/4,0)$ and $\xi\in(0,2)$ as
\begin{align}
I_{j1}=I_{j2}=\left\{ \begin{array}{ll}
\left(\xi_j ,\frac{\xi_j+\xi_{j-1}}{2}\right),\    & j=1,3,5,7,\\[10pt]
\left( \frac{\xi_j+\xi_{j+1}}{2},\xi_j\right) ,   &j=2,4,6,8,
\end{array}\right.\\
I_{j3}=I_{j4}=\left\{ \begin{array}{ll}
\left( \frac{\xi_j+\xi_{j+1}}{2},\xi_j\right) ,\    & j=1,3,5,7,\\[10pt]
\left(\xi_j ,\frac{\xi_j+\xi_{j-1}}{2}\right),   &j=2,4,6,8.
\end{array}\right.\label{In2}
\end{align}
and
\begin{align}
I_{j1}=I_{j2}=\left\{ \begin{array}{ll}
\left( \frac{\xi_j+\xi_{j+1}}{2},\xi_j\right) ,\    & j=1,3,\\[10pt]
\left(\xi_j ,\frac{\xi_j+\xi_{j-1}}{2}\right),   &j=2,4,
\end{array}\right.\label{In1}\\
I_{j3}=I_{j4}=\left\{ \begin{array}{ll}
\left(\xi_j ,\frac{\xi_j+\xi_{j-1}}{2}\right),\    & j=1,3,\\[10pt]
\left( \frac{\xi_j+\xi_{j+1}}{2},\xi_j\right) ,   &j=2,4,
\end{array}\right.
\end{align}
shown in Fig. \ref{phase}.

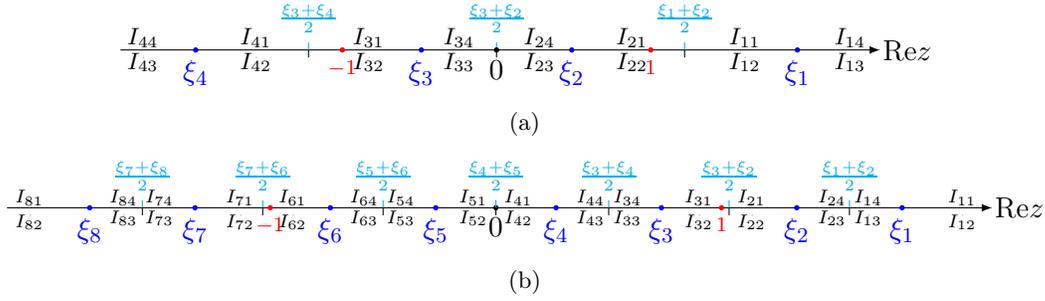
\begin{figure}[h]
	\subfigure[]{
		\begin{tikzpicture}
			\draw[-](-5,0)--(5,0)node[right]{ Re$z$};
\draw[-latex](5,0)--(5.1,0);
			\draw[cyan](2.5,0)--(2.5,0.1)node[above]{\scriptsize$\frac{\xi_1+\xi_2}{2}$};
			\draw(2.5,0)--(2.5,-0.1);
			\draw[cyan](-2.5,0)--(-2.5,0.1)node[above]{\scriptsize$\frac{\xi_3+\xi_4}{2}$};
			\draw(-2.5,0)--(-2.5,-0.1);
			\draw[cyan](0,0)--(0,0.1)node[above]{\scriptsize$\frac{\xi_3+\xi_2}{2}$};
			\draw(0,0)--(0,-0.1);
			\coordinate (I) at (0,0);
			\fill (I) circle (1pt) node[below] {$0$};
			\coordinate (A) at (-4,0);
			\fill[blue] (A) circle (1pt) node[below] {$\xi_4$};
			\coordinate (b) at (-1,0);
			\fill[blue] (b) circle (1pt) node[below] {$\xi_3$};
			\coordinate (e) at (4,0);
			\fill[blue] (e) circle (1pt) node[below] {$\xi_1$};
			\coordinate (f) at (1,0);
			\fill[blue] (f) circle (1pt) node[below] {$\xi_2$};
			\coordinate (ke) at (4.7,0.1);
			\fill (ke) circle (0pt) node[below] {\footnotesize$I_{13}$};
			\coordinate (k1e) at (4.7,-0.1);
			\fill (k1e) circle (0pt) node[above] {\footnotesize$I_{14}$};
			\coordinate (le) at (3.3,0.1);
			\fill (le) circle (0pt) node[below] {\footnotesize$I_{12}$};
			\coordinate (l1e) at (3.3,-0.1);
			\fill (l1e) circle (0pt) node[above] {\footnotesize$I_{11}$};
			\coordinate (n2) at (0.57,0.1);
			\fill (n2) circle (0pt) node[below] {\footnotesize$I_{23}$};
			\coordinate (n12) at (0.57,-0.1);
			\fill (n12) circle (0pt) node[above] {\footnotesize$I_{24}$};
			\coordinate (m2) at (1.8,0.1);
			\fill (m2) circle (0pt) node[below] {\footnotesize$I_{22}$};
			\coordinate (m12) at (1.8,-0.1);
			\fill (m12) circle (0pt) node[above] {\footnotesize$I_{21}$};
			\coordinate (k) at (-4.7,0.1);
			\fill (k) circle (0pt) node[below] {\footnotesize$I_{43}$};
			\coordinate (k1) at (-4.7,-0.1);
			\fill (k1) circle (0pt) node[above] {\footnotesize$I_{44}$};
			\coordinate (l) at (-3.2,0.1);
			\fill (l) circle (0pt) node[below] {\footnotesize$I_{42}$};
			\coordinate (l1) at (-3.2,-0.1);
			\fill (l1) circle (0pt) node[above] {\footnotesize$I_{41}$};
			\coordinate (n) at (-0.5,0.1);
			\fill (n) circle (0pt) node[below] {\footnotesize$I_{33}$};
			\coordinate (n1) at (-0.5,-0.1);
			\fill (n1) circle (0pt) node[above] {\footnotesize$I_{34}$};
			\coordinate (m) at (-1.7,0.1);
			\fill (m) circle (0pt) node[below] {\footnotesize$I_{32}$};
			\coordinate (m1) at (-1.7,-0.1);
			\fill (m1) circle (0pt) node[above] {\footnotesize$I_{31}$};
			\coordinate (c) at (-2.05,0);
			\fill[red] (c) circle (1pt) node[below] {\scriptsize$-1$};
			\coordinate (d) at (2.05,0);
			\fill[red] (d) circle (1pt) node[below] {\scriptsize$1$};
			\end{tikzpicture}
			\label{pcase1}}
		\subfigure[]{
		\begin{tikzpicture}
		\draw[-](-6.5,0)--(6.5,0)node[right]{ Re$z$};
\draw[-latex](6.5,0)--(6.6,0);
		\coordinate (I) at (0,0);
		\fill (I) circle (1pt) node[below] {$0$};
		\coordinate (c) at (-3,0);
		\fill[red] (c) circle (1pt) node[below] {\scriptsize$-1$};
		\coordinate (D) at (3,0);
		\fill[red] (D) circle (1pt) node[below] {\scriptsize$1$};
		\draw[cyan](-1.5,0)--(-1.5,0.1)node[above]{\scriptsize$\frac{\xi_5+\xi_6}{2}$};
		\draw(-1.5,0)--(-1.5,-0.1);
		\draw[cyan](-4.7,0)--(-4.7,0.1)node[above]{\scriptsize$\frac{\xi_7+\xi_8}{2}$};
		\draw(-4.7,0)--(-4.7,-0.1);
		\draw[cyan](-3.1,0)--(-3.1,0.1)node[above]{\scriptsize$\frac{\xi_7+\xi_6}{2}$};
		\draw(-3.1,0)--(-3.1,-0.1);
		\draw[cyan](1.5,0)--(1.5,0.1)node[above]{\scriptsize$\frac{\xi_3+\xi_4}{2}$};
		\draw(1.5,0)--(1.5,-0.1);
		\draw[cyan](4.7,0)--(4.7,0.1)node[above]{\scriptsize$\frac{\xi_1+\xi_2}{2}$};
		\draw(4.7,0)--(4.7,-0.1);
		\draw[cyan](3.1,0)--(3.1,0.1)node[above]{\scriptsize$\frac{\xi_3+\xi_2}{2}$};
		\draw(3.1,0)--(3.1,-0.1);
		\draw[cyan](0,0)--(0,0.1)node[above]{\scriptsize$\frac{\xi_4+\xi_5}{2}$};
		\draw(0,0)--(0,-0.1);
		\coordinate (A) at (-5.4,0);
		\fill[blue] (A) circle (1pt) node[below] {$\xi_8$};
		\coordinate (b) at (-4,0);
		\fill[blue] (b) circle (1pt) node[below] {$\xi_7$};
		\coordinate (C) at (-0.8,0);
		\fill[blue] (C) circle (1pt) node[below] {$\xi_5$};
		\coordinate (d) at (-2.2,0);
		\fill[blue] (d) circle (1pt) node[below] {$\xi_6$};
		\coordinate (E) at (5.4,0);
		\fill[blue] (E) circle (1pt) node[below] {$\xi_1$};
		\coordinate (R) at (4,0);
		\fill[blue] (R) circle (1pt) node[below] {$\xi_2$};
		\coordinate (T) at (0.8,0);
		\fill[blue] (T) circle (1pt) node[below] {$\xi_4$};
		\coordinate (Y) at (2.2,0);
		\fill[blue] (Y) circle (1pt) node[below] {$\xi_3$};
		\coordinate (q) at (6.2,-0.1);
		\fill (q) circle (0pt) node[above] {\tiny$I_{11}$};
		\coordinate (q1) at (6.2,0.05);
		\fill (q1) circle (0pt) node[below] {\tiny$I_{12}$};
		\coordinate (w) at (4.95,-0.1);
		\fill (w) circle (0pt) node[above] {\tiny$I_{14}$};
		\coordinate (w1) at (4.95,0.1);
		\fill (w1) circle (0pt) node[below] {\tiny$I_{13}$};
		\coordinate (e) at (4.47,-0.1);
		\fill (e) circle (0pt) node[above] {\tiny$I_{24}$};
		\coordinate (e1) at (4.47,0.1);
		\fill (e1) circle (0pt) node[below] {\tiny$I_{23}$};
		\coordinate (r) at (3.4,-0.1);
		\fill (r) circle (0pt) node[above] {\tiny$I_{21}$};
		\coordinate (r1) at (3.4,0.05);
		\fill (r1) circle (0pt) node[below] {\tiny$I_{22}$};
		\coordinate (t) at (2.7,-0.1);
		\fill (t) circle (0pt) node[above] {\tiny$I_{31}$};
		\coordinate (t1) at (2.7,0.05);
		\fill (t1) circle (0pt) node[below] {\tiny$I_{32}$};
		\coordinate (y) at (1.75,-0.1);
		\fill (y) circle (0pt) node[above] {\tiny$I_{34}$};
		\coordinate (y1) at (1.75,0.1);
		\fill (y1) circle (0pt) node[below] {\tiny$I_{33}$};
		\coordinate (l) at (1.26,-0.1);
		\fill (l) circle (0pt) node[above] {\tiny$I_{44}$};
		\coordinate (l1) at (1.26,0.1);
		\fill (l1) circle (0pt) node[below] {\tiny$I_{43}$};
		\coordinate (k) at (0.3,-0.1);
		\fill (k) circle (0pt) node[above] {\tiny$I_{41}$};
		\coordinate (k1) at (0.3,0.1);
		\fill (k1) circle (0pt) node[below] {\tiny$I_{42}$};
		\coordinate (q8) at (-6.2,-0.1);
		\fill (q8) circle (0pt) node[above] {\tiny$I_{81}$};
		\coordinate (q18) at (-6.2,0.05);
		\fill (q18) circle (0pt) node[below] {\tiny$I_{82}$};
		\coordinate (w8) at (-4.95,-0.1);
		\fill (w8) circle (0pt) node[above] {\tiny$I_{84}$};
		\coordinate (w18) at (-4.95,0.1);
		\fill (w18) circle (0pt) node[below] {\tiny$I_{83}$};
		\coordinate (e7) at (-4.47,-0.1);
		\fill (e7) circle (0pt) node[above] {\tiny$I_{74}$};
		\coordinate (e17) at (-4.47,0.1);
		\fill (e17) circle (0pt) node[below] {\tiny$I_{73}$};
		\coordinate (7r) at (-3.4,-0.1);
		\fill (7r) circle (0pt) node[above] {\tiny$I_{71}$};
		\coordinate (r17) at (-3.4,0.05);
		\fill (r17) circle (0pt) node[below] {\tiny$I_{72}$};
		\coordinate (t6) at (-2.7,-0.1);
		\fill (t6) circle (0pt) node[above] {\tiny$I_{61}$};
		\coordinate (t16) at (-2.7,0.05);
		\fill (t16) circle (0pt) node[below] {\tiny$I_{62}$};
		\coordinate (y6) at (-1.75,-0.1);
		\fill (y6) circle (0pt) node[above] {\tiny$I_{64}$};
		\coordinate (y16) at (-1.75,0.1);
		\fill (y16) circle (0pt) node[below] {\tiny$I_{63}$};
		\coordinate (l5) at (-1.26,-0.1);
		\fill (l5) circle (0pt) node[above] {\tiny$I_{54}$};
		\coordinate (l15) at (-1.26,0.1);
		\fill (l15) circle (0pt) node[below] {\tiny$I_{53}$};
		\coordinate (k5) at (-0.3,-0.1);
		\fill (k5) circle (0pt) node[above] {\tiny$I_{51}$};
		\coordinate (k15) at (-0.3,0.1);
		\fill (k15) circle (0pt) node[below] {\tiny$I_{52}$};
		\end{tikzpicture}
		\label{phase2}}
	\caption{ (a)  corresponds to the region $0\leq\xi<2$. (b) corresponds to the  $-\frac{1}{4}<\xi<0$.}
	\label{phase}
\end{figure}

\subsection{Contour deformation}
In order to get a new RH problem, which has no jump on the real axis $\D{R}$, the function  $M^{(1)}(z)$ needs to be continuously extended. In essence, the oscillatory functions $e^{\pm2it\theta}$ in the jump matrix is treated according to its attenuation property. Before that, some contours need to be introduced to better describe the deformation of the jump matrix.

For $\ell\in\left(0,\frac{|\xi_{j+(-1)^j}-\xi_j|}{2\cos\phi}\right)$ with $\xi\in(-1/4,0)$, define
\begin{align*}
\Sigma_{jk}=\left\{\begin{aligned}
&\xi_j+e^{i\left((k/2+1/2+j)\pi+(-1)^{j+1}\phi\right)}\ell,&k=1,3,\\
&\xi_j+e^{i\left((k/2+j)\pi+(-1)^{j}\phi\right)}\ell,&k=2,4.
\end{aligned}\right.
\end{align*}

For $\xi\in(0,2)$, define
\begin{align*}
\Sigma_{jk}=\left\{\begin{aligned}
&\xi_j+e^{i\left((k/2+j)\pi+(-1)^{j}\phi\right)}\ell,&k=1,3,\\
&\xi_j+e^{i\left((k/2+1/2+j)\pi+(-1)^{j+1}\phi\right)}\ell,&k=2,4.
\end{aligned}\right.
\end{align*}

As $j=1,8$ with $\xi\in(-1/4,0)$
\begin{align*}
\left\{\begin{aligned}
&\Sigma_{j1}(\xi)=\xi_j+e^{(1+j)\pi i+(-1)^{j+1}i\phi}\D{R}^+,\\
&\Sigma_{j2}(\xi)=\xi_j+e^{(1+j)\pi i+(-1)^{j}i\phi}\D{R}^+,\\
&\Sigma_{j3}(\xi)=\xi_j+e^{j\pi i+(-1)^{j+1}i\phi}\D{R}^+,\\
&\Sigma_{j4}(\xi)=\xi_j+e^{j\pi i+(-1)^{j}i\phi}\D{R}^+,\\
&\Sigma_p^{\pm}=\frac{\xi_{p+1}+\xi}{2}+e^{i\pi}\ell,\quad p=1,2,\ldots,7.
\end{aligned}\right.
\end{align*}

As $j=1,4$ with $\xi\in(0,2)$
\begin{align*}
\left\{\begin{aligned}
&\Sigma_{j1}(\xi)=\xi_j+e^{j\pi i+(-1)^{j}i\phi}\D{R}^+,\\
&\Sigma_{j2}(\xi)=\xi_j+e^{j\pi i+(-1)^{j+1}i\phi}\D{R}^+,\\
&\Sigma_{j3}(\xi)=\xi_j+e^{(1+j)\pi i+(-1)^{j}i\phi}\D{R}^+,\\
&\Sigma_{j4}(\xi)=\xi_j+e^{(1+j)\pi i+(-1)^{j+1}i\phi}\D{R}^+,\\
&\Sigma_p^{\pm}=\frac{\xi_{p+1}+\xi}{2}+e^{i\pi}\ell,\quad p=2,3,4.
\end{aligned}\right.
\end{align*}

Applying the above contours defines the following regions and new contours
\begin{align}
	&\Omega(\xi)=\underset{j=1,..,n(\xi)}{\underset{k=1,...,4,}{\cup}}\Omega_{jk},\hspace{0.5cm} \Omega_\pm(\xi)=\mathbb{C}\setminus\Omega,\\
	&\Sigma^\sharp(\xi)=\left( \underset{j=1,..,n(\xi)}{\underset{k=1,...,4,}{\cup}}\Sigma_{jk}\right) \cup\left(\underset{j=1,...,n(\xi)}{\cup}\Sigma_{j}^{\pm} \right) ,\hspace{0.5cm}\\
	&\Sigma^{(2)}(\xi)=\Sigma^\sharp(\xi)\underset{n\in N\setminus\Lambda}{\cup}\left( \partial\overline{\mathbb{P}}_n\cup\partial\mathbb{P}_n\right).
\end{align}

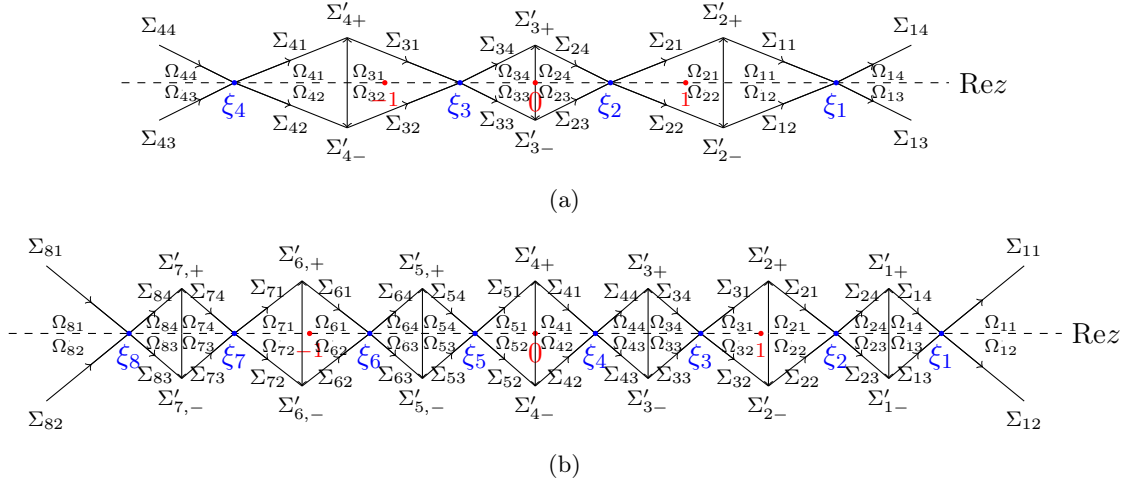
\begin{figure}[H]
\subfigure[]{
\begin{tikzpicture}
\draw(-4,0)--(-5,0.5)node[above]{\scriptsize$\Sigma_{44}$};
\draw[-<](-4,0)--(-4.5,0.25);
\draw(-4,0)--(-2.5,0.6);
\draw[->](-4,0)--(-3.25,-0.3)node[below]{\scriptsize$\Sigma_{42}$};
\draw(-4,0)--(-5,-0.5)node[below]{\scriptsize$\Sigma_{43}$};
\draw[->](-4,0)--(-3.25,0.3)node[above]{\scriptsize$\Sigma_{41}$};
\draw(-4,0)--(-2.5,-0.6);
\draw[-<](-4,0)--(-4.5,-0.25);
\draw(-1,0)--(0,0.5);
\draw[->](-1,0)--(-0.5,0.25)node[above]{\scriptsize$\Sigma_{34}$};
\draw(-1,0)--(-2.5,0.6);
\draw[-<](-1,0)--(-1.75,-0.3)node[below]{\scriptsize$\Sigma_{32}$};
\draw(-1,0)--(0,-0.5);
\draw[-<](-1,0)--(-1.75,0.3)node[above]{\scriptsize$\Sigma_{31}$};
\draw(-1,0)--(-2.5,-0.6);
\draw[->](-1,0)--(-0.5,-0.25)node[below]{\scriptsize$\Sigma_{33}$};
\draw[dashed](-5.5,0)--(5.5,0)node[right]{ Re$z$};
\draw(1,0)--(0,0.5);
\draw[-<](1,0)--(0.5,0.25)node[above]{\scriptsize$\Sigma_{24}$};
\draw(1,0)--(2.5,0.6);
\draw[->](1,0)--(1.75,-0.3)node[below]{\scriptsize$\Sigma_{22}$};
\draw(1,0)--(0,-0.5);
\draw[->](1,0)--(1.75,0.3)node[above]{\scriptsize$\Sigma_{21}$};
\draw(1,0)--(2.5,-0.6);
\draw[-<](1,0)--(0.5,-0.25)node[below]{\scriptsize$\Sigma_{23}$};
\draw(4,0)--(5,0.5)node[above]{\scriptsize$\Sigma_{14}$};
\draw[->](4,0)--(4.5,0.25);
\draw(4,0)--(2.5,0.6);
\draw[-<](4,0)--(3.25,-0.3)node[below]{\scriptsize$\Sigma_{12}$};
\draw(4,0)--(5,-0.5)node[below]{\scriptsize$\Sigma_{13}$};
\draw[-<](4,0)--(3.25,0.3)node[above]{\scriptsize$\Sigma_{11}$};
\draw(4,0)--(2.5,-0.6);
\draw[->](4,0)--(4.5,-0.25);
\draw[->](2.5,0)--(2.5,0.6)node[above]{\scriptsize$\Sigma_{2+}'$};
\draw[->](2.5,0)--(2.5,-0.6)node[below]{\scriptsize$\Sigma_{2-}'$};
\draw[->](-2.5,0)--(-2.5,0.6)node[above]{\scriptsize$\Sigma_{4+}'$};
\draw[->](-2.5,0)--(-2.5,-0.6)node[below]{\scriptsize$\Sigma_{4-}'$};
\draw[->](0,0)--(0,0.5)node[above]{\scriptsize$\Sigma_{3+}'$};
\draw[->](0,0)--(0,-0.5)node[below]{\scriptsize$\Sigma_{3-}'$};
\coordinate (I) at (0,0);
\fill[red] (I) circle (1pt) node[below] {$0$};
\coordinate (A) at (-4,0);
\fill[blue] (A) circle (1pt) node[below] {$\xi_4$};
\coordinate (b) at (-1,0);
\fill[blue] (b) circle (1pt) node[below] {$\xi_3$};
\coordinate (e) at (4,0);
\fill[blue] (e) circle (1pt) node[below] {$\xi_1$};
\coordinate (f) at (1,0);
\fill[blue] (f) circle (1pt) node[below] {$\xi_2$};
\coordinate (ke) at (4.7,0.1);
\fill (ke) circle (0pt) node[below] {\tiny$\Omega_{13}$};
\coordinate (k1e) at (4.7,-0.1);
\fill (k1e) circle (0pt) node[above] {\tiny$\Omega_{14}$};
\coordinate (le) at (3,0.1);
\fill (le) circle (0pt) node[below] {\tiny$\Omega_{12}$};
\coordinate (l1e) at (3,-0.1);
\fill (l1e) circle (0pt) node[above] {\tiny$\Omega_{11}$};
\coordinate (n2) at (0.27,0.1);
\fill (n2) circle (0pt) node[below] {\tiny$\Omega_{23}$};
\coordinate (n12) at (0.27,-0.1);
\fill (n12) circle (0pt) node[above] {\tiny$\Omega_{24}$};
\coordinate (m2) at (2.25,0.1);
\fill (m2) circle (0pt) node[below] {\tiny$\Omega_{22}$};
\coordinate (m12) at (2.25,-0.1);
\fill (m12) circle (0pt) node[above] {\tiny$\Omega_{21}$};
\coordinate (k) at (-4.7,0.1);
\fill (k) circle (0pt) node[below] {\tiny$\Omega_{43}$};
\coordinate (k1) at (-4.7,-0.1);
\fill (k1) circle (0pt) node[above] {\tiny$\Omega_{44}$};
\coordinate (l) at (-3,0.1);
\fill (l) circle (0pt) node[below] {\tiny$\Omega_{42}$};
\coordinate (l1) at (-3,-0.1);
\fill (l1) circle (0pt) node[above] {\tiny$\Omega_{41}$};
\coordinate (n) at (-0.27,0.1);
\fill (n) circle (0pt) node[below] {\tiny$\Omega_{33}$};
\coordinate (n1) at (-0.27,-0.1);
\fill (n1) circle (0pt) node[above] {\tiny$\Omega_{34}$};
\coordinate (m) at (-2.2,0.1);
\fill (m) circle (0pt) node[below] {\tiny$\Omega_{32}$};
\coordinate (m1) at (-2.2,-0.1);
\fill (m1) circle (0pt) node[above] {\tiny$\Omega_{31}$};
\coordinate (c) at (-2,0);
\fill[red] (c) circle (1pt) node[below] {\scriptsize$-1$};
\coordinate (d) at (2,0);
\fill[red] (d) circle (1pt) node[below] {\scriptsize$1$};
\end{tikzpicture}
\label{case1}}
\subfigure[]{
\begin{tikzpicture}
\draw[dashed](-7,0)--(7,0)node[right]{ Re$z$};
\coordinate (I) at (0,0);
\fill[red] (I) circle (1pt) node[below] {$0$};
\coordinate (c) at (-3,0);
\fill[red] (c) circle (1pt) node[below] {\scriptsize$-1$};
\coordinate (D) at (3,0);
\fill[red] (D) circle (1pt) node[below] {\scriptsize$1$};
\draw(-0.8,0)--(-0,0.7);
\draw[->](-0.8,0)--(-0.4,0.35)node[above]{\scriptsize$\Sigma_{51}$};
\draw(-0.8,0)--(-1.5,0.6);
\draw[-<](-0.8,0)--(-1.15,-0.3)node[below]{\scriptsize$\Sigma_{53}$};
\draw(-0.8,0)--(-0,-0.7);
\draw[-<](-0.8,0)--(-1.15,0.3)node[above]{\scriptsize$\Sigma_{54}$};
\draw(-0.8,0)--(-1.5,-0.6);
\draw[->](-0.8,0)--(-0.4,-0.35)node[below]{\scriptsize$\Sigma_{52}$};
\draw(-2.2,0)--(-1.5,0.6);
\draw[-<](-2.2,0)--(-2.65,0.35)node[above]{\scriptsize$\Sigma_{61}$};
\draw(-2.2,0)--(-1.5,-0.6);
\draw[->](-2.2,0)--(-1.85,-0.3)node[below]{\scriptsize$\Sigma_{63}$};
\draw(-2.2,0)--(-3.1,0.7);
\draw[->](-2.2,0)--(-1.85,0.3)node[above]{\scriptsize$\Sigma_{64}$};
\draw(-2.2,0)--(-3.1,-0.7);
\draw[-<](-2.2,0)--(-2.65,-0.35)node[below]{\scriptsize$\Sigma_{62}$};
\draw(-5.4,0)--(-6.5,0.9)node[above]{\scriptsize$\Sigma_{81}$};
\draw[-<](-5.4,0)--(-5.95,0.45);
\draw(-5.4,0)--(-4.7,0.6);
\draw[->](-5.4,0)--(-5.05,-0.3)node[below]{\scriptsize$\Sigma_{83}$};
\draw(-5.4,0)--(-6.5,-0.9)node[below]{\scriptsize$\Sigma_{82}$};
\draw[->](-5.4,0)--(-5.05,0.3)node[above]{\scriptsize$\Sigma_{84}$};
\draw(-5.4,0)--(-4.7,-0.6);
\draw[-<](-5.4,0)--(-5.95,-0.45);
\draw(-4,0)--(-3.1,0.7);
\draw[->](-4,0)--(-3.55,0.35)node[above]{\scriptsize$\Sigma_{71}$};
\draw(-4,0)--(-4.7,0.6);
\draw[-<](-4,0)--(-4.35,-0.3)node[below]{\scriptsize$\Sigma_{73}$};
\draw(-4,0)--(-3.1,-0.7);
\draw[-<](-4,0)--(-4.35,0.3)node[above]{\scriptsize$\Sigma_{74}$};
\draw(-4,0)--(-4.7,-0.6);
\draw[->](-4,0)--(-3.55,-0.35)node[below]{\scriptsize$\Sigma_{72}$};
\draw[->](-1.5,0)--(-1.5,0.6)node[above]{\scriptsize$\Sigma_{5,+}'$};
\draw[->](-1.5,0)--(-1.5,-0.6)node[below]{\scriptsize$\Sigma_{5,-}'$};
\draw[->](-4.7,0)--(-4.7,0.6)node[above]{\scriptsize$\Sigma_{7,+}'$};
\draw[->](-4.7,0)--(-4.7,-0.6)node[below]{\scriptsize$\Sigma_{7,-}'$};
\draw[->](-3.1,0)--(-3.1,0.7)node[above]{\scriptsize$\Sigma_{6,+}'$};
\draw[->](-3.1,0)--(-3.1,-0.7)node[below]{\scriptsize$\Sigma_{6,-}'$};
\draw(0.8,0)--(0,0.7);
\draw[-<](0.8,0)--(0.4,0.35)node[above]{\scriptsize$\Sigma_{41}$};
\draw(0.8,0)--(1.5,0.6);
\draw[->](0.8,0)--(1.15,-0.3)node[below]{\scriptsize$\Sigma_{43}$};
\draw(0.8,0)--(0,-0.7);
\draw[->](0.8,0)--(1.15,0.3)node[above]{\scriptsize$\Sigma_{44}$};
\draw(0.8,0)--(1.5,-0.6);
\draw[-<](0.8,0)--(0.4,-0.35)node[below]{\scriptsize$\Sigma_{42}$};
\draw(2.2,0)--(1.5,0.6);
\draw[->](2.2,0)--(2.65,0.35)node[above]{\scriptsize$\Sigma_{31}$};
\draw(2.2,0)--(1.5,-0.6);
\draw[-<](2.2,0)--(1.85,-0.3)node[below]{\scriptsize$\Sigma_{33}$};
\draw(2.2,0)--(3.1,0.7);
\draw[-<](2.2,0)--(1.85,0.3)node[above]{\scriptsize$\Sigma_{34}$};
\draw(2.2,0)--(3.1,-0.7);
\draw[->](2.2,0)--(2.65,-0.35)node[below]{\scriptsize$\Sigma_{32}$};
\draw(5.4,0)--(6.5,0.9)node[above]{\scriptsize$\Sigma_{11}$};
\draw[->](5.4,0)--(5.95,0.45);
\draw(5.4,0)--(4.7,0.6);
\draw[-<](5.4,0)--(5.05,-0.3)node[below]{\scriptsize$\Sigma_{13}$};
\draw(5.4,0)--(6.5,-0.9)node[below]{\scriptsize$\Sigma_{12}$};
\draw[-<](5.4,0)--(5.05,0.3)node[above]{\scriptsize$\Sigma_{14}$};
\draw(5.4,0)--(4.7,-0.6);
\draw[->](5.4,0)--(5.95,-0.45);
\draw(4,0)--(3.1,0.7);
\draw[-<](4,0)--(3.55,0.35)node[above]{\scriptsize$\Sigma_{21}$};
\draw(4,0)--(4.7,0.6);
\draw[->](4,0)--(4.35,-0.3)node[below]{\scriptsize$\Sigma_{23}$};
\draw(4,0)--(3.1,-0.7);
\draw[->](4,0)--(4.35,0.3)node[above]{\scriptsize$\Sigma_{24}$};
\draw(4,0)--(4.7,-0.6);
\draw[-<](4,0)--(3.55,-0.35)node[below]{\scriptsize$\Sigma_{22}$};
\draw[->](1.5,0)--(1.5,0.6)node[above]{\scriptsize$\Sigma_{3+}'$};
\draw[->](1.5,0)--(1.5,-0.6)node[below]{\scriptsize$\Sigma_{3-}'$};
\draw[->](4.7,0)--(4.7,0.6)node[above]{\scriptsize$\Sigma_{1+}'$};
\draw[->](4.7,0)--(4.7,-0.6)node[below]{\scriptsize$\Sigma_{1-}'$};
\draw[->](3.1,0)--(3.1,0.7)node[above]{\scriptsize$\Sigma_{2+}'$};
\draw[->](3.1,0)--(3.1,-0.7)node[below]{\scriptsize$\Sigma_{2-}'$};
\draw[->](0,0)--(0,0.7)node[above]{\scriptsize$\Sigma_{4+}'$};
\draw[->](0,0)--(0,-0.7)node[below]{\scriptsize$\Sigma_{4-}'$};
\coordinate (A) at (-5.4,0);
\fill[blue] (A) circle (1pt) node[below] {$\xi_8$};
\coordinate (b) at (-4,0);
\fill[blue] (b) circle (1pt) node[below] {$\xi_7$};
\coordinate (C) at (-0.8,0);
\fill[blue] (C) circle (1pt) node[below] {$\xi_5$};
\coordinate (d) at (-2.2,0);
\fill[blue] (d) circle (1pt) node[below] {$\xi_6$};
\coordinate (E) at (5.4,0);
\fill[blue] (E) circle (1pt) node[below] {$\xi_1$};
\coordinate (R) at (4,0);
\fill[blue] (R) circle (1pt) node[below] {$\xi_2$};
\coordinate (T) at (0.8,0);
\fill[blue] (T) circle (1pt) node[below] {$\xi_4$};
\coordinate (Y) at (2.2,0);
\fill[blue] (Y) circle (1pt) node[below] {$\xi_3$};
\coordinate (q) at (6.2,-0.1);
\fill (q) circle (0pt) node[above] {\tiny$\Omega_{11}$};
\coordinate (q1) at (6.2,0.05);
\fill (q1) circle (0pt) node[below] {\tiny$\Omega_{12}$};
\coordinate (w) at (4.95,-0.1);
\fill (w) circle (0pt) node[above] {\tiny$\Omega_{14}$};
\coordinate (w1) at (4.95,0.1);
\fill (w1) circle (0pt) node[below] {\tiny$\Omega_{13}$};
\coordinate (e) at (4.47,-0.1);
\fill (e) circle (0pt) node[above] {\tiny$\Omega_{24}$};
\coordinate (e1) at (4.47,0.1);
\fill (e1) circle (0pt) node[below] {\tiny$\Omega_{23}$};
\coordinate (r) at (3.4,-0.1);
\fill (r) circle (0pt) node[above] {\tiny$\Omega_{21}$};
\coordinate (r1) at (3.4,0.05);
\fill (r1) circle (0pt) node[below] {\tiny$\Omega_{22}$};
\coordinate (t) at (2.7,-0.1);
\fill (t) circle (0pt) node[above] {\tiny$\Omega_{31}$};
\coordinate (t1) at (2.7,0.05);
\fill (t1) circle (0pt) node[below] {\tiny$\Omega_{32}$};
\coordinate (y) at (1.75,-0.1);
\fill (y) circle (0pt) node[above] {\tiny$\Omega_{34}$};
\coordinate (y1) at (1.75,0.1);
\fill (y1) circle (0pt) node[below] {\tiny$\Omega_{33}$};
\coordinate (l) at (1.26,-0.1);
\fill (l) circle (0pt) node[above] {\tiny$\Omega_{44}$};
\coordinate (l1) at (1.26,0.1);
\fill (l1) circle (0pt) node[below] {\tiny$\Omega_{43}$};
\coordinate (k) at (0.3,-0.1);
\fill (k) circle (0pt) node[above] {\tiny$\Omega_{41}$};
\coordinate (k1) at (0.3,0.1);
\fill (k1) circle (0pt) node[below] {\tiny$\Omega_{42}$};
\coordinate (q8) at (-6.2,-0.1);
\fill (q8) circle (0pt) node[above] {\tiny$\Omega_{81}$};
\coordinate (q18) at (-6.2,0.05);
\fill (q18) circle (0pt) node[below] {\tiny$\Omega_{82}$};
\coordinate (w8) at (-4.95,-0.1);
\fill (w8) circle (0pt) node[above] {\tiny$\Omega_{84}$};
\coordinate (w18) at (-4.95,0.1);
\fill (w18) circle (0pt) node[below] {\tiny$\Omega_{83}$};
\coordinate (e7) at (-4.47,-0.1);
\fill (e7) circle (0pt) node[above] {\tiny$\Omega_{74}$};
\coordinate (e17) at (-4.47,0.1);
\fill (e17) circle (0pt) node[below] {\tiny$\Omega_{73}$};
\coordinate (7r) at (-3.4,-0.1);
\fill (7r) circle (0pt) node[above] {\tiny$\Omega_{71}$};
\coordinate (r17) at (-3.4,0.05);
\fill (r17) circle (0pt) node[below] {\tiny$\Omega_{72}$};
\coordinate (t6) at (-2.7,-0.1);
\fill (t6) circle (0pt) node[above] {\tiny$\Omega_{61}$};
\coordinate (t16) at (-2.7,0.05);
\fill (t16) circle (0pt) node[below] {\tiny$\Omega_{62}$};
\coordinate (y6) at (-1.75,-0.1);
\fill (y6) circle (0pt) node[above] {\tiny$\Omega_{64}$};
\coordinate (y16) at (-1.75,0.1);
\fill (y16) circle (0pt) node[below] {\tiny$\Omega_{63}$};
\coordinate (l5) at (-1.26,-0.1);
\fill (l5) circle (0pt) node[above] {\tiny$\Omega_{54}$};
\coordinate (l15) at (-1.26,0.1);
\fill (l15) circle (0pt) node[below] {\tiny$\Omega_{53}$};
\coordinate (k5) at (-0.3,-0.1);
\fill (k5) circle (0pt) node[above] {\tiny$\Omega_{51}$};
\coordinate (k15) at (-0.3,0.1);
\fill (k15) circle (0pt) node[below] {\tiny$\Omega_{52}$};
\end{tikzpicture}
\label{case2}}
\caption{Figure (a) and (b) are corresponding to the  $0<\xi<2$ and  $-\frac{1}{4}<\xi<0$ respectively. The regions
$\Omega_{ij}$ are of the boundaries $\Sigma_{ij}$.}
	\label{FigOmig}
\end{figure}
Additionally, taking $\Sigma_{8}^\pm=\emptyset $ for $\xi(-1/4,0)$ and $\Sigma_{1}^\pm=\emptyset$ for $\xi(0,2)$.
Fix a sufficiently small constant angle $\phi$ such that the following conditions hold
\begin{enumerate}[(1)]
\item Each $\Omega_j$ are disjoint with $\D{P}_N$, $\overline{\D{P}}_N$ and $\{z\in\D{C}|\Im\theta(z)=0\}$.
\item $2\tan\phi>\xi_{n(\xi)/2}-\xi_{n(\xi)/2+1}$.
\end{enumerate}

Since for each regions $\Omega_j$, the oscillation terms $e^{\pm2it\theta}$ have different attenuation properties, which will affect the contour deformation, so we first give the following proposition.
\begin{prop}
For $\xi\in(-1/4,0)$ and $\xi\in(0,2)$, there exists a constant $\tau(\xi)$ such that
\begin{align}
&\Im\theta\leq-\tau(\xi)\Im z\cdot\frac{|z|^2-\xi_j^2}{4+|z|^2}, \ z\in\Omega_{j1},\Omega_{j3},\label{est-theta1}\\
&\Im\theta\geq\tau(\xi)\Im z\cdot\frac{|z|^2-\xi_j^2}{4+|z|^2},\ z\in\Omega_{j2},\Omega_{j4}.\label{est-theta2}
\end{align}
\end{prop}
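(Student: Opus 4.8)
The plan is to reduce the inequalities \eqref{est-theta1}--\eqref{est-theta2} to a careful examination of $\Im\theta(z)$ near each stationary point $\xi_j$, exactly as in Proposition \ref{PP-theta}, but now with a nonempty set of stationary points on $\D{R}$. Writing $z=|z|e^{i\phi}$ and recalling $F(|z|)=|z|+|z|^{-1}$, the formula derived in the proof of Proposition \ref{PP-theta},
\begin{align}
\Im\theta(z)=-F(|z|)\sin\phi\left(-\frac{\xi}{4}+H(|z|,\phi)\right),\qquad
H(|z|,\phi)=\frac{4\cos2\phi-2F(|z|)^2+12}{(F(|z|)^2+2\cos2\phi-2)^2},
\end{align}
remains valid here. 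The key point is that on the sector $\Omega_{jk}$ the relevant quantity is not the global sign of $-\frac{\xi}{4}+H$ but the product $\Im z\cdot(|z|^2-\xi_j^2)$: since $\frac{d\theta}{dz}$ vanishes precisely at the $\xi_j$, the function $\xi\mapsto -\tfrac{\xi}{4}+H(|z|,0)=\tfrac14\bigl(\xi(z)-\xi\bigr)$ (with $\xi(z)=\frac{2-8z^2}{(1+4z^2)^2}$ from Proposition \ref{pp-point}) changes sign at each $\xi_j$, and one factors out $(|z|^2-\xi_j^2)$ from the numerator. So the first step is to show that along the real axis, in a neighbourhood of $\xi_j$, $\Im\theta$ restricted to the relevant half-line behaves like a nonzero constant times $(u-\xi_j)$ times the sign pattern $\eta_j$ from \eqref{notation-eta}; this is where the stated factor $\frac{|z|^2-\xi_j^2}{4+|z|^2}$ comes from after symmetrizing in $z\leftrightarrow 1/z$ using $F(|z|)^2-4=\frac{(|z|^2-1)^2}{|z|^2}$ and the relation $4+|z|^2$ in the denominator tracking $(z^2+1)^2$.

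\textbf{Key steps.} First I would fix one representative sector, say $\Omega_{j1}$ with $j$ odd in the region $\xi\in(0,2)$, and set $z=\xi_j+e^{i\psi}\ell$ with $|\psi-\psi_0|$ small, $\psi_0$ the ray angle; the other eleven cases (four/eight stationary points, both sectors of each) follow by the symmetries $\theta(z)=-\overline{\theta(\bar z)}$ (giving $\Omega_{j1}\leftrightarrow\Omega_{j3}$, $\Omega_{j2}\leftrightarrow\Omega_{j4}$) and $z\mapsto -z$, $z\mapsto 1/z$ (permuting the $\xi_j$ according to the relations in Proposition \ref{pp-point}), together with the sign table $\eta_j$. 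Second, I would Taylor-expand $\theta$ about $\xi_j$: since $\theta'(\xi_j)=0$ and $\theta''(\xi_j)\neq 0$ with sign $\eta_j$, one gets $\theta(z)=\theta(\xi_j)+\tfrac12\theta''(\xi_j)(z-\xi_j)^2+O((z-\xi_j)^3)$, hence $\Im\theta(z)=\tfrac12\theta''(\xi_j)\,\Im\bigl((z-\xi_j)^2\bigr)+\cdots=\tfrac12\theta''(\xi_j)\sin(2\psi)\ell^2+\cdots$. Choosing the ray angle $\phi$ small enough (condition $2\tan\phi>\xi_{n(\xi)/2}-\xi_{n(\xi)/2+1}$ guarantees the sectors do not overlap) makes $\sin 2\psi$ bounded away from $0$ with a definite sign on $\Omega_{j1}$, so $\Im\theta$ has the sign $-\mathrm{sgn}(\theta''(\xi_j))$ near $\xi_j$; this gives the local version of \eqref{est-theta1}. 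Third, for $z$ away from $\xi_j$ I would use the closed-form $H(|z|,\phi)$ bound $H\in\bigl(-\tfrac{1}{4(\cos2\phi+1)},\tfrac{1}{\cos2\phi+1}\bigr)$ from Proposition \ref{PP-theta} together with the exact factorization of $\frac{d\theta}{dz}$ in \eqref{theta-dao}: integrating $\theta'$ from $\xi_j$ along the ray and keeping track of which factor $(z^2-\xi_k^2)$ of the octic numerator controls the sign on $\Omega_{jk}$, one extracts the global lower bound with the claimed factor $\frac{|z|^2-\xi_j^2}{4+|z|^2}$ and a constant $\tau(\xi)$ depending continuously on $\xi$ in the compact subintervals. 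Finally I would assemble the local and far estimates into a single inequality valid on all of $\Omega_{jk}$, absorbing constants into $\tau(\xi)$.

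\textbf{Main obstacle.} The routine part is the expansion near $\xi_j$; the genuinely delicate part is the \emph{uniform} far-field estimate, i.e. showing that the sign of $\Im\theta$ does not flip anywhere inside the (unbounded) sector $\Omega_{jk}$ and that the decay is correctly captured by $\tau(\xi)\,\Im z\cdot\frac{|z|^2-\xi_j^2}{4+|z|^2}$ rather than by something weaker near the \emph{other} stationary points or near the singular points $z=\pm1$, $z=0$, $z=\infty$. This requires verifying that each sector $\Omega_{jk}$ contains no stationary point other than $\xi_j$ on its boundary ray and stays uniformly away from $\{z:\Im\theta(z)=0\}\setminus\{\xi_j\}$ — which is exactly where the hypotheses on $\phi$ (disjointness of the $\Omega_j$ from $\D{P}_N,\overline{\D{P}}_N$ and from the zero level set of $\Im\theta$, and $2\tan\phi>\xi_{n(\xi)/2}-\xi_{n(\xi)/2+1}$) are used. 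Once the geometry of the level set $\{\Im\theta=0\}$ is pinned down from Figure \ref{Fig1}, monotonicity of $\ell\mapsto\Im\theta(\xi_j+e^{i\psi}\ell)$ along each ray follows from the sign-definiteness of the corresponding factor in \eqref{theta-dao}, and the proof closes; the remaining cases are handled verbatim by the symmetry reductions and the sign table \eqref{notation-eta}.
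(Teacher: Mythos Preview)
The paper does not actually prove this proposition: its entire proof reads ``The proof of this proposition is similar to \cite{YF-AM}.'' So there is no detailed argument in the paper to compare against; the authors simply defer to the analogous estimate for the mCH equation with decaying data in Yang--Fan.

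Your strategy is a reasonable reconstruction of what that referenced proof almost certainly does: use the explicit polar-coordinate formula for $\Im\theta$ together with a quadratic Taylor expansion at $\xi_j$ (where $\theta'(\xi_j)=0$, $\theta''(\xi_j)\neq 0$) to get the local quadratic vanishing, then use the sign structure of the octic numerator in \eqref{theta-dao} and the geometric constraints on the opening angle $\phi$ to propagate the estimate along each ray. Your consistency check near $\xi_j$ is correct: with $z=\xi_j+\ell e^{i\psi}$ one has $\Im z\cdot\frac{|z|^2-\xi_j^2}{4+|z|^2}\sim\frac{\xi_j}{4+\xi_j^2}\ell^2\sin 2\psi$ and $\Im\theta\sim\tfrac12\theta''(\xi_j)\ell^2\sin 2\psi$, so the two sides match up to a $\xi$-dependent constant, as needed.

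One small correction: your parenthetical ``condition $2\tan\phi>\xi_{n(\xi)/2}-\xi_{n(\xi)/2+1}$ guarantees the sectors do not overlap'' has the role backwards. That inequality is a \emph{lower} bound on $\phi$; it ensures that the two innermost sectors (emanating from $\xi_{n(\xi)/2}$ and $\xi_{n(\xi)/2+1}$) actually meet above the origin so that the continuous extension covers the whole strip without leaving a gap near $z=0$. The \emph{upper} bound on $\phi$ comes from condition (1), keeping each $\Omega_{jk}$ inside the correct sign region of $\Im\theta$ and away from the pole disks. This does not affect the correctness of the estimate itself, but you should state the role of the angle constraints accurately when you invoke them in the far-field step.
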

\begin{proof}
The proof of this proposition is similar to \cite{YF-AM}.
\end{proof}

In what follows, we perform contour deformation for RH problem \ref{RHP-M1}, following the standard procedure in  \cite{DZ-AM1993} and \cite{AIHP} in the presence of discrete spectrum.
Now we introduce a new matrix-valued function $M^{(2)}(z)=M^{(2)}(y,t,z)$ defined by
\begin{align}\label{m2}
M^{(2)}(z)=M^{(1)}(z)\mathcal {R}^{(2)}(z),
\end{align}
where the choice of  $\mathcal {R}^{(2)}(z)$ needs to meet the following conditions
\begin{enumerate}[(1)]
\item To remove jump condition for $M^{(1)}(z)$ on real axis $\D{R}$.
\item Does not affect the residue condition at the poles $\eta_n$ and $\overline{\eta}_n$ for $n\in\Lambda$.
\item The new jump matrix for $M^{(2)}(z)$ along path $\Sigma^{(2)}$ is exponentially decaying.
\item $\mathcal {R}^{(2)}_-(z)^{-1}V^{(1)}(z)\mathcal {R}^{(2)}_+(z)\equiv I$.
\end{enumerate}
Simple calculations show that
\begin{align*}
M^{(2)}_+(z)&=M^{(1)}_+(z)\mathcal {R}^{(2)}_+(z)=M^{(1)}_-(z)V^{(1)}(z)
\\&=M^{(2)}_-(z)
\mathcal {R}^{(2)}_-(z)^{-1}V^{(1)}(z)\mathcal {R}^{(2)}_+(z),
\end{align*}
where $\mathcal {R}^{(2)}_\pm(z)$ are the boundaries of $\mathcal {R}^{(2)}(z)$ as $\Im(z)\to0$. To this end, for $k=1,2,\ldots,n(\xi)$, we define
\begin{align}\label{r2}
\mathcal {R}^{(2)}(z)=\left\{\begin{aligned}
&\left(
  \begin{array}{cc}
    1 & R_{kj}(z)e^{-2it\theta} \\
    0 & 1 \\
  \end{array}
\right),&z\in\Omega_{kj},\quad j=1,3,\\
&\left(
  \begin{array}{cc}
    1 & 0 \\
    R_{kj}(z)e^{2it\theta} & 1 \\
  \end{array}
\right),&z\in\Omega_{kj},\quad j=2,4,\\
&I, &z\in\text{elsewhere},
\end{aligned}\right.
\end{align}
where the functions $R_{kj}(z)$ are defined in Proposition \ref{pp-r2}.
\begin{prop}\label{pp-r2}
For $\xi\in(-1/4,0)$ and $\xi\in(0,2)$, the function $R_{kj}(z):\overline{\Omega}_{kj}\to\D{C}$, $j=1,2,3,4$ and $k=1,2,\ldots,n(\xi)$ are of the boundary values
\begin{subequations}\label{Rkj}
\begin{align}
&R_{k1}(z,\xi)=\Bigg\{\begin{array}{ll}
p_{k1}(z,\xi)T_+(z)^{2}, & z\in I_{k1},\\
p_{k1}(\xi_k,\xi)T_k(\xi)^{2}(z-\xi_k)^{2i\nu(\xi_k)},  &z\in \Sigma_{k1},\\
\end{array} \\
&R_{k2}(z,\xi)=\Bigg\{\begin{array}{ll}
p_{k2}(z,\xi)T_-(z)^{-2}, &z\in  I_{k2},\\
p_{k2}(\xi_k,\xi)T_k(\xi)^{-2}(z-\xi_k)^{-2i\nu(\xi_k)},  &z\in \Sigma_{k2},\\
\end{array} \\
&R_{k3}(z,\xi)=\Bigg\{\begin{array}{ll}
p_{k3}(z,\xi)T(z)^{2}, &z\in I_{k3}, \\
p_{k3}(\xi_k,\xi)T_k(\xi)^{2}(z-\xi_k)^{2i\nu(\xi_k)}, &z\in \Sigma_{k3},\\
\end{array} \\
&R_{k4}(z,\xi)=\Bigg\{\begin{array}{ll}
p_{k4}(z,\xi)T(z)^{-2}, &z\in I_{k4},\\
p_{k4}(\xi_k,\xi)T_k(\xi)^{-2}(z-\xi_k)^{-2i\nu(\xi_k)}, &z\in \Sigma_{k4},\\
\end{array}
\end{align}	
\end{subequations}
where the entries $p_{kj}$ appearing in \eqref{Rkj} are defined by
\begin{align}
&p_{k1}(z,\xi)=-\frac{r(z)}{1-|r(z)|^2}, \qquad p_{k3}(z,\xi)=r(z),\\
&p_{k2}(z,\xi)=-\frac{\overline{r}(z)}{1-|r(z)|^2}, \qquad p_{k4}(z,\xi)=\overline{r}(z).
\end{align}
The function $R_{kj}(z)$ has the following estimates
\begin{align}
&\left|R_{kj}(z)\right|\lesssim \sin^2(z_0\arg(z-\xi_k))+(1+\Re z)^{-1/2},&z\in\Omega_{kj},\\
&\left|\overline{\partial}R_{kj}(z)\right|\lesssim \left|p'_{kj}(\Re z)\right|+|z-\xi_k|^{-1/2},&z\in\Omega_{kj},\\
&\overline{\partial}R_{kj}(z)\equiv0,&z\in\text{elsewhere}.
\end{align}
\end{prop}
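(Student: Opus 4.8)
The plan is to carry out, for each stationary point $\xi_k$, the same kind of explicit $\overline{\partial}$-extension that was used in the stationary-point-free case of Proposition \ref{PP-R2}, but now localized at $\xi_k$ so that the two prescribed boundary values, the one on the interval $I_{kj}$ and the one on the ray $\Sigma_{kj}$, are matched simultaneously. By symmetry it suffices to treat one representative, say $R_{k1}$ on $\Omega_{k1}$; the functions $R_{k2},R_{k3},R_{k4}$ are obtained by the same recipe with $T_-^{-2}$, $T^2$, $T^{-2}$ replacing $T_+^2$ and with the conjugate data $p_{kj}$, and the two regimes $\xi\in(-1/4,0)$ (eight points) and $\xi\in(0,2)$ (four points) are identical up to the relabelling of sectors fixed before the proposition. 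Writing $z-\xi_k=\rho e^{i\varphi}$ and choosing $z_0=\pi/(2\phi)$, I would set
\[
R_{k1}(z)=\big(p_{k1}(\Re z)\,T(z)^{2}\big)\cos^{2}\!\big(z_0\varphi\big)+\big(p_{k1}(\xi_k)\,T_k(\xi)^{2}(z-\xi_k)^{2i\nu(\xi_k)}\big)\sin^{2}\!\big(z_0\varphi\big).
\]
At $\varphi=0$ (on $I_{k1}$) this reduces to $p_{k1}(z)T_+(z)^2$, while at $\varphi=\phi$ (on $\Sigma_{k1}$) the cutoff $\sin^2(z_0\phi)=1$ selects exactly the frozen value $p_{k1}(\xi_k)T_k(\xi)^2(z-\xi_k)^{2i\nu(\xi_k)}$, which is the correct boundary datum by Proposition \ref{PP-T}$(vii)$.

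With this construction the first estimate is essentially bookkeeping: $T,T^{-1}$ are bounded by Proposition \ref{PP-T}, the factor $1-|r|^2$ is bounded below by Proposition \ref{pp-S}, and the decay $|p_{k1}(\Re z)|\lesssim (1+\Re z)^{-1/2}$ follows from $r\in H^{2,1}(\D{R})\subset H^{1,1}(\D{R})$ (so that $|r(x)|^2\lesssim\|xr\|_{2}\|r'\|_{2}(1+|x|)^{-1}$). Estimating the $\cos^2$ term by the decay of $p_{k1}$ and the $\sin^2$ term by the bounded prefactor then yields $|R_{k1}(z)|\lesssim \sin^2(z_0\arg(z-\xi_k))+(1+\Re z)^{-1/2}$. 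For the $\overline{\partial}$-estimate I would apply $\overline{\partial}=\tfrac12 e^{i\varphi}\big(\partial_\rho+\tfrac{i}{\rho}\partial_\varphi\big)$; since $T(z)^2$ and $(z-\xi_k)^{2i\nu(\xi_k)}$ are analytic inside the sector, $\overline{\partial}$ acts only on $p_{k1}(\Re z)$ and on the angular cutoffs. The $\partial_\rho$ contribution produces a term controlled by $|p_{k1}'(\Re z)|$, while the two angular derivatives combine, using $\partial_\varphi\cos^2(z_0\varphi)=-\partial_\varphi\sin^2(z_0\varphi)$, into a single term proportional to $\rho^{-1}\sin(2z_0\varphi)\big[p_{k1}(\xi_k)-p_{k1}(\Re z)\,T(z)^2/(T_k(\xi)^2(z-\xi_k)^{2i\nu(\xi_k)})\big]$.

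The crux of the argument, and the step I expect to be the main obstacle, is controlling this last term so as to obtain the integrable exponent $|z-\xi_k|^{-1/2}$ rather than the naive $|z-\xi_k|^{-1}$ coming from the bare $\rho^{-1}$. The gain must come from the fact that the bracket vanishes at the stationary point: by $\tfrac12$-Hölder continuity of $p_{k1}$ one has $|p_{k1}(\Re z)-p_{k1}(\xi_k)|\lesssim\|r\|_{H^1(\D{R})}|z-\xi_k|^{1/2}$ via Cauchy--Schwarz, exactly as in \eqref{beta-est}, and the ratio $T(z)^2/(T_k(\xi)^2(z-\xi_k)^{2i\nu(\xi_k)})$ tends to $1$ with the same rate by Proposition \ref{PP-T}$(vii)$. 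Hence the bracket is $O(|z-\xi_k|^{1/2})$, which cancels one power of $\rho$ and gives $\overline{\partial}R_{k1}(z)\lesssim |p_{k1}'(\Re z)|+|z-\xi_k|^{-1/2}$ on $\Omega_{k1}$. Finally, $\overline{\partial}R_{kj}\equiv0$ off $\overline{\Omega}$ is immediate because $\mathcal{R}^{(2)}=I$ there, and it remains to verify the routine matching of the piecewise definition across the separating rays $\Sigma_p^{\pm}$ and at the interval midpoints $\tfrac{\xi_j+\xi_{j\pm1}}{2}$, so that no spurious jump is introduced between neighbouring sectors, following the constructions of \cite{AIHP,YF-AM}.
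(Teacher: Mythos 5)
Your construction is correct and is exactly the standard two-cutoff interpolation one expects here: the paper in fact states Proposition \ref{pp-r2} without proof, and your argument is the natural generalization of its proof of Proposition \ref{PP-R2}, with the key new ingredient --- the $O(|z-\xi_k|^{1/2})$ vanishing of the bracket at the stationary point, obtained from Cauchy--Schwarz on $p_{kj}'$ together with Proposition \ref{PP-T}$(vii)$ --- correctly identified and justified. One small correction: you do not need (and should not try) to match $R^{(2)}$ across the separating rays $\Sigma_p^{\pm}$; the mismatch there is deliberate and is recorded as the exponentially decaying jump of $V^{(2)}$ on $\Sigma_k^{\pm}$ in \eqref{jumpv21}, so no continuity verification is required at the interval midpoints.
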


Based on the above analysis, our new matrix $M^{(2)}(z)$ satisfies the following mixed RH problem.
\begin{RHP}\label{RHP-m2}
Find a matrix-valued function $M^{(2)}(z)$ defined by \eqref{m2} such that
\begin{enumerate}[(i)]
\item The function $M^{(2)}(z)$ is continuous in $\D{C}$ with continuous first partial derivatives in $\D{C}\setminus(\Sigma^{(2)}\cup\{\eta_n,\overline{\eta}_n\}_{n\in\Lambda})$.
\item The jump relation
\begin{align}\label{Jump-m2}
M^{(2)}_+(z)=M^{(2)}_-(z)V^{(2)}(z),\ z\in\Sigma^{(2)},
\end{align}
where the jump matrix is defined by
\begin{align} \label{jumpv21}
V^{(2)}(z)=\left\{
\begin{array}{ll}
R^{(2)}(z)^{-1}|_{\Omega_{k1}\cup\Omega_{k4}},& z\in\Sigma_{k1}\cup\Sigma_{k4},\\[12pt]
R^{(2)}(z)|_{\Omega_{k2}\cup\Omega_{k3}},& z\in\Sigma_{k2}\cup\Sigma_{k3},\\[12pt]
R^{(2)}(z)^{-1}|_{\Omega_{(k-1)\ (3\mp1)/2}}R^{(2)}(z)|_{\Omega_{k\ (3\mp1)/2}},& z\in\Sigma_{k}^{\pm},\ k \text{ is even},\\[12pt]
R^{(2)}(z)^{-1}|_{\Omega_{k\ (7\pm1)/2}}R^{(2)}(z)|_{\Omega_{(k+1)\ (7\pm1)/2}},& z\in\Sigma_{k}^{\pm},\ k \text{ is odd},\\[12pt]
\left(\begin{array}{cc}
1 & 0\\
-C_n(z-\eta_n)^{-1}T(z)^{-2}e^{2it\theta_n} & 1
\end{array}\right),   & z\in\partial\mathbb{P}_n,\ n\in\nabla,\\[12pt]
\left(\begin{array}{cc}
1 & -C_n^{-1}(z-\eta_n)T^{2}(z)e^{-2it\theta_n}\\
0 & 1
\end{array}\right),   & z\in\partial\mathbb{P}_n,\ n\in\Delta,\\
\left(\begin{array}{cc}
1 & \overline{C}_n(z-\overline{\eta}_n)^{-1}T^{2}(z)e^{-2it\overline{\theta}_n}\\
0 & 1
\end{array}\right),   & z\in\partial\overline{\mathbb{P}}_n,\ n\in\nabla,\\
\left(\begin{array}{cc}
1 & 0	\\
\overline{C}_n^{-1}(z-\overline{\eta}_n)e^{2it\overline{\theta}_n}T(z)^{-2} & 1
\end{array}\right),   & z\in\partial\overline{\mathbb{P}}_n,\ n\in\Delta.\\
\end{array}\right.
\end{align}
\item Asymptotic behavior:
\begin{align}\label{Asy-m2}
&M^{(2)}(z)=I+O(z^{-1}), z\to\infty~ (\text{and}~ z\to0 ~\text{by symmetry}),\\
&M^{(2)}(z)=\left(\di(a_1(y,t),a_1^{-1}(y,t))+\left(
        \begin{array}{cc}
          0 & a_2(y,t) \\
          a_3(y,t) & 0 \\
        \end{array}
      \right)(z-i)\right)
T(i)^{\sigma_3}\left(I-T_0(\xi)(z-i)\right)
+O((z-i)^2),
\end{align}
where $a_j(y,t)$, $j=1,2,3$ are real-valued functions from the expansion of $M^{(2)}(z)$ at $z=i$.
\item Residue conditions: The matrix $M^{(2)}(z)$ has simple poles at $\eta_n$ and $\overline{\eta}_n$ with
\begin{subequations}\label{res-m2}
\begin{align}
&\mathop{\res}\limits_{\eta_n} M^{(2)}(z)=\lim_{z\to\eta_n} M^{(2)} (z)\left(
                                                  \begin{array}{cc}
                                                    0 & 0 \\
                                                    c_nT^2(z)e^{2it\theta(\eta_n)} & 0 \\
                                                  \end{array}
                                                \right), \label{remm21} \\
&\mathop{\res}\limits_{\overline{\eta}_n} M^{(2)}(z)=\lim_{z\to\overline{\eta}_n} M^{(2)} (z)\left(
                                                  \begin{array}{cc}
                                                    0 & \overline{c}_nT^{-2}(z)e^{-2it\overline{\theta}(\eta_n)} \\
                                                    0 & 0 \\
                                                  \end{array}\right).
\end{align}
\end{subequations}
\item The $\overline{\partial}$-derivative: for $z\in\D{C}$, one has
\begin{align}
\overline{\partial}M^{(2)}(z)=M^{(2)}(z)W(z),
\end{align}
where
\begin{align}
W(z)=\left\{\begin{aligned}
&\left(
   \begin{array}{cc}
     0 & \overline{\partial}R_{kj}(z,\xi)e^{-2it\theta} \\
     0 & 0 \\
   \end{array}
 \right),&&z\in\Omega_{kj},\ j=1,3,\\
&\left(
   \begin{array}{cc}
     0 & 0 \\
     \overline{\partial}R_{kj}(z,\xi)e^{2it\theta} & 0 \\
   \end{array}
 \right),&&z\in\Omega_{kj},\ j=2,4,\\
&\textbf{0},&&z\in\text{elsewhere.}
\end{aligned}\right.
\end{align}
\end{enumerate}
\end{RHP}

\subsection{Decomposition of the mixed RH problem}
To solve the mixed RH problem \ref{RHP-m2} for $M^{(2)}(z)$, the mixed RH problem is decomposed into pure RH problem for $M^{(2)}_{rhp}(z)$ corresponding to $W(z)\equiv0$ and pure $\overline{\partial}$-problem for $M^{(3)}(z)$ corresponding to $W(z)\neq0$.
\begin{RHP}\label{RHP-mrhp}
Find a matrix function $M^{(2)}_{rhp}(z)$ with the same asymptotic and symmetric properties as $M^{(2)}(z)$ to satisfy
\begin{enumerate}
\item $M^{(2)}_{rhp}(z)$ is analytic in $\D{C}\setminus\Sigma^{(2)}$.
\item Jump condition:
\begin{align}
M^{(2)}_{rhp,+}(z)=M^{(2)}_{rhp,-}(z)V^{(2)}(z),
\end{align}
where $V^{(2)}(z)$ is defined by \eqref{jumpv21}.
\item The $\overline{\partial}$-derivative: for $z\in\D{C}$, $W(z)=0$.
\item Residue conditions: The matrix $M^{(2)}_{rhp}(z)$ has simple poles at $\eta_n$ and $\overline{\eta}_n$ with
\begin{subequations}\label{res-mrhp}
\begin{align}
&\mathop{\res}\limits_{\eta_n} M^{(2)}_{rhp}(z)=\lim_{z\to\eta_n}M^{(2)}_{rhp}(z)\left(
                                                  \begin{array}{cc}
                                                    0 & 0 \\
                                                    c_nT^2(z)e^{2it\theta(\eta_n)} & 0 \\
                                                  \end{array}
                                                \right), \label{remmrhp} \\
&\mathop{\res}\limits_{\overline{\eta}_n} M^{(2)}_{rhp}(z)=\lim_{z\to\overline{\eta}_n}M^{(2)}_{rhp}(z)\left(
                                                  \begin{array}{cc}
                                                    0 & \overline{c}_nT^{-2}(z)e^{-2it\overline{\theta}(\eta_n)} \\
                                                    0 & 0 \\
                                                  \end{array}\right).
\end{align}
\end{subequations}
\end{enumerate}
\end{RHP}

Due to the existence of steady-state phase points $\xi_j$ ($j=1,2,\ldots,n(\xi)$), and the jump matrix $V^{(2)}$ does not uniformly converge to the identity matrix in the small neighborhood of phase points $\xi_j$, a small neighborhood $U(\xi)$ is introduced in each phase point
\begin{align}
U(\xi)=\bigcup_{j=1,2,\ldots,n(\xi)}U_{\xi_j},\
U_{\xi_j}=\{z:|z-\xi_j|\leq\min\{\rho,\frac{1}{3}\min_{m\neq n}|\eta_m-\eta_n|\}\}.
\end{align}
In what follows, we investigate the estimate for $V^{(2)}$.
\begin{prop}\label{pp-est-V2-phase}
For $1\leq p\leq+\infty$, the jump matrix $V^{(2)}$ such that
\begin{align}
&\left|\left|V^{(2)}-I\right|\right|_{L^{p}(\Sigma_{jk}\setminus U_{\xi_j})}=O(e^{-ct}),& t\to\infty,\\
&\left|\left|V^{(2)}-I\right|\right|_{L^{p}(\Sigma_{j}^{\pm})}=O(e^{-c't}),&t\to\infty,\label{est-V2-phase}
\end{align}
where $c$ ($c'$) are positive constants, $j=1,2,\ldots,n(\xi)$ and $k=1,2,3,4$.
\end{prop}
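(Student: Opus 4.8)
The plan is to reduce both bounds to a pointwise estimate on the oscillatory factor $e^{\pm 2it\theta(z)}$ followed by an elementary integration along each contour. By the jump formula \eqref{jumpv21}, on a ray $\Sigma_{kj}$ the matrix $V^{(2)}(z)-I$ has a single nonzero off-diagonal entry of the form $R_{kj}(z)e^{\mp 2it\theta(z)}$ (the inverse appearing on $\Sigma_{k1}\cup\Sigma_{k4}$ only flips a sign, since the factors are unipotent), while on a connecting segment $\Sigma_j^{\pm}$ it is the deviation from $I$ of a product of two such unipotent factors coming from the two adjacent regions. By Proposition \ref{pp-r2} each amplitude $R_{kj}$ is assembled from $r$, the bounded functions $T(z)^{\pm2}$, $T_{\pm}(z)^{\pm2}$ and the power $(z-\xi_k)^{\pm 2i\nu(\xi_k)}$, whose modulus $e^{\mp 2\nu(\xi_k)\arg(z-\xi_k)}$ is bounded on $\overline{\Omega}_{kj}$; hence $|R_{kj}(z)|\lesssim 1$ and everything is governed by $|e^{\pm 2it\theta(z)}|=e^{\mp 2t\operatorname{Im}\theta(z)}$.

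First I would treat the rays. The opening angle $\phi$ and the labelling of the $\Omega_{kj}$ are arranged so that the sign of $\operatorname{Im}\theta$ in each region is exactly the one making the corresponding exponential decay; concretely, combining \eqref{est-theta1}--\eqref{est-theta2} with the form of the entry gives, for every $z\in\Sigma_{kj}$,
\begin{equation}
\bigl|V^{(2)}(z)-I\bigr|\lesssim \exp\!\left(-2t\,\tau(\xi)\,\operatorname{Im}z\,\frac{|z|^{2}-\xi_k^{2}}{4+|z|^{2}}\right),
\end{equation}
where the argument of the exponential is positive along the ray. Parametrizing $z=\xi_k+\ell e^{i\omega_{kj}}$ with $\ell\ge 0$, one checks that $\operatorname{Im}z\,\frac{|z|^{2}-\xi_k^{2}}{4+|z|^{2}}\ge h(\ell)$ for a positive function $h$ behaving like $\ell^2$ near $\ell=0$ and at least linearly as $\ell\to\infty$. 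Removing the disk $U_{\xi_k}$ forces $\ell\ge\rho_0$ for the radius $\rho_0$ of $U_{\xi_k}$, so $h(\ell)\ge h(\rho_0)>0$ and the linear growth guarantees convergence of
\begin{equation}
\bigl\|V^{(2)}-I\bigr\|_{L^{p}(\Sigma_{kj}\setminus U_{\xi_k})}^{p}\lesssim \int_{\rho_0}^{\infty}e^{-2pt\,\tau(\xi)\,h(\ell)}\,d\ell\lesssim e^{-ct},
\end{equation}
for a suitable $c>0$ (the case $p=\infty$ follows by taking the supremum). This establishes the first estimate.

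For the vertical segments $\Sigma_j^{\pm}$ the argument is the same but simpler: these lie on the line through the midpoint $\tfrac12(\xi_j+\xi_{j+1})$, which is a fixed positive distance from every stationary point, so the factor $\frac{|z|^{2}-\xi_k^{2}}{4+|z|^{2}}$ is bounded away from $0$ and $\operatorname{Im}z$ is comparable to arclength. Hence $\operatorname{Im}\theta$ keeps the decaying sign and is bounded away from $0$ throughout, giving $|V^{(2)}-I|\lesssim e^{-c't}$ pointwise; since $\Sigma_j^{\pm}$ has finite length this passes to every $L^{p}$ norm, which is \eqref{est-V2-phase}.

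The step I expect to be the real obstacle is the geometric lower bound $\operatorname{Im}z\,\frac{|z|^{2}-\xi_k^{2}}{4+|z|^{2}}\ge h(\ell)$ with the correct sign: verifying it requires using the admissibility of $\phi$ (the condition $2\tan\phi>\xi_{n(\xi)/2}-\xi_{n(\xi)/2+1}$ together with disjointness of the $\Omega_{kj}$ from $\{\operatorname{Im}\theta=0\}$) and must be carried out ray-by-ray while tracking the sign of $\xi_k$, separately in the two regimes $\xi\in(-1/4,0)$ and $\xi\in(0,2)$. Once this sign bookkeeping is settled, as in \cite{YF-AM}, the remaining integrations are routine.
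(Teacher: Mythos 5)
Your proposal is correct and follows essentially the same route as the paper: bound the unipotent off-diagonal amplitude by a constant using the boundedness of $T$ and of $(z-\xi_k)^{\pm 2i\nu(\xi_k)}$, invoke the phase estimates to get pointwise decay $e^{-2t\tau(\xi)\Im z\,(|z|^2-\xi_k^2)/(4+|z|^2)}$, and integrate along the ray parametrized by distance $\ell\ge\rho$ from the stationary point (the paper does exactly this for $\Sigma_{11}\setminus U_{\xi_1}$ with $\xi\in(-1/4,0)$, obtaining $\int_\rho^\infty e^{-p\tau(\xi)t\ell}\,d\ell\lesssim t^{-1}e^{-p\tau(\xi)t\rho}$, and handles $\Sigma_j^{\pm}$ by the same idea). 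The sign bookkeeping you flag as the main obstacle is indeed the only nontrivial point, and it is resolved exactly as you describe.
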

\begin{proof}
Taking $\xi\in(-1/4,0)$ as example, other situations are similar and can be proved. Let $z-\xi=\ell e^{i\psi}$ with $\ell\in(\rho,\infty)$ for $\Sigma_{11}\setminus U_{\xi_1}$
\begin{align}
\left|\left|V^{(2)}-I\right|\right|^p_{L^{p}(\Sigma_{11}\setminus U_{\xi_1})}&=
\left|\left|p_{11}(\xi_1)T_1(\xi)^2(z-\xi_1)^{2iv(\xi_1)}e^{-2it\theta}\right|\right|^p_{L^{p}(\Sigma_{11}\setminus U_{\xi_1})}\nonumber \\
&\lesssim \int_{\Sigma_{11}\setminus U_{\xi_1}}e^{-p\tau(\xi)t\Im z\frac{|z|^2-\xi_i^2}{4+|z|^2}}\ dz \nonumber \\
&\lesssim\int_{\rho}^{+\infty}e^{-p\tau(\xi)t\ell}\ d\ell\lesssim t^{-1}\exp(-p\tau(\xi)t\rho).
\end{align}
Using the same idea, inequality \eqref{est-V2-phase} can be proved, which completes the proof of the proposition.
\end{proof}

Proposition \ref{pp-est-V2-phase} shows that the jump matrix $V^{(2)}(z)$ uniformly converges to the identity matrix on $\Sigma^{\sharp}\setminus U(\xi)$, which encourages us to ignore the jump of $M^{(2)}_{rhp}(z)$ out $ U(\xi)$, and then we can do the following decomposition
\begin{align}\label{dec-M2}
M^{(2)}_{rhp}(z)=\left\{\begin{aligned}
&E(z,\xi)M^{(sol)}(z),&z\notin U(\xi),\\
&E(z,\xi)M^{(sol)}(z)M^{(mod)}(z),&z\in U(\xi).
\end{aligned}\right.
\end{align}
It follows from \eqref{dec-M2} that $M^{(2)}_{rhp}(z)$ is decomposed into two parts, one is $M^{(sol)}(z)$ corresponding to the pure RH problem and ignoring the jump condition out $ U(\xi)$, the other is the local model $M^{(mod)}(z)$ on the small neighborhood of the steady-state phase points $\xi_j$, which can be matched by using the parabolic cylinder model, where $E(z,\xi)$ is the error function, which is analyzed by the small-norm RH problem \cite{KMM-2003,small-1}.

\subsection{Analysis on pure RH problem}
At the beginning of this section, we will analyze the construction of the solution of the pure RH problem on region $\xi\in(-1/4,0)\cup(0,2)$. From \eqref{dec-M2}, it is clear that this includes two parts, one is the soliton solution provided by the discrete spectrum outside the small neighborhood of the phase point, and the other is the solution provided by the phase points $\xi_j$ inside the small neighborhood. To this end, first define some new contours
$$\Sigma^{(\natural)}= (\underset{j=1,..,n(\xi)}{\underset{k=1,...,4,}{\cup}}\Sigma_{jk} )\cap U(\xi),$$
which is shown in Fig. \ref{sigmaj}.
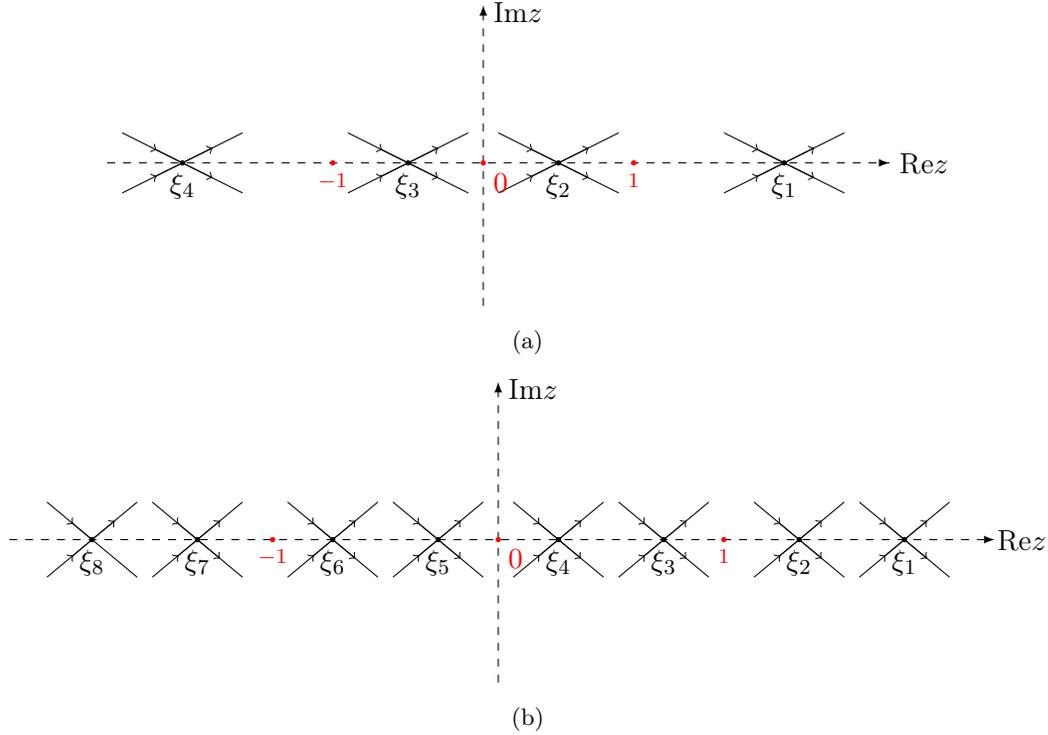
\begin{figure}[h]
\subfigure[]{
\begin{tikzpicture}
\draw(-4,0)--(-4.8,0.4);
\draw[dashed](0,2)node[right]{ Im$z$}--(0,-2);
\draw[-latex](0,2)--(0,2.1);
\draw[-latex](5.3,0)--(5.4,0);
\draw[-<](-4,0)--(-4.4,0.2);
\draw(-4,0)--(-3.2,0.4);
\draw[->](-4,0)--(-3.6,-0.2);
\draw(-4,0)--(-4.8,-0.4);
\draw[->](-4,0)--(-3.6,0.2);
\draw(-4,0)--(-3.2,-0.4);
\draw[-<](-4,0)--(-4.4,-0.2);
\draw(-1,0)--(-0.2,0.4);
\draw[->](-1,0)--(-0.6,0.2);
\draw(-1,0)--(-1.8,0.4);
\draw[-<](-1,0)--(-1.4,-0.2);
\draw(-1,0)--(-0.2,-0.4);
\draw[-<](-1,0)--(-1.4,0.2);
\draw(-1,0)--(-1.8,-0.4);
\draw[->](-1,0)--(-0.6,-0.2);
\draw[dashed](-5,0)--(5.4,0)node[right]{ Re$z$};
\draw(1,0)--(0.2,0.4);
\draw[-<](1,0)--(0.6,0.2);
\draw(1,0)--(0.2,-0.4);
\draw[->](1,0)--(1.4,-0.2);
\draw(1,0)--(1.8,0.4);
\draw[->](1,0)--(1.4,0.2);
\draw(1,0)--(1.8,-0.4);
\draw[-<](1,0)--(0.6,-0.2);
\draw(4,0)--(4.8,0.4);
\draw[->](4,0)--(4.4,0.2);
\draw(4,0)--(3.2,0.4);
\draw[-<](4,0)--(3.6,-0.2);
\draw(4,0)--(4.8,-0.4);
\draw[-<](4,0)--(3.6,0.2);
\draw(4,0)--(3.2,-0.4);
\draw[->](4,0)--(4.4,-0.2);
\coordinate (I) at (0,0);
\fill[red] (I) circle (1pt) node[below right] {$0$};
\coordinate (A) at (-4,0);
\fill (A) circle (1pt) node[below] {$\xi_4$};
\coordinate (b) at (-1,0);
\fill (b) circle (1pt) node[below] {$\xi_3$};
\coordinate (e) at (4,0);
\fill (e) circle (1pt) node[below] {$\xi_1$};
\coordinate (f) at (1,0);
\fill (f) circle (1pt) node[below] {$\xi_2$};
\coordinate (c) at (-2,0);
\fill[red] (c) circle (1pt) node[below] {\scriptsize$-1$};
\coordinate (d) at (2,0);
\fill[red] (d) circle (1pt) node[below] {\scriptsize$1$};
\end{tikzpicture}
\label{si1}}
\subfigure[]{
\begin{tikzpicture}
\draw[dashed](-6.5,0)--(6.5,0)node[right]{ Re$z$};
\draw[dashed](0,2)node[right]{ Im$z$}--(0,-2);
\draw[-latex](0,2)--(0,2.1);
\draw[-latex](6.5,0)--(6.6,0);
\coordinate (I) at (0,0);
\fill[red] (I) circle (1pt) node[below right] {$0$};
\coordinate (c) at (-3,0);
\fill[red] (c) circle (1pt) node[below] {\scriptsize$-1$};
\coordinate (D) at (3,0);
\fill[red] (D) circle (1pt) node[below] {\scriptsize$1$};
\draw(-0.8,0)--(-0.2,0.5);
\draw[->](-0.8,0)--(-0.5,0.25);
\draw(-0.8,0)--(-1.4,0.5);
\draw[-<](-0.8,0)--(-1.1,-0.25);
\draw(-0.8,0)--(-0.2,-0.5);
\draw[-<](-0.8,0)--(-1.1,0.25);
\draw(-0.8,0)--(-1.4,-0.5);
\draw[->](-0.8,0)--(-0.5,-0.25);
\draw(-2.2,0)--(-1.6,0.5);
\draw[-<](-2.2,0)--(-2.5,0.25);
\draw(-2.2,0)--(-1.6,-0.5);
\draw[->](-2.2,0)--(-1.9,-0.25);
\draw(-2.2,0)--(-2.8,0.5);
\draw[->](-2.2,0)--(-1.9,0.25);
\draw(-2.2,0)--(-2.8,-0.5);
\draw[-<](-2.2,0)--(-2.5,-0.25);
\draw(-5.4,0)--(-6,0.5);
\draw[-<](-5.4,0)--(-5.7,0.25);
\draw(-5.4,0)--(-4.8,0.5);
\draw(-5.4,0)--(-6,-0.5);
\draw[->](-5.4,0)--(-5.1,0.25);
\draw(-5.4,0)--(-4.8,-0.5);
\draw[-<](-5.4,0)--(-5.7,-0.25);
\draw(-4,0)--(-3.4,0.5);
\draw[->](-4,0)--(-3.7,0.25);
\draw(-4,0)--(-4.6,0.5);
\draw[-<](-4,0)--(-4.3,-0.25);
\draw(-4,0)--(-3.4,-0.5);
\draw[-<](-4,0)--(-4.3,0.25);
\draw(-4,0)--(-4.6,-0.5);
\draw[->](-4,0)--(-3.7,-0.25);
\draw(0.8,0)--(0.2,0.5);
\draw[-<](0.8,0)--(0.5,0.25);
\draw(0.8,0)--(1.4,0.5);
\draw[->](0.8,0)--(1.1,-0.25);
\draw(0.8,0)--(0.2,-0.5);
\draw[->](0.8,0)--(1.1,0.25);
\draw(0.8,0)--(1.4,-0.5);
\draw[-<](0.8,0)--(0.5,-0.25);
\draw(2.2,0)--(1.6,0.5);
\draw[->](2.2,0)--(2.5,0.25);
\draw(2.2,0)--(1.6,-0.5);
\draw[-<](2.2,0)--(1.9,-0.25);
\draw(2.2,0)--(2.8,0.5);
\draw[-<](2.2,0)--(1.9,0.25);
\draw(2.2,0)--(2.8,-0.5);
\draw[->](2.2,0)--(2.5,-0.25);
\draw(5.4,0)--(6,0.5);
\draw[->](5.4,0)--(5.7,0.25);
\draw(5.4,0)--(4.8,0.5);
\draw[-<](5.4,0)--(5.1,-0.25);
\draw(5.4,0)--(6,-0.5);
\draw[-<](5.4,0)--(5.1,0.25);
\draw(5.4,0)--(4.8,-0.5);
\draw[->](5.4,0)--(5.7,-0.25);
\draw(4,0)--(3.4,0.5);
\draw[-<](4,0)--(3.7,0.25);
\draw(4,0)--(4.6,0.5);
\draw[->](4,0)--(4.3,-0.25);
\draw(4,0)--(3.4,-0.5);
\draw[->](4,0)--(4.3,0.25);
\draw(4,0)--(4.6,-0.5);
\draw[-<](4,0)--(3.7,-0.25);
\coordinate (A) at (-5.4,0);
\fill (A) circle (1pt) node[below] {$\xi_8$};
\coordinate (b) at (-4,0);
\fill (b) circle (1pt) node[below] {$\xi_7$};
\coordinate (C) at (-0.8,0);
\fill (C) circle (1pt) node[below] {$\xi_5$};
\coordinate (d) at (-2.2,0);
\fill (d) circle (1pt) node[below] {$\xi_6$};
\coordinate (E) at (5.4,0);
\fill (E) circle (1pt) node[below] {$\xi_1$};
\coordinate (R) at (4,0);
\fill (R) circle (1pt) node[below] {$\xi_2$};
\coordinate (T) at (0.8,0);
\fill (T) circle (1pt) node[below] {$\xi_4$};
\coordinate (Y) at (2.2,0);
\fill (Y) circle (1pt) node[below] {$\xi_3$};
\end{tikzpicture}
\label{si2}}
\caption{Figures (a) and (b) denote the contour $\Sigma^{(\natural)}$ corresponding to the  $0<\xi<2$ and  $-\frac{1}{4}<\xi<0$, respectively.}
	\label{sigmaj}
\end{figure}
\subsubsection{The outer model: an $N$-soliton potential}
In this section, similar to region $\xi\in(-\infty,-1/4)\cup(2,+\infty)$, the external soliton solution is constructed from the original RH problem \ref{RHP-M}. The difference is that there are phase points $\xi_j$ at this time, which means that the combination of $U(\xi)$ is not empty.
\begin{prop}
The matrix $M^{(sol)}(z)$ solves RH problem \ref{RHP-Mrhp} for $M^{(rhp)}(z)$ and satisfies Proposition \ref{pp-Mrhp}.
\end{prop}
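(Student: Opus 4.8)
The plan is to mimic verbatim the proof of Proposition \ref{pp-Mrhp}, the only difference being cosmetic changes that record the presence of the stationary phase points $\xi_j$. First I would make explicit which problem $M^{(sol)}(z)$ is required to solve: since by Proposition \ref{pp-est-V2-phase} the jump $V^{(2)}$ is exponentially close to $I$ on $\Sigma^\sharp(\xi)\setminus U(\xi)$, the matrix $M^{(sol)}(z)$ is, by construction, the solution of the reflectionless problem obtained from RH problem \ref{RHP-Mrhp} by discarding the jump on $\Sigma^\sharp(\xi)$ — i.e.\ the analogue of RH problem \ref{RHP-Msol}: a normalized (${}\to I$ as $z\to\infty$, and $z\to 0$ by symmetry) meromorphic matrix on $\mathbb{C}\setminus\{\eta_n,\overline\eta_n\}_{n\in\Lambda}$ carrying the symmetries of $M^{(2)}(z)$, the $z=\pm1$ singular structure, and the residue conditions \eqref{remmrhp} restricted to $n\in\Lambda$. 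The one point to flag here is that in this regime $\Sigma_b(\xi)$ is the union of intervals listed in Proposition \ref{pp-point}, so $T(z)$ has the $(z-\xi_j)^{i\nu(\xi_j)}$ behaviour of Proposition \ref{PP-T}(vii) near each $\xi_j$; since the $\eta_n$ are bounded away from $\mathbb{R}$ (hence from every $\xi_j$) by the choice of $\varrho$, the numbers $T(\eta_n)$ are finite and the residue data are well defined.

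Next I would prove existence and uniqueness by undoing the transformations \eqref{T} and \eqref{M1}. Set
\[
M^{(\natural)}(z)=M^{(sol)}(z)\,G(z)\Big(\prod_{n\in\Delta}\tfrac{z-\eta_n}{\overline\eta_n^{-1}z-1}\Big)^{-\sigma_3},
\]
with $G(z)$ the interpolation matrix exactly as in \eqref{MJ}, which reintroduces simple poles at $\eta_n,\overline\eta_n$ for $n\in N\setminus\Lambda$. Then I would verify, step by step as in Section \ref{sec4}: (a) the normalizations at $z=\infty$ and $z=0$ are preserved, since the Blaschke-type product and $G$ tend to $I$ there; (b) for $n\notin\Lambda$, $M^{(\natural)}$ recovers the jump conditions on $\mathbb{P}_n,\overline{\mathbb{P}}_n$ of RH problem \ref{RHP-M}; (c) for $n\in\Lambda$, the residue computation is the same chain of equalities as in the proof of Proposition \ref{pp-Mrhp} and shows that $M^{(\natural)}$ satisfies the residue conditions of RH problem \ref{RHP-M} with the modified connection coefficients $\widetilde c_n=c_n\exp\big(-\tfrac{1}{i\pi}\int_{\Sigma_b(\xi)}\tfrac{\ln(1-|r(s)|^2)}{s-\eta_n}\,ds\big)$; (d) analyticity, the three symmetries and the $z=\pm1$ structure carry over from $M^{(sol)}$, $T$ and $G$; (e) $z=i$ is not a pole of $M^{(\natural)}$, so it does not obstruct the reconstruction. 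Liouville's theorem then yields uniqueness of $M^{(\natural)}$, hence, running the transformation backwards, of $M^{(sol)}$; this is precisely the content of Proposition \ref{pp-Mrhp}.

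The explicit form of $M^{(sol)}(z)$ I would obtain exactly as in Proposition \ref{prop-sol-Msol}: insert the Laurent ansatz at the poles $\eta_{j_k}$, use the symmetries $M^{(sol)}(z)=\sigma_1\overline{M^{(sol)}(\overline z)}\sigma_1=\sigma_2M^{(sol)}(-z)\sigma_2=\sigma_1M^{(sol)}(z^{-1})\sigma_1$ to reduce the number of unknown residue coefficients, and solve the resulting linear algebraic system. This reproduces the $N$-soliton formula of Proposition \ref{prop-sol-Msol} with the modified norming constants $C_nT^2(\eta_n)$, which is what the statement asserts.

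The main obstacle is not a genuine analytic difficulty — the argument is a transcription of Section \ref{sec4} — but rather bookkeeping: one must confirm that the disks $\mathbb{P}_n$, the set $\{z:\Im\theta(z)=0\}$, the neighborhoods $U_{\xi_j}$ and the steepest-descent contour $\Sigma^\sharp(\xi)$ are mutually disjoint, so that $G(z)$ and $\prod_{n\in\Delta}\tfrac{z-\eta_n}{\overline\eta_n^{-1}z-1}$ are holomorphic and invertible exactly where required and the residue manipulations in (c) make sense. This is guaranteed by the earlier choice of the angle $\phi$ and the radius $\varrho$; once it is in place, the proof of Proposition \ref{pp-Mrhp} applies unchanged.
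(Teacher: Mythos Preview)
Your proposal is correct and matches the paper's intent: the paper gives no explicit proof for this proposition, simply stating it and relying on the reader to recognize that the argument of Section~\ref{sec4} (specifically Proposition~\ref{pp-Mrhp} and Proposition~\ref{prop-sol-Msol}) carries over verbatim, the only difference being that $\Sigma_b(\xi)$ is now a finite union of intervals and the bookkeeping with $U(\xi)$, $\varrho$ and $\phi$ ensures all relevant sets are disjoint. Your write-up makes that transcription explicit, which is exactly what is needed.
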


\subsubsection{Local model near the saddle points $\xi_j$}
This section will consider the local RH problem in the small neighborhood of the steady-state phase points $\xi_j$, based on the parabolic cylindrical function.
\begin{RHP}\label{RHP-Mmod}
Find a matrix-valued function $M^{(mod)}(z)$ such that
\begin{enumerate}
\item $M^{(mod)}(z)$ is analytic in $\D{C}\setminus\Sigma^{(\natural)}$.
\item For $z\in\Sigma^{(\natural)}$, $M^{(mod)}(z)$ has continuous boundary values $M^{(mod)}_{\pm}(z)$  with jump condition
\begin{align}\label{jump-mmod}
M^{(mod)}_{+}(z)=M^{(mod)}_{-}(z)V^{(2)}(z).
\end{align}
\item Asymptotic behavior:
\begin{align*}
M^{(mod)}(z)=I+\mathcal {O}(z^{-1}), ~~~~~z\rightarrow\infty.
\end{align*}
\end{enumerate}
\end{RHP}

The solution of this RH problem \ref{RHP-Mmod} depends on the Beal-Coifman operator theory \cite{BC-1984}. Now we construct the relationship between $M^{(mod)}(z)$ and $M^{(mod),\xi_k}(z)$, where $k=1,2,\ldots,n(\xi)$. Observing that RH problem \ref{RHP-Mmod} only has jump condition and no poles. The jump condition $V^{(2)}(z)$ adopts trivial decomposition based on
Beal-Coifman operator theory
\begin{align*}
V^{(2)}(z)=(I-\omega_{kj}^-)^{-1}(I+\omega_{kj}^+),
\end{align*}
where $\omega_{kj}^-=I-V^{(2)}(z)^{-1}=V^{(2)}(z)-I$ and $\omega_{kj}^+=0$ with
\begin{align*}
\omega_{kj}^-(z)=\left\{\begin{array}{lll}
\left(\begin{array}{cc}
0 & R_{kj}(z,\xi)e^{-2it\theta}\\
0 & 0
\end{array}\right), &z\in \Sigma_{kj},\ j=1,3,\\[10pt]
\left(\begin{array}{cc}
0 & 0\\
R_{kj}(z,\xi)e^{2it\theta} & 0
\end{array}\right),  &z\in \Sigma_{kj},\ j=2,4.
\end{array}\right.
\end{align*}

Applying the Cauchy projection operators $C_\pm$ on $\Sigma^{(\natural)}_{kj}$ yields for any $f\in L^{2}(\Sigma_{jk}^{(\natural)})$
\begin{align}\label{oper-cau}
C_{\pm}f(z)=\lim_{s\rightarrow z\in\Sigma_{jk}^{(\natural)}}\frac{1}{2\pi i}\int_{\Sigma_{jk}^{(\natural)}}
\frac{f(s)}{s-z}~ds,
\end{align}
where   $k=1,2,\ldots,n(\xi)$. Furthermore, we define the Beals-Coifman operator as
\begin{align}\label{oper-cau-}
C_{w_{jk}}(f)=C_{-}(fw_{jk}^+)+C_+(fw_{jk}^-).
\end{align}

we define
\begin{align*}
&\Sigma_{k}^{(\natural)}=\bigcup_{j=1,2,3,4}\Sigma_{kj}^{(\natural)},\ w_{k}=\sum_{j=1}^4w_{kj},\\
&\omega=\sum_{k=1}^{n(\xi)}w_{k},\ \Sigma^{(\natural)}=\bigcup_{k=1,2,\ldots,n(\xi)}\Sigma_{k}^{(\natural)},
\end{align*}
and then we have $C_{\omega}=\sum_{k=1}^{n(\xi)}C_{\omega_k}.$  The existence and uniqueness of the solution of RH problem \ref{RHP-Mmod} can be guaranteed by the following lemma corresponding to Proposition 2.11 in Deift and Zhou \cite{DZ-CPAM}.
\begin{lem}\label{lemm1}
Let $f\in I+L^{2}(\Sigma)$ be the solution of the  singular integral equation
\begin{align}
f=I+C_{w}(f).
\end{align}
Then,  the unique solution to RH problem \ref{RHP-Mmod} for $M^{(mod)}(z)$ has the form
\begin{align}
M^{(mod)}(z)=I+\frac{1}{2\pi i}\int_{ \Sigma^{(\natural)}}\frac{f(s)w(s)}{s-z}ds.
\end{align}
\end{lem}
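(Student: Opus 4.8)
The plan is to run the standard Beals--Coifman argument (this is exactly Proposition~2.11 of \cite{DZ-CPAM}, adapted to the contour $\Sigma^{(\natural)}$): show that the Cauchy-type integral built from an $L^2$-solution of $f=I+C_w(f)$ solves RH problem~\ref{RHP-Mmod}, conversely that any solution of the RH problem produces such an $f$ through its boundary values, and then deduce uniqueness from a Liouville argument.

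First I would record the functional-analytic ingredients. Since $\Sigma^{(\natural)}$ is a finite union of segments issuing from the saddle points $\xi_j$ (Fig.~\ref{sigmaj}), the Cauchy projections $C_\pm$ of \eqref{oper-cau} are bounded on $L^2(\Sigma^{(\natural)})$; by Proposition~\ref{pp-r2} the weights $w^{\pm}$ (here $w^{+}\equiv 0$ and $w^{-}=V^{(2)}-I$) lie in $L^1\cap L^2\cap L^\infty(\Sigma^{(\natural)})$, so the Beals--Coifman operator $C_w$ of \eqref{oper-cau-} is bounded on $L^2(\Sigma^{(\natural)})$ and, after rescaling near each $\xi_j$ together with the small-norm reduction away from $U(\xi)$ furnished by Proposition~\ref{pp-est-V2-phase}, $(I-C_w)^{-1}$ exists. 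Hence $f=I+C_w(f)$ has a unique solution $f\in I+L^2(\Sigma^{(\natural)})$.

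Next I would set $M^{(mod)}(z)=I+\frac{1}{2\pi i}\int_{\Sigma^{(\natural)}}\frac{f(s)w(s)}{s-z}\,ds$ with $w=w^{+}+w^{-}$ and verify the three conditions of RH problem~\ref{RHP-Mmod}. Analyticity of $M^{(mod)}$ in $\mathbb{C}\setminus\Sigma^{(\natural)}$ and $M^{(mod)}(z)=I+\mathcal{O}(z^{-1})$ as $z\to\infty$ are immediate since $f w\in L^1(\Sigma^{(\natural)})$ on the bounded contour. For the jump, the Sokhotski--Plemelj formula gives the non-tangential boundary values $M^{(mod)}_\pm=I+C_\pm(f w)$; inserting $f=I+C_+(f w^{-})+C_-(f w^{+})$ and using $C_+-C_-=\mathrm{Id}$ one finds $M^{(mod)}_-=f(I-w^{-})$ and $M^{(mod)}_+=f(I+w^{+})$, whence $M^{(mod)}_+=M^{(mod)}_-(I-w^{-})^{-1}(I+w^{+})=M^{(mod)}_- V^{(2)}$, which is \eqref{jump-mmod}. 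Thus the stated formula indeed solves RH problem~\ref{RHP-Mmod}.

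Finally, for uniqueness I would note that $\det V^{(2)}\equiv 1$ along $\Sigma^{(\natural)}$, so $\det M^{(mod)}$ is entire, bounded and $\to 1$, hence $\equiv 1$; if $\widehat{M}$ is any other solution, then $\widehat{M}(M^{(mod)})^{-1}$ has no jump across $\Sigma^{(\natural)}$, is bounded near the $\xi_j$ (the local singularities of the two factors being integrable), and tends to $I$ at $\infty$, so by Morera and Liouville it equals $I$. I expect the main obstacle to be the behaviour at the self-intersection points $\xi_j$ of $\Sigma^{(\natural)}$: one must ensure that $C_\pm$ act boundedly there, that the limits $M^{(mod)}_\pm$ exist, and that the Plemelj algebra above remains valid in a neighbourhood of each $\xi_j$; this is precisely where the decay and continuity estimates for $R_{kj}$ from Proposition~\ref{pp-r2} are invoked, and where the argument requires the most care.
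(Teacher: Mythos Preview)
Your proposal is correct and follows exactly the approach the paper intends: the paper does not give an independent proof of this lemma but simply identifies it with Proposition~2.11 of Deift--Zhou \cite{DZ-CPAM}, and your write-up supplies precisely the standard Beals--Coifman verification (Plemelj boundary values $M^{(mod)}_\pm=I+C_\pm(fw)$, the algebra $M^{(mod)}_-=f(I-w^-)$, $M^{(mod)}_+=f(I+w^+)$, and the Liouville uniqueness argument) that underlies that proposition. One small remark: the invertibility of $I-C_w$ on $\Sigma^{(\natural)}$ is not part of the lemma's hypothesis (the lemma already assumes $f$ exists), and in the paper it is established immediately \emph{after} the lemma via the estimate $\|\omega_{kj}\|_{L^2(\Sigma_{kj}^{(\natural)})}=O(t^{-1/2})$ rather than through Proposition~\ref{pp-est-V2-phase}, which concerns the contour \emph{outside} $U(\xi)$.
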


Simple calculations show that $||\omega_{kj}^{-1}||_{L^2(\Sigma_{kj}^{(\natural)})}=O(t^{-1/2})$, which implies $1-C_{\omega}$, and $1-C_{\omega_k}$, and $1-C_{\omega_{kj}}$ exist and are reversible as $t\to+\infty$. Thus, combining
with Lemma \ref{lemm1}, the solution to RH problem \ref{RHP-Mmod} is expressed by
\begin{align}\label{sol-Mmod}
M^{(mod)}(z)=I+\frac{1}{2\pi i}\int_{\Sigma^{(\natural)}}\frac{(I-C_w)^{-1}Iw(s)}{s-z}ds.
\end{align}
In what follows, following the strategy of Deift-Zhou \cite{DZ-AM1993}, the contribution to the solution is calculated at each phase point. First, the following equation is introduced in Proposition \ref{PP-asy-phase}.
\begin{prop}\label{PP-asy-phase}
As $t\to+\infty$, the integral equation can be decomposed into the sum of the integral equation at phase points $\xi_k$
\begin{align}
\frac{1}{2\pi i}\int_{\Sigma^{(\natural)}}\frac{(I-C_w)^{-1}Iw(s)}{s-z}ds
=\frac{1}{2\pi i}\sum_{k=1}^{n(\xi)}\int_{\Sigma^{(\natural)}_k}\frac{(I-C_{w_k})^{-1}Iw_k(s)}
{s-z}ds+\mathcal {O}(t^{-1}).
\end{align}
\end{prop}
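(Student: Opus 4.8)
The idea is to use that, with the angle $\phi$ and the disks $U_{\xi_j}$ chosen as above, the pieces $\Sigma_1^{(\natural)},\dots,\Sigma_{n(\xi)}^{(\natural)}$ of $\Sigma^{(\natural)}$ are pairwise disjoint with
$$d_0:=\min_{j\neq k}\operatorname{dist}\bigl(\Sigma_j^{(\natural)},\Sigma_k^{(\natural)}\bigr)>0,$$
so the singular integral equation decouples into $n(\xi)$ local equations up to order $t^{-1}$. Let $f:=(I-C_w)^{-1}I$ be the solution of $f=I+C_w(f)$ from Lemma \ref{lemm1}, and $f_k:=(I-C_{w_k})^{-1}I$. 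Since $w=\sum_k w_k$ with $w_k$ supported on $\Sigma_k^{(\natural)}$ and $w_k^{+}\equiv0$, each $C_{w_j}$ acts as $C_{w_j}(h)=C_{+}\bigl(h\,w_j^{-}\bigr)$ and its output depends on $h$ only through $h|_{\Sigma_j^{(\natural)}}$. Restricting $f=I+\sum_j C_{w_j}f$ to $\Sigma_k^{(\natural)}$ therefore gives $(I-C_{w_k})\bigl(f|_{\Sigma_k^{(\natural)}}\bigr)=I+g_k$ with $g_k:=\sum_{j\neq k}\bigl(C_{w_j}(f|_{\Sigma_j^{(\natural)}})\bigr)\big|_{\Sigma_k^{(\natural)}}$, whence
$$f|_{\Sigma_k^{(\natural)}}=f_k+\delta_k,\qquad \delta_k:=(I-C_{w_k})^{-1}g_k.$$
Because $f\,w=\sum_k\bigl(f|_{\Sigma_k^{(\natural)}}\bigr)w_k$, the integral in the statement equals $\sum_k\frac{1}{2\pi i}\int_{\Sigma_k^{(\natural)}}\frac{f_k w_k}{s-z}\,ds+\sum_k\frac{1}{2\pi i}\int_{\Sigma_k^{(\natural)}}\frac{\delta_k w_k}{s-z}\,ds$, and it remains to show the $\delta_k$-sum is $O(t^{-1})$ when $z$ stays a fixed distance from $\Sigma^{(\natural)}$ — the only case needed, since $M^{(mod)}$ enters the reconstruction of $q(x,t)$ only through its expansions at $z=\infty$ and $z=i$.

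Next I would estimate the forcing $g_k$. For $z\in\Sigma_k^{(\natural)}$ and $j\neq k$, $\bigl(C_{w_j}(f|_{\Sigma_j^{(\natural)}})\bigr)(z)=\frac{1}{2\pi i}\int_{\Sigma_j^{(\natural)}}\frac{f(s)\,w_j^{-}(s)}{s-z}\,ds$ with $|s-z|\ge d_0$, so the kernel is smooth and bounded in $s$ along $\Sigma_j^{(\natural)}$. Parametrising $\Sigma_{jk}^{(\natural)}$ by $s=\xi_j+\ell e^{i\psi}$, the estimates \eqref{est-theta1}--\eqref{est-theta2} give $\Im\theta(s)\gtrsim\ell^{2}$ on the contour, hence $|w_j^{-}(s)|\lesssim e^{-ct\ell^{2}}$; together with $\|f\|_{L^{\infty}\cap L^{2}(\Sigma_j^{(\natural)})}=O(1)$ — which comes from the uniform bound $\|(I-C_{w_j})^{-1}\|_{L^2\to L^2}=O(1)$ supplied by the exact solvability of the parabolic-cylinder local model in Appendix \ref{AppA} — a Laplace-type estimate yields $\|f\,w_j^{-}\|_{L^{1}(\Sigma_j^{(\natural)})}=O(t^{-1/2})$. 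Consequently $g_k$ is $O(t^{-1/2})$ uniformly on $\Sigma_k^{(\natural)}$ and in $L^{2}(\Sigma_k^{(\natural)})$, and using once more $\|(I-C_{w_k})^{-1}\|_{L^2\to L^2}=O(1)$ we obtain $\|\delta_k\|_{L^{2}(\Sigma_k^{(\natural)})}=O(t^{-1/2})$.

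Finally, insert this into $\sum_k\frac{1}{2\pi i}\int_{\Sigma_k^{(\natural)}}\frac{\delta_k w_k}{s-z}\,ds$ and split $\delta_k=g_k+C_{+}(\delta_k w_k^{-})$. For $z$ at fixed distance from $\Sigma^{(\natural)}$ the factor $1/(s-z)$ is smooth in $s$; since $w_k$ carries the Gaussian-type weight $|w_k(s)|\lesssim e^{-ct\ell^{2}}$ and $g_k$ is $O(t^{-1/2})$, a second Laplace-type estimate supplies another factor $t^{-1/2}$, so the $g_k$-contribution of each term is $O(t^{-1})$; the self-interaction part coming from $C_{+}(\delta_k w_k^{-})$ is controlled by pairing it against $w_k/(s-z)$, using the $L^{2}$-boundedness of $C_{\pm}$ together with the concentration of $w_k$ in an $O(t^{-1/2})$-window around $\xi_k$. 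Summing over $k$ yields the claimed $O(t^{-1})$. The main obstacle is exactly this last self-interaction term: its treatment rests on $(I-C_{w_k})^{-1}$ being bounded uniformly in $t$, which is not a small-norm fact but a consequence of the explicit parabolic-cylinder solution of the local RH problem near each $\xi_k$ (Appendix \ref{AppA}); once that is available, the separation $d_0>0$ and the quadratic vanishing of $\Im\theta$ at the saddle points reduce everything else to standard Laplace-type bookkeeping, following \cite{DZ-AM1993}.
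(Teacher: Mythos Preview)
Your localization strategy---restrict $f=(I-C_w)^{-1}I$ to each $\Sigma_k^{(\natural)}$ and show that the cross-contour forcing $g_k$ is small---is a valid alternative to the paper's proof. The paper instead follows the Varzugin/Deift--Zhou route \cite{Varzugin-JMP,DZ-AM1993}: it uses the algebraic identity
\[
1-\sum_{j\neq k}C_{w_j}C_{w_k}(1-C_{w_k})^{-1}
=(1-C_w)\Bigl(1+\sum_{j}C_{w_j}(1-C_{w_j})^{-1}\Bigr)
\]
to factor $(1-C_w)^{-1}$ into the local inverses plus a remainder controlled by the cross-term estimate $\|C_{w_j}C_{w_k}\|_{L^\infty\to L^2}=O(t^{-3/4})$, which paired against $\|w\|_{L^2}=O(t^{-1/4})$ gives $O(t^{-1})$. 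A collateral benefit of the identity is that it \emph{produces} the existence and uniform boundedness of $(1-C_w)^{-1}$; you assume $\|f\|_{L^\infty\cap L^2}=O(1)$ up front, which does not follow from the local PC solvability of $(I-C_{w_k})^{-1}$ alone and needs either a parametrix argument or the paper's identity.

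The substantive gap is your ``self-interaction'' step. As you have written it---pairing $C_+(\delta_k w_k^-)$ against $w_k/(s-z)$ via Cauchy--Schwarz and the $L^2$-boundedness of $C_\pm$---you only get
\[
\bigl\|C_+(\delta_k w_k^-)\bigr\|_{L^2}\,\|w_k\|_{L^2}
\;\lesssim\;\|\delta_k\|_{L^2}\,\|w_k^-\|_{L^\infty}\cdot\|w_k\|_{L^2}
\;=\;O(t^{-1/2})\cdot O(t^{-1/4})\;=\;O(t^{-3/4}),
\]
and invoking ``concentration in an $O(t^{-1/2})$-window'' does not by itself recover the missing factor. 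The clean fix is the second resolvent identity $\delta_k-g_k=(I-C_{w_k})^{-1}C_{w_k}g_k$, together with
\[
\|C_{w_k}g_k\|_{L^2}=\|C_+(g_k w_k^-)\|_{L^2}\lesssim\|g_k\|_{L^\infty}\,\|w_k^-\|_{L^2}=O(t^{-1/2})\cdot O(t^{-1/4})=O(t^{-3/4}),
\]
so that $\|\delta_k-g_k\|_{L^2}=O(t^{-3/4})$. The error then splits as
\[
\int_{\Sigma_k^{(\natural)}}\frac{g_k w_k}{s-z}\,ds
+\int_{\Sigma_k^{(\natural)}}\frac{(\delta_k-g_k)w_k}{s-z}\,ds
=O\bigl(\|g_k\|_{L^\infty}\|w_k\|_{L^1}\bigr)+O\bigl(\|\delta_k-g_k\|_{L^2}\|w_k\|_{L^2}\bigr)=O(t^{-1}).
\]
This is precisely the $L^\infty\to L^2$ gain that the paper extracts from the composite operator $C_{w_j}C_{w_k}$; once you insert it, your argument and the paper's are two packagings of the same two ingredients (separation $d_0>0$ and Gaussian concentration of $w_k$).
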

\begin{proof}
Following the decomposition \cite{Varzugin-JMP}
\begin{align*}
&1-\sum_{j\neq k}^{n(\xi)}C_{\omega_j}C_{\omega_k}\left(1-C_{\omega_k}\right)^{-1}=\left(1-C_{\omega}\right)\left(1+\sum_{j=1}^{n(\xi)}
C_{\omega_j}\left(1-C_{\omega_j}\right)^{-1}\right),\\
&1-\sum_{j\neq k}^{n(\xi)}\left(1-C_{\omega_k}\right)^{-1}C_{\omega_j}C_{\omega_k}=\left(1+\sum_{j=1}^{n(\xi)}C_{\omega_j}
\left(1-C_{w_j}\right)^{-1}\right)\left(1-C_{\omega}\right),
\end{align*}
we  observe that it is necessary to estimate the norm of $C_{\omega_j}C_{\omega_k}$ from $L^{\infty}$ to $L^2$ and
$L^{2}$ to $L^2$. Combining the result $||\omega_{kj}^{-1}||_{L^2(\Sigma_{kj}^{(\natural)})}=O(t^{-1/2})$ with Lemma 3.5 in Deift-Zhou \cite{DZ-AM1993} yields
\begin{align}
||C_{w_j}C_{w_k}||_{L^{2}(\Sigma^{(\natural)})}=\mathcal {O}(t^{-1/2}),\ \
||C_{w_j}C_{w_k}||_{L^{\infty}(\Sigma^{(\natural)})\rightarrow L^{2}(\Sigma^{(\natural)})}=\mathcal {O}(t^{-3/4}).
\end{align}
Additionally, the following  equality holds
\begin{align*}
(1-C_\omega)^{-1}I&=I+\sum_{j=1}^{n(\xi)}C_{\omega_j}(1-C_\omega)^{-1}I+\left[1+\sum_{j=1}^{n(\xi)}C_{\omega_j}(1-C_\omega)^{-1}\right]\\
&\times\left[1-\sum_{j\neq k}C_{\omega_j}C_{\omega_k}((1-C_\omega)^{-1})\right]^{-1}
\left(\sum_{j\neq k}C_{\omega_j}C_{\omega_k}(1-C_\omega)^{-1}\right)I
\\& \triangleq I+\sum_{j=1}^{n(\xi)}C_{\omega_j}(1-C_\omega)^{-1}I+J_1J_2J_3I,
\end{align*}
where
\begin{align*}
J_1&=1+\sum_{j=1}^{n(\xi)}C_{\omega_j}(1-C_\omega)^{-1},\\
J_2&=\left[1-\sum_{j\neq k}C_{\omega_j}C_{\omega_k}((1-C_\omega)^{-1})\right]^{-1},\\
J_3&=\sum_{j\neq k}C_{\omega_j}C_{\omega_k}(1-C_\omega)^{-1}.
\end{align*}
It follows from Cauchy-Schwartz inequality that
\begin{align}
\left|\int_{\Sigma^{(\natural)}}J_1J_2J_3Iw\right|&\leq||J_1||_{L^2(\Sigma^{(\natural)}_j)}||J_2||
_{L^2(\Sigma^{(\natural)}_j)}||J_3||_{{L^\infty(\Sigma^{(\natural)}_j)}\rightarrow{L^2(\Sigma^{(\natural)}_j)}}
||w||_{L^2(\Sigma^{(\natural)}_j)}\nonumber\\
&\lesssim t^{-3/4}t^{-1/4}=O(t^{-1}).
\end{align}
\end{proof}

It follows from Proposition \ref{PP-asy-phase} that RH problem \ref{RHP-Mmod} can be reduced to a model RH problem solved by parabolic cylinder functions at each points $\xi_j$. For the points $\xi_j$, the phase function $\theta(z)$ can be expressed by
\begin{align}
\theta(z)=\theta(\xi_j)+\frac{\theta''(\xi_j)}{2}(z-\xi_j)^2+O(|z-\xi_j|^3),\ z\to\xi_j.
\end{align}
As in Section 8.2 \cite{YDo} and Lemma 3.35 \cite{PIZbook}, the higher-order expansion term of the phase function $\theta(z)$ decays rapidly. Now considering the local RH problem for $M^{(mod),j}(z)$ at $\xi_j$, $j=1,2,\ldots, n(\xi).$
\begin{RHP}\label{RHP-local}
Find a  $2\times2$ matrix-valued function $M^{(mod),j}(z)$ such that
\begin{enumerate}[(I)]
\item $M^{(mod),j}(z)$ is analytic in $\mathbb{C}\setminus\Sigma_{j}^{mod}$.
\item Jump condition:
\begin{align}\label{jump-local}
M_{+}^{(mod),j}(z)=M_{-}^{(mod),j}(z)V^{(mod),j}(z),~~~~z\in\Sigma_{j}^{mod},
\end{align}
where
\begin{align}
V^{(mod),j}(z)=\left\{\begin{aligned}
                 &\left(
                   \begin{array}{cc}
                     1 & \frac{r(\xi_j)}{1-|r(\xi_j)|^2}T_j(\xi_j)^{2}(z-\xi_j)
                       ^{2i\eta v(\xi_j)}e^{-2it\theta} \\
                     0 & 1 \\
                   \end{array} \right),&&z\in\Sigma_{j1}^{mod},\\
                   &\left(
                     \begin{array}{cc}
                       1 & 0 \\
                       -\frac{\overline{r}(\xi_j)}{1-|r(\xi_j)|^2}T_j(\xi_j)^{-2}(z-\xi_j)^{-2i\eta v(\xi_j)}
                       e^{2it\theta} & 1 \\
                     \end{array}
                   \right),&&z\in\Sigma_{j2}^{mod},\\
                   &\left(
                     \begin{array}{cc}
                       1 & r(\xi_j)T_j(\xi_j)^{2}(z-\xi_j)^{2i\eta v(\xi_j)}e^{-2it\theta} \\
                       0 & 1 \\
                     \end{array}
                   \right),&&z\in\Sigma_{j3}^{mod},\\
                   &\left(
                     \begin{array}{cc}
                       1 &  0 \\
                       -\overline{r}(\xi_j)T_j(\xi_j)^{-2}(z-\xi_j)^{-2i\eta v(\xi_j)}e^{2it\theta} & 1 \\
                     \end{array}
                   \right),&&z\in\Sigma_{j4}^{mod}.
                   \end{aligned}
                   \right.
\end{align}
\item Asymptotic behavior: $M^{(mod),j}(z)=I+\mathcal {O}(z^{-1})$, as $z\rightarrow\infty$.
\end{enumerate}
\end{RHP}

We give a detailed proof at phase point $\xi_1$ below for $\xi\in(-1/4,0)\cup(0,2)$, and the rest are similar. The jump contours corresponding to $\xi\in(0,2)$ and $\xi\in(-1/4,0)$ are show in Fig.\ref{Fig-phase1} and Fig.\ref{Fig-phase2}, respectively.

\begin{figure}[H]
\centering
\begin{tikzpicture}[scale=0.7]
\draw[-][red, dashed](-6,0)--(6,0);
\draw[-][thick](-4,-4)--(4,4);
\draw[-][thick](-4,4)--(4,-4);
\draw[->][thick](2,2)--(3,3);
\draw[->][thick](-4,4)--(-3,3);
\draw[->][thick](-4,-4)--(-3,-3);
\draw[->][thick](2,-2)--(3,-3);
\draw[fill] (3.2,5)node[below][red]{$\Sigma_{24}^{mod}$};
\draw[fill] (3.2,-5)node[above][red]{$\Sigma_{23}^{mod}$};
\draw[fill] (-3.2,5)node[below][red]{$\Sigma_{21}^{mod}$};
\draw[fill] (-3.2,-5)node[above][red]{$\Sigma_{22}^{mod}$};
\draw[fill] (0,0)node[below]{$\xi_1$};
\draw[fill] [blue] (7.2,2.5)node[below]{$
\left(
  \begin{array}{cc}
    1 & 0 \\
    -\overline{r}(\xi_1)T_{2}(\xi_1)^2(z-\xi_1)^{-2iv(\xi_1)}e^{2it\theta} & 1 \\
  \end{array}
\right)
$};
\draw[fill] [blue] (7.5,-1)node[below]{$
\left(
  \begin{array}{cc}
    1 & r(\xi_1)T_{1}(\xi_1)^{2}(z-\xi_1)^{2iv(\xi_1)}e^{-2it\theta} \\
    0 & 1 \\
  \end{array}
\right)
$};
\draw[fill][blue] (-8,2.5)node[below]{$
\left(
  \begin{array}{cc}
    1 & \frac{r(\xi_1)}{1-|r(\xi_1)|^{2}}T_{1}(\xi_1)^{2}(z-\xi_1)^{2iv(\xi_1)}e^{-2it\theta} \\
    0 & 1 \\
  \end{array}
\right)
$};
\draw[fill] [blue] (-8.4,-1)node[below]{$
\left(
  \begin{array}{cc}
    1 & 0 \\
    -\frac{\overline{r}(\xi_1)}{1-|r(\xi_1)|^{2}}T_{1}(\xi_1)^{-2}(z-\xi_1)^{-2iv(\xi_1)}e^{2it\theta} & 1 \\
  \end{array}
\right)
$};
\end{tikzpicture}
\caption{The jump condition $\Sigma^{(\natural)}_{1}$ and jump matrix in the region $\xi\in(0,2)$ for the phase point $\xi_1$.}\label{Fig-phase1}
\end{figure}

\begin{figure}[H]
\centering
\begin{tikzpicture}[scale=0.7]
\draw[-][red, dashed](-6,0)--(6,0);
\draw[-][thick](-4,-4)--(4,4);
\draw[-][thick](-4,4)--(4,-4);
\draw[->][thick](2,2)--(3,3);
\draw[->][thick](-4,4)--(-3,3);
\draw[->][thick](-4,-4)--(-3,-3);
\draw[->][thick](2,-2)--(3,-3);
\draw[fill] (3.2,5)node[below][red]{$\Sigma_{24}^{mod}$};
\draw[fill] (3.2,-5)node[above][red]{$\Sigma_{23}^{mod}$};
\draw[fill] (-3.2,5)node[below][red]{$\Sigma_{21}^{mod}$};
\draw[fill] (-3.2,-5)node[above][red]{$\Sigma_{22}^{mod}$};
\draw[fill] (0,0)node[below]{$\xi_1$};
\draw[fill] [blue] (8,2.5)node[below]{$
\left(
  \begin{array}{cc}
    1 & -\frac{r(\xi_1)}{1-|r(\xi_1)|^{2}}T_{1}(\xi_1)^{2}(z-\xi_1)^{-2iv(\xi_1)}e^{-2it\theta} \\
    0 & 1 \\
  \end{array}
\right)
$};
\draw[fill] [blue] (8,-1)node[below]{$
\left(
  \begin{array}{cc}
    1 & 0 \\
    \frac{\overline{r}(\xi_1)}{1-|r(\xi_1)|^{2}}T_{1}(\xi_1)^{-2}(z-\xi_1)^{2iv(\xi_1)}e^{2it\theta} & 1 \\
  \end{array}
\right)
$};
\draw[fill][blue] (-7,2.5)node[below]{$
\left(
  \begin{array}{cc}
    1 & 0 \\
    -\overline{r}(\xi_1)T_{1}(\xi_1)^{-2}(z-\xi_1)^{2iv(\xi_1)}e^{2it\theta} & 1 \\
  \end{array}
\right)
$};
\draw[fill] [blue] (-7,-1)node[below]{$
\left(
  \begin{array}{cc}
    1 & r(\xi_1)T_{1}(\xi_1)^2(z-\xi_1)^{-2iv(\xi_1)}e^{-2it\theta}  \\
    0 & 1 \\
  \end{array}
\right)
$};
\end{tikzpicture}
\caption{The jump condition $\Sigma^{(\natural)}_{2}$ and jump matrix in the region $\xi\in(-1/4,0)$ for the phase point $\xi_1$.}\label{Fig-phase2}
\end{figure}

Introduce the variable
\begin{align}
\zeta(z)=(2t\epsilon_j\phi^{''}(\xi_j))^{1/2}(z-\xi_j),~~~j=1,2,\ldots,n(\xi),
\end{align}
and let
\begin{align}\label{e60}
r_{\xi_j}=r(\xi_j)T_{j}(\xi_j)^{2}e^{-2it\theta(\xi_j)}e^{-i\eta v(\xi_j)\ln(2t\eta\ \theta^{''}(\xi_j))},
\end{align}
where $|r(\xi_j)|^2=|r_{\xi_j}|^2$. Taking the branch of the logarithm with $0<\arg\zeta<2\pi$ for $\xi\in(0,2)$ and $-\pi<\arg\zeta<\pi$ for $\xi\in(-1/4,0)$. Following the idea \cite{Deift-1994,toda-2009}, one obtains the proposition as follows.
\begin{prop}\label{pp-phase-sol}
As $t\to\infty$,
\begin{align}
M^{(mod)}(z)=I+t^{-1/2}\sum_{k=1}^{n(\xi)}\frac{A_k(\xi)}{z-\xi_k}+O(t^{-1}),
\end{align}
where
\begin{align}
A_k(\xi)=\frac{1}{\sqrt{2\eta \theta''(\xi_k)}}\left(
                                                 \begin{array}{cc}
                                                   0 & -\beta_{12}^{\xi_k} \\
                                                   i\beta_{21}^{\xi_k} & 0 \\
                                                 \end{array}
                                               \right)
\end{align}
and as $\xi\in(0,2)$, $k=2,4$, as $\xi\in(-1/4,0)$, $k=1,3,5,7$
\begin{align}
&\beta_{12}^{\xi_k}=-\frac{\sqrt{2\pi}e^{-\frac{-\pi v(\xi_k)}{2}}e^{\frac{i\pi}{4}}}{\overline{r}_{\xi_k}
\Gamma(-iv(\xi_k))},\ \beta_{12}^{\xi_k}\beta_{21}^{\xi_k}=v(\xi_k),\\
&r_{\xi_k}=r(\xi_k)T_k(\xi)^2e^{-2it\theta(\xi_k)}\exp(iv(\xi_k)\ln(2t\theta''(\xi_k))),\\
&\arg(\beta_{12}^{\xi_k})=\frac{\pi}{4}+\arg(r_{\xi_k})-\arg(\Gamma(-iv(\xi_k))).
\end{align}
As $\xi\in(0,2)$, $k=1,3$, as $\xi\in(-1/4,0)$, $k=2,4,6,8$
\begin{align}
&\beta_{12}^{\xi_k}=-\frac{\sqrt{2\pi}e^{\frac{3\pi v(\xi_k)}{2}}e^{-\frac{5i\pi}{4}}}{\overline{r}_{\xi_k}
\Gamma(iv(\xi_k))},\ \beta_{12}^{\xi_k}\beta_{21}^{\xi_k}=v(\xi_k),\\
&r_{\xi_k}=r(\xi_k)T_k(\xi)^2e^{-2it\theta(\xi_k)}\exp(iv(\xi_k)\ln(-2t\theta''(\xi_k))),\\
&\arg(\beta_{12}^{\xi_k})=-\frac{5\pi}{4}+\arg(r_{\xi_k})-\arg(\Gamma(iv(\xi_k))).
\end{align}
\end{prop}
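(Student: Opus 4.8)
The plan is to feed the localization result of Proposition~\ref{PP-asy-phase} into the exactly solvable parabolic cylinder model near each stationary point $\xi_k$, and then sum. By Proposition~\ref{PP-asy-phase} the resolvent representation \eqref{sol-Mmod} reduces, up to $O(t^{-1})$, to
\[
M^{(mod)}(z)=I+\frac{1}{2\pi i}\sum_{k=1}^{n(\xi)}\int_{\Sigma^{(\natural)}_k}\frac{(I-C_{w_k})^{-1}Iw_k(s)}{s-z}\,ds+O(t^{-1}),
\]
so it suffices to evaluate each local contribution at $\xi_k$ separately. Since the disks $U_{\xi_j}$ are pairwise disjoint and $z$ is eventually taken outside $U(\xi)$ (ultimately at $z=i$ and $z\to\infty$), the Cauchy kernel $(s-z)^{-1}$ is smooth on each $\Sigma^{(\natural)}_k$, and the $k$-th term is governed entirely by the jump data on the cross through $\xi_k$, which by Proposition~\ref{pp-r2} equals the frozen data $p_{kj}(\xi_k,\xi)T_k(\xi)^{\pm2}(z-\xi_k)^{\pm2i\nu(\xi_k)}e^{\mp 2it\theta}$ — precisely the jump matrix $V^{(mod),j}$ of RH problem~\ref{RHP-local}.

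\textbf{Rescaling and model identification.} Fix $k$. Using the quadratic Taylor expansion $\theta(z)=\theta(\xi_k)+\tfrac12\theta''(\xi_k)(z-\xi_k)^2+O(|z-\xi_k|^3)$, introduce $\zeta=\zeta_k(z)=(2t\eta_k\theta''(\xi_k))^{1/2}(z-\xi_k)$ with $\eta_k=\operatorname{sgn}\theta''(\xi_k)$ as in \eqref{notation-eta}, so that $2it\theta(z)=2it\theta(\xi_k)+\tfrac{i}{2}\eta_k\zeta^2+(\text{cubic})$. Under this substitution the branch factor $(z-\xi_k)^{\pm2i\nu(\xi_k)}$ becomes $\zeta^{\pm2i\nu(\xi_k)}$ times a constant, and the constants $T_k(\xi)^{\pm2}$, $e^{\mp2it\theta(\xi_k)}$, $(2t\eta_k\theta''(\xi_k))^{\mp i\nu(\xi_k)}$ combine into exactly $r_{\xi_k}$ of \eqref{e60}. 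Thus the rescaled local RH problem converges to the standard parabolic cylinder model on the cross $\{\arg\zeta=\text{const}\}$, with reflection parameter $r_{\xi_k}$; the relevant branch of $\log\zeta$ is $0<\arg\zeta<2\pi$ when $\xi\in(0,2)$ and $-\pi<\arg\zeta<\pi$ when $\xi\in(-1/4,0)$, the two regimes being dictated by whether the V-shaped contour at $\xi_k$ opens into the upper or lower half plane, i.e.\ by the sign $\eta_k$. This is the origin of the two case distinctions ($\eta_k=+1$ versus $\eta_k=-1$) in the statement.

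\textbf{Model asymptotics and back-substitution.} The parabolic cylinder model RH problem is solved in closed form by Weber functions, as in \cite{Deift-1994,toda-2009,PIZbook}, and its solution has the large-$\zeta$ expansion
\[
M^{pc}(\zeta)=I+\frac{1}{\zeta}\left(\begin{array}{cc} 0 & -\beta_{12}^{\xi_k}\\ i\beta_{21}^{\xi_k} & 0\end{array}\right)+O(\zeta^{-2}),
\]
where $\beta_{12}^{\xi_k},\beta_{21}^{\xi_k}$ are given by the Gamma-function formulas in the proposition: the branch with $\Gamma(-i\nu(\xi_k))$ and prefactor $e^{-\pi\nu(\xi_k)/2}e^{i\pi/4}$ in the $\eta_k=-1$ case, and the branch with $\Gamma(i\nu(\xi_k))$ and prefactor $e^{3\pi\nu(\xi_k)/2}e^{-5i\pi/4}$ in the $\eta_k=+1$ case, with the product relation $\beta_{12}^{\xi_k}\beta_{21}^{\xi_k}=\nu(\xi_k)$ a consequence of the determinant identity $|a(z)|^2-|b(z)|^2=1$ of Proposition~\ref{pp-S} via $\nu=-\tfrac{1}{2\pi}\ln(1-|r|^2)$. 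Substituting $\zeta_k(z)^{-1}=(2t\eta_k\theta''(\xi_k))^{-1/2}(z-\xi_k)^{-1}=t^{-1/2}(2\eta_k\theta''(\xi_k))^{-1/2}(z-\xi_k)^{-1}$ into the $\zeta^{-1}$ term produces the $k$-th local contribution $t^{-1/2}A_k(\xi)/(z-\xi_k)$ with $A_k(\xi)=\tfrac{1}{\sqrt{2\eta\theta''(\xi_k)}}\left(\begin{smallmatrix} 0 & -\beta_{12}^{\xi_k}\\ i\beta_{21}^{\xi_k} & 0\end{smallmatrix}\right)$, and summing over $k=1,\dots,n(\xi)$ gives the asserted formula. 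Consistency with the global symmetries of $M^{(mod)}(z)$ follows from the symmetry relations among the $\xi_j$ in Proposition~\ref{pp-point} together with the symmetries of $r$ in Proposition~\ref{pp-S}.

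\textbf{Main obstacle.} The delicate part is the error bookkeeping in the rescaling step: one must verify, uniformly for $z\notin U(\xi)$, that truncating the infinite cross to the disk $U_{\xi_k}$ and replacing $\theta$ by its quadratic jet perturb the model solution by only $O(t^{-1})$ after integration against the Cauchy kernel — the cubic remainder in $\theta$ is exponentially small away from $\xi_k$ on the rescaled contour and, integrated against the localized density (whose $L^2$ norm is $O(t^{-1/2})$), contributes at most $O(t^{-1})$, using the estimates of \cite{YDo} Section~8.2 and \cite{PIZbook} Lemma~3.35 — and that in each of the two sign regimes the phase, the cut $(z-\xi_k)^{\pm2i\nu(\xi_k)}$, the factor $T_k(\xi)^{\pm2}$ and $e^{\mp2it\theta(\xi_k)}$ recombine into $r_{\xi_k}$ with exactly the correct branch of the logarithm. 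Once the model is correctly identified, Step~3 is a direct citation of the known parabolic cylinder asymptotics, and combining with the $O(t^{-1})$ error already furnished by Proposition~\ref{PP-asy-phase} completes the proof.
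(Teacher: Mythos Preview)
Your proposal is correct and follows essentially the same route as the paper's Appendix~\ref{AppA}: localize via Proposition~\ref{PP-asy-phase}, rescale near each $\xi_k$ by $\zeta=(2t\eta_k\theta''(\xi_k))^{1/2}(z-\xi_k)$, identify the frozen local jump with the parabolic cylinder model, read off the $\zeta^{-1}$ coefficient, and back-substitute. The only difference is one of detail rather than method: where you cite the model asymptotics from \cite{Deift-1994,toda-2009,PIZbook}, the paper works them out explicitly by passing to the constant-jump problem $\Psi(\zeta)=M^{(pc),\xi_k}(\zeta)\mathcal{P}^{-1}\zeta^{iv\sigma_3}e^{-\frac{i}{4}\zeta^2\sigma_3}$, deriving the Weber ODE for the entries of $\Psi$, writing the solution in terms of $D_a$, and computing $\beta_{12}^{\xi_k}$ from the jump relation via the Wronskian identity for parabolic cylinder functions.
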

\begin{proof}
See  Appendix \ref{AppA}.
\end{proof}
\subsection{The small-norm RH problem}
In this section, we will consider the problem that the error function $E(z)$ defined by \eqref{dec-M2} by the small norm RH problem. In reality, the error function such that
\begin{RHP}\label{RHP-error}
Find a $2\times2$ matrix function $E(z)$ satisfies that
\begin{enumerate}[(I)]
\item $E(z)$ is analytic in $\mathbb{C}\setminus\Sigma^{err}$, where $$\Sigma^{err}
=\partial U(\xi)\cup\left(\Sigma^{(2)}\setminus U(\xi)\right)$$ shown in Fig.\ref{figerror}.
\item Jump condition:
\begin{align}
E_{+}(z)=E_{-}(z)V^{err}(z),~~~~~z\in\Sigma^{err},
\end{align}
where
\begin{align*}
V^{err}(z)=\left\{\begin{aligned}
&M^{(sol)}(z)V^{(2)}(z)M^{(sol)}(z)^{-1},&&z\in\Sigma^{(2)}\setminus U(\xi),\\
&M^{(sol)}(z)M^{(mod)}(z)M^{(sol)}(z)^{-1},&&z\in\partial U(\xi).
\end{aligned}\right.
\end{align*}
\item Asymptotic behavior
\begin{align}
E(z)=I+O(z^{-1}),~~~~~~z\rightarrow\infty.
\end{align}
\end{enumerate}
\end{RHP}

\begin{figure}[H]
	\centering
\subfigure[]{
	\begin{tikzpicture}
\draw(4.35,0.18)--(5,0.5);
\draw[->](4.35,0.18)--(4.5,0.25);
\draw(3.64,0.15)--(2.5,0.6);
\draw[-<](3.64,-0.15)--(3.25,-0.3);
\draw(4.35,-0.18)--(5,-0.5);
\draw[-<](3.64,0.15)--(3.25,0.3);
\draw(3.64,-0.15)--(2.5,-0.6);
\draw[->](4.35,-0.18)--(4.5,-0.25);
\draw(-4.35,0.18)--(-5,0.5);
\draw[-<](-4.35,0.18)--(-4.5,0.25);
\draw(-3.64,0.15)--(-2.5,0.6);
\draw[->](-3.64,-0.15)--(-3.25,-0.3);
\draw(-4.35,-0.18)--(-5,-0.5);
\draw[->](-3.64,0.15)--(-3.25,0.3);
\draw(-3.64,-0.15)--(-2.5,-0.6);
\draw[-<](-4.35,-0.18)--(-4.5,-0.25);
\draw(-0.64,0.15)--(0,0.5);
\draw[->](-0.64,0.15)--(-0.4,0.28);
\draw(-1.35,0.16)--(-2.5,0.6);
\draw[-<](-1.35,-0.16)--(-1.75,-0.31);
\draw(-0.64,-0.15)--(0,-0.5);
\draw[-<](-1.35,0.16)--(-1.75,0.31);
\draw(-1.35,-0.16)--(-2.5,-0.6);
\draw[->](-0.64,-0.15)--(-0.4,-0.28);
\draw[dashed](-5.5,0)--(5.5,0)node[right]{Re$z$};
\draw [-latex](5.5,0)--(5.6,0);
\draw(0.64,0.15)--(0,0.5);
\draw[-<](0.64,0.15)--(0.4,0.28);
\draw(1.35,0.16)--(2.5,0.6);
\draw[->](1.35,-0.16)--(1.75,-0.31);
\draw(0.64,-0.15)--(0,-0.5);
\draw[->](1.35,0.16)--(1.75,0.31);
\draw(1.35,-0.16)--(2.5,-0.6);
\draw[-<](0.64,-0.15)--(0.4,-0.28);
\draw[->](2.5,0)--(2.5,0.6);
\draw[->](2.5,0)--(2.5,-0.6);
\draw[->](-2.5,0)--(-2.5,0.6);
\draw[->](-2.5,0)--(-2.5,-0.6);
\draw[->](0,0)--(0,0.5);
\draw[->](0,0)--(0,-0.5);
\coordinate (I) at (0,0);
\fill (I) circle (1pt) node[below right] {$0$};
\coordinate (A) at (-4,0);
\fill[blue] (A) circle (1pt) node[below] {$\xi_4$};
\coordinate (b) at (-1,0);
\fill[blue] (b) circle (1pt) node[below] {$\xi_3$};
\coordinate (e) at (4,0);
\fill[blue] (e) circle (1pt) node[below] {$\xi_1$};
\coordinate (f) at (1,0);
\draw[thick,red](1,0) circle (0.4);
\fill[blue] (f) circle (1pt) node[below] {$\xi_2$};
\draw[thick,red](4,0) circle (0.4);
\draw[thick,red](-1,0) circle (0.4);
\draw[thick,red](-4,0) circle (0.4);
\coordinate (c) at (-2,0);
\fill[red] (c) circle (1pt) node[below] {\scriptsize$-1$};
\coordinate (d) at (2,0);
\fill[red] (d) circle (1pt) node[below] {\scriptsize$1$};
\end{tikzpicture}
}
\subfigure[]{
\begin{tikzpicture}
\draw[dashed](-6.8,0)--(7,0)node[right]{ Re$z$};
\draw [-latex](7,0)--(7.1,0);
\coordinate (I) at (0,0);
\fill (I) circle (1pt) node[below right] {$0$};
\coordinate (c) at (-3,0);
\fill[red] (c) circle (1pt) node[below] {\scriptsize$-1$};
\coordinate (D) at (3,0);
\fill[red] (D) circle (1pt) node[below] {\scriptsize$1$};
\draw(-0.575,0.187)--(-0,0.7);
\draw[->](-0.575,0.187)--(-0.4,0.35);
\draw(-1.03,0.186)--(-1.5,0.6);
\draw[-<](-1.03,-0.186)--(-1.15,-0.3);
\draw(-0.575,-0.187)--(-0,-0.7);
\draw[-<](-1.03,0.186)--(-1.15,0.3);
\draw(0.575,0.187)--(0,0.7);
\draw[-<](0.575,0.187)--(0.4,0.35);
\draw(1.03,0.186)--(1.5,0.6);
\draw[->](1.03,-0.186)--(1.15,-0.3);
\draw(0.575,-0.187)--(0,-0.7);
\draw[->](1.03,0.186)--(1.15,0.3);
\draw(1.03,-0.186)--(1.5,-0.6);
\draw[-<](0.575,-0.187)--(0.4,-0.35);
\draw(-1.03,-0.186)--(-1.5,-0.6);
\draw[->](-0.575,-0.187)--(-0.4,-0.35);
\draw(-1.97,0.187)--(-1.5,0.6);
\draw[-<](-2.43,0.187)--(-2.65,0.35);
\draw(-1.97,-0.187)--(-1.5,-0.6);
\draw[->](-1.97,-0.187)--(-1.85,-0.3);
\draw(-2.43,0.187)--(-3.1,0.7);
\draw[->](-1.97,0.187)--(-1.85,0.3);
\draw(-2.43,-0.187)--(-3.1,-0.7);
\draw[-<](-2.43,-0.187)--(-2.65,-0.35);
\draw(1.97,0.187)--(1.5,0.6);
\draw[->](2.43,0.187)--(2.65,0.35);
\draw(1.97,-0.187)--(1.5,-0.6);
\draw[-<](1.97,-0.187)--(1.85,-0.3);
\draw(2.43,0.187)--(3.1,0.7);
\draw[-<](1.97,0.187)--(1.85,0.3);
\draw(2.43,-0.187)--(3.1,-0.7);
\draw[->](2.43,-0.187)--(2.65,-0.35);
\draw(-5.64,0.18)--(-6.5,0.9);
\draw[-<](-5.64,0.18)--(-5.96,0.446);
\draw(-5.16,0.18)--(-4.7,0.6);
\draw[->](-5.16,-0.18)--(-4.93,-0.39);
\draw(-5.64,-0.18)--(-6.5,-0.9);
\draw[->](-5.16,0.18)--(-4.93,0.39);
\draw(-5.16,-0.18)--(-4.7,-0.6);
\draw[-<](-5.64,-0.18)--(-5.96,-0.446);
\draw(-3.75,0.187)--(-3.1,0.7);
\draw[->](-3.75,0.187)--(-3.55,0.35);
\draw(-4.25,0.187)--(-4.7,0.6);
\draw[-<](-4.25,-0.187)--(-4.42,-0.35);
\draw(-3.75,-0.187)--(-3.1,-0.7);
\draw[-<](-4.25,0.187)--(-4.42,0.35);
\draw(-4.25,-0.187)--(-4.7,-0.6);
\draw[->](-3.75,-0.187)--(-3.55,-0.35);
\draw(3.75,0.187)--(3.1,0.7);
\draw[-<](3.75,0.187)--(3.55,0.35);
\draw(4.25,0.187)--(4.7,0.6);
\draw[->](4.25,-0.187)--(4.42,-0.35);
\draw(3.75,-0.187)--(3.1,-0.7);
\draw[->](4.25,0.187)--(4.42,0.35);
\draw(4.25,-0.187)--(4.7,-0.6);
\draw[-<](3.75,-0.187)--(3.55,-0.35);
\draw[->](-1.5,0)--(-1.5,0.6);
\draw[->](-1.5,0)--(-1.5,-0.6);
\draw[->](-4.7,0)--(-4.7,0.6);
\draw[->](-4.7,0)--(-4.7,-0.6);
\draw[->](-3.1,0)--(-3.1,0.7);
\draw[->](-3.1,0)--(-3.1,-0.7);
\draw(5.64,0.18)--(6.5,0.9);
\draw[->](5.64,0.18)--(5.96,0.446);
\draw(5.16,0.18)--(4.7,0.6);
\draw[-<](5.16,-0.18)--(4.93,-0.39);
\draw(5.64,-0.18)--(6.5,-0.9);
\draw[-<](5.16,0.18)--(4.93,0.39);
\draw(5.16,-0.18)--(4.7,-0.6);
\draw[->](5.64,-0.18)--(5.96,-0.446);
\draw[->](1.5,0)--(1.5,0.6);
\draw[->](1.5,0)--(1.5,-0.6);
\draw[->](4.7,0)--(4.7,0.6);
\draw[->](4.7,0)--(4.7,-0.6);
\draw[->](3.1,0)--(3.1,0.7);
\draw[->](3.1,0)--(3.1,-0.7);
\draw[->](0,0)--(0,0.7);
\draw[->](0,0)--(0,-0.7);
\coordinate (A) at (-5.4,0);
\fill[blue] (A) circle (1pt) node[below] {$\xi_8$};
\draw[thick,red](-5.4,0) circle (0.3);
\coordinate (b) at (-4,0);
\draw[thick,red](-4,0) circle (0.3);
\fill[blue] (b) circle (1pt) node[below] {$\xi_7$};
\coordinate (C) at (-0.8,0);
\draw[thick,red](-0.8,0) circle (0.3);
\fill[blue] (C) circle (1pt) node[below] {$\xi_5$};
\coordinate (d) at (-2.2,0);
\draw[thick,red](-2.2,0) circle (0.3);
\fill[blue] (d) circle (1pt) node[below] {$\xi_6$};
\coordinate (E) at (5.4,0);
\draw[thick,red](5.4,0) circle (0.3);
\fill[blue] (E) circle (1pt) node[below] {$\xi_1$};
\coordinate (R) at (4,0);
\draw[thick,red](4,0) circle (0.3);
\fill[blue] (R) circle (1pt) node[below] {$\xi_2$};
\coordinate (T) at (0.8,0);
\draw[thick,red](0.8,0) circle (0.3);
\fill[blue] (T) circle (1pt) node[below] {$\xi_4$};
\coordinate (Y) at (2.2,0);
\draw[thick,red](2.2,0) circle (0.3);
\fill[blue] (Y) circle (1pt) node[below] {$\xi_3$};
\end{tikzpicture}
}
\caption{ ($a$) and ($b$) denote the jump contour $\Sigma^{err}$ as $\xi\in(0,2)$ and $\xi\in(-1/4,0)$, respectively. The red circles represent the jump contour $U(\xi)$ at each phase points $\xi_j$.}\label{figerror}
\end{figure}
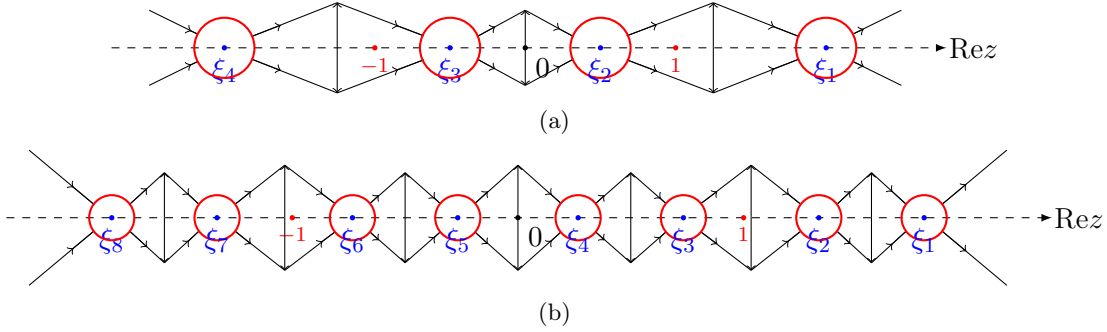

From the small norm theory of RH problem \cite{KMM-2003,Deift-1994,DZ-CPAM},  it is necessary to further analyze the estimation of the jump matrix $V^{err}$ of the error function $E(z)$. It follows from Proposition \ref{pp-est-V2-phase} that
\begin{align}\label{est-verr}
\left|\left|V^{err}-I\right|\right|_{L^2}\lesssim \left\{\begin{aligned}
&e^{-tK}, \ z\in\Sigma_{kj}\setminus U(\xi),\\
&e^{-tK'}, \ z\in\Sigma_{j}^{\pm},\\
&t^{-1/2}, \ z\in\partial U(\xi),
\end{aligned}\right.
\end{align}
where it takes advantage of the fact that $M^{(sol)}$ is bounded for $z\in\partial U(\xi)$, which implies that the solution to RH problem \ref{RHP-error}  exists and is unique expressed by
\begin{align}\label{sol-err}
E(z)=I+\frac{1}{2\pi i}\int_{\Sigma^{err}}\frac{(I+\rho(s))(V^{err}-I)}{s-z}ds,
\end{align}
where $\rho\in L^{2}(\Sigma^{err})$ is the unique solution of $(1-C_{V^{err}})\rho=C_{V^{err}}I$ and $C_{V^{err}}: L^{2}(\Sigma^{err})\to L^{2}(\Sigma^{err})$  is integral operator on $\Sigma^{err}$
\begin{align}
    C_{V^{err}} (f)(z)=C_{-}{f(V^{err}-I)}=\lim_{s\rightarrow z}\frac{1}{2\pi i}\int_{\Sigma^{err}}\frac{f(s)(V^{err}(s)-I)}{s-z}ds.
\end{align}

Then, an application of inequality \eqref{est-verr} yields
\begin{align}
    \Vert C_{V^{err}}\Vert \leqslant \Vert C_{-} \Vert_{L^{2}\rightarrow L^{2}}\Vert V^{err}-I \Vert_{L^{2}}
    \lesssim O(t^{-\frac{1}{2}}),
\end{align}
which indicates that as $t\to+\infty$, $1-C_{V^{err}}$ is reversible and $\rho$ exists and in unique. Also
\begin{align}
    \Vert \rho \Vert_{L^{\infty}{(\Sigma^{err}})}\lesssim \frac{\Vert C_{V^{err}}\Vert}{1-\Vert C_{V^{err}}\Vert}\lesssim t^{-\frac{1}{2}}.
\end{align}

In order to recover the solution $q(x,t)$ of mCH equation, we need to further analyze the asymptotic behavior of error function $E(z)$ as $z\to i$, which is reflected in the following proposition.
\begin{prop}\label{asy-err}
As $z\to i$, the error function $E(z)$ satisfies that
\begin{align}
E(z)=E(i)+E_{1}(z-i)+O((z-i)^2),
\end{align}
where $E(i)$ and $E_{1}$ are determined by
\begin{subequations}\label{express-err}
\begin{align}
&E(i)=I+\frac{1}{2\pi i}\int_{\Sigma^{(E)}}\dfrac{\left( I+\varpi(s)\right) (V^{err}-I)}{s-i}ds,\\
&E_1=-\frac{1}{2\pi i}\int_{\Sigma^{(E)}}\dfrac{\left( I+\varpi(s)\right) (V^{err}-I)}{(s-i)^2}ds,
\end{align}
\end{subequations}
with the asymptotic behaviors
\begin{subequations}\label{asy-ei}
\begin{align}
&E(i)=I+t^{-1/2}f_1+O(t^{-1}),\\
&E_1=t^{-1/2}f_2+O(t^{-1}),
\end{align}
\end{subequations}
where
\begin{subequations}\label{asy-ff}
\begin{align}
&f_1=-\sum_{j=1}^{n(\xi)}\frac{1}{(1-\xi_j^{-2})(\xi_j-i)}M^{(sol)}(\xi_j)A_{j}\sigma_2
M^{(sol)}(\xi_j)\sigma_2,\\
&f_2=\sum_{j=1}^{n(\xi)}\frac{1}{(1-\xi_j^{-2})(\xi_j-i)^2}M^{(sol)}(\xi_j)A_{j}\sigma_2
M^{(sol)}(\xi_j)\sigma_2.\label{asy-f2}
\end{align}
\end{subequations}

For convenience, let denote the following notations
\begin{align}
h_{11}=-&\left[(f_2^{21}M^{(sol)}_{12}(i)+f_2^{21}M^{(sol)}_{12}(i))T(i)+(f_{1}^{11}
M^{(sol)}_{1,12}+f_{1}^{12}M^{(sol)}_{1,22})T(i) \right.\nonumber \\&+
\left.(f_{1}^{11}
M^{(sol)}_{12}(i)+f_{1}^{12}M^{(sol)}_{22}(i))T(i)T_0\right]
(f_{1}^{11}M^{(sol)}_{11}(i)+f_{1}^{12}M^{(sol)}_{21}(i))T(i) \nonumber \\
&-\left[(f_2^{21}M^{(sol)}_{11}(i)+f_{2}^{22}M^{(sol)}_{22}(i))T(i)^{-1}+
(f_1^{21}M^{(sol)}_{1,11}+f_{1}^{22}M^{(sol)}_{1,21})T(i)^{-1}\right.\nonumber \\&
-\left.(f_{1}^{21}
M^{(sol)}_{11}(i)+f_{1}^{22}M^{(sol)}_{21}(i))T(i)^{-1}T_0\right]
(f_{1}^{11}M^{(sol)}_{11}(i)+f_{1}^{12}M^{(sol)}_{21}(i))^{-1}T(i)^{-1},
\end{align}
and
\begin{align}
h_{12}=\ln\left[(1+t^{-1/2}f_{1}^{11})+\frac{M^{(sol)}_{21}(i)}{M^{(sol)}_{11}(i)}
t^{-1/2}f_{1}^{12}\right],
\end{align}
where $M^{(sol)}_{1}$, $T(i)$, and $T_0$ are defined by \eqref{exp-Msol-i}, Proposition \ref{PP-T}, as well as $f_{i}^{jk}$ ($i,j,k=1,2$) denote the elements of the matrices $f_1$ and $f_2$.

\end{prop}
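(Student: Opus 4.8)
The plan is to analyze the integral representation \eqref{sol-err} of $E(z)$ locally at $z=i$ — exactly the procedure used for the error function of the phase-point-free region in Proposition \ref{Merr-i} — and then to extract the leading $t^{-1/2}$ contribution from the local parametrices near the saddle points. Since the contour $\Sigma^{err}=\partial U(\xi)\cup\left(\Sigma^{(2)}\setminus U(\xi)\right)$ stays a fixed positive distance from $z=i$ (by the choice of $\varrho$ and of the radii of $U(\xi)$, together with the fact that the $\xi_j$ are real while $i$ is not), the kernel $\frac{1}{s-z}$ is analytic near $z=i$ uniformly in $s\in\Sigma^{err}$, so inserting $\frac{1}{s-z}=\frac{1}{s-i}+\frac{z-i}{(s-i)^2}+O((z-i)^2)$ into \eqref{sol-err} immediately gives the expansion $E(z)=E(i)+E_1(z-i)+O((z-i)^2)$ together with the closed formulas \eqref{express-err}; here $\varpi\in L^2(\Sigma^{err})$ is the resolvent density solving $(1-C_{V^{err}})\varpi=C_{V^{err}}I$, for which $\|\varpi\|_{L^2}\lesssim t^{-1/2}$ has already been shown.

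The substantive step is the evaluation of \eqref{express-err} as $t\to\infty$. First split the integrals into the part over $\Sigma^{(2)}\setminus U(\xi)$ and the part over $\partial U(\xi)$. On $\Sigma^{(2)}\setminus U(\xi)$ the bound \eqref{est-verr} (via Proposition \ref{pp-est-V2-phase}) gives $\|V^{err}-I\|_{L^1\cap L^2}=O(e^{-ct})$, which combined with $\|\varpi\|_{L^2}\lesssim t^{-1/2}$ makes this contribution exponentially small and absorbable into the $O(t^{-1})$ error. On each circle $\partial U_{\xi_j}$ use $V^{err}=M^{(sol)}(z)M^{(mod)}(z)M^{(sol)}(z)^{-1}$ together with Proposition \ref{pp-phase-sol}, which supplies $M^{(mod)}(z)=I+t^{-1/2}\sum_k\frac{A_k(\xi)}{z-\xi_k}+O(t^{-1})$ uniformly on $\partial U(\xi)$; hence on $\partial U_{\xi_j}$,
\[
V^{err}(z)-I=\frac{t^{-1/2}}{z-\xi_j}\,M^{(sol)}(z)A_j(\xi)M^{(sol)}(z)^{-1}+O(t^{-1}).
\]
Since $M^{(sol)}$ and $M^{(sol)-1}$ are bounded on $\partial U(\xi)$ and $\|\varpi\|_{L^2}\lesssim t^{-1/2}$ renders the $\varpi\cdot(V^{err}-I)$ term $O(t^{-1})$, the leading part of $E(i)-I$ is $\frac{t^{-1/2}}{2\pi i}\sum_j\int_{\partial U_{\xi_j}}\frac{M^{(sol)}(s)A_j(\xi)M^{(sol)}(s)^{-1}}{(s-\xi_j)(s-i)}\,ds$. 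The integrand is analytic inside $U_{\xi_j}$ except for the simple pole at $s=\xi_j$, so a residue evaluation (with the orientation convention fixed for $\partial U(\xi)$ in RH Problem \ref{RHP-error} determining the sign) collapses each integral to $-\frac{1}{\xi_j-i}M^{(sol)}(\xi_j)A_j(\xi)M^{(sol)}(\xi_j)^{-1}$; using $\det M^{(sol)}\equiv1$ and the symmetry of $M^{(sol)}$ from RH Problem \ref{RHP-Msol} to rewrite $M^{(sol)}(\xi_j)^{-1}$ through $\sigma_2 M^{(sol)}(\xi_j)\sigma_2$ and the rational normalization factor $(1-\xi_j^{-2})^{-1}$ then yields precisely the formula \eqref{asy-ff} for $f_1$; repeating the computation with $(s-i)^{-1}$ replaced by $-(s-i)^{-2}$ gives $f_2$, and reading off the sizes gives $E(i)-I,\ E_1=O(t^{-1/2})$, i.e. \eqref{asy-ei}.

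Finally, the auxiliary quantities $h_{11}$ and $h_{12}$ are obtained by a bookkeeping substitution: insert $E(z)=(I+t^{-1/2}f_1+O(t^{-1}))+(t^{-1/2}f_2+O(t^{-1}))(z-i)+O((z-i)^2)$, the expansion \eqref{exp-Msol-i} of $M^{(sol)}$, and \eqref{expT0} of $T$ into the portion $E\,M^{(sol)}\,T^{-\sigma_3}$ of the reconstruction chain that governs $M$ near $z=i$ (where $\mathcal R^{(2)}$ and $F$ reduce to $I$ and $M^{(3)}$ enters at a later stage), and then collect the coefficients of the $(z-i)^0$ and $(z-i)^1$ terms feeding the reconstruction formula \eqref{q-sol}; this produces the stated expressions for $h_{11}$ (the error contribution to $q$) and $h_{12}$ (the error contribution to $x(y,t)=y+\ln M_{11}(i)$). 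I expect the main obstacle to be the residue bookkeeping on $\partial U(\xi)$ — getting the orientation sign, the symmetry reduction of $M^{(sol)}(\xi_j)^{-1}$, and the normalization factor $(1-\xi_j^{-2})^{-1}$ correct so that the clean closed forms \eqref{asy-ff} emerge — while simultaneously verifying that every discarded piece (the $O(t^{-1})$ tail of $M^{(mod)}$, the $\varpi$-contribution, and the exponentially small part off $U(\xi)$) is genuinely $o(t^{-1/2})$ uniformly in $(y,t)$.
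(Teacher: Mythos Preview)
Your proposal is correct and follows essentially the same route as the paper: Taylor-expand the Cauchy kernel at $z=i$ to get \eqref{express-err}, split the contour into $\partial U(\xi)$ and its complement, discard the latter together with the $\varpi$-term as $O(t^{-1})$ by the small-norm estimates, insert the expansion of $M^{(mod)}$ from Proposition~\ref{pp-phase-sol}, and evaluate the remaining integrals over $\partial U_{\xi_j}$ by residues at $\xi_j$ before rewriting $M^{(sol)}(\xi_j)^{-1}$ via the $\sigma_2$-symmetry to obtain \eqref{asy-ff}. The paper's proof is exactly this chain of reductions (it even lays out the same sequence of equalities for $E(i)$), and your anticipated difficulty---the residue sign/orientation and the appearance of the $(1-\xi_j^{-2})^{-1}$ factor from the symmetry reduction---is precisely where the only nontrivial bookkeeping lies.
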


\begin{proof}
The proof of expression \eqref{express-err} is trivial. The following mainly proves \eqref{asy-ei} and \eqref{asy-ff}. Applying the expression \eqref{sol-err} yields
\begin{align}
E(i)&=I+\frac{1}{2\pi i}\left(\oint_{\partial U(\xi)}\frac{V^{err}(s)-I}{s-i}\ ds+
\int_{\Sigma^{err}\setminus\partial U(\xi)}\frac{V^{err}(s)-I}{s-i}\ ds +\int_{\Sigma^{err}}
\frac{\rho(s)(V^{err}(s)-I)}{s-i}\ ds\right)\nonumber\\
&=I+\frac{1}{2\pi i}\oint_{\partial U(\xi)}\frac{V^{err}(s)-I}{s-i}\ ds+O(t^{-1})\nonumber \\
&=I+\frac{1}{2\pi i}\oint_{\partial U(\xi)}
\frac{M^{(sol)}(s)(M^{(mod)}-I)M^{(sol)}(s)^{-1}}{s-i}\ ds+O(t^{-1})\nonumber \\
&=I+t^{-1/2}\frac{1}{2\pi i}\sum_{j=1}^{n(\xi)}\oint_{\partial U(\xi)}
\frac{M^{(sol)}(s)A_j(\xi)M^{(sol)}(s)^{-1}}{(s-i)(s-\xi_j)}\ ds+O(t^{-1})\nonumber \\
&=I-t^{-1/2}\sum_{j=1}^{n(\xi)}
\frac{M^{(sol)}(\xi_j)A_j(\xi)M^{(sol)}(\xi_j)^{-1}}{(\xi_j-i)}\ ds+O(t^{-1})\nonumber \\
&=I-t^{-1/2}\sum_{j=1}^{n(\xi)}
\frac{M^{(sol)}(\xi_j)A_j(\xi)\sigma_2
M^{(sol)}(\xi_j)\sigma_2}{(\xi_j-i)(1-\xi_j^{-2})}\ ds+O(t^{-1})\triangleq
I+t^{-1/2}f_1+O(t^{-1}).
\end{align}
Similarly, the expression \eqref{asy-f2} can be  obtained in the same way.

\end{proof}

\subsection{Analysis on pure $\overline{\partial}$-problem}

Similar to Section \ref{subsec4.4}, from this section we will analyze the pure $\overline{\partial}$-problem $m^{(3)}(z)$
\begin{align}\label{sol-m3}
m^{(3)}(z)= M^{(2)}(z)M^{(2)}_{rhp}(z)^{-1},
\end{align}
in the presence of steady-state phase points, that is, to remove soliton components from $M^{(2)}(z)$, which solves the pure $\overline{\partial}$-problem \ref{RHP-Dbar} replaced the condition $W^{(3)}(z)=M^{(rhp)}(z)\overline{\partial}R^{(2)}(z)M^{(rhp)}(z)^{-1}$ by $W^{(3)}(z)=M^{(2)}_{rhp}(z)\overline{\partial}R^{(2)}(z)M^{(2)}_{rhp}(z)^{-1}$. Then the solution to equation \eqref{sol-m3} is given by
\begin{equation}\label{sol-m31}
m^{(3)}(z)=I+\frac{1}{\pi}\iint_\mathbb{C}\dfrac{m^{(3)}(s)W^{(3)} (s)}{s-z}dA(s),
\end{equation}
which can be rewritten as an operator equation
\begin{align}\label{m3-oper}
m^{(3)}(z)=I\cdot(I-\mathbb{P}_z)^{-1},
\end{align}
where the operator $\D{P}_z$ is defined by \eqref{M3-oper}.

\begin{prop}\label{pp-oper-m3}
As $t\to\infty,$ the operator $\D{P}_z$ satisfies the following estimation
\begin{align}
\left|\left|\D{P}_z\right|\right|_{L^{\infty}\to L^{\infty}}\lesssim t^{-1/4}.
\end{align}
which indicates that the operator $(I-\mathbb{P}_z)^{-1}$ exists.
\end{prop}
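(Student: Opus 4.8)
The plan is to follow the scheme of the proof of Proposition~\ref{pp-oper-P}, the only genuinely new feature being that $\Im\theta$ vanishes to second order at the stationary points $\xi_k$, which is exactly what degrades the decay from $t^{-1/2}$ to $t^{-1/4}$. First I would reduce the operator norm to a finite sum of scalar double integrals: from the definition of $\mathbb{P}_z$, for $f\in L^\infty$,
\begin{align*}
\left\|f\mathbb{P}_z\right\|_{L^\infty}\lesssim\|f\|_{L^\infty}\iint_{\D{C}}\frac{|W^{(3)}(s)|}{|s-z|}\,dA(s),
\end{align*}
and here $W^{(3)}(z)=M^{(2)}_{rhp}(z)\overline{\partial}R^{(2)}(z)M^{(2)}_{rhp}(z)^{-1}$ is supported on $\overline{\Omega}=\bigcup_{k,j}\overline{\Omega}_{kj}$. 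By the decomposition \eqref{dec-M2}, on $\overline{\Omega}$ the matrices $M^{(2)}_{rhp}(z)^{\pm1}$ are bounded: $M^{(sol)}$ is bounded there because the choice of the angle $\phi$ keeps $\overline{\Omega}$ disjoint from the discs $\D{P}_n,\overline{\D{P}}_n$ (hence from all poles) and from $z=\pm1$, $M^{(mod)}$ is bounded by RH Problem~\ref{RHP-Mmod}, and $E$ is bounded by the small-norm estimate for RH Problem~\ref{RHP-error}. Thus it suffices to prove that for each of the finitely many sectors $\Omega_{kj}$,
\begin{align*}
\iint_{\Omega_{kj}}\frac{|\overline{\partial}R_{kj}(s)|\,|e^{\mp2it\theta(s)}|}{|s-z|}\,dA(s)=O(t^{-1/4})\qquad\text{uniformly in }z.
\end{align*}

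On a fixed $\Omega_{kj}$ I would pass to the local coordinate $s-\xi_k=u+iv$, after reflecting $u\mapsto-u$ and/or $v\mapsto-v$ if needed so that $\Omega_{kj}\subset\{u\ge0,\,v\ge0\}$. Two inputs are available for $|s-\xi_k|$ bounded: Proposition~\ref{pp-r2} gives $|\overline{\partial}R_{kj}(s)|\lesssim|p'_{kj}(\Re s)|+|s-\xi_k|^{-1/2}$ with $p'_{kj}\in L^2(\D{R})$, and the phase estimates \eqref{est-theta1}--\eqref{est-theta2}, together with $|s|^2-\xi_k^2=2\xi_k u+u^2+v^2$, give $|e^{\mp2it\theta(s)}|=e^{\pm2t\Im\theta(s)}\le e^{-2c(\xi)\,tuv}$ for some $c(\xi)>0$. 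Splitting $|\overline{\partial}R_{kj}|$ along this sum and writing $z-\xi_k=\alpha+i\beta$, I would, for each fixed $v$, apply H\"older's inequality in $u$ (with $\|\,|s-z|^{-1}\|_{L^2_u}\lesssim|v-\beta|^{-1/2}$ for the $p'_{kj}$-piece, and a conjugate pair $p,q$ for the $|s-\xi_k|^{-1/2}$-piece) and then estimate the surviving $v$-integral by splitting at $v=\beta$; balancing the contribution of $\{|v-\beta|\le t^{-1/2}\}$ against its complement yields $O(t^{-1/4})$ for both pieces. The quickest heuristic for the exponent of the singular piece is the rescaling $(u,v)\mapsto(u/\sqrt t,\,v/\sqrt t)$, under which $dA\mapsto t^{-1}dA$, $|s-\xi_k|^{-1/2}\mapsto t^{1/4}|\cdot|^{-1/2}$, $e^{-2ctuv}\mapsto e^{-2cuv}$ and $|s-z|^{-1}\mapsto\sqrt t\,|\cdot|^{-1}$, so the integral collapses to $t^{-1/4}$ times a $t$-independent, $z$-uniformly convergent integral of exactly the type already handled in the proof of Proposition~\ref{pp-M3i}.

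Summing over the finitely many $\Omega_{kj}$ gives $\|\mathbb{P}_z\|_{L^\infty\to L^\infty}\lesssim t^{-1/4}$, hence $I-\mathbb{P}_z$ is invertible for $t$ large and the operator representation \eqref{m3-oper} of $m^{(3)}(z)$ is justified. The main obstacle is precisely the analysis near the $\xi_k$: away from the stationary points (including on the unbounded ends of the outermost sectors) $\Im\theta$ is linear in $\Im s$ and the $t^{-1/2}$ argument of Proposition~\ref{pp-oper-P} carries over verbatim, but near $\xi_k$ the quadratic degeneracy $\Im\theta\asymp uv$ forces the genuinely two-dimensional estimate above and is what caps the rate at $t^{-1/4}$. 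A secondary technical point — already taken care of by the constraints on $\phi$ imposed above — is to keep every $\Omega_{kj}$ bounded away from $\{\Im\theta=0\}$, from $z=\pm1$ and from the pole discs, so that $M^{(2)}_{rhp}$ and its inverse really are bounded on the whole domain of integration.
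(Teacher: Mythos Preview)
Your proposal is correct and follows essentially the same route as the paper: reduce to scalar double integrals over each sector $\Omega_{kj}$, use boundedness of the conjugating factors on $\overline{\Omega}$, split $|\overline{\partial}R_{kj}|$ into the $p'_{kj}$-piece and the $|s-\xi_k|^{-1/2}$-piece, and then apply H\"older in $u$ followed by a $v$-integral split at $v=\beta$, the quadratic degeneracy $\Im\theta\asymp uv$ near $\xi_k$ producing exactly $t^{-1/4}$. Your scaling heuristic $(u,v)\mapsto(u/\sqrt t,v/\sqrt t)$ is a nice shortcut the paper does not make explicit, and you are slightly more careful than the paper in naming $M^{(2)}_{rhp}$ (rather than $M^{(sol)}$) as the conjugating factor.
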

\begin{proof}
Taking region $\xi\in(-1/4,0)$ as an example, consider that for any $f\in L^{\infty}$
\begin{align}
||\mathbb{P}_zf||_{L^{\infty}}\leq||f||_{L^{\infty}}\frac{1}{\pi}
\int\int_{\D{C}}\frac{|W^{(3)}(s)|}{|s-z|}\ dA(s),
\end{align}
where $W^{(3)}(s)=M^{(sol)}(s)\overline{\partial}R^{(2)}(s)M^{(sol)}(s)^{-1}$ and $W^{(3)}(s)\equiv0$ as out $\overline{\Omega}$. On the other hand, $M^{(sol)}(s)$ and $M^{(sol)}(s)^{-1}$ are bounded as $z\in\overline{\Omega}$. Let $z=\alpha+i\beta$ and $s-\xi_1=u+iv\in\Omega_{11}$, then
\begin{align}
||f||_{L^{\infty}}\frac{1}{\pi}
\int\int_{\Omega_{11}}\frac{|W^{(3)}(s)|}{|s-z|}\ dA(s)
\leq||f||_{L^{\infty}}\frac{1}{\pi}
\int\int_{\Omega_{11}}\frac{|\overline{\partial}R_{11}(s)|}{|s-z|}\ dA(s)
\leq J_5+J_6,
\end{align}
where
\begin{subequations}
\begin{align}
&J_5=\int\int_{\Omega_{11}}\frac{|p_{11}'(s)|e^{2t\Im\theta}}{|s-z|}\ dudv,\label{J5}\\
&J_6=\int\int_{\Omega_{11}}\frac{|s-\xi_1|^{-1/2}e^{2t\Im\theta}}{|s-z|}\ dudv.\label{J6}
\end{align}
\end{subequations}

For $J_5$,  applying H\"{o}lder inequality obtains
\begin{align}
J_5&\leq\int_0^{\infty}\left\||p_{11}'(s)|\right\|_{L^{2}(v,\infty)}\left\||s-z|^{-1}\right\|
_{L^{2}(v,\infty)}
e^{-4\tau(\xi)tv^{2}\frac{v+\xi_1}{4+\xi_1^2+v^2}}\ dv,\nonumber \\
&\lesssim \int_0^{\infty}|v-y|^{-1/2}e^{-4\tau(\xi)tv^{2}\frac{v+\xi_1}{4+\xi_1^2+v^2}}\ dv\nonumber \\
&=\int_0^{y}|v-y|^{-1/2}e^{-4\tau(\xi)tv^{2}\frac{v+\xi_1}{4+\xi_1^2+v^2}}\ dv
+\int_y^{\infty}|v-y|^{-1/2}e^{-4\tau(\xi)tv^{2}\frac{v+\xi_1}{4+\xi_1^2+v^2}}\ dv
\triangleq J_5^1+J_5^2.
\end{align}
For $J_5^1$, it follows from $e^{-s}\leq s^{-1/4}$ that
\begin{align}
J_5^1&\leq\int_{0}^y(y-v)^{-1/2}v^{-1/2}t^{-1/4}\ dv\nonumber \\
&=\int_{0}^yy^{-1/2}(1-v/y)^{-1/2}(v/y)^{-1/2}y^{-1/2}t^{-1/4}\ dv\nonumber \\
&\overset{w=v/y}{=}\int_{0}^1(1-w)^{-1/2}w^{-1/2}\ dwt^{-1/4} \lesssim t^{-1/4}.\label{J51}
\end{align}

On the other hand, for $J_5^2$
\begin{align}
J_5^2&\overset{\kappa=v/y}{=}\int_{0}^{\infty}\kappa^{-1/2}e^{-2\tau(\xi)(\kappa+y)^2
\frac{\kappa+y+\xi_1}{4+\xi_1^2+(\kappa+y)^2}}\ d\kappa\nonumber \\
&\leq\int_{0}^{\infty}\kappa^{-1/2}e^{-2\tau(\xi)ty\frac{\xi_1+y}{4+\xi_1^2+y^2}} d\kappa
\lesssim e^{-2\tau(\xi)ty^2\frac{\xi_1+y}{4+\xi_1^2+y^2}}.\label{J52}
\end{align}

Observing the fact that $\left\||s-\xi_1|^{-1/2}\right\|_{L^p}\lesssim v^{-1/p-1/2}$ and $\left\||z-s|^{-1}\right\|_{L^q}\lesssim |v-y|^{1/q-1}$ for $p>2$ satisfying $1/p+1/q=1$.
Then
\begin{align}
J_6&\lesssim \int_0^{\infty}\left\||s-\xi_1|^{-1/2}\right\|_{L^p}\left\||z-s|^{-1}\right\|_{L^q}
e^{-4\tau(\xi)tv^{2}\frac{v+\xi_1}{4+\xi_1^2+v^2}}\ dv\nonumber \\
&\lesssim \int_0^{\infty} v^{-1/p-1/2}|v-y|^{1/q-1}e^{-4\tau(\xi)tv^{2}\frac{v+\xi_1}{4+\xi_1^2+v^2}}\ dv\nonumber\\
&=\int_{0}^yv^{-1/p-1/2}(y-v)^{1/q-1}e^{-4\tau(\xi)tv^{2}\frac{v+\xi_1}{4+\xi_1^2+v^2}}\ dv+
\int_{y}^{\infty}v^{-1/p-1/2}(v-y)^{1/q-1}e^{-4\tau(\xi)tv^{2}\frac{v+\xi_1}{4+\xi_1^2+v^2}}\ dv
\end{align}
Similar to the proof of inequality \eqref{J51} and \eqref{J52}, one has $J_6\lesssim t^{-1/4}$.
\end{proof}

Since restoring the solution $q(x,t)$ of the mCH equation needs to consider the asymptotic behavior at $z=i$, which is reflected in the following proposition.
\begin{prop}
The solution to \eqref{sol-m3} obeys the estimation as $t\to\infty$
\begin{align}\label{est-m3i}
\left\|m^{(3)}-I\right\|=\left\|\frac{1}{\pi}\int\int_{\D{C}}
\frac{m^{(3)}(s)W^{(3)}(s)}{s-i}\ dA(s)\right\|\lesssim t^{-3/4}.
\end{align}
Then, The expansion at $z=i$ can be expressed as
\begin{align}
m^{(3)}(z)=m^{(3)}(i)+m^{(3)}_1(x,t)(z-i)+O((z-i)^2),
\end{align}
where
\begin{align}\label{expansion-m3}
m^{(3)}_1(x,t)=\frac{1}{\pi}\int\int_{\D{C}}
\frac{m^{(3)}(s)W^{(3)}(s)}{(s-i)^2}\ dA(s),
\end{align}
with
\begin{align}\label{m3-est-34}
\left|m^{(3)}_1(x,t)\right|\lesssim t^{-3/4}.
\end{align}
\end{prop}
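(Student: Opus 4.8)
The plan is to mirror the argument already used for Propositions \ref{pp-oper-P} and \ref{pp-M3i}, but now accounting for the saddle points $\xi_j$, which is precisely what upgrades the rate from $t^{-1/2}$ to $t^{-3/4}$. First, since \eqref{m3-oper} writes $m^{(3)}(z)=I\cdot(I-\mathbb{P}_z)^{-1}$ and Proposition \ref{pp-oper-m3} gives $\|\mathbb{P}_z\|_{L^\infty\to L^\infty}\lesssim t^{-1/4}\to0$, the Neumann series converges and $\|m^{(3)}\|_{L^\infty}\lesssim1$. Setting $z=i$ in \eqref{sol-m31} and using that $W^{(3)}(s)=M^{(2)}_{rhp}(s)\overline{\partial}\mathcal{R}^{(2)}(s)M^{(2)}_{rhp}(s)^{-1}$ is supported on $\overline{\Omega}$, that $M^{(2)}_{rhp}$ and its inverse are uniformly bounded there (by the constructions of the previous subsections, via \eqref{dec-M2} and the boundedness of $M^{(sol)}$, $M^{(mod)}$, $E$), and that $\overline{\Omega}$ stays a fixed positive distance from $z=i$ so that $|s-i|^{-1}$ is bounded on it, the estimate \eqref{est-m3i} reduces to bounding $\int\int_{\Omega}|\overline{\partial}\mathcal{R}^{(2)}(s)|\,e^{2t\Im\theta(s)}\,dA(s)$. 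I would then split this sum over the sectors $\Omega_{kj}$ and, by the symmetry of the construction, treat one representative, say $\Omega_{11}$ adjacent to $\xi_1$, on which $\Im\theta\le-\tau(\xi)\Im s\cdot\frac{|s|^2-\xi_1^2}{4+|s|^2}$ by \eqref{est-theta1}; the other sectors use \eqref{est-theta1}--\eqref{est-theta2} with the appropriate sign.

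On $\Omega_{11}$ the bound of Proposition \ref{pp-r2}, $|\overline{\partial}R_{11}(s)|\lesssim|p_{11}'(\Re s)|+|s-\xi_1|^{-1/2}$, splits the integral into two pieces. For the piece carrying $|p_{11}'(\Re s)|$, integrating first in $\Re s$ (using $r\in H^{1,1}(\D{R})$, as in the $J_5$ estimate of Proposition \ref{pp-oper-m3}) and then against the Gaussian-type factor $e^{2t\Im\theta}$ produces an $O(t^{-1})$ bound. For the piece carrying the singular factor $|s-\xi_1|^{-1/2}$, the exponential weight confines the integration to the disk $U_{\xi_1}$ up to an $O(e^{-ct})$ term from the part of $\Omega_{11}$ outside $U_{\xi_1}$; inside $U_{\xi_1}$ the parabolic rescaling $s-\xi_1=\zeta(2t|\theta''(\xi_1)|)^{-1/2}$ turns the integral into $(2t|\theta''(\xi_1)|)^{-3/4}$ times the convergent integral $\int\int|\zeta|^{-1/2}e^{-c|\zeta|^2}\,dA(\zeta)$, hence $O(t^{-3/4})$, which dominates. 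Summing over all sectors gives $\|m^{(3)}-I\|\lesssim t^{-3/4}$. The expansion at $z=i$ is then immediate: inserting $\frac{1}{s-z}=\frac{1}{s-i}+\frac{z-i}{(s-i)^2}+O((z-i)^2)$ into \eqref{sol-m31} identifies $m^{(3)}_1(x,t)$ as in \eqref{expansion-m3}, and since $|s-i|^{-2}$ is again bounded on $\overline{\Omega}$, the identical argument yields \eqref{m3-est-34}.

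The main obstacle is extracting the sharp exponent $-3/4$ rather than the cruder $-1/2$ that a naive use of $e^{2t\Im\theta}\le e^{-ct(\Im s)^2}$ would give: this requires exploiting that near each $\xi_j$ the phase provides genuine two-dimensional Gaussian decay $e^{-ct|s-\xi_j|^2}$ localizing the effective domain of integration to a ball of radius $\sim t^{-1/2}$, and then balancing that localization against the local integrability of the singularity $|s-\xi_j|^{-1/2}$ of $\overline{\partial}\mathcal{R}^{(2)}$ — exactly the interplay already exploited in \cite{AIHP,Cu-CMP} and in Appendix \ref{AppA}. The remaining bookkeeping (checking that all off-saddle contributions are exponentially small, that the expansion terms at $z=i$ are legitimate, and that the reduction to the single sector $\Omega_{11}$ is justified by symmetry) is routine and parallels Section \ref{subsec4.5}.
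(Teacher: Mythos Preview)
Your proposal is correct and follows essentially the same strategy as the paper: reduce to bounding $\int\int_{\Omega}|\overline{\partial}\mathcal{R}^{(2)}(s)|e^{2t\Im\theta(s)}\,dA(s)$ using $\|m^{(3)}\|_{L^\infty}\lesssim1$ and the boundedness of $|s-i|^{-1}$ on $\overline{\Omega}$, then split via Proposition~\ref{pp-r2} into the $|p_{kj}'|$ piece and the $|s-\xi_j|^{-1/2}$ piece. The paper treats both pieces by explicit H\"older estimates in $u$ followed by a case split of the $v$-integral over $(0,1)$ and $(1,\infty)$, whereas you handle the singular piece by the cleaner parabolic rescaling $s-\xi_j\mapsto\zeta\,t^{-1/2}$; both routes yield $t^{-3/4}$. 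One small correction: the $|p_{kj}'|$ piece typically gives only $O(t^{-3/4})$ (Cauchy--Schwarz in the direction transverse to the saddle contributes $t^{-1/4}$, and the remaining Gaussian integral $t^{-1/2}$), not the $O(t^{-1})$ you claim, but this does not affect the final bound.
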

\begin{proof}
Similar to the proof of Proposition \ref{pp-oper-m3}, we still take $z\in\Omega_{11}$ in region $\xi\in(-1/4,0)$ as an example, and first consider inequality \eqref{est-m3i}. Observing the fact that $\left\|m^{(3)}\right\|_{L^{\infty}}\leq 1$. Let $s-\xi_1=u+iv$ and $z=x+iy\in\Omega_{11}$, then
\begin{align}
\frac{1}{\pi}\int\int_{\D{C}}\frac{\left|m^{(3)}(s)W^{(3)}(s)\right|}{|s-i|}
\leq J_7+J_8,
\end{align}
where
\begin{subequations}
\begin{align}
&J_7=\int\int_{\Omega_{11}}\frac{|p_{11}'(s)|e^{2t\Im\theta}}{|s-i|}\ dudv,\label{J7}\\
&J_8=\int\int_{\Omega_{11}}\frac{|s-\xi_1|^{-1/2}e^{2t\Im\theta}}{|s-i|}\ dudv.\label{J8}
\end{align}
\end{subequations}

For the integral equation $J_7$, it follows from H\"{o}lder inequality and $|s-i|^{-1}$ has nonzero maximum that
\begin{align}
J_7&\leq\int_0^{\infty}\int_v^{\infty}\left|p_{11}'(s)\right|e^{-\tau(\xi)tv\frac{u^2+v^2+2u\xi_1}
{4+|s|^2}}\ dudv\nonumber \\
&\lesssim \int_0^{\infty}\left\|p_{11}'(s)\right\|_{L^{2}}
\left\|e^{-\tau(\xi)tvu\frac{v+2\xi_1}{
4+v^2+(v+\xi_1)^2}}\right\|_{L^{2}}e^{-\tau(\xi)tv^2\frac{v+2\xi_1}{
4+v^2+(v+\xi_1)^2}}\ dudv\nonumber \\
&\lesssim t^{-1}\int_0^{\infty}\left(\frac{v^2+2v\xi_1}{4+v^2+(v+\xi_1)^2}\right)^2
e^{-\tau(\xi)tv^2\frac{v+2\xi_1}{
4+v^2+(v+\xi_1)^2}}\ dudv\nonumber \\
&=t^{-1}\left[\left(\int_{0}^1+\int_1^{\infty}\right)
\left(\frac{v^2+2v\xi_1}{4+v^2+(v+\xi_1)^2}\right)^2
e^{-\tau(\xi)tv^2\frac{v+2\xi_1}{
4+v^2+(v+\xi_1)^2}}\ dv\right]\nonumber \\
&\triangleq J_7^1+J_7^2.\label{J712}
\end{align}
For the first integral $J_7^1$, that is $v\in(0,1)$, then one has $\frac{v^2+2v\xi_1}{4+v^2+(v+\xi_1)^2}\leq v^{-1}$. Therefore,
\begin{align}
J_7^1\lesssim \int_{0}^1v^{-1/2}e^{-tv^2}\ dv\lesssim
\int_{0}^1t^{-1/4}e^{-(t^{1/4}v^{1/2})^4}\ dt^{1/4}v^{1/2}\lesssim t^{-1/4}.
\end{align}
Similarly, direct calculation shows
\begin{align}
J_7^2\lesssim \int_1^{\infty}e^{-t(v+\xi_1)}\lesssim t^{-1}.
\end{align}
Now our attention turns to proving $J_8$. For $2<p<4$ such that $1/p+1/q=1$, applying H\"{o}lder inequality yields
\begin{align}
J_8&\lesssim \int_0^{\infty}\left\||s-\xi_1|^{-1/2}\right\|_{L^{p}}
\left\|e^{-\tau(\xi)tvu\frac{v+2\xi_1}{
4+v^2+(v+\xi_1)^2}}\right\|_{L^{q}}e^{-\tau(\xi)tv^2\frac{v+2\xi_1}{
4+v^2+(v+\xi_1)^2}}\ dv\nonumber \\
&=t^{-\frac{1}{q}}\left[\left(\int_{0}^1+\int_1^{\infty}\right)v^{\frac{1}{p}-\frac{1}{2}}
\left(\frac{v^2+2v\xi_1}{4+v^2+(v+\xi_1)^2}\right)^{-\frac{1}{q}}
e^{-q\tau(\xi)tv^2\frac{v+2\xi_1}{
4+v^2+(v+\xi_1)^2}}\ dv\right]\nonumber \\
&\triangleq J_8^1+J_8^2.
\end{align}
Similar to the proof of \eqref{J712}, one has $J_8\leq t^{-3/4}$. On the other hand, because $|(s-i)^{-1}|$ is bounded, inequality \eqref{m3-est-34} can be obtained by using results \eqref{est-m3i} and \eqref{expansion-m3}.
\end{proof}

\section{Long-time asymptotic behaviors}\label{sec6}
The main purpose of this section is to give the asymptotic behavior of the mCH equation depending on $\xi$ with nonzero boundary conditions. First, a series of deformations of the original RH problem \ref{RHP-M} are reviewed, including \eqref{M1}, \eqref{M2}, \eqref{M3} and \eqref{Ez}. Therefore, one has
\begin{align}
M(z)=M^{(3)}(z)M^{(err)}(z)M^{(sol)}(z)R^{(2)}(z)^{-1}T(z)^{-\sigma_3}
\end{align}
In order to reconstruct the solution of the mCH equation, consider $z\to i$ outside the region $\overline{\Omega}$, then there is $R^{(2)}(z)\equiv1$. It follows from Propositions \ref{PP-T} and \ref{pp-M3i}
\begin{align}
M(z)=&\left(M^{(3)}(i)+M^{(3)}_1(z)(z-i)\right)M^{(err)}(z)M^{(sol)}(z)
T(i)^{-\sigma_3}\nonumber \\
&\times\left(1-\frac{1}{2\pi i}\int_{\Sigma_b(\xi)}\frac{\ln(1-|r(s)|^2)}{(s-i)^2}\ ds(z-i)\right)^{-\sigma_3}+O((z-i)^2)
\end{align}

For the regions $\xi\in(-\infty,-1/4)\cup(2,+\infty)$, the corresponding steady-state phase point does not exist, then
\begin{align}
M(z)=M^{(sol)}(i)
T(i)^{-\sigma_3}
\left(1-\frac{1}{2\pi i}\int_{\Sigma_b(\xi)}\frac{\ln(1-|r(s)|^2)}{(s-i)^2}\ ds(z-i)\right)^{-\sigma_3}+O((z-i)^2)+O(t^{-1/4}),
\end{align}
and
\begin{align}
M(z)=M^{(sol)}(i)T(i)^{-\sigma_3}+O(t^{-1/4}).
\end{align}

Applying the reconstruction formula \eqref{q-sol} yields
\begin{align}
q(x,t)&=-\partial_zM_{12}(z)|_{z=i}M_{11}(i)-\partial_zM_{21}(z)|_{z=i}M_{11}(i)^{-1}\nonumber \\
=&-\partial_zM^{(sol)}_{12}(z)|_{z=i}M^{(sol)}_{11}(i)-
\partial_zM^{(sol)}_{21}(z)|_{z=i}M^{(sol)}_{11}(i)^{-1}+O(t^{-1/4})\nonumber \\
=&q_{sol}(x,t,\Lambda)+O(t^{-1/4}),
\end{align}
and
\begin{align}
x(y,t)=y+2\ln(T(i))+c_++O(t^{-1/4}).
\end{align}
Similarly, the same treatment can be done for the regions $\xi\in(-1/4,0)\cup(0,2)$, which is summarized as the following theorem.

\begin{thm}
Let $q(x,t)$ be the solution of mCH equation \eqref{mCH-2} corresponding to initial data $q_0(x)\in H^{2,1}(\D{R})$ and assume that $q_0(x)$ is generic. Let $\left\{r(z),\{\eta_n,c_n\}_{n=1}^{4N_1+2N_2}\right\}$ be the scattering data generated by the
initial data $q_0(x)$ and $q_{sol}(x,t,\Lambda)$ denote the $N(\Lambda)$-soliton solution with the modified scattering data $\sigma_d^{\Lambda}=\left\{0,\{\eta_n,c_nT^{2}(\eta_n)\}_{n\in\Lambda}\right\}$ shown in Corollary \ref{cor-sol}. Then the solution to the mCH equation can be described as follow: as $t\to+\infty$
\begin{enumerate}[(I)]
\item As $\xi\in(-\infty,-1/4)\cup(2,+\infty)$, then
\begin{align}
q(x,t)&=q_{sol}(x,t,\Lambda)+O(t^{-1/4}),\\
x(y,t)&=y+2\ln(T(i))+c_++O(t^{-1/4}),
\end{align}
where the parameters $q_{sol}(x,t,\Lambda)$, $c_+$ and $T(i)$ are defined in Corollary \ref{cor-sol} and Proposition \ref{PP-T}.
\item As $\xi\in(-1/4,0)\cup(0,2)$, then
\begin{align}
q(x,t)&=q_{sol}(x,t,\Lambda)(1+T_0)+t^{-1/2}h_{11}+O(t^{-3/4}),\\
x(y,t)&=y+2\ln(T(i))+c_+'+t^{-1/2}h_{12}+O(t^{-3/4})
\end{align}
where the parameters $h_{11}$, $h_{12}$ and $\ln(T(i))$ are defined in Proposition \ref{asy-err} and Proposition \ref{PP-T}.
\end{enumerate}
\end{thm}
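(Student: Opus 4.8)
The plan is to prove the theorem by composing the entire chain of invertible transformations built in Sections \ref{sec4} and \ref{sec5}, reading off the Taylor expansion of each factor at $z=i$, and substituting into the reconstruction formula \eqref{q-sol}. First I would unwind the transformations. In the regions $\xi\in(-\infty,-1/4)\cup(2,+\infty)$, the relations \eqref{M1}, \eqref{M2}, \eqref{M3}, \eqref{Ez} give
\begin{align}
M(z)=M^{(3)}(z)M^{(err)}(z)M^{(sol)}(z)R^{(2)}(z)^{-1}T(z)^{-\sigma_3},
\end{align}
and in $\xi\in(-1/4,0)\cup(0,2)$ the analogous composition, using \eqref{m2}, \eqref{sol-m3} and \eqref{dec-M2}, reads $M(z)=m^{(3)}(z)E(z)M^{(2)}_{rhp}(z)\mathcal{R}^{(2)}(z)^{-1}T(z)^{-\sigma_3}$ with $M^{(2)}_{rhp}=E M^{(sol)}$ away from $U(\xi)$. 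Since \eqref{q-sol} requires only the behavior at $z=i$, and $i$ lies outside $\overline{\Omega}$, one has $R^{(2)}(z)\equiv\mathcal{R}^{(2)}(z)\equiv I$ in a neighborhood of $z=i$, so that factor drops out, leaving $M(z)=M^{(3)}(z)M^{(err)}(z)M^{(sol)}(z)T(z)^{-\sigma_3}$ (resp.\ $M(z)=m^{(3)}(z)E(z)M^{(sol)}(z)T(z)^{-\sigma_3}$) near $z=i$.

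Next I would expand each surviving factor at $z=i$ using the expansions already collected: \eqref{expT0} for $T(z)^{\pm\sigma_3}$, \eqref{exp-Msol-i} for $M^{(sol)}$, Proposition \ref{Merr-i} (resp.\ Proposition \ref{asy-err}) for the error function, and Proposition \ref{pp-M3i} (resp.\ \eqref{est-m3i}, \eqref{m3-est-34}) for the pure $\overline{\partial}$ factor. Multiplying these truncated series and keeping terms through order $(z-i)$ produces a two-term Taylor polynomial for $M(z)$ at $z=i$ whose coefficients are explicit in $M^{(sol)}(i)$, $M^{(sol)}_1$, $T(i)$, $T_0(\xi)$ and the error contributions. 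Feeding this into
\begin{align}
q(x,t)=-\partial_zM_{12}(z)|_{z=i}M_{11}(i)-\partial_zM_{21}(z)|_{z=i}M_{11}(i)^{-1},\qquad x(y,t)=y+\ln M_{11}(i),
\end{align}
and using that $M^{(sol)}$ reconstructs the $N(\Lambda)$-soliton potential $q_{sol}(x,t,\Lambda)$ of Corollary \ref{cor-sol}, yields the stated leading terms, the residual being controlled by $\|M^{(err)}-I\|$ (resp.\ $\|E-I\|$) and $\|M^{(3)}-I\|$ (resp.\ $\|m^{(3)}-I\|$) together with the derivative estimates at $z=i$.

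Finally I would assemble the error orders case by case. In the no-phase-point case, $\Sigma_b(\xi)=\emptyset$ for $\xi>2$ and $\Sigma_b(\xi)=\mathbb{R}$ for $\xi<-1/4$; \eqref{est-Verr}, Proposition \ref{Merr-i} and Proposition \ref{pp-M3i} give $M^{(err)}-I=O(e^{-2\varepsilon_0 t})$ and $M^{(3)}-I,\ M^{(3)}_1=O(t^{-1/2})$, all dominated by the recorded $O(t^{-1/4})$; multiplying through and using $T(\infty)=1$ produces $q=q_{sol}(x,t,\Lambda)+O(t^{-1/4})$ and $x=y+2\ln T(i)+c_++O(t^{-1/4})$. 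In the phase-point case, Proposition \ref{asy-err} supplies $E(i)=I+t^{-1/2}f_1+O(t^{-1})$ and $E_1=t^{-1/2}f_2+O(t^{-1})$ with $f_1,f_2$ as in \eqref{asy-ff}, built from the parabolic-cylinder data $A_k(\xi)$ of Proposition \ref{pp-phase-sol}, while \eqref{est-m3i} and \eqref{m3-est-34} give $m^{(3)}-I,\ m^{(3)}_1=O(t^{-3/4})$; carrying these through the product and through the two derivative-at-$i$ operations yields the factor $1+T_0$ multiplying $q_{sol}$, the explicit $t^{-1/2}$ corrections $h_{11}$ and $h_{12}$ of Proposition \ref{asy-err}, and total remainder $O(t^{-3/4})$.

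The main obstacle, I expect, is the bookkeeping in the phase-point regions: one must correctly propagate the parabolic-cylinder contributions $A_k(\xi)$ through $M^{(2)}_{rhp}=E\,M^{(sol)}\,M^{(mod)}$, then through $m^{(3)}$, and finally through the two differentiations in \eqref{q-sol}, checking that every cross term between the $O(t^{-1/2})$ part of $E$ and the $O(t^{-3/4})$ part of $m^{(3)}$ is genuinely $o(t^{-3/4})$ and that no $t^{-1/2}$ term is lost in $x(y,t)$. The delicate inputs are the identity $c=1$ and the symmetry reductions \eqref{sym-coe1}--\eqref{sym-coe2}, which make $M^{(sol)}$ behave correctly under $z\mapsto -\overline z$, and the uniform boundedness of $M^{(sol)}(\xi_j)$ and its inverse, without which $f_1,f_2$ in \eqref{asy-ff} would not be well defined; once these are secured the remaining computation is a routine multiplication of truncated series.
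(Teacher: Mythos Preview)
Your proposal is correct and follows essentially the same route as the paper: in Section \ref{sec6} the paper inverts the chain \eqref{M1}, \eqref{M2}, \eqref{M3}, \eqref{Ez} (resp.\ \eqref{m2}, \eqref{sol-m3}, \eqref{dec-M2}) to write $M(z)$ as a product, observes that $R^{(2)}(z)\equiv I$ near $z=i$ since $i\notin\overline{\Omega}$, expands each factor at $z=i$ using Propositions \ref{PP-T}, \ref{Merr-i}, \ref{pp-M3i} (resp.\ \ref{asy-err}, \eqref{est-m3i}, \eqref{m3-est-34}) and \eqref{exp-Msol-i}, and then substitutes into \eqref{q-sol}. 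One small slip: in your intermediate formula for the phase-point case you wrote $M=m^{(3)}E\,M^{(2)}_{rhp}\,\mathcal{R}^{(2)}{}^{-1}T^{-\sigma_3}$, which double-counts $E$ since $M^{(2)}_{rhp}=E\,M^{(sol)}$; the correct composition is $M=m^{(3)}M^{(2)}_{rhp}\,\mathcal{R}^{(2)}{}^{-1}T^{-\sigma_3}=m^{(3)}E\,M^{(sol)}\,\mathcal{R}^{(2)}{}^{-1}T^{-\sigma_3}$ near $z=i$, which is what you use two lines later.
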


\section*{Acknowledgements}\quad

This work was supported by the National Natural Science Foundation of China under Grant No. 11975306, the Natural Science Foundation of Jiangsu Province under Grant No. BK20181351, the Six Talent Peaks Project in Jiangsu Province under Grant No. JY-059, and the Fundamental Research Fund for the Central Universities under the Grant Nos. 2019ZDPY07 and 2019QNA35.

\appendix
\renewcommand\thefigure{\Alph{section}\arabic{figure}}
\setcounter{figure}{0}
\renewcommand{\theequation}{\thesection.\arabic{equation}}
\setcounter{equation}{0}

\section{The proof of Proposition \ref{pp-phase-sol}}\label{AppA}

In this appendix, we will prove in detail the match with parabolic cylindrical function at phase point $\xi_1$ for $\xi\in(-1/4,0)\cup(0,2)$.
\subsection{Local model for $\xi\in(0,2)$}\label{AppA-02}
Now let's first consider region $\xi\in(0,2)$, which corresponds to $\theta''(\xi_1)>0$.
\begin{RHP}\label{RHP-A1}
Find a $2\times2$ matrix $M^{(pc),\xi_1}(\zeta)$ such that
\begin{enumerate}[(I)]
\item $M^{(pc),\xi_1}(\zeta)$ is analytic in analytic in $\mathbb{C}\setminus\Sigma^{pc}$,
where
$\Sigma^{pc}=\{\mathbb{R}e^{i\varphi}\}\cup\{\mathbb{R}e^{i(\pi-\varphi)}\}$ shown in Fig.\ref{Fig-phase3}.
\item Jump condition:
\begin{align}\label{Q1}
M^{(pc),\xi_1}_+(\zeta)=M^{(pc),\xi_1}_-(\zeta)V^{PC}(\zeta),~~\zeta\in\Sigma^{pc},
\end{align}
where
\begin{align}\label{Q2}
V^{PC}(\zeta)=\left\{\begin{aligned}
&\zeta^{iv\widehat{\sigma}_3}e^{-\frac{i\zeta^2}{4}\widehat{\sigma}_3}\left(
   \begin{array}{cc}
     1 & 0 \\
     -\overline{r}_{\xi_1} & 1 \\
   \end{array}
 \right),&&\zeta\in\mathbb{R}^+e^{i\varphi},\\
&\zeta^{iv\widehat{\sigma}_3}e^{-\frac{i\zeta^2}{4}\widehat{\sigma}_3}\left(
  \begin{array}{cc}
    1 & \frac{r_{\xi_1}}{1-|r_{\xi_1}|^2}  \\
    0 & 1 \\
  \end{array}
\right),&&\zeta\in\mathbb{R}^+e^{i(\pi-\varphi)},\\
&\zeta^{iv\widehat{\sigma}_3}e^{-\frac{i\zeta^2}{4}\widehat{\sigma}_3}\left(
  \begin{array}{cc}
    1 & 0 \\
    -\frac{r_{\xi_1}}{1-|r_{\xi_1}|^2} & 1 \\
  \end{array}
\right),&&\zeta\in\mathbb{R}^+e^{i(-\pi+\varphi)},\\
&\zeta^{iv\widehat{\sigma}_3}e^{-\frac{i\zeta^2}{4}\widehat{\sigma}_3}\left(
   \begin{array}{cc}
     1 & -r_{\xi_1} \\
     0 & 1 \\
   \end{array}
 \right),&&\zeta\in\mathbb{R}^+e^{-i\varphi}.
 \end{aligned}\right.
\end{align}
\item Asymptotic behavior:
\begin{align}\label{Q3}
M^{(pc),\xi_1}(\zeta)=I+\frac{M^{(pc),\xi_1}_1(\zeta)}{\zeta}+O(\zeta^{-2}),
~~~\zeta\rightarrow\infty.
\end{align}
\end{enumerate}
\end{RHP}

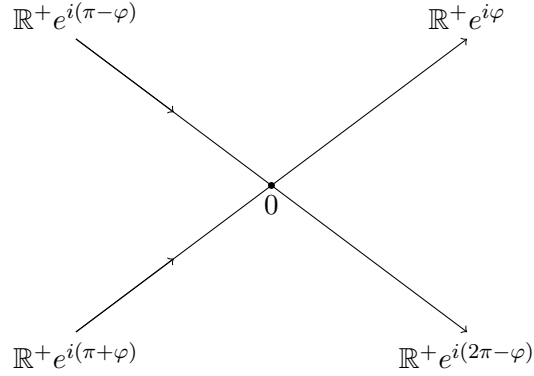
\begin{figure}[H]
\centering
\begin{tikzpicture}[scale=1.3]
\draw[->](0,0)--(2,1.5)node[above]{$\mathbb{R}^+e^{ i\varphi }$};
\draw(0,0)--(-2,1.5)node[above]{$\mathbb{R}^+e^{ i(\pi-\varphi) }$};
\draw(0,0)--(-2,-1.5)node[below]{$\mathbb{R}^+e^{ i(\pi+\varphi) }$};
\draw[->](0,0)--(2,-1.5)node[below]{$\mathbb{R}^+e^{i(2\pi-\varphi) }$};
\draw[->](-2,-1.5)--(-1,-0.75);
\draw[->](-2,1.5)--(-1,0.75);
\coordinate (A) at (1,0.5);
\coordinate (B) at (1,-0.5);
\coordinate (G) at (-1,0.5);
\coordinate (H) at (-1,-0.5);
\coordinate (I) at (0,0);
\fill (I) circle (1pt) node[below] {$0$};
\end{tikzpicture}
\caption{The contour $\Sigma^{pc}$ for the case of $\xi_1$ in the region $\xi\in(0,2)$.}\label{Fig-phase3}
\end{figure}

The next goal is to further reduce RH problem \ref{RHP-A1} to a model RH problem, whose solution can be given by using the asymptotic properties of parabolic cylindrical functions. Like the standard NLS equation and mKdV equation (see \cite{DZ-CMP-1994,DZ-AM1993,Its-1981}), we make the following transformation
\begin{align}
M^{(pc),\xi_1}(\zeta)=\Psi(\zeta)\mathcal {P}\zeta^{-iv\sigma_3}e^{\frac{i}{4}\zeta^2\sigma_3},
\end{align}
where
\begin{align*}
\mathcal {P}(\xi)=\left\{\begin{aligned}
&\left(
  \begin{array}{cc}
    1 & 0 \\
    \overline{r}_{\xi_1} & 1 \\
  \end{array}
\right),&&\arg\zeta\in(0,\varphi),\\
&\left(
  \begin{array}{cc}
    1 & r_{\xi_1} \\
    0 & 1 \\
  \end{array}
\right),&&\arg\zeta\in(2\pi-\varphi),\\
&\left(
   \begin{array}{cc}
     1 & 0 \\
     -\frac{\overline{r}_{\xi_1}}{1+|r_{\xi_1}|^2} & 1 \\
   \end{array}
 \right),&&\arg\zeta\in(\pi,\pi+\varphi),\\
&\left(
   \begin{array}{cc}
     1 & -\frac{r_{\xi_1}}{1+|r_{\xi_1}|^2} \\
     0 & 1 \\
   \end{array}
 \right),&&\arg\zeta\in(\pi-\varphi,\pi).
\end{aligned}\right.
\end{align*}
Through calculation, it is known that matrix $\Psi(\zeta)$ has the same constant jump matrix $V^{\Psi}$ along the positive real axis and the negative real axis. Additionally, the matrix $\Psi(\zeta)$ satisfies the following RH problem.
\begin{RHP}\label{RHP-A2}
The matrix $\Psi(\zeta)$ is analytic in $\D{C}\setminus\D{R}$ with the following properties
\begin{enumerate}[(I)]
\item For $\zeta\in\D{R}$, the continuous values $\Psi_{\pm}(\zeta)$ on $\D{R}$ satisfy that
\begin{align}\label{Q4}
\Psi_{+}(\zeta)=\Psi_{-}(\zeta)V^{\Psi}(\zeta), ~~~~\zeta\in\mathbb{R},
\end{align}
where $V^{\Psi}(\zeta)=\left(
                         \begin{array}{cc}
                           1-|r_{\xi_1}|^2 & r_{\xi_1} \\
                           -\overline{r}_{\xi_1} & 1 \\
                         \end{array}
                       \right).$
\item Asymptotic behavior:

\end{enumerate}
\end{RHP}

\begin{prop}\label{pp-ode}
The entries of the matrix $\Psi(\zeta)=(\psi)_{2\times2}$ obey ordinary differential equation system as follows.
\begin{align}
&\frac{d^2\psi_{11}(\zeta)}{d\zeta^2}
+\left(\frac{i}{2}+\frac{\zeta^2}{4}-\beta^{\xi_1}_{21}
\beta^{\xi_1}_{12}\right)\psi_{11}=0,\
\frac{d^2\psi_{21}(\zeta)}{d\zeta^2}+\left(-\frac{i}{2}+\frac{\zeta^2}{4}-\beta^{\xi_1}_{21}
\beta^{\xi_1}_{12}\right)\psi_{21}=0,\label{psi11}\\
&\frac{d^2\psi_{12}(\zeta)}{d\zeta^2}+\left(\frac{i}{2}+\frac{\zeta^2}{4}-\beta^{\xi_1}_{21}
\beta^{\xi_1}_{12}\right)\psi_{12}=0,\
\frac{d^2\psi_{22}(\zeta)}{d\zeta^2}+\left(-\frac{i}{2}+\frac{\zeta^2}{4}-\beta^{\xi_1}_{21}
\beta^{\xi_1}_{12}\right)\psi_{22}=0,
\end{align}
\end{prop}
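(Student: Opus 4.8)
The plan is to establish the second-order ODEs satisfied by the entries of $\Psi(\zeta)$ by differentiating the relation $M^{(pc),\xi_1}(\zeta)=\Psi(\zeta)\mathcal{P}\zeta^{-i\nu\sigma_3}e^{\frac{i}{4}\zeta^2\sigma_3}$ and exploiting the fact (already noted before RH Problem \ref{RHP-A2}) that $\Psi(\zeta)$ has a \emph{constant} jump matrix $V^{\Psi}$ along $\mathbb{R}$. First I would observe that since $V^{\Psi}$ is independent of $\zeta$, the logarithmic derivative $\frac{d\Psi}{d\zeta}\Psi^{-1}$ has no jump across $\mathbb{R}$, hence extends to an entire function of $\zeta$; combined with the prescribed asymptotic behavior $\Psi(\zeta)\zeta^{i\nu\sigma_3}e^{-\frac{i}{4}\zeta^2\sigma_3}\to I$ as $\zeta\to\infty$, a Liouville-type argument forces
\begin{align}\label{Lax-Psi}
\frac{d\Psi}{d\zeta}=\left(\frac{i}{2}\zeta\sigma_3+B(\xi_1)\right)\Psi,
\qquad B(\xi_1)=\begin{pmatrix}0 & \beta^{\xi_1}_{12}\\ \beta^{\xi_1}_{21} & 0\end{pmatrix},
\end{align}
where the off-diagonal structure of $B$ comes from matching the $O(\zeta^{-1})$ term in the expansion \eqref{Q3} and the symmetry of the problem. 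This is the standard Lax-type first-order system for the parabolic-cylinder model, exactly as in \cite{DZ-AM1993,Its-1981,DZ-CMP-1994}.

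Next I would differentiate \eqref{Lax-Psi} once more to decouple the system into scalar second-order equations. Writing $\Psi=(\psi_{ij})$ and using \eqref{Lax-Psi} columnwise, the first column satisfies $\psi_{11}'=\frac{i}{2}\zeta\psi_{11}+\beta^{\xi_1}_{12}\psi_{21}$ and $\psi_{21}'=\beta^{\xi_1}_{21}\psi_{11}-\frac{i}{2}\zeta\psi_{21}$. Differentiating the first of these and substituting for $\psi_{21}'$ and then for $\psi_{21}$ yields
\begin{align}
\psi_{11}''=\left(\frac{i}{2}+\frac{\zeta^2}{4}-\beta^{\xi_1}_{21}\beta^{\xi_1}_{12}\right)\psi_{11}\cdot(-1),
\end{align}
i.e.\ the stated equation $\psi_{11}''+(\frac{i}{2}+\frac{\zeta^2}{4}-\beta^{\xi_1}_{21}\beta^{\xi_1}_{12})\psi_{11}=0$; the $+\frac{i}{2}$ arises from the derivative of the $\frac{i}{2}\zeta$ coefficient, while $-\frac{i}{2}$ appears for $\psi_{21}$ because that entry carries the opposite diagonal sign. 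The same manipulation on the second column produces the equations for $\psi_{12}$ and $\psi_{22}$, with $\psi_{12}$ inheriting the $+\frac{i}{2}$ sign (it pairs with $\psi_{22}$ through $\beta^{\xi_1}_{12}$ in the same way $\psi_{11}$ pairs with $\psi_{21}$) and $\psi_{22}$ the $-\frac{i}{2}$ sign. After the change of variable $\zeta\mapsto \zeta e^{\pm i\pi/4}$ these are Weber's equation, so the entries are parabolic cylinder functions $D_{\pm i\nu}$, which is what makes the explicit formula for $\beta^{\xi_1}_{12}$ in Proposition \ref{pp-phase-sol} accessible via the connection formulae for $D_a$.

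The main obstacle I anticipate is not the ODE derivation itself — that is routine once \eqref{Lax-Psi} is in hand — but rather pinning down the precise constants: showing that $B(\xi_1)$ is genuinely off-diagonal with the entries \emph{named} $\beta^{\xi_1}_{12},\beta^{\xi_1}_{21}$, that their product equals $\nu(\xi_1)$ (this is the compatibility/determinant condition $\det\Psi\equiv\det M^{(pc),\xi_1}$ combined with the large-$\zeta$ normalization), and that the argument of $\beta^{\xi_1}_{12}$ is fixed by the jump data $r_{\xi_1}$ through the $\Gamma$-function in the connection formula for parabolic cylinder functions. I would handle this by carefully tracking the $O(\zeta^{-1})$ coefficient $M^{(pc),\xi_1}_1$ in \eqref{Q3}: its off-diagonal entries are $i\beta^{\xi_1}_{12}$ and $-i\beta^{\xi_1}_{21}$ up to normalization, and matching the monodromy of $\Psi$ (computed from the known Stokes structure of Weber functions) against the prescribed $V^{\Psi}$ gives $\beta^{\xi_1}_{12}=\frac{\sqrt{2\pi}\,e^{i\pi/4}e^{-\pi\nu(\xi_1)/2}}{\overline{r}_{\xi_1}\Gamma(-i\nu(\xi_1))}$ as in the statement. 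Finally, undoing the localizations — transporting $M^{(pc),\xi_1}$ back through the scalings $\zeta(z)=(2t\eta\,\theta''(\xi_j))^{1/2}(z-\xi_j)$, the factor \eqref{e60}, and the identification of $M^{(mod)}$ with $\prod_k M^{(mod),\xi_k}$ via Proposition \ref{PP-asy-phase} — produces the claimed expansion $M^{(mod)}(z)=I+t^{-1/2}\sum_k \frac{A_k(\xi)}{z-\xi_k}+O(t^{-1})$ with $A_k(\xi)=\frac{1}{\sqrt{2\eta\theta''(\xi_k)}}\left(\begin{smallmatrix}0 & -\beta^{\xi_k}_{12}\\ i\beta^{\xi_k}_{21} & 0\end{smallmatrix}\right)$, the $t^{-1/2}$ coming from $\zeta^{-1}\sim t^{-1/2}(z-\xi_k)^{-1}$ and the $O(t^{-1})$ error absorbing both the higher-order terms in \eqref{Q3} and the coupling error from Proposition \ref{PP-asy-phase}. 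The case $\xi\in(-1/4,0)$ (where $\theta''(\xi_1)<0$) is handled identically after the branch choice $-\pi<\arg\zeta<\pi$, which flips the relevant signs and replaces $\Gamma(-i\nu)$ by $\Gamma(i\nu)$ and the exponential prefactors accordingly, giving the second set of formulas in Proposition \ref{pp-phase-sol}.
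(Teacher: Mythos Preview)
Your approach is exactly the paper's: exploit the constancy of $V^{\Psi}$ to show that a suitable combination $(\Psi'+c\zeta\sigma_3\Psi)\Psi^{-1}$ is entire, apply Liouville to identify it with a constant off-diagonal matrix $\beta^{\xi_1}$, then decouple the resulting first-order system into Weber-type second-order equations. The extended discussion (connection formulae for $D_a$, undoing the scalings to recover $M^{(mod)}$) also tracks the paper's Appendix \ref{AppA} faithfully.

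There is, however, a sign slip in your first-order system that makes your derivation \emph{as written} produce the wrong equations. From the transformation $M^{(pc),\xi_1}=\Psi\,\mathcal{P}\,\zeta^{-i\nu\sigma_3}e^{\frac{i}{4}\zeta^2\sigma_3}$ with $M^{(pc),\xi_1}\to I$ and $\mathcal{P}=I$ in the wide sectors, the correct asymptotic is $\Psi\,\zeta^{-i\nu\sigma_3}e^{\frac{i}{4}\zeta^2\sigma_3}\to I$, i.e.\ $\Psi\sim \zeta^{i\nu\sigma_3}e^{-\frac{i}{4}\zeta^2\sigma_3}$; you wrote the inverse. Consequently the paper obtains
\[
\frac{d\Psi}{d\zeta}+\frac{i}{2}\zeta\sigma_3\Psi=\beta^{\xi_1}\Psi,
\]
whereas your \eqref{Lax-Psi} has the opposite sign on $\frac{i}{2}\zeta\sigma_3$. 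If you carry your version through, decoupling gives $\psi_{11}''+(-\tfrac{i}{2}+\tfrac{\zeta^2}{4}-\beta_{12}\beta_{21})\psi_{11}=0$, with the $\pm\tfrac{i}{2}$ assignments to the rows swapped relative to the Proposition. The fix is simply to correct the asymptotic and hence the sign of the diagonal term; the rest of your argument then reproduces the paper's computation verbatim, including the paper's explicit first-order relations $\psi_{11}'+\tfrac{i}{2}\zeta\psi_{11}=\beta_{12}^{\xi_1}\psi_{21}$, $\psi_{21}'-\tfrac{i}{2}\zeta\psi_{21}=\beta_{21}^{\xi_1}\psi_{11}$ displayed at the end of its proof.
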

\begin{proof}
Differentiating \eqref{Q4} and combining with $\frac{1}{2}i\zeta\sigma_3\Psi_+=\frac{1}{2}i\zeta\sigma_3\Psi_-$ obtains
\begin{align}\label{Q5}
\left(\frac{d\Psi}{d\zeta}+\frac{i}{2}\zeta\sigma_3\Psi\right)_{+}=
\left(\frac{d\Psi}{d\zeta}+\frac{i}{2}\zeta\sigma_3\Psi\right)_{-}V^{\Psi}(\zeta).
\end{align}
Observing that $\det V^{\Psi}(\zeta)\equiv1$, then one has $\det \Psi_+=\det \Psi_-$ and $\det \Psi$ is analytic in the $\zeta$-plane. Applying Liouville theorem yields that $\det \Psi=1$ and $\Psi^{-1}$ Additionally, the matrix-valued function $\left(\frac{d\Psi}{d\zeta}+\frac{i}{2}\zeta\sigma_3\Psi\right)(\Psi)^{-1}$ has no jump condition along the real axis $\mathbb{R}$ and is an entire function with respect to $\zeta$. By direct calculation, we can get
\begin{align}\label{Q7}
\begin{split}
\left(\frac{d\Psi}{d\zeta}+\frac{i}{2}\zeta\sigma_3\Psi\right)(\Psi)^{-1}&=
\left(\frac{d M^{(pc),\xi_1}(\zeta)}{d\zeta}+M^{(pc),\xi_1}(\zeta)\frac{iv\sigma_3}{\zeta}\right)
M^{(pc),\xi_1}(\zeta)^{-1}\\&-\frac{i}{2}\zeta\left[M^{(pc),\xi_1}(\zeta),\sigma_3\right]
M^{(pc),\xi_1}(\zeta)^{-1}.
\end{split}
\end{align}
It follows from Liouville theorem that
there exists a constant matrix $\beta^{\xi_1}$ such that
\begin{align}\label{Q8}
\beta^{\xi_1}=-\frac{i}{2}\left[M^{(pc),\xi_1}(\zeta),\sigma_3\right]=
\left(
  \begin{array}{cc}
    0 & i[M^{(pc),\xi_1}(\zeta)]_{12} \\
    -i[M^{(pc),\xi_1}(\zeta)]_{21} & 0 \\
  \end{array}
\right)\triangleq \left(
                    \begin{array}{cc}
                      0 & \beta_{12}^{\xi_1} \\
                      \beta_{21}^{\xi_1} & 0 \\
                    \end{array}
                  \right).
\end{align}
Also using Liouville theorem again, one has
\begin{align}\label{Q9}
\left(\frac{d\Psi}{d\zeta}+\frac{i}{2}\zeta\sigma_3\Psi\right)=\beta^{\xi_1}\Psi.
\end{align}
Expanding \eqref{Q9} yields that
\begin{align*}
&\frac{d\psi_{11}}{d\zeta}+\frac{i}{2}\zeta\psi_{11}=\beta_{12}^{\xi_1}\psi_{21},\
\frac{d\psi_{21}}{d\zeta}-\frac{i}{2}\zeta\psi_{21}=\beta_{21}^{\xi_1}\psi_{11},\\
&\frac{d\psi_{12}}{d\zeta}+\frac{i}{2}\zeta\psi_{12}=\beta_{12}^{\xi_1}\psi_{22},\
\frac{d\psi_{22}}{d\zeta}-\frac{i}{2}\zeta\psi_{22}=\beta_{21}^{\xi_1}\psi_{12}.
\end{align*}
Direct calculation can get the results of Proposition \ref{pp-ode}.

\end{proof}

Attention now turns to solving $\beta_{12}^{\xi_1}$ and $\beta_{21}^{\xi_1}$. As in \cite{DZ-AM1993}, the parabolic cylinder (PC) equation
\begin{align}\label{Q10}
y^{''}+\left(\frac{1}{2}-\frac{z^2}{4}-a\right)y=0,
\end{align}
where the PC functions $D_a(z)$, $D_a(-z)$, $D_{-a-1}(iz)$ and $D_{-a-1}(-iz)$ are entire functions for any $a$ with the asymptotic behavior as $z\rightarrow\infty$
\begin{equation}\label{asy-Da}
D_{a}(z)=\left\{
             \begin{aligned}
             &z^a e^{-\frac{z^2}{4}}\left(1+O(z^{-2})\right),  &&\vert{\rm arg}z\vert<\frac{3\pi}{4},\\
             &z^a e^{-\frac{z^2}{4}}\left(1+O(z^{-2})\right)-\frac{\sqrt{2\pi}}
             {\Gamma(-a)}e^{ia\pi}z^{-a-1}e^{z^2/4}\left(1+O(z^{-2})\right),  && \frac{\pi}{4}<{\rm arg}z<\frac{5\pi}{4},\\
             &z^a e^{-\frac{z^2}{4}}\left(1+O(z^{-2})\right)-\frac{\sqrt{2\pi}}
             {\Gamma(-a)}e^{-ia\pi}z^{-a-1}e^{z^2/4}\left(1+O(z^{-2})\right), && -\frac{5\pi}{4}<{\rm arg}z<-\frac{\pi}{4}.\\
             \end{aligned}
             \right.
\end{equation}
\begin{prop}\label{PA2}
RH problem \ref{RHP-A2} has the unique solution with the following forms
\begin{align}\label{Q11}
\Psi(\zeta)=\left(
              \begin{array}{cc}
                e^{-\frac{3\pi}{4}v(\xi_1)}D_{iv(\xi_1)}(\zeta e^{-\frac{3\pi i}{4}}) &
                \frac{e^{\frac{\pi}{4}(v(\xi_1)-i)}}{\beta_{21}^{\xi_1}}(-iv(\xi_1))
                D_{-iv(\xi_1)-1}(\zeta e^{-\frac{\pi i}{4}}) \\
                \frac{e^{\frac{\pi}{4}(v(\xi_1)-i)}}{\beta_{21}^{\xi_1}}iv(\xi_1)
                D_{iv(\xi_1)-1}(\zeta e^{-\frac{\pi i}{4}}) &
                 e^{\frac{\pi}{4}v(\xi_1)}D_{-iv(\xi_1)}(\zeta e^{-\frac{\pi i}{4}}) \\
              \end{array}
            \right),
\end{align}
for $\Im(\zeta)>0$ and for $\Im(\zeta)<0$
\begin{align}\label{Q12}
\Psi(\zeta)=\left(
              \begin{array}{cc}
                e^{\frac{\pi}{4}v(\xi_1)}D_{iv(\xi_1)}(\zeta e^{\frac{\pi i}{4}}) &
                -\frac{iv(\xi_1)}{\beta_{21}^{\xi_1}} e^{-\frac{3\pi}{4}(v(\xi_1)-i)}
                D_{-iv(\xi_1)-1}(\zeta e^{\frac{3\pi i}{4}}) \\
                \frac{iv(\xi_1)}{\beta_{12}^{\xi_1}}e^{\frac{\pi}{4}(v(\xi_1)+i)}
                D_{iv(\xi_1)-1}(\zeta e^{\frac{\pi i}{4}}) &
                 e^{-\frac{3\pi}{4}v(\xi_1)}D_{-iv(\xi_1)}(\zeta e^{\frac{3\pi i}{4}}) \\
              \end{array}
            \right).
\end{align}
\end{prop}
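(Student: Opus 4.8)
The plan is to solve RH problem \ref{RHP-A2} for $\Psi(\zeta)$ explicitly by reducing the associated linear ODE system to the classical parabolic cylinder (Weber) equation and then pinning down every constant through the first order relation \eqref{Q9} and the prescribed asymptotics. I would start from the fact, already recorded in Proposition \ref{pp-ode}, that the entries of $\Psi$ satisfy second order equations of the form $\psi''+(\pm\frac{i}{2}+\frac{\zeta^2}{4}-\beta_{12}^{\xi_1}\beta_{21}^{\xi_1})\psi=0$, the sign $+$ occurring in the first column ($\psi_{11},\psi_{12}$) and $-$ in the second ($\psi_{21},\psi_{22}$). Performing the rotation $z=\zeta e^{i\alpha}$ with $e^{4i\alpha}=-1$ converts $\frac{\zeta^2}{4}$ into $-\frac{z^2}{4}$ and turns each of these into the Weber equation \eqref{Q10}, $y''+(\frac12-\frac{z^2}{4}-a)y=0$, with index $a=\pm i\beta_{12}^{\xi_1}\beta_{21}^{\xi_1}=\pm i v(\xi_1)$ once the identity $\beta_{12}^{\xi_1}\beta_{21}^{\xi_1}=v(\xi_1)$ of Proposition \ref{pp-phase-sol} is used. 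Thus each entry is a parabolic cylinder function $D_a(\cdot)$ of a rotated argument, and the four admissible angles $\alpha\in\{\pm\pi/4,\pm3\pi/4\}$ are exactly what is needed to realize the different sectors for $\xi\in(0,2)$, where $\theta''(\xi_1)>0$.

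Next I would select, in the upper half-plane $\Im\zeta>0$, the specific solutions compatible with the normalization. The condition inherited from $M^{(pc),\xi_1}=I+O(\zeta^{-1})$ together with the definition of $\mathcal{P}$ amounts to $\Psi(\zeta)\sim(I+O(\zeta^{-1}))\zeta^{iv(\xi_1)\sigma_3}e^{-\frac{i}{4}\zeta^2\sigma_3}$ along the rays; comparing with the dominant term $w^a e^{-w^2/4}$ in \eqref{asy-Da} dictates the angle in each entry and, crucially, fixes the scalar prefactors $e^{-3\pi v(\xi_1)/4}$, $e^{\pi v(\xi_1)/4}$, etc., whose sole role is to cancel the phase $w^{iv(\xi_1)}$ produced by the rotation so that the diagonal entries behave like $\zeta^{\pm iv(\xi_1)}e^{\mp i\zeta^2/4}$. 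The off-diagonal entries are then not chosen independently but are forced by the first order relation \eqref{Q9}, $\Psi'+\frac{i}{2}\zeta\sigma_3\Psi=\beta^{\xi_1}\Psi$: writing it out columnwise gives $\psi_{11}'+\frac{i}{2}\zeta\psi_{11}=\beta_{12}^{\xi_1}\psi_{21}$ and its three companions, and applying the contiguous relation $D_a'(z)+\frac{z}{2}D_a(z)=aD_{a-1}(z)$ lowers the index by one, which is precisely how the order $iv(\xi_1)-1$ and the coefficient $iv(\xi_1)/\beta_{21}^{\xi_1}$ in \eqref{Q11} arise. Carrying out the same construction for $\Im\zeta<0$ produces \eqref{Q12}.

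The delicate and decisive step, which I expect to be the main obstacle, is to verify that these two half-plane formulas patch together across $\D{R}$ into a single function carrying exactly the constant jump $V^\Psi=\left(\begin{smallmatrix}1-|r_{\xi_1}|^2 & r_{\xi_1}\\ -\overline{r}_{\xi_1} & 1\end{smallmatrix}\right)$ of \eqref{Q4}, and to extract $\beta_{12}^{\xi_1},\beta_{21}^{\xi_1}$ in closed form. Because the chosen $D_a$ sit on Stokes rays, one cannot use the single-term asymptotics; instead one must invoke the two-term expansions in the middle ranges of \eqref{asy-Da}, where the recessive contribution comes weighted by the connection coefficient $\sqrt{2\pi}\,e^{\pm i a\pi}/\Gamma(-a)$. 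Matching the subdominant exponentials of $\Psi_+$ and $\Psi_-$ across the real axis reproduces the $(1,2)$ and $(2,1)$ entries of $V^\Psi$, and solving the resulting scalar identities yields the expression for $\beta_{12}^{\xi_1}$ in terms of $r_{\xi_1}$, $v(\xi_1)$, and $\Gamma(\pm iv(\xi_1))$ stated in Proposition \ref{pp-phase-sol}, with $\beta_{12}^{\xi_1}\beta_{21}^{\xi_1}=v(\xi_1)$ then read off from $|r_{\xi_1}|^2$. Keeping the four sectors, the branch of the logarithm ($0<\arg\zeta<2\pi$) fixed above, and the $\Gamma$-function connection coefficients mutually consistent is the entire difficulty. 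Uniqueness, finally, is painless: since $\det V^\Psi\equiv1$, Liouville's theorem gives $\det\Psi\equiv1$, so two solutions differ by an entire, bounded matrix normalized to $I$, hence equal to $I$.
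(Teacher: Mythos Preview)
Your proposal is correct and follows essentially the same route as the paper: rotate $\zeta$ by a fourth root of $-1$ to reduce each second-order equation from Proposition \ref{pp-ode} to the Weber equation \eqref{Q10}, fix the diagonal entries by matching the leading term of \eqref{asy-Da} to the required asymptotics $\Psi\sim\zeta^{iv(\xi_1)\sigma_3}e^{-i\zeta^2\sigma_3/4}$, and then recover the off-diagonal entries from the first-order system \eqref{Q9} via the contiguous relation $D_a'(z)+\tfrac{z}{2}D_a(z)=aD_{a-1}(z)$. The only organizational difference is that the paper separates the computation of $\beta_{12}^{\xi_1}$ (your ``delicate and decisive step'') from the proof of the proposition itself, performing it immediately afterwards via the Wronskian identity $-\overline r_{\xi_1}=\psi_{11}^-\psi_{21}^+-\psi_{21}^-\psi_{11}^+$ rather than by matching two-term Stokes expansions across $\D R$.
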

\begin{proof}
Let $v=\beta_{12}^{\xi_1}\beta_{21}^{\xi_1}$,  then the first equation of \eqref{psi11} can be rewritten as
\begin{align}\label{Q13}
\frac{d^2\psi_{11}}{d\eta^2}+\left(\frac{1}{2}-\frac{\eta^2}{4}+iv\right)\psi_{11}=0,
\end{align}
under the new variable $\eta=\zeta e^{-\frac{3\pi i}{4}}$ for $\Im(\zeta)>0$, which has a set of linearly independent solutions for some constants $c_1$ and $c_2$
\begin{align*}
\psi_{11}=c_1D_a(e^{-\frac{3\pi i}{4}}\zeta)+c_1D_{a}(-e^{-\frac{3\pi i}{4}}\zeta).
\end{align*}
Due to $\zeta\in\mathbb{C}^+$, thus $-\frac{3\pi}{4}<\arg\eta<\frac{\pi}{4}$.
Combining the asymptotic behavior $\Psi$ with  the properties of $D_a(z)$ yields the result
\begin{align}
\psi_{11}=e^{-\frac{3\pi}{4}v(\xi_1)}D_{iv(\xi_1)}(e^{-\frac{3\pi i}{4}}\zeta).
\end{align}
On the other hand, applying \eqref{psi11} yields
\begin{align}\label{Q14}
\psi_{21}&=\frac{1}{\beta_{21}^{\xi_1}}\left[e^{-\frac{3\pi}{4}v(\xi_1)}\partial(D_{iv(\xi_1)}
(e^{-\frac{3\pi i}{4}}\zeta))+\frac{i\zeta}{2}e^{-\frac{3\pi}{4}v(\xi_1)}D_{iv(\xi_1)}
(e^{-\frac{3\pi i}{4}}\zeta)\right]\nonumber\\
&=\frac{1}{\beta_{21}^{\xi_1}}e^{-\frac{3\pi}{4}(v(\xi_1)+i)}\left[\partial(D_{iv(\xi_1)}
(e^{-\frac{3\pi i}{4}}\zeta))+
\frac{\zeta}{2}e^{-\frac{3\pi i}{4}}D_{iv(\xi_1)}(e^{-\frac{3\pi i}{4}}\zeta)\right]\nonumber\\
&=\frac{1}{\beta_{21}^{\xi_1}}e^{-\frac{3\pi}{4}(v(\xi_1)+i)}D_{iv(\xi_1)-1}(e^{-\frac{3\pi i}{4}}\zeta).
\end{align}

Similarly, for $\psi_{22}$, taking the new variable $\eta=e^{-\frac{\pi i}{4}}\zeta$
\begin{align*}
\psi_{22}''+\left(-\frac{\zeta^2}{4}-iv+\frac{1}{2}\right)\psi_{22}=0,
\end{align*}
also
\begin{align*}
\psi_{22}=e^{\frac{\pi v}{4}}D_{-iv}(e^{\frac{-\pi}{4}}\zeta),\ \zeta\in\D{C}^+.
\end{align*}
Then, one has
\begin{align*}
\psi_{12}=\frac{1}{\beta_{21}}e^{\frac{\pi v}{4}}\left(\partial_{\zeta}(D_{-iv}(\zeta e^{-\frac{i\pi}{4}}))-\frac{i\zeta}{2}D_{-iv}(\zeta e^{-\frac{i\pi}{4}})\right).
\end{align*}

As $\Im(\zeta)<0$, we take the transformations $\eta=\zeta e^{i\pi/4}$ and $\eta=\zeta e^{3\pi i/4}$ to solve $\psi_{11}$ $\psi_{21}$ and $\psi_{22}$ $\psi_{12}$ by repeating the above process, respectively.
\end{proof}

Next, we turn our attention to solving $\beta_{12}^{\xi_1}$. It follows from \eqref{Q4} that
\begin{align}
-\bar{r}_{\xi_1}&=\psi_{11}^-\psi_{21}^+-\psi_{21}^-\psi_{11}^+ \nonumber \\
&=e^{\frac{\pi}{4}v(\xi_1)}D_{iv(\xi_1)}(e^{\frac{\pi i}{4}}\zeta)\frac{e^{-\frac{3\pi v(\xi_1)}{4}}}{\beta_{12}^{\xi_1}}\left[\partial_{\zeta}(D_{iv(\xi_1)}(e^{-\frac{3\pi i}{4}}\zeta))+\frac{i\zeta}{2}(D_{iv(\xi_1)}(e^{-\frac{3\pi i}{4}}\zeta))\right. \nonumber \\
&-e^{-\frac{3\pi v(\xi_1)}{4}}\frac{e^{\frac{\pi v(\xi_1)}{4}}}{\beta_{12}^{\xi_1}}D_{iv(\xi_1)}(e^{-\frac{3\pi i}{4}}\zeta)
\left.\partial_{\zeta}(D_{iv(\xi_1)}
(e^{\frac{\pi i}{4}}\zeta))+\frac{i\zeta}{2}D_{iv(\xi_1)}(e^{\frac{\pi i}{4}}\zeta)  \right]\nonumber \\
&=\frac{1}{\beta_{12}^{\xi_1}}\frac{\sqrt{2\pi}e^{\frac{\pi i}{4}}
e^{-\frac{\pi v(\xi_1)}{2}}}{\Gamma(-iv(\xi_1))},
\end{align}
which implies that
\begin{align}
&\beta_{12}^{\xi_1}=-\frac{\sqrt{2\pi}e^{\frac{\pi i}{4}}
e^{-\frac{\pi v(\xi_1)}{2}}}{\bar{r}_{\xi_1}\Gamma(-iv(\xi_1))},\ \beta_{12}^{\xi_1}\beta_{21}^{\xi_1}=v(\xi_1),\\
&\arg(\beta_{12}^{\xi_1})=\frac{\pi}{4}+\arg(r_{\xi_1})-\arg(\Gamma(-iv(\xi_1))).
\end{align}

\subsection{Local model for $\xi\in(-1/4,0)$}
Now let's first consider region $\xi\in(-1/4,0)$, which corresponds to $\theta''(\xi_1)<0$.
\begin{RHP}\label{RHP-A3}
The matrix $M^{(pc),\xi_1}(\zeta)$ solves RH problem \ref{RHP-A1} by replaced the jump condition

\begin{align}\label{Q15}
V^{PC}(\zeta)=\left\{\begin{aligned}
&\zeta^{-iv\widehat{\sigma}_3}e^{\frac{i\zeta^2}{4}\widehat{\sigma}_3}\left(
  \begin{array}{cc}
    1 & \frac{r_{\xi_1}}{1-|r_{\xi_1}|^2}  \\
    0 & 1 \\
  \end{array}
\right),&&\zeta\in\mathbb{R}^+e^{i\varphi},\\
&\zeta^{-iv\widehat{\sigma}_3}e^{\frac{i\zeta^2}{4}\widehat{\sigma}_3}\left(
  \begin{array}{cc}
    1 & 0  \\
    -\overline{r}_{\xi_1} & 1 \\
  \end{array}
\right),&&\zeta\in\mathbb{R}^+e^{i(\pi-\varphi)},\\
&\zeta^{-iv\widehat{\sigma}_3}e^{\frac{i\zeta^2}{4}\widehat{\sigma}_3}\left(
  \begin{array}{cc}
    1 & r_{\xi_1} \\
    0 & 1 \\
  \end{array}
\right),&&\zeta\in\mathbb{R}^+e^{i(-\pi+\varphi)},\\
&\zeta^{-iv\widehat{\sigma}_3}e^{\frac{i\zeta^2}{4}\widehat{\sigma}_3}\left(
   \begin{array}{cc}
     1 & 0 \\
     -\frac{\overline{r}_{\xi_1}}{1-|r_{\xi_1}|^2} & 1 \\
   \end{array}
 \right),&&\zeta\in\mathbb{R}^+e^{-i\varphi},
 \end{aligned}\right.
\end{align}
and the jump contour is shown in Fig.\ref{phas2}.
\begin{figure}[H]
\centering
\begin{tikzpicture}[scale=1.3]
\draw[->](0,0)--(2,1.2)node[above]{$\mathbb{R}^+e^{\varphi i}$};
\draw(0,0)--(-2,1.2)node[above]{$\mathbb{R}^+e^{(\pi-\varphi)i}$};
\draw(0,0)--(-2,-1.2)node[below]{$\mathbb{R}^+e^{(-\pi+\varphi)i}$};
\draw[->](0,0)--(2,-1.2)node[below]{$\mathbb{R}^+e^{-\varphi i}$};
\draw[dashed](-2,0)--(2,0)node[right]{\scriptsize $\Re\zeta$};
\draw[->](-2,-1.2)--(-1,-0.6);
\draw[->](-2,1.2)--(-1,0.6);
\coordinate (A) at (-1.2,0.5);
\coordinate (B) at (-1.2,-0.5);
\coordinate (G) at (1.4,0.5);
\coordinate (H) at (1.4,-0.5);
\coordinate (I) at (0,0);
\fill (I) circle (1pt) node[below] {$0$};
\end{tikzpicture}
\caption{The contour $\Sigma^{pc}$ for the case of $\xi_1$ in the region $\xi\in(-1/4,0)$.}
	\label{phas2}
\end{figure}
\end{RHP}

Similarly, we take the transformation as \cite{DZ-CMP-1994,DZ-AM1993,Its-1981}
\begin{align}
M^{(pc),\xi_1}(\zeta)=\Psi(\zeta)\mathcal {P}\zeta^{-iv\sigma_3}e^{\frac{i}{4}\zeta^2\sigma_3},
\end{align}
where
\begin{align*}
\mathcal {P}(\xi)=\left\{\begin{aligned}
&\left(
  \begin{array}{cc}
    1 & -\frac{\overline{r}_{\xi_1}}{1+|r_{\xi_1}|^2} \\
    0 & 1 \\
  \end{array}
\right),&&\arg\zeta\in(0,\varphi),\\
&\left(
  \begin{array}{cc}
    1 & r_{\xi_1} \\
    -\frac{r_{\xi_1}}{1+|r_{\xi_1}|^2} & 1 \\
  \end{array}
\right),&&\arg\zeta\in(-\varphi,0),\\
&\left(
   \begin{array}{cc}
     1 & r_{\xi_1} \\
     0 & 1 \\
   \end{array}
 \right),&&\arg\zeta\in(-\pi,-\pi+\varphi),\\
&\left(
   \begin{array}{cc}
     1 & 0 \\
     \overline{r}_{\xi_1} & 1 \\
   \end{array}
 \right),&&\arg\zeta\in(\pi-\varphi,\pi).
\end{aligned}\right.
\end{align*}

\begin{RHP}\label{RHP-A4}
The matrix $\Psi$ solves RH problem \ref{RHP-A3} with the asymptotic behavior replaced by
\begin{align}
\Psi(\zeta)=\left(I+\frac{\psi_1}{\zeta}+O(\zeta^{-2})\right)\zeta^{iv\sigma_3
}e^{-\frac{i}{4}\zeta^2\sigma_3},~~~~\zeta\rightarrow\infty.
\end{align}
\end{RHP}

Similar to Section \ref{AppA-02}, the matrix $\Psi$ satisfied by RH problem \ref{RHP-A4} follows a ordinary differential equation system, which can be solved by PC model.
\begin{prop}\label{pp-ode40}
The entries of matrix $\Psi=(\psi)_{2\times2}$ satisfy the following system
\begin{align}
\frac{d\psi_{11}}{d\zeta}-\frac{i\zeta}{2}\psi_{11}=\beta_{12}\psi_{21},\
\frac{d\psi_{21}}{d\zeta}+\frac{i\zeta}{2}\psi_{21}=\beta_{21}\psi_{11},\\
\frac{d\psi_{12}}{d\zeta}-\frac{i\zeta}{2}\psi_{12}=\beta_{12}\psi_{22},\
\frac{d\psi_{22}}{d\zeta}+\frac{i\zeta}{2}\psi_{22}=\beta_{21}\psi_{12}.
\end{align}
\end{prop}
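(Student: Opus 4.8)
The plan is to adapt the argument of Proposition~\ref{pp-ode} to the opposite sign configuration $\theta''(\xi_1)<0$ that governs the region $\xi\in(-1/4,0)$. First I would record that, after the substitution $M^{(pc),\xi_1}(\zeta)=\Psi(\zeta)\mathcal{P}\,\zeta^{-iv\sigma_3}e^{\frac{i}{4}\zeta^2\sigma_3}$ introduced before RH problem~\ref{RHP-A4}, the piecewise-constant factor $\mathcal{P}$ together with the conjugation by $\zeta^{-iv\sigma_3}e^{\frac{i}{4}\zeta^2\sigma_3}$ is chosen precisely so that the four ray jumps of $M^{(pc),\xi_1}$ in \eqref{Q15} collapse onto a single jump of $\Psi$ across $\mathbb{R}$ carrying a \emph{constant} (i.e.\ $\zeta$-independent) matrix $V^{\Psi}$. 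The constancy of $V^{\Psi}$ is the structural fact that lets differentiation in $\zeta$ preserve the jump, and it is this feature that drives the entire proof.

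Next I would exploit this constancy. Differentiating $\Psi_+=\Psi_-V^{\Psi}$ in $\zeta$ gives $\Psi_+'=\Psi_-'V^{\Psi}$, while multiplying $\Psi_+=\Psi_-V^{\Psi}$ on the left by $-\tfrac{i}{2}\zeta\sigma_3$ gives $-\tfrac{i}{2}\zeta\sigma_3\Psi_+=\bigl(-\tfrac{i}{2}\zeta\sigma_3\Psi_-\bigr)V^{\Psi}$; adding the two shows that $\Psi'-\tfrac{i}{2}\zeta\sigma_3\Psi$ inherits the same jump $V^{\Psi}$ as $\Psi$. Since $\det V^{\Psi}=1$, the scalar $\det\Psi$ is entire and tends to $1$ at infinity, so Liouville's theorem gives $\det\Psi\equiv1$ and hence the existence of $\Psi^{-1}$ everywhere. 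Consequently $\bigl(\Psi'-\tfrac{i}{2}\zeta\sigma_3\Psi\bigr)\Psi^{-1}$ is jump-free across $\mathbb{R}$, i.e.\ entire. Substituting the large-$\zeta$ normalization $\Psi=(I+\psi_1\zeta^{-1}+O(\zeta^{-2}))\,\zeta^{-iv\sigma_3}e^{\frac{i}{4}\zeta^2\sigma_3}$ dictated by \eqref{Q15} so that $M^{(pc),\xi_1}\to I$, a short computation shows that the potentially growing $\tfrac{i}{2}\zeta\sigma_3$ terms cancel and that $\bigl(\Psi'-\tfrac{i}{2}\zeta\sigma_3\Psi\bigr)\Psi^{-1}\to\tfrac{i}{2}[\psi_1,\sigma_3]$, which is bounded. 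A second application of Liouville's theorem then identifies this entire bounded function with a constant matrix $\beta$, and since $[\psi_1,\sigma_3]$ is off-diagonal, $\beta=\left(\begin{smallmatrix}0&\beta_{12}\\ \beta_{21}&0\end{smallmatrix}\right)$. Expanding $\Psi'-\tfrac{i}{2}\zeta\sigma_3\Psi=\beta\Psi$ entrywise yields the four scalar equations claimed: the row-dependent sign of $\sigma_3$ produces the factor $-\tfrac{i\zeta}{2}$ in the first-row equations for $\psi_{11},\psi_{12}$ and the factor $+\tfrac{i\zeta}{2}$ in the second-row equations for $\psi_{21},\psi_{22}$.

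The main obstacle will be the bookkeeping in the first step: verifying across all four rays of $\Sigma^{pc}$ (Fig.~\ref{phas2}) that the conjugation by $\mathcal{P}\,\zeta^{-iv\sigma_3}e^{\frac{i}{4}\zeta^2\sigma_3}$ indeed cancels the oscillatory $\zeta$-dependence of the jumps and deposits one constant $V^{\Psi}$ on $\mathbb{R}$. The delicate point is that the reversed phase signs here, namely $e^{+\frac{i}{4}\zeta^2}$ and $\zeta^{-iv}$ in place of $e^{-\frac{i}{4}\zeta^2}$ and $\zeta^{iv}$, force the sign choice $-\tfrac{i}{2}\zeta\sigma_3$ in the combination above, which is the only genuine difference from the $\xi\in(0,2)$ computation of Proposition~\ref{pp-ode}, where the opposite phase led to $+\tfrac{i}{2}\zeta\sigma_3$. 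Once the constancy of $V^{\Psi}$ and this sign are secured, the remaining Liouville arguments are identical to the proof of Proposition~\ref{pp-ode}, and the resulting off-diagonal system is exactly the one feeding the parabolic-cylinder solution in the sequel.
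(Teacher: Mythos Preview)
Your proposal is correct and follows exactly the paper's (implicit) approach: it reproduces the Liouville argument from the proof of Proposition~\ref{pp-ode} with the single sign change forced by the reversed phase $e^{+\frac{i}{4}\zeta^2\sigma_3}$ in the $\xi\in(-1/4,0)$ setting. Note that your stated asymptotic $\Psi\sim(I+\psi_1\zeta^{-1})\zeta^{-iv\sigma_3}e^{\frac{i}{4}\zeta^2\sigma_3}$ is indeed the one consistent with both the jump \eqref{Q15} and the claimed ODE (and with the explicit solution in Proposition~\ref{pA4}); the normalization printed in RH problem~\ref{RHP-A4} carries the opposite signs and appears to be a misprint.
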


It follows from PC equation and asymptotic behavior \eqref{asy-Da} that we obtain the unique solution to RH problem \ref{RHP-A4}.

\begin{prop}\label{pA4}
RH problem \ref{RHP-A4} has the unique solution with the following forms
\begin{align}
\Psi(\zeta)=\left(
              \begin{array}{cc}
                e^{\frac{\pi}{4}v(\xi_1)}D_{-iv(\xi_1)}(\zeta e^{-\frac{\pi i}{4}}) &
                \frac{e^{-\frac{3\pi}{4}(v(\xi_1)+i)}}{\beta_{21}^{\xi_1}}(iv(\xi_1))
                D_{iv(\xi_1)-1}(\zeta e^{-3\frac{\pi i}{4}}) \\
                -\frac{e^{\frac{\pi}{4}(v(\xi_1)-i)}}{\beta_{12}^{\xi_1}}iv(\xi_1)
                D_{-iv(\xi_1)-1}(\zeta e^{-\frac{\pi i}{4}}) &
                 e^{-\frac{3\pi}{4}v(\xi_1)}D_{iv(\xi_1)}(\zeta e^{-\frac{3\pi i}{4}}) \\
              \end{array}
            \right),
\end{align}
for $\Im(\zeta)>0$ and for $\Im(\zeta)<0$
\begin{align}
\Psi(\zeta)=\left(
              \begin{array}{cc}
                e^{\frac{5\pi}{4}v(\xi_1)}D_{-iv(\xi_1)}(\zeta e^{-\frac{5\pi i}{4}}) &
                \frac{iv(\xi_1)}{\beta_{21}^{\xi_1}} e^{-\frac{\pi}{4}(v(\xi_1)+i)}
                D_{iv(\xi_1)-1}(\zeta e^{-\frac{7\pi i}{4}}) \\
                -\frac{iv(\xi_1)}{\beta_{12}^{\xi_2}}e^{\frac{5\pi}{4}(v(\xi_1)-i)}
                D_{-iv(\xi_1)-1}(\zeta e^{-\frac{5\pi i}{4}}) &
                 e^{-\frac{7\pi}{4}v(\xi_1)}D_{iv(\xi_1)}(\zeta e^{-\frac{7\pi i}{4}}) \\
              \end{array}
            \right).
\end{align}
\end{prop}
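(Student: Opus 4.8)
The plan is to treat Proposition \ref{pA4} as the $\theta''(\xi_1)<0$ counterpart of Proposition \ref{PA2}, so that the whole argument runs parallel to the case $\xi\in(0,2)$ with the rotation angles and the parabolic-cylinder indices adjusted to the opposite sign of the quadratic phase. First I would invoke Proposition \ref{pp-ode40}: the first-order system recorded there comes from the fact that $\big(\tfrac{d\Psi}{d\zeta}-\tfrac{i}{2}\zeta\sigma_3\Psi\big)\Psi^{-1}$ has no jump across $\mathbb{R}$. Indeed, differentiating the jump relation across $\mathbb{R}$ (the analogue of \eqref{Q4}) and using that the constant jump $V^{\Psi}$ satisfies $\det V^{\Psi}\equiv1$, so that $\det\Psi\equiv1$ by Liouville's theorem, shows this quantity is an entire, off-diagonal constant matrix $\beta^{\xi_1}=\left(\begin{smallmatrix}0 & \beta_{12}^{\xi_1}\\ \beta_{21}^{\xi_1}&0\end{smallmatrix}\right)$. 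Eliminating the off-diagonal entries then decouples the system; with $v:=\beta_{12}^{\xi_1}\beta_{21}^{\xi_1}$ each matrix entry solves a Weber equation in the canonical form \eqref{Q10}.

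Next I would solve these scalar equations sector by sector. For $\Im\zeta>0$ I would rotate by $\eta=\zeta e^{-\pi i/4}$ in the first column and $\eta=\zeta e^{-3\pi i/4}$ in the second, so that $\eta^2=\mp i\zeta^2$ reproduces the exponential factor $e^{+i\zeta^2/4}$ dictated by the conjugation in \eqref{Q15}, and each argument stays inside the principal validity sector $|\arg\eta|<\tfrac{3\pi}{4}$ of the leading asymptotic \eqref{asy-Da}. Matching the prescribed normalization of $\Psi$ in RH problem \ref{RHP-A4} against $D_a(\eta)\sim \eta^a e^{-\eta^2/4}$ pins the diagonal entries uniquely to $D_{-iv(\xi_1)}(\zeta e^{-\pi i/4})$ and $D_{iv(\xi_1)}(\zeta e^{-3\pi i/4})$, up to the recorded scalar prefactors $e^{\pi v/4}$ and $e^{-3\pi v/4}$. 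The off-diagonal entries then follow from the first-order relations of Proposition \ref{pp-ode40} combined with the contiguous identity $\big(\tfrac{d}{dz}+\tfrac{z}{2}\big)D_a(z)=aD_{a-1}(z)$, which lowers $D_{\mp iv}$ to $D_{\mp iv-1}$ and produces the factors $iv(\xi_1)/\beta_{12}^{\xi_1}$ and $iv(\xi_1)/\beta_{21}^{\xi_1}$. The half-plane $\Im\zeta<0$ is handled identically after rotating by $e^{-5\pi i/4}$ and $e^{-7\pi i/4}$, chosen so the rotated arguments again fall in the sector where \eqref{asy-Da} applies.

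Finally, I would fix $\beta_{12}^{\xi_1}$ and $\beta_{21}^{\xi_1}$ by imposing the connection across $\mathbb{R}$: substituting the explicit $D_a$ representations from both half-planes into $\Psi_+=\Psi_-V^{\Psi}$ and using the relation between $D_a(z)$, $D_a(-z)$ and $D_{-a-1}(\pm iz)$ (equivalently the three-term asymptotic in the overlapping sectors of \eqref{asy-Da}) collapses the comparison to a single scalar identity involving $\Gamma(iv(\xi_1))$. This yields the formula for $\beta_{12}^{\xi_1}$ recorded in Proposition \ref{pp-phase-sol} for $\xi\in(-1/4,0)$, and $\beta_{21}^{\xi_1}$ is then recovered from $\beta_{12}^{\xi_1}\beta_{21}^{\xi_1}=v(\xi_1)$; uniqueness is automatic once $\det\Psi\equiv1$ and the normalization are in force. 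I expect the main obstacle to be the sector bookkeeping: because $\theta''(\xi_1)<0$ reverses the sign of the quadratic phase relative to the $\xi\in(0,2)$ case, the rotation angles that keep each parabolic-cylinder argument inside the principal asymptotic sector are shifted by $\pm\tfrac{\pi}{2}$, and one must track the subdominant $z^{-a-1}e^{z^2/4}$ terms in \eqref{asy-Da} carefully to select the correct branch and obtain the exact $\Gamma$-function and phase constants in the connection step.
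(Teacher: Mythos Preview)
Your proposal is correct and follows essentially the same approach as the paper. The paper does not spell out a detailed proof of Proposition \ref{pA4}; it simply notes that ``it follows from PC equation and asymptotic behavior \eqref{asy-Da},'' i.e., one repeats verbatim the argument of Proposition \ref{PA2} with the rotation angles $e^{-\pi i/4}$, $e^{-3\pi i/4}$ (upper half-plane) and $e^{-5\pi i/4}$, $e^{-7\pi i/4}$ (lower half-plane) dictated by the sign change in the quadratic phase---exactly as you outline.
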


Utilizing the equality $D'_a(z)+\frac{z}{2}D_{a}(z)=aD_{a-1}(z)$ and from the jump condition \eqref{Q5}, one has
\begin{align}
-\overline{r}_{\xi_1}&=\psi_{-,11}\psi_{+,21}-\psi_{-,21}\psi_{+,11}\nonumber \\
&=\frac{e^{\frac{3\pi}{2}v(\xi_1)}}{\beta_{12}^{\xi_1}}\frac{\sqrt{2\pi}e^{-\frac{5\pi i}{4}}}
{\Gamma(iv(\xi_1))},
\end{align}
from which we obtain
\begin{align}
&\beta_{12}^{\xi_1}=-\frac{e^{\frac{3\pi}{2}v(\xi_1)}}{\beta_{12}^{\xi_1}}\frac{\sqrt{2\pi}e^{-\frac{5\pi i}{4}}}{\overline{r}_{\xi_1}\Gamma(iv(\xi_1))},\ \beta_{12}^{\xi_1}\beta_{21}^{\xi_1}=v(\xi_1),\\
&\arg(\beta_{12}^{\xi_1})=-\frac{5\pi}{4}+\arg(r_{\xi_1})-\arg(\Gamma(iv(\xi_1))).
\end{align}

\bibliographystyle{plain}

\end{document}